\newtheorem{theorem}{Theorem}[section]
\newtheorem{lemma}[theorem]{Lemma}
\theoremstyle{remark}
\newtheorem{remark}[theorem]{Remark}
\numberwithin{equation}{section}
\DeclareMathAlphabet\mathbfcal{OMS}{cmsy}{b}{n}
\DeclareMathOperator*{\esssup}{ess\,sup}
\newcommand{\abs}[1]{| #1 |}
\newcommand{\bigabs}[1]{\big| #1 \big|}
\newcommand{\Bigabs}[1]{\Big| #1 \Big|}
\newcommand{\norm}[1]{\| #1 \|}
\newcommand{\Bignorm}[1]{\Big\| #1 \Big\|}
\newcommand{\ang}[2]{ \langle #1 , #2  \rangle}
\newcommand{\bigang}[2]{ \big< #1 , #2  \big>}
\newcommand{\scp}[2]{ ( #1 , #2  )}
\newcommand{\bigscp}[2]{\big( #1 , #2 \big)}
\newcommand{\meano}[1]{{\langle #1 \rangle}_{\Omega}}
\newcommand{\meang}[1]{{\langle #1 \rangle}_{\Gamma}}
\newcommand{\mean}[2]{\textnormal{mean}\scp{#1}{#2}}
\newcommand{\R}{\mathbb R}
\newcommand{\N}{\mathbb N}
\newcommand{\n}{\mathbf{n}}
\newcommand{\bv}{\mathbf{v}}
\newcommand{\bu}{\mathbf{u}}
\newcommand{\bw}{\mathbf{w}}
\newcommand{\ww}{\widehat{\bw}}
\newcommand{\wv}{\widehat{\bv}}
\newcommand{\tv}{\widetilde{\bv}}
\newcommand{\tw}{\widetilde{\bw}}
\newcommand{\wphi}{\widehat{\phi}}
\newcommand{\wpsi}{\widehat{\psi}}
\newcommand{\wmu}{\widehat{\mu}}
\newcommand{\wtheta}{\widehat{\theta}}
\newcommand{\J}{\mathbf{J}}
\newcommand{\K}{\mathbf{K}}
\newcommand{\T}{\mathbf{T}}
\newcommand{\I}{\mathbf{I}}
\newcommand{\D}{\mathbf{D}}
\newcommand{\f}{\mathbf{f}}
\newcommand{\g}{\mathbf{g}}
\newcommand{\A}{\mathbf{A}}
\newcommand{\M}{\mathbf{M}}
\renewcommand{\L}{\mathbf{L}}
\newcommand{\G}{\mathbf{G}}
\newcommand{\Dg}{\mathbf{D}_\Gamma}
\newcommand{\Z}{\mathbf{Z}}
\newcommand{\intO}{\int_\Omega}
\newcommand{\intG}{\int_\Gamma}
\newcommand{\dtau}{\;\mathrm d\tau}
\newcommand{\dx}{\;\mathrm{d}x}
\newcommand{\dt}{\;\mathrm dt}
\newcommand{\ds}{\;\mathrm ds}
\newcommand{\dxs}{\;\mathrm{d}x\;\mathrm{d}s}
\newcommand{\dGs}{\;\mathrm{d}\Ga\;\mathrm{d}s}
\newcommand{\dG}{\;\mathrm d\Ga}
\newcommand{\ddt}{\frac{\mathrm d}{\mathrm dt}}
\newcommand{\del}{\partial}
\newcommand{\delt}{\partial_{t}}
\newcommand{\delth}{\partial_{t}^{h}}
\newcommand{\deln}{\partial_\n}
\newcommand{\Grad}{\nabla}
\newcommand{\Lap}{\Delta}
\newcommand{\Div}{\textnormal{div}}
\newcommand{\Gradg}{\nabla_\Ga}
\newcommand{\Lapg}{\Delta_\Ga}
\newcommand{\Divg}{\textnormal{div}_\Ga}
\newcommand{\emb}{\hookrightarrow}
\newcommand{\suchthat}{\;\ifnum\currentgrouptype=16 \middle\fi|\;}
\newcommand{\e}{\mathbf{e}}
\newcommand{\Om}{\Omega}
\newcommand{\Ga}{\Gamma}
\def \no#1#2#3 {{\bf #1} (#3), #2.}
\def \eds#1#2#3 {#1, #2, #3.}
\def\@settitle{\begin{center}%
  \baselineskip14\p@\relax
    \huge
  \@title
  \end{center}%
}
\begin{document}
\title[Global well-Posedness of strong solutions to a bulk-surface NSCH system]{Global well-posedness of strong solutions to a \\ bulk-surface Navier--Stokes--Cahn--Hilliard model with non-degenerate mobilities in two dimensions}
\author[J. Stange]{Jonas Stange}

\address{Fakult\"{a}t f\"{u}r Mathematik\\ 
Universit\"{a}t Regensburg \\
93040 Regensburg, Germany\\
\href{mailto:jonas.stange@ur.de}{jonas.stange@ur.de}
}

\subjclass[2010]{
Primary: 76T06
; Secondary:
 35Q35, 
 76D03, 
 76D05, 
 76D07, 
}
\keywords{Multi-phase flows, Navier--Stokes--Cahn--Hilliard system, bulk-surface interaction, dynamic boundary conditions, strong solutions, uniqueness.}

\begin{abstract} We examine a thermodynamically consistent diffuse interface model for bulk-surface viscous fluid mixtures. This model consists of a Navier--Stokes--Cahn--Hilliard model in the bulk coupled to a surface Navier--Stokes--Cahn--Hilliard system on the boundary. In this paper, we address the global well-posedness of strong solutions in the two-dimensional setting, also covering the physically meaningful case of non-degenerate mobility functions. Lastly, we prove the uniqueness of the corresponding strong solutions and their continuous dependence on the initial data. Our approach hinges upon new well-posedness and regularity theory for a convective bulk-surface Cahn--Hilliard equation with non-degenerate mobilities, as well as a bulk-surface Stokes equation with non-constant coefficients.
\end{abstract}

\maketitle


\section{Introduction}
\label{Section:Introduction}
\noindent

In this contribution, we study a thermodynamically consistent, diffuse interface model for bulk-surface viscous fluid mixtures with different densities in two space dimensions. The model was recently derived by Knopf and the author in \cite{Knopf2025a} by means of local mass balance laws, local energy dissipation laws, and the Lagrange multiplier approach. The corresponding bulk-surface Navier--Stokes--Cahn--Hilliard model reads as
\begin{subequations}\label{System}
\begin{alignat}{3}
    \label{System:1}
    &\delt(\rho(\phi)\bv) + \Div(\bv\otimes(\rho(\phi)\bv + \J)) = \Div\;\T , &&\qquad\Div\;\bv = 0 &&\qquad\text{in~}Q, \\
    \label{System:2}
    &\delt(\sigma(\psi)\bw) + \Divg(\bw\otimes(\sigma(\psi)\bw + \K)) = \Divg\;\T_\Ga + \Z, &&\qquad \Divg\;\bw = 0 &&\qquad\text{on~}\Sigma, \\
    \label{System:3}
    &\bw = \bv\vert_\Ga,\qquad \bv\cdot\n = 0 && &&\qquad\text{on~}\Sigma, \\[0.3em]
    \label{System:4}
    &\delt\phi  + \Div(\phi\bv) = \Div(m_\Om(\phi)\Grad\mu) && &&\qquad\text{in~}Q, \\
    \label{System:5}
    &\mu = -\Lap\phi + F^\prime(\phi) && &&\qquad\text{in~}Q, \\[0.3em]
    \label{System:6}
    &\delt\psi + \Divg(\psi\bw) = \Divg(m_\Ga(\psi)\Gradg\theta) - \beta m_\Om(\phi)\deln\mu && &&\qquad\text{on~}\Sigma, \\
    \label{System:7}
    &\theta = -\Lapg\psi + G^\prime(\psi) + \alpha\deln\phi && &&\qquad\text{on~}\Sigma, \\
    \label{System:8}
    &K\deln\phi = \alpha\psi - \phi, \qquad Lm_\Om(\phi)\deln\mu = \beta\theta - \mu, &&\qquad K,L\in[0,\infty]  &&\qquad\text{on~}\Sigma,
\end{alignat}
accompanied by the initial conditions
\begin{alignat}{2}
    \phi\vert_{t=0} = \phi_0, &\qquad \bv\vert_{t=0} = \bv_0 &&\qquad\text{in~}\Om, \label{System:InitialConditions:1} \\
    \label{System:InitialConditions:2}
    \psi\vert_{t=0} = \psi_0, &\qquad \bw\vert_{t=0} = \bw_0 &&\qquad\text{on~}\Ga.
\end{alignat}
\end{subequations}

In \eqref{System}, $\Om\subset\R^2$ is bounded domain with boundary $\Ga\coloneqq\partial\Om$. For brevity, we use the notation $Q = \Om\times(0,\infty)$ and $\Sigma = \Ga\times(0,\infty)$. The outward pointing unit normal vector on $\Ga$ is denoted by $\n$, while $\deln$ denotes the outward normal derivative on the boundary. Moreover, the symbols $\Gradg$, $\Divg$, and $\Lapg$ stand for the surface gradient, divergence, and the Laplace--Beltrami operator on $\Ga$, respectively. We refer to Section~\ref{NOT:DIFFOP} for a precise definition of these objects.

In the bulk domain $\Om$, the motion of the mixture is described by the velocity field $\bv:Q\rightarrow\R^d$, while on the boundary, the viscous dynamics are governed by the surface velocity field $\bw:\Sigma\rightarrow\R^d$. The location of the two materials within the bulk is captured by the phase-field $\phi:Q\rightarrow\R$, whereas the location of the fluids on the boundary is represented by the surface phase-field $\psi:\Sigma\rightarrow\R$. The associated chemical potentials are given by $\mu:Q\rightarrow\R$ in the bulk and $\theta:\Sigma\rightarrow\R$ on the surface.

The time evolution of the bulk and surface velocity fields, $\bv$ and $\bw$, is governed by the bulk Navier--Stokes equation \eqref{System:1} and the tangential surface Navier--Stokes equation \eqref{System:2}, respectively. In this framework, the two fields are coupled through the trace relation \eqref{System:3}$_1$, which directly links the motion of the bulk and surface materials. Both flows are assumed to be incompressible, which implies that the corresponding velocity fields are divergence-free. This boundary condition can be viewed as a generalization of the generalized Navier boundary condition (GNBC), which relates tangential stresses to slip velocities and uncompensated Young stresses. The GNBC plays a key role in modeling contact line phenomena, see, e.g., \cite{Dussan1979, Qian2006, Seppecher1996}. The inclusion of additional viscous effects in system~\eqref{System} is inspired by the classical fluid-mosaic model of membrane structure introduced in \cite{Singer1972}.  In that model, biological membranes are regarded as laterally incompressible, two-dimensional fluid layers with embedded proteins. This motivates treating the boundary as an active layer capable of supporting both lateral momentum transport and compositional dynamics. A rigorous continuum formulation of these ideas was later developed in \cite{Arroyo2009}, where lipid bilayers are modeled by a surface Navier--Stokes equation with positive, but finite viscosity, which is expected to dominate relaxation dynamics. These findings and observations motivate the inclusion of a surface Navier--Stokes equation in system \eqref{System} to account for viscous and convective effects at the boundary. 

In system~\eqref{System}, $\rho(\phi)$ stands for the phase-field dependent density in the bulk, whereas $\sigma(\psi)$ stands for the phase-field dependent density on the surface. They are given by
\begin{align*}
    \rho(\phi) \coloneqq \frac{\tilde\rho_2 - \tilde\rho_1}{2}\phi + \frac{\tilde\rho_2 + \tilde\rho_1}{2} \qquad\text{in~}Q, \qquad \sigma(\psi) \coloneqq \frac{\tilde\sigma_2 - \tilde\sigma_1}{2}\psi + \frac{\tilde\sigma_2 + \tilde\sigma_1}{2} \qquad\text{on~}\Sigma.
\end{align*}
Furthermore, $\J$ and $\K$ represent mass fluxes related to interfacial motion, and are given by
\begin{align*}
    \J = - \frac{\tilde\rho_2 - \tilde\rho_1}{2}m_\Om(\phi)\Grad\mu \qquad\text{in~}Q, \qquad \K = - \frac{\tilde\sigma_2 - \tilde\sigma_1}{2}m_\Ga(\psi)\Gradg\theta \qquad\text{on~}\Sigma.
\end{align*}
Moreover, $\T$ and $\T_\Ga$ are stress tensors, that are given by
\begin{alignat*}{2}
    \T &\coloneqq 2\nu_\Om(\phi)\D\,\bv - p\I - \Grad\phi\otimes\Grad\phi &&\qquad\text{in~}Q, \\
    \T_\Ga &\coloneqq 2\nu_\Ga(\psi)\Dg\,\bw - q\I - \Gradg\psi\otimes\Gradg\psi &&\qquad\text{on~}\Sigma.
\end{alignat*}
Here, the phase-field dependent coefficients $\nu_\Om, \nu_\Ga:[-1,1]\rightarrow\R$ denote the viscosity of the mixtures in the bulk and on the surface, respectively. A typical choice is the affine interpolation
\begin{align*}
    \nu(s) = \frac{\nu_2 - \nu_1}{2}s + \frac{\nu_2 + \nu_1}{2} \qquad\text{for~}s\in[-1,1],
\end{align*}
where $\nu_1$ and $\nu_2$ are the constant viscosities of the two underlying fluids. Moreover, 
\begin{align*}
    \D\,\bv \coloneqq \frac12\big((\Grad\bv) + (\Grad\bv)^\top\big), \qquad \Dg\,\bw \coloneqq \frac12\big((\Gradg\bw) + (\Gradg\bw)^\top\big)
\end{align*}
denote the bulk and surface symmetric gradient of $\bv$ and $\bw$, respectively.
The vector $\Z$ represents a surface force density that accounts for mechanical interactions between the bulk and the surface materials. It is of the form
\begin{align}\label{Def:Z}
    \Z = -2\nu_\Om(\phi)[\D\,\bv\n]_\tau + \tfrac12(\J\cdot\n)\bw + \tfrac12(\J_\Ga\cdot\n)\bw + \alpha\deln\phi\Gradg\psi - \gamma(\phi,\psi)\bw \qquad\text{on~}\Sigma.
\end{align}
The coefficient $\gamma:[-1,1]^2\rightarrow\R$ denotes a slip coefficient associated with tangential friction, which is allowed to depend on both the phase-fields. In \eqref{Def:Z}, $\J_\Ga$ represents a mass flux corresponding to the transfer of materials between bulk and surface, and is given by
\begin{align*}
    \J_\Ga = -\beta\frac{\tilde\sigma_2 - \tilde\sigma_1}{2}m_\Om(\phi)\Grad\mu \qquad\text{on~}\Sigma.
\end{align*}

The time evolution of $\phi$ and $\mu$ is governed by the convective bulk Cahn--Hilliard subsystem \eqref{System:4}-\eqref{System:5}, while the dynamics of $\psi$ and $\theta$ is described by the convective surface Cahn--Hilliard subsystem \eqref{System:6}-\eqref{System:7}. These two subsystems are coupled through terms involving the normal derivatives $\deln\phi$ and $\deln\mu$. Moreover, the phase-fields $\phi$ and $\psi$ are directly linked by the boundary condition \eqref{System:8}$_1$, which has to be understood in the following sense
\begin{align*}
    \begin{cases}
        \phi = \alpha\psi &\text{if~} K = 0, \\
        K\deln\phi = \alpha\psi - \phi &\text{if~} L\in(K,\infty), \\
        \deln\phi = 0 &\text{if~} K = \infty, 
    \end{cases} \qquad\text{on~}\Sigma.
\end{align*}
The boundary condition \eqref{System:8}$_2$, which couples the chemical potentials $\mu$ and $\theta$ has to be understood similarly. In these relations, the parameters $K,L\in[0,\infty]$ distinguish different coupling regimes while $\alpha,\beta\in\R$ represent material-dependent physical effects, see, for instance, \cite{Giorgini2023, Knopf2024} for a more extensive description. Such coupling conditions belong to the class of dynamic boundary conditions, which generalize the classical homogeneous Neumann boundary conditions
\begin{align*}
    \deln\phi = \deln\mu = 0 \qquad\text{on~}\Sigma,
\end{align*}
commonly used in standard Cahn--Hilliard models. While Neumann boundary conditions remain popular for their simplicity, they can be too restrictive when accurate modeling of interfacial dynamics is required, see, e.g., \cite{Giorgini2023, Knopf2024}. This limitation has motivated extensive research into dynamic boundary condition formulations. For a comprehensive overview, we refer to the survey \cite{Wu2022} and the references therein. In particular, Cahn--Hilliard systems with dynamic boundary conditions that permit mass exchange between the bulk and the boundary have attracted significant interest in recent years, see, e.g., \cite{Goldstein2011, Liu2019, Knopf2020, Knopf2021a}. 

The functions $m_\Om, m_\Ga:[-1,1]\rightarrow\R$ are the so-called Onsager mobilities. They depend, in general, on the phase-field variables $\phi$ and $\psi$, respectively. They model both the spatial locations and the intensity at which the underlying diffusion processes take place.

The functions $F^\prime$ and $G^\prime$ are the derivatives of double-well potentials $F$ and $G$, respectively. A physically motivated example of such a double-well potential, in particular in applications related to materials science, is the logarithmic potential, which is also referred to as the Flory--Huggins potential. It is given as
\begin{align*}
    W_{\mathrm{log}}(s) = \frac{\Theta}{2}\big[(1+s)\ln(1+s) + (1-s)\ln(1-s)\big] - \frac{\Theta_c}{2}s^2 \qquad\text{for~}s\in[-1,1],
\end{align*}
with the convention that $0\,\ln 0$ is interpreted as zero. Here, we assume that $0 < \Theta < \Theta_c$, where $\Theta$ denotes the temperature of the mixture, and $\Theta_c$ represents the critical temperature below which phase separation processes are to be expected. We point out that $W_{\mathrm{log}}$ is classified as a singular potential, as it satisfies $W_{\mathrm{log}}^\prime(s) \rightarrow \pm\infty$ as $s\rightarrow\pm 1$.

Employing the decompositions
\begin{alignat*}{2}
    -\Div(\Grad\phi\otimes\Grad\phi) &= -\Grad\left(\frac12\abs{\Grad\phi}^2 + F(\phi)\right) + \mu\Grad\phi &&\qquad\text{in~}Q, \\
    -\Divg(\Gradg\psi\otimes\Gradg\psi) &= -\Gradg\left(\frac12\abs{\Gradg\psi}^2 + G(\psi)\right) + \theta\Gradg\psi - \alpha\deln\phi\Gradg\psi &&\qquad\text{on~}\Sigma,
\end{alignat*}
which readily follow from \eqref{System:5} and \eqref{System:6}, respectively, we can replace the bulk and surface pressure $p$ and $q$ by
\begin{alignat*}{2}
    \overline{p} &\coloneqq p + \frac12\abs{\Grad\phi}^2 + F(\phi) &&\qquad\text{in~}Q, \\
    \overline{q} &\coloneqq q + \frac12\abs{\Gradg\psi}^2 + G(\psi) &&\qquad\text{on~}\Sigma.
\end{alignat*}
This allows us to rewrite \eqref{System:1}$_1$-\eqref{System:2}$_1$ as
\begin{subequations}\label{eqs:NSCH:pressure}
    \begin{alignat}{2}
        &\delt(\rho(\phi)\bv) + \Div(\bv\otimes(\rho(\phi)\bv + \J)) - \Div(2\nu_\Om(\phi)\D\bv) + \Grad\overline{p} = \mu\Grad\phi &&\qquad\text{in~}Q \label{eqs:NSCH:pressure:v} \\
        &\delt(\sigma(\psi)\bw) + \Divg(\bw\otimes(\sigma(\psi)\bw + \K) - \Divg(2\nu_\Ga(\psi)\Dg\bw) + \Gradg\overline{q}\nonumber  \\
        &\qquad = \theta\Gradg\psi - 2\nu_\Om(\phi)\big[ \D\bv\,\n \big]_\tau + \tfrac12(\J\cdot\n)\bw - \tfrac12(\J_\Ga\cdot\n)\bw - \gamma(\phi,\psi)\bw &&\qquad\text{on~}\Sigma. \label{eqs:NSCH:pressure:w}
    \end{alignat}
\end{subequations}
Moreover, for the subsequent analysis, it will prove useful to write \eqref{eqs:NSCH:pressure} in the following so-called non-conservative form
\begin{subequations}\label{eqs:NSCH:NC}
    \begin{alignat}{2}
        &\rho(\phi)\delt\bv + \rho(\phi)(\bv\cdot\Grad)\bv + (\J\cdot\Grad)\bv - \Div(2\nu_\Om(\phi)\D\bv) + \Grad\overline{p} = \mu\Grad\phi &&\qquad\text{in~}Q, \label{ConsForm:v}\\
        &\sigma(\psi)\delt\bw + \sigma(\psi)(\bw\cdot\Gradg)\bw + (\K\cdot\Gradg)\bw - \Divg(2\nu_\Ga(\psi)\Dg\bw) + \Gradg\overline{q}\nonumber \\
        &\qquad = \theta\Gradg\psi - 2\nu_\Om(\phi)\big[ \D\bv\,\n \big]_\tau + \tfrac12(\J\cdot\n)\bw - \tfrac12(\J_\Ga\cdot\n)\bw - \gamma(\phi,\psi)\bw &&\qquad\text{on~}\Sigma. \label{ConsForm:w}
    \end{alignat}
\end{subequations}

Lastly, multiplying \eqref{System:4} with $\tfrac{\tilde\rho_2 - \tilde\rho_1}{2}$ and \eqref{System:6} with $\tfrac{\tilde\sigma_2 - \tilde\sigma_1}{2}$, we readily obtain that the time evolution of $\rho(\phi)$ and $\sigma(\psi)$ is given by
\begin{subequations}\label{Equation:densities}
    \begin{alignat}{2}
        \delt\rho(\phi) + \Div(\rho(\phi)\bv) &= -\Div\,\J &&\qquad\text{in~}Q, \label{Equation:rho} \\
        \delt\sigma(\psi) + \Divg(\sigma(\psi)\bw) &= -\Div\,\K + \J_\Ga\cdot\n &&\qquad\text{on~}\Sigma, \label{Equation:sigma}
    \end{alignat}
\end{subequations}
which will be useful later on in the analysis.

The energy functional associated with system \eqref{System} reads as
\begin{align}\label{Definition:TotalEnergy}
    \begin{split}
        E_{\mathrm{tot}}(\bv,\bw,\phi,\psi) &= \intO \frac{\rho(\phi)}{2}\abs{\bv}^2\dx + \intG \frac{\sigma(\psi)}{2}\abs{\bw}^2\dG + \intO \frac12\abs{\Grad\phi}^2 + F(\phi)\dx \\
        &\quad + \intG \frac12\abs{\Grad\psi}^2 + G(\psi)\dG + \chi(K)\intG \frac12(\alpha\psi - \phi)^2\dG,
    \end{split}
\end{align}
where the function
\begin{align}\label{Definition:chi}
    \chi:[0,\infty]\rightarrow[0,\infty), \quad\chi(r) \coloneqq \begin{cases}
        \tfrac1r, &\text{if~}r\in(0,\infty), \\
        0, &\text{if~} r\in\{0,\infty\},
    \end{cases}
\end{align}
is used to distinguish different cases of the parameter $K$. Sufficiently regular solutions to \eqref{System} satisfy the following energy dissipation law
\begin{align}\label{EnergyDissipation}
    \begin{split}
        &\ddt E_{\mathrm{tot}}(\bv,\bw,\phi,\psi) + \intO 2\nu_\Om(\phi)\abs{\D\bv}^2\dx + \intG 2\nu_\Ga(\psi)\abs{\Dg\bw}^2\dG + \intG \gamma(\phi,\psi)\abs{\bw}^2\dG \\
        &\qquad + \intO m_\Om(\phi)\abs{\Grad\mu}^2\dx + \intG m_\Ga(\psi)\abs{\Gradg\theta}^2\dG + \chi(L)\intG (\beta\theta - \mu)^2\dG = 0 
    \end{split}
\end{align}
on $[0,\infty)$, as well as the mass conservation law
\begin{align}\label{MassConservation}
    \begin{dcases}
        \beta\intO \phi(t)\dx + \intG \psi(t)\dG = \beta\intO \phi(0) \dx + \intG \psi(0)\dG, &\textnormal{if~} L\in[0,\infty), \\
        \intO\phi(t)\dx = \intO\phi(0)\dx \quad\textnormal{and}\quad \intG\psi(t)\dG = \intG\psi(0)\dG, &\textnormal{if~} L = \infty,
    \end{dcases}
\end{align}
for all $t\in[0,\infty)$. This means that the mass is conserved separately in $\Om$ and on $\Ga$ in the case $L = \infty$, whereas, if $L\in[0,\infty)$, a transfer of material between bulk and surface is expected to occur, see, e.g., \cite{Knopf2021a}.

\textbf{Related literature.} 
For related diffuse interface models without dynamic boundary conditions, such as the Abels--Garcke--Grün (AGG) model, the existence of strong solutions has been extensively investigated, see, for instance, \cite{Abels2021, Abels2024a, Giorgini2021, Giorgini2022}, as well as \cite{Giorgini2020, Giorgini2019}. In contrast, for models with dynamic boundary conditions, the literature has so far focused primarily on the existence of weak solutions; we refer to \cite{Chan2025, Gal2019, Giorgini2023, Gal2023a, Knopf2025a} for representative results. Progress towards stronger notions of solutions has recently been made in \cite{Gal2023c}, where the authors established the existence of quasi-strong solutions for an Allen--Cahn--Navier--Stokes--Voigt system endowed with dynamic boundary conditions.

\textbf{Goals and novelties of the paper.}
System~\eqref{System} was recently derived and analyzed in \cite{Knopf2025a}, where the authors established the existence of weak solutions in the case of constant bulk and surface mobilities $m_\Om$ and $m_\Ga$ via a semi-Galerkin scheme.
The present work advances this analysis by proving the existence and uniqueness of strong solutions in the two-dimensional setting, allowing also for non-degenerate mobility functions.
Our approach is again based on a semi-Galerkin approximation, employing eigenfunctions of a bulk-surface Stokes operator to approximate the velocity fields. After constructing suitable approximate solutions by means of a fixed-point argument, we derive suitable uniform estimates. However, in contrast to the estimates performed in \cite{Knopf2025a} for weak solutions, our proof is more involved, as we have to establish several higher-order estimates. For the convective bulk-surface Cahn--Hilliard subsystem with non-degenerate mobilities, these estimates are based on the regularity theory developed in Appendix~\ref{Section:ConvCH}, which extends the author’s recent work \cite{Stange2025} on the non-convective case.
To obtain higher-order estimates for the velocity fields, we establish new regularity theory for a bulk-surface Stokes system with variable coefficients, presented in Appendix~\ref{Section:Stokes}, generalizing the framework developed in \cite[Section~5]{Knopf2025a} for the constant-coefficient case.
We further prove uniqueness and continuous dependence on the initial data of strong solutions for $L\in(0,\infty]$. The case $L = 0$ is excluded from our analysis, as the boundary condition \eqref{System:8}$_2$ no longer allows reformulation of the normal derivative $\deln\mu$, making it unmanageable in our framework. Nevertheless, the situation $K = L = 0$ can be treated under additional structural assumptions on the potentials; see Section~\ref{Section:MainResults}.
Finally, we note that our results are restricted to the two-dimensional case. In three dimensions, the main difficulty lies in the loss of the separation property, which would necessitate more delicate approximation techniques to control the nonlinearities, similar to those employed in \cite{Giorgini2022} for the AGG model. Furthermore, one cannot expect global strong solutions, but only local ones. The analysis can also be performed only for constant mobility functions, since for the convective bulk-surface Cahn--Hilliard subsystem, the uniqueness of weak solutions and the existence of strong solutions are currently known only in this setting. The extension to the three-dimensional case will be addressed in future work.

\textbf{Structure of this paper.} The remainder of this paper is structured as follows. In Section~\ref{Section:MathematicalSetting}, we present all the necessary mathematical notation and preliminaries. Then, in Section~\ref{Section:MainResults}, we state our main results regarding the existence of a unique global-in-time strong solution. Afterwards, Section~\ref{Section:LocalWellPosedness} is devoted to the construction of a global-in-time strong solution. Lastly, in Section~\ref{Section:Uniqueness}, we establish the uniqueness of the corresponding strong solution provided that $L \in(0,\infty]$.

\medskip

\section{Mathematical setting}
\label{Section:MathematicalSetting}

\subsection{General notation and function spaces}

We write $\N$ to denote the set of natural numbers excluding zero, whereas $\N_0 = \N\cup\{0\}$. For any real Banach space $X$ with $\norm{\cdot}_X$, its dual space is denoted by $X^\prime$. The corresponding duality pairing of elements $\phi\in X^\prime$ and $\zeta\in X$ is denoted by $\ang{\phi}{\zeta}_X$. If $X$ is a Hilbert space, we write $\scp{\cdot}{\cdot}_X$ to denote its inner product. Given $p\in[1,\infty]$ and an interval $I\subset[0,\infty)$, the Bochner space $L^p(I,X)$ consists of all Bochner measurable, $p$-integrable functions defined on $I$ with values in $X$. Moreover, the space $W^{1,p}(I;X)$ consists of all functions $f\in L^p(I;X)$ such that their vector-valued distributional derivative satisfies $\delt f\in L^p(I;X)$. In particular, we set $H^1(I;X) = W^{1,2}(I;X)$. Furthermore, $L^p_{\mathrm{uloc}}(I;X)$ denotes the space of functions $f\in L^p(I;X)$ such that
\begin{align*}
    \norm{f}_{L^p_{\mathrm{uloc}}(I;X)} \coloneqq \sup_{t\geq 0}\Big(\int_{I\cap [t,t+1)} \norm{f(s)}_X^p\ds\Big)^{\frac1p} < \infty.
\end{align*}
If $I\subset\R$ is a finite interval, we simply have $L^p_{\mathrm{uloc}}(I;X) = L^p(I;X)$. Further, the set of continuous functions $f:I\rightarrow X$ is denoted by $C(I;X)$. Moreover, $C_w(I;X)$ denotes the space of functions $f:I\to X$ which are continuous on $I$ with respect to the weak topology on $X$, i.e., the map $I\ni t\mapsto \ang{\phi}{f(t)}_X$ is continuous for all $\phi\in X^\prime$.

In the remainder of this section, let $\Om\subset\R^d$, $d=2,3$, be a bounded domain with sufficiently smooth boundary $\Ga\coloneqq\partial\Om$, and let $\n$ denote the exterior unit normal vector field on $\Gamma$. For any $s\geq 0$ and $p\in[1,\infty]$, the Lebesgue and Sobolev--Slobodeckij spaces for functions mapping from $\Om$ to $\R$ are denoted as $L^p(\Om)$ and $W^{s,p}(\Om)$, respectively. We write $\norm{\cdot}_{L^p(\Om)}$ and $\norm{\cdot}_{W^{s,p}(\Om)}$ to denote the standard norms on these spaces. If $p = 2$, we use the notation $H^s(\Om) = W^{s,2}(\Om)$, with the convention that $H^0(\Om)$ is identified with $L^2(\Om)$. For the Lebesgue and Sobolev--Slobodeckij spaces on the boundary $\Ga$, we use an analogous notation. The corresponding spaces of vector-valued functions mapping into $\R^d$ or matrix-valued functions mapping into $\R^{d\times d}$ are denoted by boldface letters, namely $\mathbf L^p$, $\mathbf W^{s,p}$, and $\mathbf H^s$. In particular, we write $\mathbf L^p_\tau(\Gamma)$, $\mathbf W^{s,p}_\tau(\Gamma)$ and $\mathbf H^s_\tau(\Gamma)$ to denote the corresponding spaces of tangential vector fields on $\Gamma$.

Moreover, we further introduce the first-order homogeneous Sobolev spaces
\begin{align*}
    \dot H^1\Omega) 
    &\coloneqq
    \big\{ u \in L^2_\mathrm{loc}(\Omega) 
    \,:\, \Grad u \in \mathbf L^2(\Omega) \big\}\big/_{\sim} \,,
    \\
    \dot H^1(\Gamma) 
    &\coloneqq
    \big\{ v \in L^2(\Gamma) 
    \,:\, \Gradg v \in \mathbf L^2(\Gamma) \big\}\big/_{\sim} \,.
\end{align*}
Here, $L^2_\mathrm{loc}(\Omega)$ is the collection of all functions, which belong to $L^2(K)$ for every compact subset $K\subset \Omega$. On the boundary, we simply consider $L^2(\Gamma)$, since $\Gamma$ is itself a compact submanifold.
Moreover, the notation $/_{\sim}\,$ indicates that functions differing only by an additive constant are considered equivalent and thus identified with each other. Consequently, the functionals ${\norm{\Grad\,\cdot}_{\mathbf L^2(\Omega)}}$ and ${\norm{\Gradg\,\cdot}_{\mathbf L^2(\Gamma)}}$ define norms on the spaces $\dot H^1(\Omega)$ and $\dot H^1(\Gamma)$, respectively.

Furthermore, we write $\mathbf{P}^\Gamma$ to denote the orthogonal projection of $\R^d$ into the tangent space of the submanifold $\Gamma$.
This means that $\mathbf{P}^\Gamma$ is a linear map, which can be represented as
\begin{align*}
    \mathbf{P}^\Gamma = \mathbf{I} - \n\otimes\n,
\end{align*}
where $\mathbf{I}\in\R^{d\times d}$ is the unity matrix.
For any vector field $\bw$ on $\Gamma$, we also use the notation
\begin{align*}
    [\bw]_\tau = \mathbf{P}^\Gamma(\bw).
\end{align*}

\subsection{Differential operators in the bulk and on the surface} \label{NOT:DIFFOP}
Let $f:\Omega\to\R$ and $g:\Gamma\to\R$ be scalar functions, let $\bv:\Omega\to \R^d$ and $\bw:\Gamma\to\R^d$ be vector fields, and let $\mathbf{A}:\Omega\to\R^{d\times d}$ and $\mathbf{B}:\Gamma\to\R^{d\times d}$ be matrix-valued functions. 
For now, we assume that the boundary $\Gamma$ and all these functions are sufficiently regular.
As usual, $\Grad f$, $\Div\,\bv$, and $\Lap f = \Div(\Grad f)$ denote the gradient of $f$, the divergence of $\bv$ and the Laplacian of $f$. 
Analogously, $\Gradg g$, $\Divg\,\bw$, and $\Lapg g = \Divg(\Gradg g)$ are the tangential gradient of $g$, the surface divergence of $\bw$ and the Laplace--Beltrami operator applied to $g$. 
We point out that $\Grad f$ and $\Gradg g$ are to be understood as column vectors in $\R^d$.
For bulk vector fields, the gradient is defined as
\begin{align*}
    \Grad \bv = 
    \begin{pmatrix}
        (\Grad \bv_1)^T \\ \vdots \\ (\Grad \bv_d)^T
    \end{pmatrix}
    \in \R^{d\times d},
\end{align*}
where $\bv_1,...,\bv_d$ denote the components of $\bv$. In other words, $\Grad\bv$ is the Jacobian of $\bv$. On the surface $\Gamma$, the projective (covariant) gradient of $\bw$ is defined as 
\begin{align*}
    \Gradg\bw = \mathbf{P}^\Gamma \Grad\tw \mathbf{P}^\Gamma,
\end{align*}
where $\tw$ is an extension of $\bw$ to an open neighborhood of $\Gamma$ in $\R^d$. However, the expression $\Gradg\bw$ does not depend on the precise choice of this extension. If $\bw$ is a tangential vector field, its gradient can also be expressed as
\begin{align*}
    \Gradg \bw = \mathbf{P}^\Gamma
    \begin{pmatrix}
        (\Gradg \bw_1)^T \\ \vdots \\ (\Gradg \bw_d)^T
    \end{pmatrix}
    \in \R^{d\times d},
\end{align*}
where $\bw_1,...,\bw_d$ denote the components of $\bw$.  
For the matrix-valued functions $\mathbf{A}$ and $\mathbf{B}$, we further define the divergences
\begin{align*}
    \Div\,\mathbf{A} = 
    \begin{pmatrix}
        \Div\, \mathbf{A}_{1\ast} \\ \vdots \\ \Div\, \mathbf{A}_{d\ast}
    \end{pmatrix},
    \qquad
    \Divg\,\mathbf{B} = \mathbf{P}^\Gamma
    \begin{pmatrix}
        \Divg\, \mathbf{B}_{1\ast} 
        \\ 
        \vdots 
        \\ \Divg\, \mathbf{B}_{d\ast}  
    \end{pmatrix}.
\end{align*}
Here, $\mathbf{A}_{1\ast},...,\mathbf{A}_{d\ast}$ and $\mathbf{B}_{1\ast},...,\mathbf{B}_{d\ast}$ denote the rows of $\mathbf{A}$ and $\mathbf{B}$, respectively.

\subsection{Bulk-surface product spaces}
For any real number $s\geq 0$ and $p\in[1,\infty]$, we set
\begin{align*}
	\mathcal{L}^p \coloneqq L^p(\Om)\times L^p(\Ga), \qquad \mathcal{W}^{s,p} \coloneqq W^{s,p}(\Om)\times W^{s,p}(\Ga).
\end{align*}
We write $\mathcal{H}^s = \mathcal{W}^{s,2}$ and identify $\mathcal{L}^2$ with $\mathcal{H}^s$. Note that $\mathcal{H}^s$ is a Hilbert space with respect to the inner product
\begin{align*}
	\bigscp{\scp{\phi}{\psi}}{\scp{\zeta}{\xi}}_{\mathcal{H}^s} \coloneqq \scp{\phi}{\zeta}_{H^s(\Om)} + \scp{\psi}{\xi}_{H^s(\Ga)} \qquad\text{for all~}(\phi,\psi), (\zeta,\xi)\in\mathcal{H}^s,
\end{align*}
and its induced norm $\norm{\cdot}_{\mathcal{H}^s} \coloneqq \scp{\cdot}{\cdot}_{\mathcal{H}^s}^{\frac12}$. We further recall that the duality pairing on $\mathcal{H}^1$ satisfies
\begin{align*}
	\bigang{\scp{\phi}{\psi}}{\scp{\zeta}{\xi}}_{\mathcal{H}^1} = \scp{\phi}{\zeta}_{L^2(\Om)} + \scp{\psi}{\xi}_{L^2(\Ga)}
\end{align*}
for all $(\zeta,\xi)\in\mathcal{H}^1$ if $(\phi,\psi)\in\mathcal{L}^2$. 

For every $\phi\in H^1(\Om)^\prime$, we denote by $\meano{\phi} = \abs{\Om}^{-1}\ang{\phi}{1}_{H^1(\Om)}$ its generalized mean value over $\Om$. If $\phi\in L^1(\Om)$, its spatial mean can simply be expressed as $\meano{\phi} = \abs{\Om}^{-1}\intO\phi\dx$. The spatial mean for a function $\psi$ on $\Ga$, denoted by $\meang{\psi}$, is defined similarly. We further define 
\begin{align*}
    \mathcal{L}^2_{(0)} \coloneqq L^2_{(0)}(\Om)\times L^2_{(0)}(\Ga),
\end{align*}
where
\begin{align*}
	L^2_{(0)}(\Om) 
    \coloneqq \big\{\phi\in L^2(\Om): \meano{\phi} = 0\big\}, 
    \qquad 
    L^2_{(0)}(\Ga) 
    \coloneqq \big\{\psi\in L^2(\Ga): \meang{\psi} = 0\big\}.
\end{align*}

Now, let $L\in[0,\infty]$ and $\beta\in\R$. We introduce the subspaces
\begin{align*}
	\mathcal{H}^1_{L,\beta} \coloneqq \begin{cases}
	\mathcal{H}^1, &\text{if~} L\in(0,\infty], \\
	\big\{(\phi,\psi)\in\mathcal{H}^1: \phi = \beta\psi \text{~a.e. on~}\Ga\big\}, &\text{if~} L = 0,
	\end{cases}
\end{align*}
endowed with the inner product $\scp{\cdot}{\cdot}_{\mathcal{H}^1_{L,\beta}} \coloneqq \scp{\cdot}{\cdot}_{\mathcal{H}^1}$ and its induced norm. The space $\mathcal{H}^1_{L,\beta}$ is a Hilbert space. Moreover, we define the product
\begin{align*}
	\bigang{\scp{\phi}{\psi}}{\scp{\zeta}{\xi}}_{\mathcal{H}^1_{L,\beta}} = \scp{\phi}{\zeta}_{L^2(\Om)} + \scp{\psi}{\xi}_{L^2(\Ga)}
\end{align*}
for all $(\phi,\psi), (\zeta,\xi)\in\mathcal{L}^2$. By means of the Riesz representation theorem, this product can be extended to a duality pairing on $(\mathcal{H}^1_{L,\beta})^\prime\times\mathcal{H}^1_{L,\beta}$, which will also be denoted as $\ang{\cdot}{\cdot}_{\mathcal{H}^1_{L,\beta}}$.

Next, for $(\phi,\psi)\in(\mathcal{H}^1_{L,\beta})^\prime$, we defined the generalized bulk-surface mean
\begin{align*}
	\mean{\phi}{\psi} \coloneqq \frac{\bigang{\scp{\phi}{\psi}}{\scp{\beta}{1}}_{\mathcal{H}^1_{L,\beta}}}{\beta^2\abs{\Om} + \abs{\Ga}},
\end{align*}
which reduces to
\begin{align*}
	\mean{\phi}{\psi} = \frac{\beta\abs{\Om}\meano{\phi} + \abs{\Ga}\meang{\psi}}{\beta^2\abs{\Om} + \abs{\Ga}}
\end{align*}
if $(\phi,\psi)\in\mathcal{L}^2$. We then define the closed linear subspaces
\begin{align*}
	\mathcal{V}^1_{L,\beta} = \begin{cases}
	\{(\phi,\psi)\in\mathcal{H}^1_{L,\beta}:  \mean{\phi}{\psi} = 0\}, &\text{if~}L\in[0,\infty), \\
	\{(\phi,\psi)\in\mathcal{H}^1: \meano{\phi} = \meang{\psi} = 0\}, &\text{if~} L = \infty.
	\end{cases}
\end{align*}
Note that these subspaces are Hilbert spaces with respect to the inner product $\scp{\cdot}{\cdot}_{\mathcal{H}^1}$. We further introduce the bilinear form
\begin{align*}
	\bigscp{\scp{\phi}{\psi}}{\scp{\zeta}{\xi}}_{L,\beta} \coloneqq &\intO \Grad\phi\cdot\Grad\zeta\dx + \intG \Gradg\psi\cdot\Gradg\xi\dG \\
	&\qquad + \chi(L) \intG (\beta\psi - \phi)(\beta\xi - \zeta)\dG
\end{align*}
for all $(\phi,\psi), (\zeta,\xi)\in\mathcal{H}^1$, where
\begin{align*}
	\chi:[0,\infty]\rightarrow [0,\infty), \qquad 
    \chi(r) \coloneqq 
    \begin{cases} 
    \frac{1}{r}, &\text{if~}r\in(0,\infty), \\
	0, &\text{if~} r\in\{0,\infty\}.
	\end{cases}
\end{align*}
Moreover, we set
\begin{align*}
	\norm{(\phi,\psi)}_{L,\beta} \coloneqq \bigscp{\scp{\phi}{\psi}}{\scp{\phi}{\psi}}_{L,\beta}^{\frac12}
\end{align*}
for all $\scp{\phi}{\psi}\in\mathcal{H}^1$. The bilinear form defines and inner product on $\mathcal{V}^1_{L,\beta}$, and $\norm{\cdot}_{L,\beta}$ defines a norm on $\mathcal{V}^1_{L,\beta}$, that is equivalent to the norm $\norm{\cdot}_{\mathcal{H}^1}$, see, e.g., \cite[Corollary A.2]{Knopf2021}. The space $(\mathcal{V}^1_{L,\beta},\scp{\cdot}{\cdot}_{L,\beta},\norm{\cdot}_{L,\beta})$ is a Hilbert space. Then, we define the spaces
\begin{align*}
	\mathcal{V}^{-1}_{L,\beta} \coloneqq \begin{cases} \{(\phi,\psi)\in(\mathcal{H}^1_{L,\beta})^\prime: \mean{\phi}{\psi} = 0\}, &\text{if~} L\in[0,\infty), \\
	\{(\phi,\psi)\in(\mathcal{H}^1)^\prime: \meano{\phi} = \meang{\psi} = 0\}, &\text{if~} L = \infty.
	\end{cases}
\end{align*}

\subsection{Spaces of tangential and divergence-free vector fields}

We introduce the spaces 
\begin{align*}
	&\mathbf{L}^2_\Div(\Om) \coloneqq \{ \bv\in\mathbf{L}^2(\Om): \Div\,\bv = 0 \ \text{in~}\Om, \ \bv\cdot\n = 0 \ \text{on~}\Ga\}, \\
	&\mathbf{L}^2_\Div(\Ga) \coloneqq \{ \bw\in\mathbf{L}^2(\Ga): \Divg\,\bw = 0, \ \bw\cdot\n = 0 \ \text{on~}\Ga\},
\end{align*}
and we set
\begin{align*}
    \mathbfcal{L}^2_\Div \coloneqq \mathbf{L}^2_\Div(\Om)\times\mathbf{L}^2_\Div(\Ga).
\end{align*}
Then, for $s\geq 0$ and $p\in[2,\infty]$, we define $\mathbf{W}^{s,p}_\Div(\Om) = \mathbf{W}^{s,p}(\Om)\cap \mathbf{L}^2_\Div(\Om)$. Analogously, we define $\mathbf{W}^{s,p}_\Div(\Ga) = \mathbf{W}^{s,p}(\Ga)\cap \mathbf{L}^2_\Div(\Ga)$, and then set $\mathbfcal{W}^{s,p}_\Div = \mathbf{W}^{s,p}_\Div(\Om)\times\mathbf{W}^{s,p}_\Div(\Ga)$. Furthermore, for $s>\frac 12$, we set 
\begin{align*}
    \mathbfcal{W}^{s,p}_0 &\coloneqq \{(\bv,\bw)\in\mathbfcal{W}^{s,p}: \bv\cdot\n = 0, \ \bv\vert_\Ga = \bw \ \text{on~}\Ga\},
    \\
    \mathbfcal{W}^{s,p}_{0,\Div} &\coloneqq \mathbfcal{W}^{s,p}_0 \cap \mathbfcal{W}^{s,p}_\Div.
\end{align*}
As before, we use the notation $\mathbfcal{H}^s_0 = \mathbfcal{W}^{s,2}_0$ as well as $\mathbfcal{H}^s_{0,\Div} = \mathbfcal{W}^{s,2}_{0,\Div}$.

\medskip
\subsection{Important tools.}

Throughout this paper, we will frequently use the following bulk-surface Poincar\'{e} inequality, which has been established in \cite[Lemma~A.1]{Knopf2021}.

\begin{lemma}\label{Lemma:Poincare}
	Let $K\in[0,\infty)$ and $\alpha,\beta\in\R$ with $\alpha\beta\abs{\Om} + \abs{\Ga} \neq 0$. Then there exists a constant $C_P > 0$ depending only on $K,\alpha,\beta$ and $\Om$ such that
	\begin{align}\label{Est:Poincare}
		\norm{(\phi,\psi)}_{\mathcal{L}^2} \leq C_P \norm{(\phi,\psi)}_{K,\alpha}
	\end{align}
	for all pairs $(\phi,\psi)\in\mathcal{H}^1_{K,\alpha}$ satisfying $\mean{\phi}{\psi} = 0$.
\end{lemma}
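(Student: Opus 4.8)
The plan is to argue by contradiction via a compactness argument, which is the standard route for Poincar\'e-type inequalities of this kind. Suppose no such constant exists. Then there is a sequence $(\phi_n,\psi_n)\in\mathcal{H}^1_{K,\alpha}$ with $\mean{\phi_n}{\psi_n}=0$, normalized by $\norm{(\phi_n,\psi_n)}_{\mathcal{L}^2}=1$, and with $\norm{(\phi_n,\psi_n)}_{K,\alpha}\to 0$. Unpacking the last condition yields $\Grad\phi_n\to 0$ in $\mathbf{L}^2(\Om)$, $\Gradg\psi_n\to 0$ in $\mathbf{L}^2(\Ga)$, and---when $K\in(0,\infty)$, so that $\chi(K)>0$---also $\alpha\psi_n-\phi_n\to 0$ in $L^2(\Ga)$ (when $K=0$ the identity $\phi_n=\alpha\psi_n$ on $\Ga$ holds exactly, by definition of $\mathcal{H}^1_{K,\alpha}$). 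Combined with the normalization, this shows that $(\phi_n,\psi_n)$ is bounded in $\mathcal{H}^1$.

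First I would pass to a subsequence (not relabeled) with $\phi_n\rightharpoonup\phi$ in $H^1(\Om)$ and $\psi_n\rightharpoonup\psi$ in $H^1(\Ga)$. By the compact embeddings $H^1(\Om)\emb\emb L^2(\Om)$ and $H^1(\Ga)\emb\emb L^2(\Ga)$ we obtain $\phi_n\to\phi$ in $L^2(\Om)$ and $\psi_n\to\psi$ in $L^2(\Ga)$, so that $\norm{(\phi,\psi)}_{\mathcal{L}^2}=1$; in particular $(\phi,\psi)\neq(0,0)$. Moreover, the trace operator $H^1(\Om)\to H^{1/2}(\Ga)$ is bounded and $H^{1/2}(\Ga)\emb\emb L^2(\Ga)$ is compact, hence also $\phi_n\vert_\Ga\to\phi\vert_\Ga$ in $L^2(\Ga)$.

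Next, weak lower semicontinuity of the $\mathbf{L}^2$-norms applied to $\Grad\phi_n\rightharpoonup\Grad\phi$ and $\Gradg\psi_n\rightharpoonup\Gradg\psi$ forces $\Grad\phi=0$ in $\Om$ and $\Gradg\psi=0$ on $\Ga$, so $\phi\equiv c_1$ and $\psi\equiv c_2$ are constants (using connectedness of $\Om$ and $\Ga$). Passing to the limit in $\alpha\psi_n-\phi_n\to 0$ in $L^2(\Ga)$ (resp.\ in the constraint $\phi_n=\alpha\psi_n$ when $K=0$) and using the strong trace convergence yields $c_1=\alpha c_2$. Finally, the generalized bulk-surface mean is a bounded linear functional on $\mathcal{L}^2$, hence continuous along the strong convergence, so $\mean{\phi_n}{\psi_n}=0$ passes to the limit: $\beta\abs{\Om}c_1+\abs{\Ga}c_2=0$. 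Substituting $c_1=\alpha c_2$ gives $(\alpha\beta\abs{\Om}+\abs{\Ga})\,c_2=0$, and since $\alpha\beta\abs{\Om}+\abs{\Ga}\neq 0$ by hypothesis we conclude $c_2=0$ and then $c_1=0$. Thus $(\phi,\psi)=(0,0)$, contradicting $\norm{(\phi,\psi)}_{\mathcal{L}^2}=1$, which proves the lemma.

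The weak and strong convergences are routine; the step I would be most careful about is the strong trace convergence $\phi_n\vert_\Ga\to\phi\vert_\Ga$ in $L^2(\Ga)$, since this is precisely what allows the coupling relation to be carried to the limit and the identity $c_1=\alpha c_2$ to be established---after that, the only remaining obstacle is the short algebraic endgame, where the nondegeneracy condition $\alpha\beta\abs{\Om}+\abs{\Ga}\neq 0$ enters. As an alternative one could give a constructive proof with an explicit constant: write $\phi=\meano{\phi}+(\phi-\meano{\phi})$ and $\psi=\meang{\psi}+(\psi-\meang{\psi})$, control the oscillatory parts by the classical Poincar\'e--Wirtinger inequalities on $\Om$ and on $\Ga$, and then bound the two means $\meano{\phi},\meang{\psi}$ through the vanishing generalized mean together with the penalty term $\chi(K)\norm{\alpha\psi-\phi}_{L^2(\Ga)}^2$ (or the constraint $\phi=\alpha\psi$ when $K=0$); this avoids compactness entirely but requires tracking the dependence of the constants on $K,\alpha,\beta$.
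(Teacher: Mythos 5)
The paper does not prove this lemma itself; it simply cites \cite[Lemma~A.1]{Knopf2021}, so there is no internal argument to compare against. Your contradiction-and-compactness proof is correct and is the standard route to this bulk-surface Poincar\'e inequality: the $\mathcal{H}^1$-bound on the normalized sequence, the strong $\mathcal{L}^2$ and trace convergences via compact embeddings, the reduction to constants from vanishing (tangential) gradients on the connected sets $\Om$ and $\Ga$ (a hypothesis you rightly flag as implicit and genuinely needed, especially when $\alpha=0$), the passage to the limit in the coupling constraint (penalized when $K\in(0,\infty)$, exact when $K=0$) and in the generalized mean, and the algebraic endgame in which $\alpha\beta\abs{\Om}+\abs{\Ga}\neq 0$ forces both constants to vanish; the constructive alternative you sketch would also work and would additionally yield an explicit $C_P$.
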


Furthermore, we recall the following trace interpolation inequality (see, e.g., \cite{Necas2012}):

\begin{lemma}\label{Prelim:Lemma:Interpol:Trace}
    There exists a constant $C > 0$ such that
    \begin{align}\label{InterpolEst:Trace:L^2}
        \norm{u}_{L^2(\Ga)}\leq C\norm{u}_{L^2(\Om)}^{\frac12}\norm{u}_{H^1(\Om)}^{\frac12} \qquad\text{for all~}u\in H^1(\Om).
    \end{align}
\end{lemma}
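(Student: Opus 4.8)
The statement to prove is the trace interpolation inequality
\[
  \norm{u}_{L^2(\Ga)} \leq C \norm{u}_{L^2(\Om)}^{1/2}\norm{u}_{H^1(\Om)}^{1/2}
  \qquad\text{for all } u\in H^1(\Om).
\]

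\textbf{Approach.} The plan is to prove the inequality first for smooth functions via a divergence-theorem / vector-field argument and then extend to all of $H^1(\Om)$ by density. The key observation is the classical ``boundary-flux'' identity: if $\mathbf{h}\in C^1(\ov\Om;\R^d)$ is a fixed vector field with $\mathbf{h}\cdot\n \geq c_0 > 0$ on $\Ga$ (such a field exists because $\Ga$ is smooth — one may take a smooth extension of $\n$ itself, or use that $\ov\Om$ is compact), then for $u\in C^1(\ov\Om)$,
\[
  c_0 \intG u^2 \dS
  \;\leq\; \intG u^2\,(\mathbf{h}\cdot\n)\dS
  \;=\; \intO \Div(u^2\mathbf{h})\dx
  \;=\; \intO u^2 \Div\mathbf{h}\dx + \intO 2u\,(\mathbf{h}\cdot\Grad u)\dx.
\]

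\textbf{Key steps.} First I would fix the vector field $\mathbf{h}$ as above and record $\norm{\mathbf{h}}_{L^\infty}, \norm{\Div\mathbf{h}}_{L^\infty} \leq C(\Om)$, and $c_0 = c_0(\Om) > 0$. Second, for $u\in C^1(\ov\Om)$, apply the divergence theorem to $u^2\mathbf{h}$ as displayed above and estimate the two bulk terms: the first by $\norm{\Div\mathbf{h}}_{L^\infty}\norm{u}_{L^2(\Om)}^2$, and the second, via Cauchy–Schwarz, by $2\norm{\mathbf{h}}_{L^\infty}\norm{u}_{L^2(\Om)}\norm{\Grad u}_{L^2(\Om)}$. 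Hence
\[
  \norm{u}_{L^2(\Ga)}^2
  \;\leq\; C\big(\norm{u}_{L^2(\Om)}^2 + \norm{u}_{L^2(\Om)}\norm{\Grad u}_{L^2(\Om)}\big)
  \;\leq\; C\norm{u}_{L^2(\Om)}\norm{u}_{H^1(\Om)},
\]
where in the last step I use $\norm{u}_{L^2(\Om)} \leq \norm{u}_{H^1(\Om)}$ and $\norm{\Grad u}_{L^2(\Om)} \leq \norm{u}_{H^1(\Om)}$. Taking square roots gives the claimed bound for smooth $u$. Third, I would extend to general $u\in H^1(\Om)$ by density: since $\Om$ has smooth (in particular Lipschitz) boundary, $C^\infty(\ov\Om)$ is dense in $H^1(\Om)$, and the trace operator $H^1(\Om)\to L^2(\Ga)$ is bounded and continuous; passing to the limit along a smooth approximating sequence $u_k\to u$ in $H^1(\Om)$ preserves the inequality, since both sides are continuous with respect to $H^1(\Om)$-convergence (the left side via continuity of the trace operator, the right side trivially).

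\textbf{Main obstacle.} There is no substantial obstacle here — this is a standard result, and the only point requiring a little care is the existence of the auxiliary vector field $\mathbf{h}$ with $\mathbf{h}\cdot\n$ bounded below by a positive constant on $\Ga$, which follows from the smoothness and compactness of $\Ga$ (e.g.\ extend $\n$ from a tubular neighborhood and cut off, or invoke that such fields are standard in trace theory). One could alternatively cite \cite{Necas2012} directly, as the statement already does; the argument above is included for completeness and to keep the exposition self-contained.
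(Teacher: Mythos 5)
Your proof is correct and is essentially the standard divergence-theorem argument one finds in N\v{e}cas (the reference the paper cites for this lemma); the paper itself gives no proof, only the citation. The vector-field construction, the Cauchy--Schwarz estimate of the two bulk terms, and the density extension are all exactly what is needed, and no step requires repair.
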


Next, we recall the following interpolation inequality:
\begin{lemma}
    Let $\Om\subset\R^2$ with sufficiently regular boundary $\Ga$. Then there exists a constant $C > 0$, such that it holds that
	\begin{align}\label{InterpolEst:L^4}
		\norm{(\zeta,\xi)}_{\mathcal{L}^4}\leq C\norm{(\zeta,\xi)}_{\mathcal{L}^2}^{\frac12}\norm{(\zeta,\xi)}_{\mathcal{H}^1}^{\frac12} \qquad\text{for all~}(\zeta,\xi)\in\mathcal{H}^1.
	\end{align}
\end{lemma}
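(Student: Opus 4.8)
The plan is to prove the two-dimensional interpolation inequality
$\norm{(\zeta,\xi)}_{\mathcal{L}^4}\leq C\norm{(\zeta,\xi)}_{\mathcal{L}^2}^{1/2}\norm{(\zeta,\xi)}_{\mathcal{H}^1}^{1/2}$
by treating the bulk and surface components separately and then recombining. Since $\norm{(\zeta,\xi)}_{\mathcal{L}^4}^2 = \norm{\zeta}_{L^4(\Om)}^2 + \norm{\xi}_{L^4(\Ga)}^2$ (up to the usual equivalence of the $\ell^p$-combination of the two norms, where for definiteness one works with the natural product norm), it suffices to bound each summand by the corresponding quantity on the right-hand side and then sum.

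\emph{Step 1: the bulk estimate.} For $\Om\subset\R^2$ a bounded domain with sufficiently regular boundary, the Gagliardo--Nirenberg inequality gives $\norm{\zeta}_{L^4(\Om)}\leq C\norm{\zeta}_{L^2(\Om)}^{1/2}\norm{\zeta}_{H^1(\Om)}^{1/2}$ for all $\zeta\in H^1(\Om)$. (This is the classical Ladyzhenskaya-type inequality in dimension two; it holds on bounded Lipschitz domains via an extension operator to $\R^2$.)

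\emph{Step 2: the surface estimate.} The boundary $\Ga$ is a compact one-dimensional submanifold of $\R^2$, i.e.\ a finite disjoint union of smooth closed curves. On such a manifold the Gagliardo--Nirenberg--Sobolev inequality in dimension $n=1$ yields, for any $q\in[2,\infty)$, the bound $\norm{\xi}_{L^q(\Ga)}\leq C\norm{\xi}_{L^2(\Ga)}^{1/2}\norm{\xi}_{H^1(\Ga)}^{1/2}$; in fact in one dimension $H^1(\Ga)\emb L^\infty(\Ga)$, and one even has the sharper interpolation $\norm{\xi}_{L^\infty(\Ga)}\leq C\norm{\xi}_{L^2(\Ga)}^{1/2}\norm{\xi}_{H^1(\Ga)}^{1/2}$, from which the $L^4$ case follows by interpolating $\norm{\xi}_{L^4(\Ga)}^4 \leq \norm{\xi}_{L^\infty(\Ga)}^2\norm{\xi}_{L^2(\Ga)}^2$ and taking fourth roots. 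One works chart by chart with a finite partition of unity subordinate to a covering of $\Ga$ by coordinate patches, using that $\Ga$ is compact and smooth so that the metric coefficients and their derivatives are bounded above and below.

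\emph{Step 3: recombination.} Adding the squares of the bulk and surface estimates and using $\norm{\zeta}_{L^2(\Om)}\norm{\zeta}_{H^1(\Om)}\leq \norm{(\zeta,\xi)}_{\mathcal{L}^2}\norm{(\zeta,\xi)}_{\mathcal{H}^1}$ together with the analogous bound on $\Ga$, one obtains
$\norm{(\zeta,\xi)}_{\mathcal{L}^4}^2 \leq C\big(\norm{\zeta}_{L^2(\Om)}\norm{\zeta}_{H^1(\Om)} + \norm{\xi}_{L^2(\Ga)}\norm{\xi}_{H^1(\Ga)}\big) \leq 2C\norm{(\zeta,\xi)}_{\mathcal{L}^2}\norm{(\zeta,\xi)}_{\mathcal{H}^1}$,
and taking square roots gives the claim (absorbing the numerical factor into $C$). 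The only mildly delicate point is the surface estimate in Step 2: one must make sure the one-dimensional Gagliardo--Nirenberg inequality is applied correctly on the manifold $\Ga$ rather than on a flat interval, which is handled by the partition-of-unity/localization argument above; everything else is a direct application of standard Sobolev interpolation in dimensions one and two, so I do not expect any genuine obstacle.
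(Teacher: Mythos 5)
Your argument is correct and is exactly the route the paper takes: the paper dispatches the lemma with the single remark that, since $\Gamma$ is a one-dimensional submanifold of $\R^2$, the inequality follows from the classical Ladyzhenskaya interpolation inequality applied separately in the bulk and on the surface. Your Steps 1--3 simply spell out that observation in detail (two-dimensional Gagliardo--Nirenberg in $\Om$, one-dimensional interpolation on $\Ga$, then recombination), so there is no substantive difference.
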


Note that, as $\Gamma$ is $1$-dimensional submanifold of $\R^2$, this bulk-surface inequality readily follows directly from the classical Ladyzhenskaya interpolation inequality applied in the bulk and on the surface, respectively.

Lastly, we present a bulk-surface Korn-type inequality, which has been recently established in \cite[Lemma~5.1]{Knopf2025a}.

\begin{lemma}\label{Lemma:Korn}
    There exists a constant $C_K > 0$ such that
    \begin{align}\label{Est:Korn}
        \norm{(\bv,\bw)}_{\mathbfcal{H}^1} \leq C_K\big(\norm{\D\bv}_{\mathbf{L}^2(\Om)}^2 + \norm{\Dg\bw}_{\mathbf{L}^2(\Ga)}^2 + \norm{\bw}_{\mathbf{L}^2(\Ga)}^2\big)^{\frac12}
    \end{align}
    for all $(\bv,\bw)\in\mathbfcal{H}^1_0$.
\end{lemma}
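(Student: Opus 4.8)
The plan is to argue by contradiction via a compactness argument, since this is the standard route for Korn-type inequalities where no explicit constant is needed. Suppose the claimed inequality fails. Then there exists a sequence $(\bv_n,\bw_n)\in\mathbfcal{H}^1_0$ with $\norm{(\bv_n,\bw_n)}_{\mathbfcal{H}^1} = 1$ for all $n$, while the right-hand side tends to zero, i.e.
\begin{align*}
    \norm{\D\bv_n}_{\mathbf{L}^2(\Om)}^2 + \norm{\Dg\bw_n}_{\mathbf{L}^2(\Ga)}^2 + \norm{\bw_n}_{\mathbf{L}^2(\Ga)}^2 \longrightarrow 0 \qquad\text{as~}n\to\infty.
\end{align*}
Since $\norm{(\bv_n,\bw_n)}_{\mathbfcal{H}^1} = 1$, the sequence is bounded in $\mathbfcal{H}^1 = \mathbf{H}^1(\Om)\times\mathbf{H}^1(\Ga)$, so after passing to a subsequence (not relabeled) we obtain a weak limit $(\bv,\bw)\in\mathbfcal{H}^1$ with $\bv_n\wto\bv$ in $\mathbf{H}^1(\Om)$ and $\bw_n\wto\bw$ in $\mathbf{H}^1(\Ga)$. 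By the compact embeddings $\mathbf{H}^1(\Om)\emb\emb\mathbf{L}^2(\Om)$ and $\mathbf{H}^1(\Ga)\emb\emb\mathbf{L}^2(\Ga)$, the convergence is strong in $\mathbfcal{L}^2$. The constraint set $\mathbfcal{H}^1_0$ is a closed subspace of $\mathbfcal{H}^1$ (the conditions $\bv\cdot\n = 0$ and $\bv\vert_\Ga = \bw$ on $\Ga$ pass to weak limits via continuity of the trace operator), so $(\bv,\bw)\in\mathbfcal{H}^1_0$.

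Next I would identify the limit. Since $\D\,\cdot$ and $\Dg\,\cdot$ are bounded linear operators into $\mathbf{L}^2(\Om)$ and $\mathbf{L}^2(\Ga)$ respectively, weak convergence of $(\bv_n,\bw_n)$ gives $\D\bv_n\wto\D\bv$ and $\Dg\bw_n\wto\Dg\bw$; combined with the vanishing of the norms above and weak lower semicontinuity, we conclude $\D\bv = 0$ in $\Om$, $\Dg\bw = 0$ on $\Ga$, and (from the strong $\mathbf{L}^2(\Ga)$ convergence $\bw_n\to\bw$) also $\bw = 0$ on $\Ga$. The condition $\D\bv = 0$ on the connected domain $\Om\subset\R^2$ forces $\bv$ to be an infinitesimal rigid displacement, $\bv(\x) = \A\x + \mathbf{b}$ with $\A\in\R^{2\times 2}$ skew-symmetric and $\mathbf{b}\in\R^2$. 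But the boundary trace of $\bv$ equals $\bw = 0$ on all of $\Ga$, which is a set of positive $1$-dimensional measure; since a nonzero affine map $\x\mapsto\A\x+\mathbf{b}$ with $\A$ skew can vanish on at most a proper affine subspace (a single point, as $\det(\A)\geq 0$ and $\A$ skew in $2$D means $\A = \bigl(\begin{smallmatrix} 0 & -a \\ a & 0\end{smallmatrix}\bigr)$, so $\A\x+\mathbf{b} = 0$ has a unique solution unless $a = 0$, in which case $\mathbf{b} = 0$ too), we conclude $\bv \equiv 0$ in $\Om$. Hence $(\bv,\bw) = (\zero,\zero)$.

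Finally I would derive the contradiction with the normalization. We have $\bv_n\to\zero$ strongly in $\mathbf{L}^2(\Om)$ and $\bw_n\to\zero$ strongly in $\mathbf{L}^2(\Ga)$. Moreover $\norm{\D\bv_n}_{\mathbf{L}^2(\Om)}\to 0$ by assumption, so the classical Korn inequality on $\Om$ (in the form $\norm{\Grad\bv_n}_{\mathbf{L}^2(\Om)} \leq C(\norm{\bv_n}_{\mathbf{L}^2(\Om)} + \norm{\D\bv_n}_{\mathbf{L}^2(\Om)})$) yields $\bv_n\to\zero$ strongly in $\mathbf{H}^1(\Om)$. Similarly, the surface Korn inequality on the closed manifold $\Ga$ gives $\norm{\Gradg\bw_n}_{\mathbf{L}^2(\Ga)}\leq C(\norm{\bw_n}_{\mathbf{L}^2(\Ga)} + \norm{\Dg\bw_n}_{\mathbf{L}^2(\Ga)})\to 0$, so $\bw_n\to\zero$ strongly in $\mathbf{H}^1(\Ga)$. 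Therefore $\norm{(\bv_n,\bw_n)}_{\mathbfcal{H}^1}\to 0$, contradicting $\norm{(\bv_n,\bw_n)}_{\mathbfcal{H}^1} = 1$. This proves the inequality.

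I expect the main obstacle to be the rigidity step: carefully justifying that an infinitesimal rigid displacement on $\Om\subset\R^2$ whose boundary trace vanishes on all of $\Ga$ must be identically zero (one must rule out the rotational component using that $\Ga$ is not contained in any single point, which is automatic since $\Om$ is open and nonempty), and invoking the correct versions of the scalar Korn inequality in the bulk and on the closed surface $\Ga$ — the latter being the less standard ingredient, though it is classical for compact Riemannian manifolds without boundary.
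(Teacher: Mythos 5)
Your compactness argument is correct and self-contained. A point worth making explicit: the paper does not prove this lemma at all — it simply cites \cite[Lemma~5.1]{Knopf2025a}, so there is no paper proof to compare against. Your Peetre--Tartar-style argument (normalize, extract a weak limit, identify the limit as a rigid displacement vanishing on $\Ga$, conclude it is zero, then upgrade to strong $\mathbfcal{H}^1$ convergence via the bulk and surface Korn inequalities with $\mathbf{L}^2$ remainder) is the standard route and is almost certainly what the reference does.

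Two small remarks. First, the proof can be organized slightly more directly: what your compactness step is really establishing is the bulk Korn inequality with boundary control, $\norm{\bv}_{\mathbf{H}^1(\Om)} \leq C\big(\norm{\D\bv}_{\mathbf{L}^2(\Om)} + \norm{\bv}_{\mathbf{L}^2(\Ga)}\big)$; once this is granted (and it is itself classical), the lemma follows in one line by combining it with the surface Korn inequality $\norm{\bw}_{\mathbf{H}^1(\Ga)}^2 \leq C\big(\norm{\Dg\bw}_{\mathbf{L}^2(\Ga)}^2 + \norm{\bw}_{\mathbf{L}^2(\Ga)}^2\big)$ — which the paper explicitly invokes from \cite[Eqn.~(4.7)]{Jankuhn2018} in the proof of Lemma~\ref{App:Lemma:SurfaceStokes} — together with the trace constraint $\bv\vert_\Ga = \bw$. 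Second, the lemma sits in Section~2, where the paper temporarily allows $d\in\{2,3\}$, but Assumption \ref{Assumption:Domain} and all subsequent uses restrict to $d=2$, so your rigidity computation (skew $\A$ in two dimensions) covers every case in which the lemma is actually applied; if one wanted $d=3$, the analogous count (a skew matrix of rank $2$ vanishes at most on a line, which a two-dimensional boundary cannot lie in) goes through unchanged.
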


In particular, as a consequence of Lemma~\ref{Lemma:Korn}, the norm
\begin{align*}
    \norm{(\bv,\bw)}_{\mathbfcal{H}^1_0} \coloneqq \big(\norm{\D\bv}_{\mathbf{L}^2(\Om)}^2 + \norm{(\Dg\bw}_{\mathbf{L}^2(\Ga)}^2 + \norm{\bw}_{\mathbf{L}^2(\Ga)}^2\big)^{\frac12}
\end{align*}
is  equivalent to the standard $\norm{\cdot}_{\mathbfcal{H}^1}$ norm on $\mathbfcal{H}^1_0$.

\medskip
\subsection{Elliptic problems with bulk-surface interaction.}
We end this section by presenting results for two different elliptic equations with bulk-surface interaction. The results are extracted from \cite{Knopf2021} and \cite{Stange2025}, respectively. We refer to these works for a more comprehensive exposition.

We start with the following problem
\begin{alignat}{2}\label{SYSTEM:EBS}
    -\Lap u &= f &&\qquad\text{in~}\Om, \nonumber \\
    -\Lapg v + \beta\deln u &= g &&\qquad\text{on~}\Ga, \\
    L\deln u &= \beta v - u &&\qquad\text{on~}\Ga, \nonumber
\end{alignat}
which has been studied in \cite{Knopf2021}. One can show that, employing the Lax--Milgram theorem in combination with Lemma~\ref{Lemma:Poincare}, for any $(f,g)\in\mathcal{V}_{L,\beta}^{-1}$, there exists a unique weak solution $(u,v)\in\mathcal{V}^1_{L,\beta}$ to \eqref{SYSTEM:EBS} in the sense that it satisfies the weak formulation
\begin{align*}
    \big((u,v),(\zeta,\xi))_{L,\beta} = \big\langle (f,g), (\zeta,\xi) \big\rangle_{\mathcal{H}^1_{L,\beta}}
\end{align*}
for all $(\zeta,\xi)\in\mathcal{H}^1_{L,\beta}$, where the bilinear form $(\cdot,\cdot)_{L,\beta}$ on $\mathcal{H}^1_{L,\beta}\times\mathcal{H}^1_{L,\beta}$ is defined as
\begin{align*}
    \big((u,v),(\zeta,\xi)\big)_{L,\beta} &\coloneqq \intO \Grad u\cdot\Grad\zeta\dx + \intG \Gradg v\cdot\Gradg\xi\dG \\
    &\quad + \chi(L)\intG (\beta v - u)(\beta\xi - \zeta)\dG.
\end{align*}
Consequently, there exists a constant $C > 0$, depending only on the parameters of the system, such that
\begin{align*}
    \norm{(u,v)}_{L,\beta} \leq C\norm{(f,g)}_{(\mathcal{H}^1_{L,\beta})^\prime}
\end{align*}
for all $(f,g)\in\mathcal{V}_{L,\beta}^{-1}$. This allows us to define a solution operator
\begin{align*}
    \mathcal{S}_{L,\beta}:\mathcal{V}_{L,\beta}^{-1}\rightarrow \mathcal{V}_{L,\beta}^1, \qquad (f,g)\mapsto \mathcal{S}_{L,\beta}(f,g) = \big(\mathcal{S}_{L,\beta}^\Om(f,g),\mathcal{S}_{L,\beta}^\Ga(f,g)\big)
\end{align*}
in the sense that, for given $(f,g)\in\mathcal{V}^{-1}_{L,\beta}$, $\mathcal{S}_{L,\beta}(f,g)$ is the unique weak solution to \eqref{SYSTEM:EBS}. Furthermore, we can define an inner product and its induced norm on $\mathcal{V}_{L,\beta}^{-1}$ via
\begin{align*}
    \big((f,g),(\zeta,\xi)\big)_{L,\beta,\ast} &\coloneqq \big(\mathcal{S}_{L,\beta}(f,g),\mathcal{S}_{L,\beta}(\zeta,\xi)\big)_{L,\beta}, \\
    \norm{(f,g)}_{L,\beta,\ast} &\coloneqq \big((f,g),(f,g)\big)_{L,\beta,\ast}^{\frac12}
\end{align*}
for all $(f,g), (\zeta,\xi)\in\mathcal{V}_{L,\beta}^{-1}$. This norm is equivalent to the standard norm $\norm{\cdot}_{(\mathcal{H}^1_{L,\beta})^\prime}$ on $\mathcal{V}^{-1}_{L,\beta}$.

Next, let $m_\Om, m_\Ga\in C([-1,1])$ be such that there exist two constants $m_\ast,m^\ast > 0$ with
\begin{align}\label{EBS:Mob:Assumptions}
    0 < m_\ast \leq m_\Om(s), m_\Ga(s) \leq m^\ast \qquad\text{for all~}s\in[-1,1].
\end{align}
Then, given two measurable functions $\phi:\Om\rightarrow[-1,1]$ and $\psi:\Ga\rightarrow[-1,1]$, we introduce the following elliptic problem
\begin{alignat}{2}\label{SYSTEM:EBS:MOB}
    -\Div(m_\Om(\phi)\Grad u) &= f &&\qquad\text{in~}\Om, \nonumber \\
    -\Divg(m_\Ga(\psi)\Gradg v) + \beta m_\Om(\phi)\deln u &= g &&\qquad\text{on~}\Ga, \\
    Lm_\Om(\phi)\deln u &= \beta v - u &&\qquad\text{on~}\Ga,\nonumber
\end{alignat}
which has been recently studied in \cite[Section~4]{Stange2025}. The system \eqref{SYSTEM:EBS:MOB} can be seen as a generalization of \eqref{SYSTEM:EBS}.
Similarly to the definition of $\mathcal{S}_{L,\beta}$, we can introduce a solution operator $\mathcal{S}_{L,\beta}[\phi,\psi]:\mathcal{V}_{L,\beta}^{-1}\rightarrow\mathcal{V}_{L,\beta}^1$ in the following way: for any $(f,g)\in\mathcal{V}_{L,\beta}^{-1}$, $\mathcal{S}_{L,\beta}[\phi,\psi](f,g)$ is the unique function satisfying
\begin{align}\label{EBS:MOB:WF}
    \big((u,v),(\zeta,\xi)\big)_{L,\beta,[\phi,\psi]} = \big\langle (f,g),(\zeta,\xi)\big\rangle_{\mathcal{H}^1_{L,\beta}}\qquad\text{for all~}(\zeta,\xi)\in\mathcal{H}^1_{L,\beta}.
\end{align}
Here, we have used the notation
\begin{align*}
    \big((u,v),(\zeta,\xi)\big)_{L,\beta,[\phi,\psi]} &\coloneqq \intO m_\Om(\phi)\Grad u\cdot\Grad\zeta\dx + \intG m_\Ga(\psi)\Gradg v\cdot\Gradg\xi\dG \\
    &\quad + \chi(L)\intG (\beta v - u)(\beta\xi - \zeta)\dG.
\end{align*}
Note that this bilinear form defines an inner product on $\mathcal{V}^1_{L,\beta}$. The corresponding norm on $\mathcal{V}^1_{L,\beta}$ is then defined as
\begin{align*}
    \norm{(f,g)}_{L,\beta,[\phi,\psi]} &\coloneqq \big((f,g),(f,g)\big)_{L,\beta,[\phi,\psi]}^{\frac12}
\end{align*}
for all $(f,g)\in\mathcal{V}^1_{L,\beta}$. We readily see that there exists a constant $C > 0$, that only depends on the parameters of the system, such that
\begin{align}\label{BSE:MOB:Est:Apriori}
    \norm{\mathcal{S}_{L,\beta}[\phi,\psi](f,g)}_{L,\beta,[\phi,\psi]} \leq C\norm{(f,g)}_{(\mathcal{H}^1_{L,\beta})^\prime}
\end{align}
for all $(f,g)\in\mathcal{V}^{-1}_{L,\beta}$. Moreover, on account of \eqref{EBS:Mob:Assumptions}, it holds that
\begin{align}\label{NormEquivalence:1}
    \min\{1,\sqrt{m_\ast}\}\norm{(f,g)}_{L,\beta,[\phi,\psi]} \leq \norm{(f,g)}_{L,\beta} \leq \max\{1,\sqrt{m^\ast}\}\norm{(f,g)}_{L,\beta,[\phi,\psi]}
\end{align}
for all $(f,g)\in\mathcal{H}^1_{L,\beta}$. In particular, the norms $\norm{\cdot}_{L,\beta}$ and $\norm{\cdot}_{L,\beta,[\phi,\psi]}$ are equivalent on $\mathcal{V}^1_{L,\beta}$.

Next, we define an inner product and its induced norm on $\mathcal{V}^{-1}_{L,\beta}$ by
\begin{align*}
    \big((f,g),(\zeta,\xi)\big)_{L,\beta,[\phi,\psi],\ast} &\coloneqq \big(\mathcal{S}_{L,\beta}[\phi,\psi](f,g),\mathcal{S}_{L,\beta}[\phi,\psi](\zeta,\xi)\big)_{L,\beta,[\phi,\psi]}, \\
    \norm{(f,g)}_{L,\beta,[\phi,\psi],\ast} &\coloneqq \big((f,g),(f,g)\big)_{L,\beta,[\phi,\psi],\ast}^{\frac12}
\end{align*}
for all $(f,g), (\zeta,\xi)\in\mathcal{V}^{-1}_{L,\beta}$. Using the respective weak formulation satisfied by the solution operators $\mathcal{S}_{L,\beta}$ and $\mathcal{S}_{L,\beta}[\phi,\psi]$, we deduce that the norms $\norm{\cdot}_{L,\beta,[\phi,\psi],\ast}$ and $\norm{\cdot}_{L,\beta,\ast}$ are equivalent on $\mathcal{V}^{-1}_{L,\beta}$ with
\begin{align}\label{NormEquivalence:EBS}
    \min\{1,\sqrt{m_\ast}\}\norm{(f,g)}_{L,\beta,[\phi,\psi],\ast} \leq \norm{(f,g)}_{L,\beta,\ast} \leq \max\{1,\sqrt{m^\ast}\}\norm{(f,g)}_{L,\beta,[\phi,\psi],\ast}
\end{align}
for all $(f,g)\in\mathcal{V}^{-1}_{L,\beta}$. In particular, we also obtain that $\norm{\cdot}_{L,\beta,[\phi,\psi],\ast}$ and $\norm{\cdot}_{(\mathcal{H}^1_{L,\beta})^\prime}$ are equivalent on $\mathcal{V}^{-1}_{L,\beta}$. Besides, in view of the weak formulation \eqref{EBS:MOB:WF} and the bulk-surface Poincar\'{e} inequality \eqref{Est:Poincare}, we have
\begin{align}\label{Est:fg:L^2:K}
    \norm{(f,g)}_{\mathcal{L}^2} \leq \max\{1,\sqrt{m^\ast}\}C_PC\norm{\mathcal{S}_{L,\beta}[\phi,\psi](f,g)}_{L,\beta}^{\frac12}\norm{(f,g)}_{K,\alpha}^{\frac12}
\end{align}
for all $(f,g)\in\mathcal{V}_{L,\beta}^{-1}\cap\mathcal{H}^1_{K,\alpha}$ and $K\in[0,\infty)$.

Now, we recall some elliptic estimates related to the problem \eqref{SYSTEM:EBS:MOB} for $d = 2$, whose proofs can be found in \cite[Section~4]{Stange2025}. To this end, we additionally assume that $m_\Om,m_\Ga\in C^1([-1,1])$ as well as $(\phi,\psi)\in\mathcal{W}^{2,4}$.  Let $(f,g)\in\mathcal{V}^{-1}_{L,\beta}\cap\mathcal{L}^p$ for some $2 \leq p < \infty.$ Then, we have
\begin{align}\label{Est:Sol:G:W2p}
    \norm{\mathcal{S}_{L,\beta}[\phi,\psi](f,g)}_{\mathcal{W}^{2,p}} \leq C\big( \norm{(\Grad\mathcal{S}_{L,\beta}^\Om[\phi,\psi](f,g)\cdot\Grad\phi, \Gradg\mathcal{S}_{L,\beta}^\Ga[\phi,\psi](f,g)\cdot\Gradg\psi)}_{\mathcal{L}^p} + \norm{(f,g)}_{\mathcal{L}^p}\big).
\end{align}
In particular, taking $p = 2$ in \eqref{Est:Sol:G:W2p}, and by using \eqref{InterpolEst:L^4}, we find
\begin{align*}
    \norm{\mathcal{S}_{L,\beta}[\phi,\psi](f,g)}_{\mathcal{H}^2} &\leq C\big( \norm{(\Grad\mathcal{S}_{L,\beta}^\Om[\phi,\psi](f,g)\cdot\Grad\phi, \Gradg\mathcal{S}_{L,\beta}^\Ga[\phi,\psi](f,g)\cdot\Gradg\psi)}_{\mathcal{L}^2} + \norm{(f,g)}_{\mathcal{L}^2}\big) \\
    &\leq C\big(\norm{(\Grad\phi,\Gradg\psi)}_{\mathbfcal{L}^4}\norm{(\Grad\mathcal{S}_{L,\beta}^\Om[\phi,\psi](f,g),\Gradg\mathcal{S}_{L,\beta}^\Ga[\phi,\psi](f,g))}_{\mathbfcal{L}^4} + \norm{(f,g)}_{\mathcal{L}^2}\big) \\
    &\leq C\big(\norm{(\Grad\phi,\Gradg\psi)}_{\mathbfcal{L}^2}^{\frac12}\norm{(\phi,\psi)}_{\mathcal{H}^2}^{\frac12}\norm{\mathcal{S}_{L,\beta}^\Om[\phi,\psi](f,g)}_{L,\beta}^{\frac12}\norm{\mathcal{S}_{L,\beta}[\phi,\psi](f,g)}_{\mathcal{H}^2}^{\frac12} \\
    &\quad + \norm{(f,g)}_{\mathcal{L}^2}\big).
\end{align*}
Consequently, we infer from Young's inequality that
\begin{align}\label{Est:Sol:G:H^2}
	\norm{\mathcal{S}_{L,\beta}[\phi,\psi](f,g)}_{\mathcal{H}^2} \leq C\big(\norm{(\Grad\phi,\Gradg\psi)}_{\mathbfcal{L}^2}\norm{(\phi,\psi)}_{\mathcal{H}^2}\norm{\mathcal{S}_{L,\beta}[\phi,\psi](f,g)}_{L,\beta} + \norm{(f,g)}_{\mathcal{L}^2}\big).
\end{align}
Lastly, if $(f,g)\in\mathcal{V}^{-1}_{L,\beta}\cap\mathcal{H}^1$ and $m_\Om, m_\Ga\in C^2([-1,1])$, then there exists $C > 0$ such that
\begin{align}\label{Est:Sol:G:H^3}
    &\norm{\mathcal{S}_{L,\beta}[\phi,\psi](f,g)}_{\mathcal{H}^3} \nonumber \\
    &\quad\leq C\big(1 + \mathbf{1}_{\{0\}}(L)\norm{(\phi,\psi)}_{\mathcal{H}^2}\big) \nonumber \\
    &\qquad\times\Bigg( \Bignorm{\bigg(\frac{f}{m_\Om(\phi)},\frac{g}{m_\Ga(\psi)}\bigg)}_{\mathcal{H}^1} \\
    &\qquad\qquad + \Bignorm{\bigg(\frac{m_\Om^\prime(\phi)\Grad\phi\cdot\Grad\mathcal{S}_{L,\beta}^\Om[\phi,\psi](f,g)}{m_\Om(\phi)},\frac{m_\Ga^\prime(\psi)\Gradg\psi\cdot\Gradg\mathcal{S}_{L,\beta}^\Ga[\phi,\psi](f,g)}{m_\Ga(\psi)}\bigg)}_{\mathcal{H}^1}\Bigg), \nonumber
\end{align}
where $\mathbf{1}_{\{0\}}$ denotes the indicator function of the set $\{0\}$.

\medskip
\subsection{Assumptions}

Now, we fix some assumptions that are supposed to hold throughout the remainder of this paper.

\begin{enumerate}[label=\textnormal{\bfseries(A\arabic*)}]
    \item  \label{Assumption:Domain} We consider a bounded domain $\emptyset\neq \Om\subset\R^2$ with $C^3$ boundary $\Ga\coloneqq\del\Om$. We further use the notation
    \begin{align*}
        Q\coloneqq \Om\times(0,\infty), \quad\Sigma\coloneqq\Ga\times(0,\infty).
    \end{align*}
    
    \item \label{Assumption:Constants} The constants occurring in the system \eqref{System} satisfy $\alpha\in[-1,1]$ and $\beta\in \R$ with $\alpha\beta\abs{\Omega} + \abs{\Gamma} \neq 0$. 
    
    \item \label{Assumption:Density} The density functions $\rho$ and $\sigma$ are given by
    \begin{align*}
    	\rho(s) = \frac{\tilde{\rho}_2 - \tilde{\rho}_1}{2}s + \frac{\tilde{\rho}_1 + \tilde{\rho}_2}{2} \quad\text{and}\quad\sigma(s) = \frac{\tilde{\sigma}_2 - \tilde{\sigma}_1}{2}s + \frac{\tilde{\sigma}_1 + \tilde{\sigma}_2}{2} \qquad\text{for all~}s\in[-1,1].
    \end{align*}
    Here, $\tilde{\rho}_1, \tilde{\rho}_2>0$ and $\tilde{\sigma}_1, \tilde{\sigma}_2>0$ are the specific densities of the two fluid components in the bulk and on the surface, respectively. Moreover, we use the notation
    \begin{alignat*}{2}
        &\rho_\ast \coloneqq \min\{\tilde\rho_1,\tilde\rho_2\}, \qquad 
        &&\rho^\ast \coloneqq \max\{\tilde\rho_1,\tilde\rho_2\},
        \\
        &\sigma_\ast \coloneqq \min\{\tilde\sigma_1,\tilde\sigma_2\}, \qquad 
        &&\sigma^\ast \coloneqq \max\{\tilde\sigma_1,\tilde\sigma_2\},
    \end{alignat*}
    By this definition, we clearly have
    \begin{align*}
        0 < \rho_\ast \leq \rho(s) \leq \rho^\ast \quad\text{and}\quad 0 < \sigma_\ast \leq \sigma(s) \leq \sigma^\ast \qquad\text{for all~}s\in[-1,1].
    \end{align*}
    
    \item \label{Assumption:Coefficients} For the coefficients we assume $m_\Om, m_\Ga\in C^2([-1,1])$, $\nu_\Om, \nu_\Ga\in C^2([-1,1])$, $\gamma\in C^{0,1}([-1,1]^2)$. Moreover, we postulate the existence of positive constants $m_\ast,m^\ast,\nu_\ast,\nu^\ast,\gamma_\ast,\gamma^\ast$ such that for all $s\in[-1,1]$,
    \begin{align}\label{Assumption:Mobility:Bound}
        0 < m_\ast \leq m_\Om(s), m_\Ga(s) \leq m^\ast \quad\text{and}\quad 0 < \nu_\ast \leq \nu_\Om(s), \nu_\Ga(s) \leq \nu^\ast \qquad\text{for all~}s\in[-1,1],
    \end{align}
    and
    \begin{align*}
    	0 < \gamma_\ast \leq \gamma(s,r) \leq \gamma^\ast \qquad\text{for all~}(s,r)\in[-1,1]^2.
    \end{align*}
    
    \item \label{Assumption:Potential} The bulk free energy density is given by
    \begin{align*}
    	F(s) = F_0(s) - \frac{c_F}{2}s^2
    \end{align*}
    for some $c_F\in\R$, satisfying $F_0 \in C([-1,1])\cap C^3(-1,1)$. We assume that
    \begin{align*}
    	\lim_{s\searrow -1} F_0^\prime(s) = - \infty \quad\text{and~}\quad \lim_{s\nearrow 1} F_0^\prime(s) = + \infty,
    \end{align*}
    and the existence of a constant $\Theta > 0$ such that
    \begin{align}\label{Pot:F_0:Convex}
    	F_0^{\prime\prime}(s) \geq \Theta
        \quad\text{for all $s\in(-1,1)$.}
    \end{align}
    As usual we extend $F_0(s) = +\infty$ for any $s\not\in[-1,1]$. Without loss of generality, we assume $F_0(0) = F_0^\prime(0) = 0$. In particular, we thus have $F_0(s) \geq 0$ for all $s\in[-1,1]$ due to the strong convexity of $F_0$. 
    Analogously, the free energy density on the boundary is given by
    \begin{align*}
    	G(s) = G_0(s) - \frac{c_G}{2}s^2
    \end{align*}
    for some $c_G\in\R$ and $G_0\in C([-1,1])\cap C^3(-1,1)$. We assume that $G_0$ satisfies analogous assumptions to $F_0$.
    \item \label{Assumption:DominationProperty} There exist constants $\kappa_1 > 0$ and $\kappa_2 > 0$ such that
    \begin{align*}
    	\abs{F_0^\prime(\alpha s)} \leq \kappa_1\abs{G_0^\prime(s)} + \kappa_2 \qquad\text{for all~}s\in(-1,1).
    \end{align*}
    \item \label{Assumption:Potential:Growth} We assume that one of the following conditions hold:
    \begin{enumerate}[label=\textnormal{\bfseries(A.7.\arabic*)}]
        \item There exist constant $C_\sharp > 0$ and $\gamma_\sharp\in[1,2)$ such that
        \begin{align*}
            F_0^{\prime\prime}(s) \leq C_\sharp \e^{C_\sharp\abs{F_0^\prime(s)}^{\gamma_\sharp}} \qquad\text{for all~}s\in(-1,1).
        \end{align*}
        \item As $\delta\searrow 0$, for some $\kappa > \frac12$, it holds that
        \begin{align*}
            \frac{1}{F_0^\prime(1-2\delta)} = O\bigg(\frac{1}{\abs{\ln \delta}^\kappa}\bigg), \qquad \frac{1}{\abs{F_0^\prime(-1+2\delta)}} = O\bigg(\frac{1}{\abs{\ln\delta}^\kappa}\bigg).
        \end{align*}
    \end{enumerate}
\end{enumerate}

\medskip

\section{Main results}
\label{Section:MainResults}

In this section, we formulate the main results of this paper. Our first main result is concerned with the existence of global strong solutions to system \eqref{System}.
\begin{theorem}\label{Theorem:GlobalExistence}
        Let the assumptions \ref{Assumption:Domain}-\ref{Assumption:Potential:Growth} hold, and let $K\in(0,\infty)$ and $L\in[0,\infty]$. Assume that $(\bv_0,\bw_0)\in\mathbfcal{H}^1_\Div$, and let $(\phi_0,\psi_0)\in\mathcal{H}^1$ be such that
        \begin{subequations}\label{Assumption:InitialCondition}
        	\begin{align}\label{Assumption:InitalCondition:Int}
            	\norm{\phi_0}_{L^\infty(\Om)} \leq 1 \quad\text{and}\quad \norm{\psi_0}_{L^\infty(\Ga)} \leq 1.
        	\end{align}
        	Furthermore, we assume that
        	\begin{align}\label{Assumption:InitalCondition:L}
            	\beta\mean{\phi_0}{\psi_0},\ \mean{\phi_0}{\psi_0}\in(-1,1), \qquad\text{if~}L\in[0,\infty),
        	\end{align}
        	and
        	\begin{align}\label{Assumption:InitalCondition:Inf}
            	\meano{\phi_0}, \ \meang{\psi_0}\in(-1,1), \qquad\text{if~} L = \infty.
        	\end{align}
        \end{subequations}
        We further assume that the following compatibility assumption holds:
        \begin{enumerate}[label=\textnormal{\bfseries(C)},topsep=0ex,leftmargin=*]
        \item \label{cond:MT:0} There exists $\scp{\mu_0}{\theta_0}\in\mathcal{H}^1_{L,\beta}$ such that for all $\scp{\eta}{\vartheta}\in\mathcal{H}^1$ it holds
        \begin{align*}
        \begin{aligned}
            &\intO\mu_0\eta\dx + \intG\theta_0\vartheta\dG 
            \\
            &= \intO\Grad\phi_0\cdot\Grad\eta + F^\prime(\phi_0)\eta\dx + \intG\Gradg\psi_0\cdot\Gradg\vartheta + G^\prime(\psi_0)\vartheta\dG 
            \\
            &\quad + \chi(K)\intG(\alpha\psi_0 - \phi_0)(\alpha\vartheta - \eta)\dG.
        \end{aligned}
        \end{align*}
    \end{enumerate}
    Moreover, in the case $L = 0$, we suppose that the following compatibility assumption on the densities
    \begin{align}\label{Densities:Comp:Strong}
        \beta(\tilde\sigma_2 - \tilde\sigma_1) = \tilde\rho_2 - \tilde\rho_1
    \end{align}
    holds.
    Then, there exists a global strong solution $(\bv,\bw,\phi,\psi,\mu,\theta)$ to \eqref{System} satisfying the regularities
    \begin{align}
        (\bv,\bw)&\in BC([0,\infty);\mathbfcal{H}^1_{0,\Div})\cap L^2_{\mathrm{uloc}}([0,\infty);\mathbfcal{H}^2)\cap H^1_{\mathrm{uloc}}(0,\infty;\mathbfcal{L}^2_\Div), \\
        (p,q)&\in L^2_{\mathrm{uloc}}(0,\infty;\mathcal{H}^1\cap\mathcal{L}^2_{(0)}), \\
        (\phi,\psi)&\in L^\infty(0,\infty;\mathcal{H}^3), \qquad (\delt\phi,\delt\psi)\in L^\infty(0,\infty;(\mathcal{H}^1_{L,\beta})^\prime) \cap L^2_{\mathrm{uloc}}(0,\infty;\mathcal{H}^1), \\
        (\mu,\theta)&\in BC([0,\infty);\mathcal{H}^1_{L,\beta})\cap L^2_{\mathrm{uloc}}([0,\infty);\mathcal{H}^3)\cap H^1_{\mathrm{uloc}}(0,\infty;(\mathcal{H}^1)^\prime), \\
        (F^\prime(\phi&), G^\prime(\psi)), \ (F^{\prime\prime}(\phi), G^{\prime\prime}(\psi))\in L^\infty(0,\infty;\mathcal{L}^p)
    \end{align}
    for any $2 \leq p < \infty$, in the sense that the equations \eqref{System:1}, \eqref{System:4} and \eqref{System:5} are satisfied a.e. on $\Om\times(0,\infty)$, while the equations \eqref{System:2} and \eqref{System:6}-\eqref{System:7} and the boundary conditions \eqref{System:3} and \eqref{System:8} are satisfied a.e. on $\Ga\times(0,\infty)$. Moreover, the solution is such that $(\bv,\bw)\vert_{t=0} = (\bv_0,\bw_0)$ a.e. in $\Om\times\Ga$ and $(\phi,\psi)\vert_{t=0} = (\phi_0,\psi_0)$ a.e. in $\Om\times\Ga$.
\end{theorem}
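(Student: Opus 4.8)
The plan is to construct the solution by a semi-Galerkin approximation coupled with a fixed-point argument, to derive uniform a priori estimates (the technical core, and where the two-dimensional structure is essential), and finally to pass to the limit. More precisely, I would discretize only the velocity equations: let $\{(\bv_k,\bw_k)\}_{k\in\N}$ be the eigenfunctions of the bulk-surface Stokes operator (an orthonormal basis of $\mathbfcal{L}^2_\Div$ and orthogonal in $\mathbfcal{H}^1_{0,\Div}$), set $\mathbfcal{V}_n\coloneqq\mathrm{span}\{(\bv_1,\bw_1),\dots,(\bv_n,\bw_n)\}$, and seek $(\bv^n,\bw^n)(t)=\sum_{k=1}^n c_k^n(t)(\bv_k,\bw_k)$ solving the Galerkin projection of \eqref{eqs:NSCH:NC}, while the phase fields $(\phi^n,\psi^n,\mu^n,\theta^n)$ solve the \emph{full} convective bulk-surface Cahn--Hilliard subsystem \eqref{System:4}--\eqref{System:8} driven by $(\bv^n,\bw^n)$. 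For a given velocity in a suitable class, well-posedness, regularity, and the separation property for the Cahn--Hilliard part (available in two dimensions, using \ref{Assumption:Potential:Growth}) follow from the theory of Appendix~\ref{Section:ConvCH}, which extends \cite{Stange2025}; for given phase fields the Galerkin system is an ODE system for $(c_1^n,\dots,c_n^n)$, solvable locally by Cauchy--Peano. Composing these two solution maps and invoking a Schauder fixed-point argument on a short time interval yields a local-in-time approximate solution; compatibility condition \ref{cond:MT:0} and \eqref{Densities:Comp:Strong} (when $L=0$) ensure the initial data are consistent.

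\textbf{Uniform estimates.} Testing the Galerkin equations with $(\bv^n,\bw^n)$ and the Cahn--Hilliard equations with $(\mu^n,\theta^n)$ and with $(\delt\phi^n,\delt\psi^n)$ reproduces the energy dissipation law \eqref{EnergyDissipation} at the approximate level; together with \ref{Assumption:Potential}, Lemma~\ref{Lemma:Korn}, and Lemma~\ref{Lemma:Poincare} this gives uniform bounds for $(\bv^n,\bw^n)$ in $L^\infty(0,\infty;\mathbfcal{L}^2_\Div)\cap L^2_{\mathrm{uloc}}(0,\infty;\mathbfcal{H}^1_0)$, for $(\phi^n,\psi^n)$ in $L^\infty(0,\infty;\mathcal{H}^1)$, and for $(\mu^n,\theta^n)$ in $L^2_{\mathrm{uloc}}$ of the relevant norms. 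I would then pass to the higher-order estimates: testing the Cahn--Hilliard system with $(\delt\mu^n,\delt\theta^n)$ and inserting the elliptic regularity bounds \eqref{Est:Sol:G:H^2}--\eqref{Est:Sol:G:H^3} for $\mathcal{S}_{L,\beta}[\phi^n,\psi^n]$, combined with the separation property and the Ladyzhenskaya-type inequality \eqref{InterpolEst:L^4}, to control $(\phi^n,\psi^n)$ in $L^\infty(0,\infty;\mathcal{H}^3)$ and $(\mu^n,\theta^n)$ in $BC([0,\infty);\mathcal{H}^1_{L,\beta})\cap L^2_{\mathrm{uloc}}(0,\infty;\mathcal{H}^3)$. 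For the velocity I would test the Galerkin system with $\delt(\bv^n,\bw^n)$ (admissible since $\mathbfcal{V}_n\subset\mathbfcal{H}^2$) and feed the resulting bound into the variable-coefficient bulk-surface Stokes regularity of Appendix~\ref{Section:Stokes} to upgrade $(\bv^n,\bw^n)$ to $L^2_{\mathrm{uloc}}(0,\infty;\mathbfcal{H}^2)\cap H^1_{\mathrm{uloc}}(0,\infty;\mathbfcal{L}^2_\Div)$; the forcing terms $\mu^n\Grad\phi^n$, $\theta^n\Gradg\psi^n$, $(\bv^n\cdot\Grad)\bv^n$, $(\J^n\cdot\Grad)\bv^n$, the Young-stress contribution $-2\nu_\Om(\phi^n)[\D\bv^n\,\n]_\tau$ and $\gamma(\phi^n,\psi^n)\bw^n$ are estimated using the phase-field regularity just obtained and two-dimensional interpolation. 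All constants must be tracked so that the bounds are uniform in $n$ and, via a uniform Gronwall argument exploiting the dissipative terms, valid on all of $[0,\infty)$; this simultaneously shows the approximate solutions are global.

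\textbf{Passage to the limit.} With these bounds, the Aubin--Lions--Simon lemma yields a subsequence converging in the appropriate strong topologies; the separation property guarantees that $F^\prime(\phi^n),F^{\prime\prime}(\phi^n),G^\prime(\psi^n),G^{\prime\prime}(\psi^n)$ and the coefficient functions pass to the limit. One identifies the limit as a weak solution enjoying the stated regularity, recovers the pressures $(p,q)$ from the momentum balance via a de~Rham / Bogovskii-type argument adapted to the bulk-surface divergence constraint, and verifies the initial conditions and the $BC$-in-time regularity through an energy-equality / weak-continuity upgrade.

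\textbf{Main obstacle.} The crux is the higher-order estimate step, where the momentum and Cahn--Hilliard subsystems are genuinely coupled at the top order: the capillary force $\mu\Grad\phi$ requires strong control of $\mu$ (hence $\phi\in\mathcal{H}^3$), while the interior Young-stress term $[\D\bv\,\n]_\tau$ on $\Sigma$ is a boundary trace of $\Grad\bv$ and thus demands $\mathbfcal{H}^2$-regularity of the velocity, which feeds back through $\bv\cdot\Grad\phi$ into the Cahn--Hilliard estimates. Closing this loop needs a delicate simultaneous bootstrap relying on the two-dimensional interpolation inequalities, the separation property (unavailable in three dimensions), and the variable-coefficient elliptic and Stokes regularity of the appendices; the non-degenerate mobilities produce the extra terms $\Grad m_\Om(\phi)\cdot\Grad\mu$ and $\Gradg m_\Ga(\psi)\cdot\Gradg\theta$, which are precisely why the $\mathcal{H}^3$-regularity of $(\phi,\psi)$ is required, and controlling them uniformly in time is the subtlest point.
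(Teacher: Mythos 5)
Your proposal follows essentially the same route as the paper: a semi-Galerkin scheme (full convective bulk--surface Cahn--Hilliard solve via Appendix~\ref{Section:ConvCH} driven by a finite-dimensional velocity expansion in Stokes eigenfunctions), a Schauder fixed point, energy estimates, and then the higher-order estimates closing the loop via the variable-coefficient Stokes regularity of Appendix~\ref{Section:Stokes} and the $\mathcal{H}^3$-control of $(\phi,\psi)$ from Theorem~\ref{Theorem:CCH:Strong}. The passage to the limit and pressure recovery match as well, so the overall architecture is sound.

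Two small points of deviation are worth flagging. First, the paper's fixed point is set up so that the frozen velocity $(\bv_\ast,\bw_\ast)$ enters both the Cahn--Hilliard solve and the convection term in the Galerkin ODE \eqref{GalerkinApprox}, which makes that ODE \emph{linear} in the coefficients; one can thus invoke classical linear ODE theory on the full interval $[0,T]$ for arbitrary $T$ rather than appealing to Cauchy--Peano on a short interval and then extending. Your nonlinear-ODE/local-existence variant also works, but needs a continuation argument. Second, the compatibility condition \eqref{Densities:Comp:Strong} in the case $L=0$ is not about consistency of the initial data; its purpose is dynamic. When $L=0$ the boundary condition \eqref{System:8}$_2$ offers no way to rewrite $\deln\mu$, and the flux contributions $\tfrac12(\J\cdot\n)\bw - \tfrac12(\J_\Ga\cdot\n)\bw$ (appearing as boundary terms after integration by parts in the energy and $\delt$-tested estimates) become uncontrollable. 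Condition~\eqref{Densities:Comp:Strong} forces these terms to cancel identically, which is why it is required there; cf.\ also the remark following Theorem~\ref{Theorem:GlobalExistence}. The condition \ref{cond:MT:0}, on the other hand, is indeed a compatibility requirement on the initial data (it supplies $(\mu_0,\theta_0)\in\mathcal{H}^1_{L,\beta}$, used as the time-zero datum in the difference-quotient estimate of Theorem~\ref{Theorem:CCH:Strong}), so your description is accurate only for that half of the pair.
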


\begin{remark}
    The case $K = 0$ is not admissible in the setting of non-degenerate mobility functions. This issue appears in the analysis of the well-posedness of weak solutions to the convective bulk-surface Cahn--Hilliard subsystem \eqref{EQ:CONV:SYSTEM}, as discussed in Appendix~\ref{Section:ConvCH} (see also \cite{Stange2025}). In fact, only for $K \in (0,\infty)$ does the corresponding weak solution $(\phi,\psi,\mu,\theta)$ satisfy the enhanced regularity $(\phi,\psi)\in L^4_{\mathrm{uloc}}([0,\infty);\mathcal{H}^2)$ (see \cite[Theorem~3.3]{Giorgini2025}), which is a crucial step in the proof of uniqueness of weak solutions (see also \cite{Conti2024}). If $K=0$, one only obtains $(\phi,\psi)\in L^3_{\mathrm{uloc}}([0,\infty)\mathcal{H}^2)$, which appears insufficient to complete the argument.
    By contrast, in the case of constant mobilities, say $m_\Om = m_\Ga \equiv 1$, the choice $K=0$ becomes admissible since the aforementioned higher-order norms are not required in this setting. Indeed, for constant mobilities, one can establish the existence of a strong  solution with the additional regularity
    \begin{align*}
    (\bv,\bw) &\in L^2(0,\infty;\mathbfcal{H}^2)\cap H^1(0,\infty;\mathbfcal{L}^2_\Div), \\ 
    (p,q) &\in L^2(0,\infty;\mathcal{H}^1\cap\mathcal{L}^2_{(0)}), \\
    (\delt\phi,\delt\psi) &\in L^2(0,\infty;\mathcal{H}^1), \\
    (\Grad\mu,\Gradg\theta) &\in L^2(0,\infty;\mathbfcal{H}^2),
    \qquad (\mu,\theta)\in H^1(0,\infty;(\mathcal{H}^1_{K,\alpha})^\prime).
    \end{align*}
    These enhanced regularities can be achieved by using the regularity theory for the convective bulk-surface Cahn--Hilliard equation with constant mobility functions as developed in \cite{Knopf2025} and \cite{Giorgini2025}.
\end{remark}

\begin{remark}
    We comment on the compatibility condition \eqref{Densities:Comp:Strong}, which was needed in our analysis in the case $L = 0$. The compatibility condition \eqref{Densities:Comp:Strong} ensures that, formally,
        \begin{align}\label{Term:Comp:Strong}
        \frac12(\J\cdot\n)\bw - \frac12(\J_\Ga\cdot\n)\bw
        = \Big(\beta\frac{\tilde\sigma_2 - \tilde\sigma_1}{2} - \frac{\tilde\rho_2 - \tilde\rho_1}{2}\Big)\deln\mu\,\bw
        = 0 \qquad\text{on }\Ga,
        \end{align}
    which arises as a term on the right-hand side of the non-conservative form \eqref{ConsForm:w}. This assumption differs from the condition used in \cite[Theorem~4.2]{Knopf2025a} to guarantee the existence of weak solutions.
    Namely, in \cite{Knopf2025a}, it was assumed that
        \begin{align}\label{Densities:Comp:Weak}
            \beta(\tilde\sigma_2 - \tilde\sigma_1) = -(\tilde\rho_2 - \tilde\rho_1),
        \end{align}
    which implies, formally,
        \begin{align}\label{Term:Comp:Weak}
        \frac12(\J\cdot\n)\bw + \frac12(\J_\Ga\cdot\n)\bw
        = \Big(\beta\frac{\tilde\sigma_2 - \tilde\sigma_1}{2} + \frac{\tilde\rho_2 - \tilde\rho_1}{2}\Big)\deln\mu\,\bw
        = 0 \qquad\text{on }\Ga,
        \end{align}
    which instead appears on the right-hand side of \eqref{eqs:NSCH:pressure:w}. In the analysis of \cite{Knopf2025a}, the weak formulation of \eqref{eqs:NSCH:pressure} is employed, so the condition \eqref{Densities:Comp:Weak} is necessary to eliminate \eqref{Term:Comp:Weak}, as otherwise this term couldn't be handled. In contrast, to prove Theorem~\ref{Theorem:GlobalExistence} we rely on the non-conservative formulation \eqref{eqs:NSCH:NC}, which allows us to derive suitable bounds for the weak time derivative of the velocity fields. Since we could not find a way to control the boundary contribution involving $\deln\mu$ from the fluxes $\J$ and $\J_\Ga$, we imposed \eqref{Densities:Comp:Strong}, which ensures that these terms vanish, see \eqref{Term:Comp:Strong}. This explains why the two compatibility conditions \eqref{Densities:Comp:Strong} and \eqref{Densities:Comp:Weak} differ by a minus sign. Note that these conditions are only required in the case $L = 0$, since when $L\in(0,\infty]$, the boundary condition \eqref{System:8}$_2$ allows us to express the normal derivative $\deln\mu$ as $\tfrac{1}{Lm_\Om(\phi)}(\beta\theta - \mu)$.
    
\end{remark}

Our next main result is concerned with the uniqueness of strong solutions to \eqref{System}
\begin{theorem}\label{Theorem:Uniqueness}
    Let the assumptions from Theorem~\ref{Theorem:GlobalExistence} hold. Additionally, assume that $L\in(0,\infty]$. Then, the global strong solution to \eqref{System} is unique.
\end{theorem}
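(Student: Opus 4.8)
The plan is to prove uniqueness via a contradiction-free energy estimate on the difference of two solutions, combined carefully with the structure provided by the bulk-surface Cahn--Hilliard and bulk-surface Stokes theory. Let $(\bv_i,\bw_i,\phi_i,\psi_i,\mu_i,\theta_i)$, $i=1,2$, be two strong solutions emanating from the same data, and set $\bv = \bv_1 - \bv_2$, $\bw = \bw_1 - \bw_2$, $\phi = \phi_1 - \phi_2$, $\psi = \psi_1 - \psi_2$, and similarly for $\mu,\theta,p,q$. First I would write down the PDE system satisfied by the differences: for the velocity part, starting from the non-conservative form \eqref{eqs:NSCH:NC}, and for the phase fields, starting from \eqref{System:4}--\eqref{System:8}. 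The difference system has source terms that are products of differences with one of the two solutions, so the strong-solution regularities listed in Theorem~\ref{Theorem:GlobalExistence} — in particular $(\phi_i,\psi_i)\in L^\infty(0,\infty;\mathcal{H}^3)$, the separation property $\|\phi_i\|_{L^\infty}<1$, $\|\psi_i\|_{L^\infty}<1$ (hence $F'',G''$ bounded on the relevant range), $(\bv_i,\bw_i)\in BC([0,\infty);\mathbfcal{H}^1_{0,\Div})\cap L^2_{\mathrm{uloc}}(\mathbfcal{H}^2)$, and $(\mu_i,\theta_i)\in L^2_{\mathrm{uloc}}(\mathcal{H}^3)$ — will all be used to bound these terms.

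The energy quantity I would control is
\begin{align*}
    \Lambda(t) &\coloneqq \tfrac12\intO \rho(\phi_1)\abs{\bv}^2\dx + \tfrac12\intG \sigma(\psi_1)\abs{\bw}^2\dG + \tfrac12\norm{(\phi,\psi)}_{L,\beta,[\phi_1,\psi_1],\ast}^2,
\end{align*}
i.e.\ the natural kinetic energy of the velocity difference plus a negative-order ($\mathcal{V}^{-1}_{L,\beta}$-type) norm of the phase-field difference, since the phase-field equation is a fourth-order parabolic equation and only its ``$\mathcal{H}^{-1}$'' estimate closes together with the Navier--Stokes estimate. Here one uses $L\in(0,\infty]$ crucially: the boundary condition \eqref{System:8}$_2$ becomes $\deln\mu = \tfrac{1}{Lm_\Om(\phi)}(\beta\theta-\mu)$ (or $\deln\mu=0$ when $L=\infty$), so the coupling term $-\beta m_\Om(\phi)\deln\mu$ in \eqref{System:6} and the flux contributions in $\Z$ can be rewritten without an uncontrolled normal derivative — this is exactly the obstruction that excludes $L=0$. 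For the velocity estimate I would test \eqref{ConsForm:v} for $\bv$ and \eqref{ConsForm:w} for $\bw$ with $(\bv,\bw)$ itself, handle the $\delt(\rho(\phi_1)\bv)$ term using \eqref{Equation:rho} to produce $\tfrac{d}{dt}\Lambda$ plus commutator terms involving $\delt\rho(\phi_1)$ controlled via $(\delt\phi_1,\delt\psi_1)\in L^2_{\mathrm{uloc}}(\mathcal{H}^1)$, and bound the convective, viscosity-difference ($\nu_\Om(\phi_1)-\nu_\Om(\phi_2)$), capillary ($\mu_1\Grad\phi_1 - \mu_2\Grad\phi_2$) and friction ($\gamma(\phi_1,\psi_1)-\gamma(\phi_2,\psi_2)$) terms by products of $\|(\bv,\bw)\|_{\mathbfcal{H}^1_0}$, $\|(\phi,\psi)\|$ in various norms, and the regularity of the fixed solution, using the 2D interpolation inequalities \eqref{InterpolEst:L^4}, \eqref{InterpolEst:Trace:L^2}, Korn \eqref{Est:Korn} and Young's inequality to absorb the top-order $\|\D\bv\|_{\mathbf L^2}^2 + \|\Dg\bw\|_{\mathbf L^2}^2$ dissipation. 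For the phase-field estimate I would test the difference of \eqref{System:4}/\eqref{System:6} against $\mathcal{S}_{L,\beta}[\phi_1,\psi_1](\phi,\psi)$ (after verifying that $(\phi,\psi)$ has the right mean, which follows from mass conservation \eqref{MassConservation} and the shared data), exploit the identity relating $\|\cdot\|_{L,\beta,[\phi_1,\psi_1],\ast}$ to the solution operator, and handle the mobility differences $m_\Om(\phi_1)\Grad\mu_1 - m_\Om(\phi_2)\Grad\mu_2$ and potential differences $F'(\phi_1)-F'(\phi_2)$ (Lipschitz on the separated range, with a good sign from strict convexity $F_0''\ge\Theta$) together with the convective terms $\phi_i\bv_i$; the strictly convex part yields a coercive $\|(\phi,\psi)\|_{\mathcal{H}^1}$-type term, matched against the $\|\Grad\mu\|$-type dissipation.

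Putting these together gives a differential inequality $\tfrac{d}{dt}\Lambda + c(\|(\bv,\bw)\|_{\mathbfcal{H}^1_0}^2 + \text{phase dissipation}) \le \mathcal{A}(t)\,\Lambda$, where $\mathcal{A}\in L^1_{\mathrm{loc}}([0,\infty))$ is built from the (squared) $\mathbfcal{H}^2$, $\mathcal{H}^3$ and time-derivative norms of the two solutions, all integrable in time by the strong-solution regularity. Since $\Lambda(0)=0$, Gronwall's lemma yields $\Lambda\equiv 0$, hence $\bv\equiv\bw\equiv0$ and $(\phi,\psi)\equiv0$ in the negative norm, and then $\mu,\theta$ (and after normalizing the pressures, $p,q$) coincide as well. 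I expect the main obstacle to be the careful treatment of the capillary stress difference $\mu_1\Grad\phi_1-\mu_2\Grad\phi_2$ on the surface together with the mechanical coupling term $\alpha\deln\phi\,\Gradg\psi$ and the cross terms in $\Z$: controlling these requires trace estimates for $\Grad\phi$ and $\deln\phi$ in $\mathbf L^4(\Ga)$ or better, which is where the $\mathcal{H}^3$-regularity of the phase fields and the $\mathcal{H}^3$-regularity (hence $\Grad\mu\in\mathbf H^2$, so $\deln\mu$ controllable) of the chemical potentials must be invoked, and where one must be most careful that every term is genuinely linear in a difference times an $L^1_t$ coefficient — in particular the term $2\nu_\Om(\phi_1)[\D\bv\,\n]_\tau$ coupling bulk and surface velocities must be absorbed into the bulk dissipation using \eqref{InterpolEst:Trace:L^2} and interpolation, exactly as in the a priori estimates for the existence proof.
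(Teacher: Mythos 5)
Your plan for the phase-field part does not produce the dissipation you claim. Testing the Cahn--Hilliard difference against $\mathcal{S}_{L,\beta}[\phi_1,\psi_1](\phi,\psi)$ is the scheme from Appendix~\ref{Section:ConvCH} for the \emph{weak}-solution uniqueness of the decoupled equation (see~\eqref{DiffIneq:Final}); it yields a balance of the form
$\ddt\tfrac12\norm{(\phi,\psi)}_{L,\beta,[\phi_1,\psi_1],\ast}^2 + \norm{(\phi,\psi)}_{K,\alpha}^2 \leq \cdots$,
so the dissipation you actually obtain is $\norm{(\phi,\psi)}_{K,\alpha}^2$, \emph{not} a ``$\norm{\Grad\mu}$-type'' term as you write. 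This matters: after integrating the capillary coupling by parts using $\Div\bv=0$ and $\bv\cdot\n=0$, the velocity energy estimate contains
$\intO(\mu_1\Grad\phi_1 - \mu_2\Grad\phi_2)\cdot\bv\dx = -\intO\phi_1\Grad\mu\cdot\bv\dx - \intO\phi\Grad\mu_2\cdot\bv\dx$,
and the first integral cannot be closed against $\norm{(\phi,\psi)}_{K,\alpha}^2$, because $\mu = -\Lap\phi + F'(\phi_1) - F'(\phi_2)$ is second order in $\phi$ while the $K,\alpha$-norm controls only first derivatives of the difference. One could try integrating $\Lap\phi$ by parts once more, using the $\mathcal{H}^3$-regularity of the fixed solution and the boundary condition $\deln\phi = K^{-1}(\alpha\psi - \phi)$ to tame the new boundary term, but that is a substantially different calculation from the one you outline, and you give no indication you have done it.

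The paper instead tests the Cahn--Hilliard difference against $(\mu,\theta)$, obtaining~\eqref{Uniq:ddt:Ka}:
$\ddt\tfrac12\norm{(\phi,\psi)}_{K,\alpha}^2 + \norm{(\mu,\theta)}_{L,\beta,[\phi_1,\psi_1]}^2 = W_{15} + \cdots + W_{19}$.
Here the controlled energy is the \emph{stronger} quantity $\norm{(\phi,\psi)}_{K,\alpha}^2$, and the dissipation $\norm{(\mu,\theta)}_{L,\beta}^2$, which in particular controls $\norm{(\Grad\mu,\Gradg\theta)}_{\mathbfcal{L}^2}^2$, is exactly what absorbs the capillary coupling. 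The price is the term $W_{15} = -\ang{(\delt\phi,\delt\psi)}{(F'(\phi_1)-F'(\phi_2),G'(\psi_1)-G'(\psi_2))}_{\mathcal{H}^1}$, which needs an $(\mathcal{H}^1)'$ bound on $(\delt\phi,\delt\psi)$ by comparison and the bound $\norm{(F'(\phi_1)-F'(\phi_2),G'(\psi_1)-G'(\psi_2))}_{\mathcal{H}^1}\leq C\norm{(\phi,\psi)}_{K,\alpha}$ from the separation property and $\mathcal{H}^3$-regularity; see~\eqref{Uniq:Est:W15:delt} and~\eqref{Uniq:Est:Pot:H^1}. This trade-off is precisely why the $\mathcal{H}^{-1}$-scheme appears only in the appendix for weak solutions, while the main body switches test functions once strong regularity is in hand. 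Your treatment of the velocity part and your observations on $L\in(0,\infty]$ are in line with the paper; the gap is the choice of test function on the Cahn--Hilliard side and the associated misconception about which dissipation it generates.
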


\begin{remark}
    We remark on the assumption $L\in(0,\infty]$ imposed in Theorem~\ref{Theorem:Uniqueness}. This limitation again stems from the boundary condition \eqref{System:8}$_2$: when $L=0$, it is not possible to reformulate the normal derivative $\deln \mu$, and we cannot control this term otherwise. Thus, our analysis does not extend to the case $L = 0$. This difficulty already appears in the bulk-surface Cahn--Hilliard system, even without coupling to the Navier--Stokes equations, see Remark~\ref{REM:REG:WEAK}.
    An exception occurs when $K=L=0$. In this setting, assuming $\alpha\neq 0$, $\beta\neq 0$, and the following compatibility condition on the potentials $F$ and $G$,
    \begin{align*}
        F(\alpha s) = \alpha\beta G(s) \qquad\text{for all~}s\in[-1,1],
    \end{align*}
    the pair $(F^\prime(\phi), G^\prime(\psi))$ then belongs to $\mathcal{H}^1_{0,\beta}$, and hence it is an admissible test function in \eqref{WF:CCH:PP}. This choice makes the problematic term containing the normal derivative vanish, which allows us to circumvent the obstruction mentioned above.
\end{remark}

\medskip

\section{Existence of a global-in-time strong solution}
\label{Section:LocalWellPosedness}

In this section, we present the proof of Theorem~\ref{Theorem:GlobalExistence} regarding the existence of a global strong solution to \eqref{System}.

\begin{proof}[Proof of Theorem~\ref{Theorem:GlobalExistence}.]
Our proof is based on a semi-Galerkin discretization of the Navier--Stokes subsystem combined with Schauder's fixed point theorem. This method was already used to prove the existence of weak solutions to \eqref{System} in \cite{Knopf2025a}. In this way, we prove the existence of suitable approximate solutions to \eqref{System}. The main part of the proof then relies on establishing higher regularity estimates on these approximate solutions that are uniform with respect to the approximation parameter. Here, we rely on regularity theory for a convective bulk-surface Cahn--Hilliard system with non-degenerate mobility as well as regularity theory for a bulk-surface Stokes system with non-constant coefficients as presented in Appendix~\ref{Section:ConvCH} and Appendix~\ref{Section:Stokes}, respectively. These uniform estimates then allow us to pass to the limit in the corresponding weak formulation of the approximate problem.

\textbf{Step 1: Definition of the approximate problem.}
We consider the bulk-surface Stokes operator (see \cite[Section~5.3]{Knopf2025a})
\begin{align*}
    \mathbfcal{A}:D(\mathbfcal{A})\subset \mathbfcal{L}^2_\Div&\rightarrow\mathbfcal{L}^2_\Div, \\
    (\bv,\bw)&\mapsto \big(-\mathbf{P}_\Div^\Om(\Div(2\D\bv)), -\mathbf{P}_\Div^\Ga(\Divg(2\Dg\bw) + 2[\D\bv\,\n]_\tau + \bw)\big),
\end{align*}
where $\mathbf{P}_\Div^\Om$ and $\mathbf{P}_\Div^\Ga$ are the Leray projections on $\Om$ and $\Ga$, respectively, and the domain $\mathcal{D}(\mathbfcal{A})$ is given by
\begin{align*}
    D(\mathbfcal{A}) = \mathbfcal{H}^2_{0,\Div}.
\end{align*}
It was shown in \cite[Theorem~5.9]{Knopf2025a} that $\mathbfcal{A}$ has countably many positive eigenvalues $\{\widetilde\lambda_j\}_{j\in\N}$ with corresponding eigenfunctions $\{(\tv_j,\tw_j)\}_{j\in\N}$. These eigenfunctions are determined by
\begin{align*}
    &\intO 2\D\tv_j:\D\wv\dx + \intG 2\Dg\tw_j:\Dg\ww\dG + \intG \tw_j\cdot\ww\dG \\
    &\qquad = \widetilde\lambda_j\intO \tv_j\cdot\wv\dx + \widetilde\lambda_j \intG \tw_j\cdot\ww\dG
\end{align*}
for all $(\wv,\ww)\in\mathbfcal{H}^1_{0,\Div}$, and can be chosen in such a way that they form an orthonormal basis of $\mathbfcal{L}^2_\Div$. In addition, the eigenfunctions $\{(\tv_j,\tw_j)\}_{j\in\N}$ form a complete orthogonal system of $D(\mathbfcal{A})$. For more details, we refer to \cite[Section~5.3]{Knopf2025a}. For any $k\in\N$, we introduce the finite-dimensional subspace
\begin{align*}
    \mathbfcal{V}_k \coloneqq \mathrm{span}\,\{(\tv_1,\tw_1),\ldots,(\tv_k,\tw_k)\}\subset\mathbfcal{H}^1_{0,\Div},
\end{align*}
and denote with $\mathbfcal{P}_{\mathbfcal{V}_k}$ the $\mathbfcal{L}^2_\Div$-orthogonal projection  onto $\mathbfcal{V}_k$. Since $\Omega$ is of class $C^3$, regularity theory for the bulk-surface Stokes operator (see \cite[Theorem~5.7]{Knopf2025a}) yields that $(\tv_j,\tw_j)\in\mathbfcal{W}^{2,r}$ for all $r\in[2,\infty)$ and all $j\in\N$. As a consequence, the following inverse Sobolev embedding inequalities hold for all $(\wv,\ww)\in\mathbfcal{V}_k$:
\begin{align}
    \label{Est:InverseSobolev}
    \begin{split}
        \norm{(\wv,\ww)}_{\mathbfcal{H}^1} &\leq C_k \norm{(\wv,\ww)}_{\mathbfcal{L}^2}, \quad \norm{(\wv,\ww)}_{\mathbfcal{H}^2} \leq C_k \norm{(\wv,\ww)}_{\mathbfcal{L}^2}, \\
        \norm{(\wv,\ww)}_{\mathbfcal{W}^{2,r}} &\leq C_{k,r}\norm{(\wv,\ww)}_{\mathbfcal{L}^2}.
    \end{split}
\end{align}
Now, fix $T > 0$. For any $k\in\N$, we seek an approximate solution $(\bv_k,\bw_k,\phi_k,\psi_k,\mu_k,\theta_k)$ to system \eqref{System} such that
\begin{subequations}\label{Reg:ApproxSolution}
    \begin{align}
        (\bv_k,\bw_k)&\in C^1([0,T];\mathbfcal{V}_k), \\
        (\phi_k,\psi_k)&\in L^\infty(0,T;\mathcal{W}^{2,p}), \quad(\delt\phi_k,\delt\psi_k)\in L^\infty(0,T;(\mathcal{H}^1_{L,\beta})^\prime)\cap L^2(0,T;\mathcal{H}^1), \\
        (\mu_k,\theta_k)&\in C([0,T];\mathcal{H}^1_{L,\beta})\cap L^2(0,T;\mathcal{H}^3)\cap H^1(0,T;(\mathcal{H}^1)^\prime), \\
        (F^\prime(\phi_k)&, G^\prime(\psi_k)), \ (F^{\prime\prime}(\phi_k),G^{\prime\prime}(\psi_k))\in L^\infty(0,T;\mathcal{L}^p),
    \end{align}
\end{subequations}
    for any $2 \leq p < \infty$, with $\abs{\phi_k} < 1$ a.e. in $Q_T$ and $\abs{\psi_k} < 1$ a.e. on $\Sigma_T$, in the sense that, a.e. on $[0,T]$, it holds that
    \begin{align}
        &\intO\rho(\phi_k)\delt\bv_k\cdot\wv\dx + \intG\sigma(\psi_k)\delt\bw_k\cdot\ww\dG \nonumber \\
        &\qquad + \intO\rho(\phi_k)(\bv_k\cdot\Grad)\bv_k\cdot\wv\dx + \intG\sigma(\psi_k)(\bw_k\cdot\Gradg)\bw_k\cdot\ww\dG \nonumber \\
        &\qquad + \frac{\tilde\rho_2 - \tilde\rho_1}{2}\intO m_\Om(\phi_k)(\Grad\mu_k\cdot\Grad)\bv_k\cdot\wv\dx + \frac{\tilde\sigma_2 - \tilde\sigma_1}{2}\intG m_\Ga(\psi_k)(\Gradg\theta_k\cdot\Gradg)\bw_k\cdot\ww\dG \label{WF:VW:DISCR} \\ 
        &\qquad  + \intO2\nu_\Om(\phi_k)\D\bv_k:\D\wv\dx + \intG2\nu_\Ga(\psi_k)\Dg\bw_k:\Dg\ww\dG + \intG \gamma(\phi_k,\psi_k)\bw_k\cdot\ww\dG \nonumber \\
        &\quad = \intO\mu_k\Grad\phi_k\cdot\wv\dx + \intG\theta_k\Gradg\psi_k\cdot\ww\dG \nonumber \\
        &\qquad + \frac{\chi(L)}{2}\Big(\beta\frac{\tilde\sigma_2 - \tilde\sigma_1}{2} - \frac{\tilde\rho_2 - \tilde\rho_1}{2}\Big)\intG (\beta\theta_k - \mu_k)\bw_k\cdot\ww\dG, \nonumber
    \end{align}
    for all $(\wv,\ww)\in\mathbfcal{V}_k$, and
    \begin{align}\label{WF:PP:DISCR}
    	&\bigang{(\delt\phi_k,\delt\psi_k)}{(\zeta,\xi)}_{\mathcal{H}^1_{L,\beta}} - \intO \phi_k\bv_k\cdot\Grad\zeta\dx - \intG 									\psi_k\bw_k\cdot\Gradg\xi\dG \\
    	&\quad = - \intO m_\Om(\phi_k)\Grad\mu_k\cdot\Grad\zeta\dx - \intG m_\Ga(\psi_k)\Gradg\theta_k\cdot\Gradg\xi \dG - \chi(L)\intG (\beta\theta_k - \mu_k)				(\beta\xi - \zeta)\dG \nonumber
    \end{align}
    for all $(\zeta,\xi)\in\mathcal{H}^1_{L,\beta}$, where
    \begin{subequations}\label{WF:MT:DISCR:STR}
    	\begin{alignat}{2}
    		&\mu_k = -\Lap\phi_k + F^\prime(\phi_k) &&\qquad\text{a.e. in~}Q_T, \label{WF:MT:DEF:MU}\\
    		&\theta_k = -\Lapg\psi_k + G^\prime(\psi_k) + \alpha\deln\phi_k &&\qquad\text{a.e. on~}\Sigma_T, \label{WF:MT:DEF:THETA}\\
    		&K\deln\phi_k = \alpha\psi_k - \phi_k, \qquad K\in(0,\infty), &&\qquad\text{a.e. on~}\Sigma_T,
    	\end{alignat}
    \end{subequations}
together with the initial conditions
\begin{align}\label{Approx:Problem:IC}
    (\bv_k,\bw_k)\vert_{t=0} = \mathbfcal{P}_{\mathbfcal{V}_k}(\bv_0,\bw_0), \qquad (\phi_k,\psi_k)\vert_{t=0} = (\phi_0, \psi_0)  \qquad \text{in~}\Om\times\Ga. 
\end{align}
Here, we have used the notation
\begin{align*}
    Q_T \coloneqq \Om \times(0,T), \qquad \Sigma_T \coloneqq \Ga\times(0,T).
\end{align*}
We note that the weak formulation corresponding to \eqref{WF:MT:DISCR:STR} reads as
    \begin{align}
    	&\intO\mu_k\eta\dx + \intG \theta_k\vartheta\dG \nonumber \\
    	&\quad = \intO \Grad\phi_k\cdot\Grad\eta + F^\prime(\phi_k)\eta\dx + \intG \Gradg\psi_k\cdot\Gradg\vartheta + G^\prime(\psi_k)\vartheta\dG \label{WF:MT:DISCR} \\
    	&\qquad + \chi(K) \intG (\alpha\psi_k - \phi_k)(\alpha\vartheta - \eta)\dG \nonumber
    \end{align}
a.e. on $[0,T]$ for all $(\eta,\vartheta)\in\mathcal{H}^1$.

To show the existence of an approximation solution satisfying \eqref{Reg:ApproxSolution}-\eqref{Approx:Problem:IC}, we perform a fixed-point argument as in \cite{Knopf2025a}. Toward this aim, let $(\bv_\ast,\bw_\ast)\in H^1(0,T;\mathbfcal{V}_k)$. We consider the convective bulk-surface Cahn--Hilliard equation
\begin{subequations}\label{CCH:Fixed}
    \begin{alignat}{2}
        &\delt\phi_k + \Div(\phi_k\bv_\ast) = \Div(m_\Om(\phi_k)\Grad\mu_k) &&\qquad\text{in~}Q_T, \label{CCH:Fixed:1} \\
        &\mu = -\Lap\phi_k + F^\prime(\phi_k) &&\qquad\text{in~}Q_T, \label{CCH:Fixed:2} \\
        &\delt\psi_k + \Divg(\psi_k\bw_\ast) = \Divg(m_\Ga(\psi_k)\Gradg\theta_k) - \beta m_\Om(\phi_k)\deln\mu_k &&\qquad\text{on~}\Sigma_T, \label{CCH:Fixed:3} \\
        &\theta_k = -\Lapg\psi_k + G^\prime(\psi_k) + \alpha\deln\phi_k &&\qquad\text{on~}\Sigma_T, \label{CCH:Fixed:4} \\
        &K\deln\phi_k = \alpha\psi_k - \phi_k \qquad K\in (0,\infty), &&\qquad\text{on~}\Sigma_T, \label{CCH:Fixed:5} \\
        &\begin{cases} 
        L m_\Om(\phi_k) \deln\mu_k = \beta\theta_k - \mu_k &\text{if} \  L\in[0,\infty), \\
        m_\Om(\phi_k)\deln\mu_k = 0 &\text{if} \ L=\infty
        \end{cases} &&\qquad\text{on~}\Sigma_T, \label{CCH:Fixed:6} \\
        &\phi_k\vert_{t=0} = \phi_0 &&\qquad\text{in~}\Om, \label{CCH:Fixed:7} \\
        &\psi_k\vert_{t=0} = \psi_0 &&\qquad\text{on~}\Ga. \label{CCH:Fixed:8}
    \end{alignat}
\end{subequations}
In view of Theorem~\ref{Theorem:CCH:Strong}, there exists a unique strong solution to \eqref{CCH:Fixed} such that
\begin{subequations}\label{Reg:ConvCH}
    \begin{align}
        (\phi_k,\psi_k) &\in L^\infty(0,T;\mathcal{H}^3), \\
        (\delt\phi_k,\delt\psi_k) &\in L^\infty(0,T;(\mathcal{H}^1_{L,\beta})^\prime)\cap H^1(0,T;\mathcal{H}^1), \\
        (\mu_k,\theta_k) &\in C([0,T];\mathcal{H}^1_{L,\beta})\cap L^2(0,T;\mathcal{H}^3)\cap H^1(0,T;(\mathcal{H}^1)^\prime), \\
        (F^\prime(\phi_k)&, G^\prime(\psi_k)), \ (F^{\prime\prime}(\phi_k), G^{\prime\prime}(\psi_k))\in L^\infty(0,T;\mathcal{L}^p),
    \end{align}
\end{subequations}
for any $2 \leq p <\infty$, and it holds
\begin{align}\label{Est:pp:<1}
    \abs{\phi_k} < 1 \quad\text{a.e.~in~}Q_T\quad\text{and} \quad\abs{\psi_k} < 1 \quad\text{a.e.~on~}\Sigma_T.
\end{align}
Furthermore, the energy inequality \eqref{CCH:EnergyIneq} in combination with \eqref{Est:pp:<1} yields that
\begin{align}\label{CCH:Fixed:EnergyEstimate}
    \begin{split}
        &E_{\mathrm{free}}(\phi(t),\psi(t)) + \frac{\min\{1,m_\ast\}}{2}\int_0^t\norm{(\mu,\theta)}_{L,\beta}^2\ds \\
        &\quad\leq E_{\mathrm{free}}(\phi_0,\psi_0) + \frac{1}{2\min\{1,m_\ast\}}\int_0^t\norm{(\bv_\ast,\bw_\ast)}_{\mathcal{L}^2}^2\ds
    \end{split}
\end{align}
for all $t\in[0,T]$.
Now, for any $t\in[0,T]$, we make the ansatz
\begin{align*}
    \big(\bv_k(t),\bw_k(t)\big) = \sum_{j=1}^k a_j^k(t)\big(\tv_j,\tw_j) \qquad\text{a.e.~in~}\Om\times\Ga
\end{align*}
as a solution to the Galerkin approximation of \eqref{WF:VW:DISCR} that reads as
\begin{align}
    \begin{split}
        &\intO\rho(\phi_k)\delt\bv_k\cdot\tv_l\dx + \intG\sigma(\psi_k)\delt\bw_k\cdot\tw_l\dG  \\
        &\qquad + \intO\rho(\phi_k)(\bv_\ast\cdot\Grad)\bv_k\cdot\tv_l\dx + \intG\sigma(\psi_k)(\bw_\ast\cdot\Gradg)\bw_k\cdot\tw_l\dG \\
        &\qquad - \frac{\tilde\rho_2 - \tilde\rho_1}{2}\intO m_\Om(\phi_k)(\Grad\mu_k\cdot\Grad)\bv_k\cdot\tv_l\dx - \frac{\tilde\sigma_2 - \tilde\sigma_1}{2}\intG m_\Ga(\psi_k)(\Gradg\theta_k\cdot\Gradg)\bw_k\cdot\tw_l\dG \label{GalerkinApprox} \\
        &\qquad  + \intO2\nu_\Om(\phi_k)\D\bv_k:\D\tv_l\dx + \intG2\nu_\Ga(\psi_k)\Dg\bw_k:\Dg\tw_l\dG  + \intG \gamma(\phi_k,\psi_k)\bw_k\cdot\tw_l\dG \\
        &\quad = \intO\mu_k\Grad\phi_k\cdot\tv_l\dx + \intG\theta_k\Gradg\psi_k\cdot\tw_l\dG \\
        &\qquad + \frac{\chi(L)}{2}\Big(\beta\frac{\tilde\sigma_2 -\tilde\sigma_1}{2} - \frac{\tilde\rho_2 - \tilde\rho_1}{2}\Big)\intG(\beta\theta_k - \mu_k)\bw_k\cdot\tw_l\dG,
    \end{split}
\end{align}
a.e. on $[0,T]$ for all $l=1,\ldots,k$, supplemented with the initial condition 
\begin{align}\label{Galerkin:InitialCondition}
    \scp{\bv_k}{\bw_k}\vert_{t=0} = \mathbfcal{P}_{\mathbfcal{V}_k}(\bv_0,\bw_0) \qquad\text{a.e. in~}\Om\times\Ga.
\end{align}
Setting $\A_k(t) = (a_1^k(t),\ldots,a_k^k(t))^\top$, the system \eqref{GalerkinApprox} is equivalent to the system of differential equations
\begin{align}
    \label{ODE:Galerkin}
    \M^k(t)\ddt \A^k(t) + \L^k(t)\A^k(t) = \G^k(t),
\end{align}
where
\begin{align*}
    (\M^k(t))_{l,j} &\coloneqq \intO \rho(\phi_k)\tv_j\cdot\tv_l \dx + \intG \sigma(\psi_k)\tw_j\cdot\tw_l\dG, \\
    (\L^k(t))_{l,j} &\coloneqq \intO \rho(\phi_k)(\bv_\ast\cdot\Grad)\tv_j\cdot\tv_l\dx + \intG \sigma(\psi_k)(\bw_\ast\cdot\Gradg)\tw_j\cdot\tw_l\dG \\
    &\quad - \frac{\tilde\rho_2 - \tilde\rho_1}{2}\intO m_\Om(\phi_k)(\Grad\mu_k\cdot\Grad)\tv_j\cdot\tv_l\dx \\
    &\quad - \frac{\tilde\sigma_2 - \tilde\sigma_1}{2}\intG m_\Ga(\psi_k)(\Gradg\theta_k\cdot\Gradg)\tw_j\cdot\tw_l\dG + \intO 2\nu_\Om(\phi_k)\D\tv_j:\D\tv_l\dx \\
    &\quad + \intG 2\nu_\Ga(\psi_k)\Dg\tw_j:\Dg\tw_l\dG + \intG \gamma(\phi_k,\psi_k)\tw_j\cdot\tw_l\dG \\
    &\quad - \frac{\chi(L)}{2}\Big(\beta\frac{\tilde\sigma_2 - \tilde\sigma_1}{2} - \frac{\tilde\rho_2 - \tilde\rho_1}{2}\Big)\intG (\beta\theta_k - \mu_k)\tw_j\cdot\tw_l\dG, \\
    (\G^k(t))_l &\coloneqq \intO \mu_k\Grad\phi_k\cdot\tv_l\dx + \intG \theta_k\Gradg\psi_k\cdot\tw_l\dG,
\end{align*}
and the initial condition reads as
\begin{align}\label{IC:Galerkin}
    \A^k(0)_j = \big(\mathbfcal{P}_{\mathbfcal{V}_k}(\bv_0,\bw_0),(\tv_j,\tw_j)\big)_{\mathbfcal{L}^2}, \qquad j=1,\ldots,k.
\end{align}
By \eqref{Reg:ConvCH} we have $(\phi_k,\psi_k)\in C([0,T];\mathcal{H}^2)$, which in turn implies that
\begin{align*}
    \rho(\phi_k), \nu_\Om(\phi_k), m_\Om(\phi_k)\in C(\overline\Om\times[0,T])
\end{align*}
as well as 
\begin{align*}
    \sigma(\psi_k), \nu_\Ga(\psi_k), m_\Ga(\psi_k), \gamma(\phi_k,\psi_k)\in C(\Ga\times[0,T]).
\end{align*}
Moreover, as we know that $(\bv_\ast,\bw_\ast)\in C([0,T];\mathbfcal{L}^2_\Div)$ and $(\mu_k,\theta_k)\in C([0,T];\mathcal{H}^1_{L,\beta})$, it follows that $\mathbf{M}^k,\mathbf{L}^k\in C([0,T];\R^{k\times k})$ and $\mathbf{G}^k\in C([0,T];\R^k)$. Furthermore, the matrix $\mathbf{M}^k(\cdot)$ is positive definite on $[0,T]$, which entails that $\det\big(\M^k(\cdot)\big)$ is continuous and uniformly positive on $[0,T]$. Consequently, according to Cramer's rule for the inverse, we also have $(\M^k)^{-1}\in C([0,T];\R^{k\times k})$. Therefore, the classical existence and uniqueness theorem for systems of linear ODEs entails that there exists a unique vector $\mathbf{A}^k\in C^1([0,T];\mathbb{R}^k)$ solving \eqref{ODE:Galerkin} together with the initial condition \eqref{IC:Galerkin}. This shows that there exists a unique solution $(\bv_k,\bw_k)\in C^1([0,T];\mathbfcal{V}_k)$ to \eqref{GalerkinApprox}-\eqref{Galerkin:InitialCondition}.

Now, we multiply \eqref{GalerkinApprox} by $a_l^k$ and sum over $l=1,\ldots,m$ to find
\begin{align*}
    &\frac12\intO \rho(\phi_k)\delt\abs{\bv_k}^2\dx + \frac12\intG \sigma(\psi_k)\delt\abs{\bw_k}^2\dG \\
    &\qquad + \frac12\intO \rho(\phi_k)\bv_\ast\cdot\Grad\abs{\bv_k}^2\dx + \frac12\intG \sigma(\psi_k)\bw_\ast\cdot\Gradg\abs{\bw_k}^2\dG \\ 
    &\qquad + \intO 2\nu_\Om(\phi_k)\abs{\D\bv_k}^2\dx + \intG 2\nu_\Ga(\psi_k)\abs{\Dg\bw_k}^2\dG + \intG \gamma(\phi_k,\psi_k)\abs{\bw_k}^2\dG \\
    &\qquad - \frac12\frac{\tilde\rho_2 - \tilde\rho_1}{2}\intO m_\Om(\phi_k)\Grad\mu_k\cdot\Grad\abs{\bv_k}^2\dx - \frac12\frac{\tilde\sigma_2 - \tilde\sigma_1}{2}\intG m_\Ga(\psi_k)\Gradg\theta_k\cdot\Gradg\abs{\bw_k}^2\dG \\
    &\quad = \intO \mu_k\Grad\phi_k\cdot\bv_k\dx + \intG \theta_k\Gradg\psi_k\cdot\bw_k\dG \\
    &\qquad + \frac{\chi(L)}{2}\Big(\beta\frac{\tilde\sigma_2 - \tilde\sigma_1}{2} - \frac{\tilde\rho_2 - \tilde\rho_1}{2}\Big)\intG(\beta\theta_k - \mu_k)\abs{\bw_k}^2\dG. 
\end{align*}
Then, using integration by parts, we observe that
\begin{align*}
    &\ddt\frac12\intO \rho(\phi_k)\abs{\bv_k}^2\dx + \ddt\frac12\intG \sigma(\psi_k)\abs{\bw_k}^2\dG \nonumber \\
    &\qquad - \frac12\intO \Big(\delt\rho(\phi_k) + \Div(\rho(\phi_k)\bv_\ast)\Big)\abs{\bv_k}^2\dx - \frac12\intG \Big(\delt\sigma(\psi_k) + \Divg(\sigma(\psi_k)\bw_\ast)\Big)\abs{\bw_k}^2\dG \nonumber \\
    &\qquad + \intO 2\nu_\Om(\phi_k)\abs{\D\bv_k}^2\dx + \intG 2\nu_\Ga(\psi_k)\abs{\Dg\bw_k}^2\dG + \intG \gamma(\phi_k,\psi_k)\abs{\bw_k}^2\dG \\
    &\qquad + \frac12\frac{\tilde\rho_2 - \tilde\rho_1}{2}\intO
        \Div(m_\Om(\phi_k)\Grad\mu_k)\abs{\bv_k}^2\dx \nonumber \\
    &\qquad + \frac12\frac{\tilde\sigma_2 - \tilde\sigma_1}{2}\intG\Big(\Divg(m_\Ga(\psi_k)\Gradg\theta_k) - \beta m_\Om(\phi_k)\deln\mu_k\Big)\abs{\bw_k}^2\dG \nonumber \\
    &\quad = \intO \mu_k\Grad\phi_k\cdot\bv_k\dx + \intG \theta_k\Gradg\psi_k\cdot\bw_k\dG. \nonumber
\end{align*}
Here, we have additionally employed the assumption $\beta(\tilde\sigma_2 - \tilde\sigma_1) = \tilde\rho_2 - \tilde\rho_1$ in the case $L = 0$ to rewrite the boundary contribution arising from the integration by parts as
\begin{align*}
    \frac{\tilde\rho_2 - \tilde\rho_1}{2}\intG m_\Om(\phi_k)\deln\mu_k\abs{\bv_k}^2\dG = \beta\frac{\tilde\sigma_2 - \tilde\sigma_1}{2}\intG m_\Om(\phi_k)\deln\mu_k\abs{\bw_k}^2\dG.
\end{align*}
Then, as $\bv_\ast$ and $\bw_\ast$ are both divergence-free on their respective domains, we find that
\begin{align*}
        &- \frac12\intO \Big(\delt\rho(\phi_k) + \Div(\rho(\phi_k)\bv_\ast)\Big)\abs{\bv_k}^2\dx - \frac12\intG \Big(\delt\sigma(\psi_k) + \Divg(\sigma(\psi_k)\bw_\ast)\Big)\abs{\bw_k}^2\dG \\
        &\quad= - \frac12\frac{\tilde\rho_2 - \tilde\rho_1}{2}\intO\Big(\delt\phi_k + \bv_\ast\cdot\Grad\phi_k\Big)\abs{\bv_k}^2\dx - \frac12\frac{\tilde\sigma_2 - \tilde\sigma_1}{2}\intG\Big(\delt\psi_k + \bw_\ast\cdot\Gradg\psi_k)\Big)\abs{\bw_k}^2\dG \\ 
        &\quad= - \frac12\frac{\tilde\rho_2 - \tilde\rho_1}{2}\intO\Div(m_\Om(\phi_k)\Grad\mu_k)\abs{\bv_k}^2\dx \\
        &\qquad - \frac12\frac{\tilde\sigma_2 - \tilde\sigma_1}{2}\intG\Big(\Divg(m_\Ga(\psi_k)\Gradg\theta_k) - \beta m_\Om(\phi_k)\deln\mu_k\Big)\abs{\bw_k}^2\dG,
\end{align*}
see also \eqref{Equation:densities}.
Thus, we deduce that
\begin{align}\label{Energy:kin:id:}
    &\ddt\frac12\Big(\intO \rho(\phi_k)\abs{\bv_k}^2\dx + \intG \sigma(\psi_k)\abs{\bw_k}^2\dG\Big) \nonumber \\
    &\qquad + \intO 2\nu_\Om(\phi_k)\abs{\D\bv_k}^2\dx + \intG 2\nu_\Ga(\psi_k)\abs{\Dg\bw_k}^2\dG + \intG \gamma(\phi_k,\psi_k)\abs{\bw_k}^2\dG \\
    &\quad= - \intO \phi_k\Grad\mu_k\cdot\bv_k\dx - \intG \psi_k\Gradg\theta_k\cdot\bw_k\dG. \nonumber
\end{align}
For the right-hand side, we use
\begin{align*}
    &\Bigabs{- \intO \phi_k\Grad\mu_k\cdot\bv_k\dx - \intG \psi_k\Gradg\theta_k\cdot\bw_k\dG} \\
    &\quad\leq \norm{\Grad\mu_k}_{\mathbf{L}^2(\Om)}\norm{\bv_k}_{\mathbf{L}^2(\Om)} + \norm{\Gradg\theta_k}_{\mathbf{L}^2(\Ga)}\norm{\bw_k}_{\mathbf{L}^2(\Ga)} \\
    &\quad\leq \frac{\min\{2\nu_\ast,\gamma_\ast\}}{2}\norm{(\bv_k,\bw_k)}_{\mathbfcal{H}^1_{0}}^2 + \frac{1}{2\min\{2\nu_\ast,\gamma_\ast\}}\norm{(\mu_k,\theta_k)}_{L,\beta}^2,
\end{align*}
which leads to the differential inequality
\begin{align*}
    &\ddt\frac12\Big(\intO \rho(\phi_k)\abs{\bv_k}^2\dx + \intG \sigma(\psi_k)\abs{\bw_k}^2\dG\Big) + \frac{\min\{2\nu_\ast,\gamma_\ast\}}{2}\norm{(\bv_k,\bw_k)}_{\mathbfcal{H}^1_{0}}^2 \\
    &\quad\leq \frac{\min\{2\nu_\ast,\gamma_\ast\}}{2}\norm{(\bv_k,\bw_k)}_{\mathbfcal{H}^1_{0}}^2 + \frac{1}{2\min\{2\nu_\ast,\gamma_\ast\}}\norm{(\mu_k,\theta_k)}_{L,\beta}^2.
\end{align*}
Integrating the above inequality over $[0,s]$ for some $s\in[0,T]$, and using \eqref{CCH:Fixed:EnergyEstimate}, it follows that
\begin{align*}
    &\intO \frac{\rho_\ast}{2}\abs{\bv_k(s)}^2\dx + \intG \frac{\sigma_\ast}{2}\abs{\bw_k(s)}^2\dG \\
    &\quad\leq \intO \frac{\rho^\ast}{2}\abs{\mathbf{P}_{\mathbfcal{V}_k}^\Om(\bv_0)}^2\dx + \intG \frac{\sigma^\ast}{2}\abs{\mathbf{P}_{\mathbfcal{V}_k}^\Ga(\bw_0)}^2\dG + \frac{1}{\min\{2\nu_\ast,\gamma_\ast\}\min\{1,m_\ast\}}E_{\mathrm{free}}(\phi_0,\psi_0) \\
    &\qquad + \frac{1}{2\min\{2\nu_\ast,\gamma_\ast\}\min\{1,m_\ast\}}\int_0^s \norm{(\bv_\ast(\tau),\bw_\ast(\tau))}_{\mathbfcal{L}^2}^2\dtau,
\end{align*}
which, in turn, entails that
\begin{align}\label{Est:vw:m:int:-1}
    \norm{(\bv_k(s),\bw_k(s)}_{\mathbfcal{L}^2}^2 &\leq c_\ast\big(\rho^\ast + \sigma^\ast)\norm{(\bv_0,\bw_0)}_{\mathbfcal{L}^2}^2 + \frac{2c_\ast}{\min\{2\nu_\ast,\gamma_\ast\}\min\{1,m_\ast\}}E_{\mathrm{free}}(\phi_0,\psi_0) \\
    &\qquad + \frac{c_\ast}{\min\{2\nu_\ast,\gamma_\ast\}\min\{1,m_\ast\}}\int_0^s \norm{(\bv_\ast(\tau),\bw_\ast(\tau))}_{\mathbfcal{L}^2}^2\dtau, \nonumber
\end{align}
where $c_\ast \coloneqq \tfrac{1}{\rho_\ast} + \tfrac{1}{\sigma_\ast}$. At this point, setting
\begin{align*}
    R_1 &= \rho^\ast c_\ast\big(\rho^\ast + \sigma^\ast)\norm{(\bv_0,\bw_0)}_{\mathbfcal{L}^2}^2 + \frac{2c_\ast}{\min\{2\nu_\ast,\gamma_\ast\}\min\{1,m_\ast\}}E_{\mathrm{free}}(\phi_0,\psi_0), \\
    R_2 &= \frac{c_\ast}{\min\{2\nu_\ast,\gamma_\ast\}\min\{1,m_\ast\}}, \qquad R_3 = R_1T,
\end{align*}
and assuming
\begin{align}\label{Assump:vw:_star}
    \int_0^t \norm{(\bv_\ast(\tau),\bw_\ast(\tau))}_{\mathbfcal{L}^2}^2\dtau \leq R_3 \e^{R_2t}\qquad\text{for all~}t\in[0,T],
\end{align}
we deduce from \eqref{Est:vw:m:int:-1} that
\begin{align}\label{Est:vw:m:int}
    \begin{split}
        \int_0^t \norm{(\bv_k(s),\bw_k(s))}_{\mathbfcal{L}^2}^2\ds \leq R_3 + R_2\int_0^t\int_0^s\norm{(\bv_\ast(\tau),\bw_\ast(\tau))}_{\mathbfcal{L}^2}^2\dtau\ds \leq R_3\e^{R_2t}
    \end{split}
\end{align}
for all $t\in[0,T]$. Furthermore, we also infer that
\begin{align}\label{Est:vw:m:sup}
    \sup_{t\in[0,T]} \norm{(\bv_k(t),\bw_k(t))}_{\mathbfcal{L}^2} \leq \big(R_1 + R_3R_2\e^{R_2T}\big)^{\frac12} \eqqcolon M_0.
\end{align}
Next, we aim to control the time derivative $(\delt\bv_k,\delt\bw_k)$. To this end, we multiply \eqref{GalerkinApprox} by $\ddt a_l^k$ and sum over $l=1,\ldots,k$. This yields
\begin{align*}
    &\intO \rho(\phi_k)\abs{\delt\bv_k}^2\dx + \intG \sigma(\psi_k)\abs{\delt\bw_k}^2\dG \nonumber \\
    &\quad = - \intO \rho(\phi_k)(\bv_\ast\cdot\Grad)\bv_k\cdot\delt\bv_k\dx - \intG \sigma(\psi_k)(\bw_\ast\cdot\Gradg)\bw_k\cdot\delt\bw_k\dG \nonumber \\
    &\qquad - \intO 2\nu_\Om(\phi_k)\D\bv_k:\D\delt\bv_k\dx - \intG 2\nu_\Ga(\psi_k)\Dg\bw_k:\Dg\delt\bw_k\dG \nonumber \\
    &\qquad - \intG \gamma(\phi_k,\psi_k)\bw_k\cdot\delt\bw_k\dG \\
    &\qquad + \frac{\tilde\rho_2 - \tilde\rho_1}{2}\intO m_\Om(\phi_k)(\Grad\mu_k\cdot\Grad)\bv_k\cdot\delt\bv_k\dx + \frac{\tilde\sigma_2 - \tilde\sigma_1}{2}\intG m_\Ga(\psi_k)(\Gradg\theta_k\cdot\Gradg)\bw_k\cdot\delt\bw_k\dG \nonumber \\
    &\qquad - \intO \phi_k\Grad\mu_k\cdot\delt\bv_k\dx - \intG \psi_k\Gradg\theta_k\cdot\delt\bw_k\dG \nonumber \\
    &\qquad + \frac{\chi(L)}{2}\Big(\beta\frac{\tilde\sigma_2 - \tilde\sigma_1}{2} - \frac{\tilde\rho_2 - \tilde\rho_1}{2}\Big)\intG (\beta\theta_k - \mu_k)\bw_k\cdot\delt\bw_k\dG. \nonumber
\end{align*}
By exploiting \eqref{Est:InverseSobolev}, we use Hölder's inequality and the Sobolev embedding to find that
\begin{align}\label{Est:Galerkin:Test:ddta}
    &\rho_\ast\norm{\delt\bv_k}_{\mathbf{L}^2(\Om)}^2 + \sigma_\ast\norm{\delt\bw_k}_{\mathbf{L}^2(\Ga)}^2 \nonumber \\[0.4em]
    &\quad\leq\rho^\ast\norm{\bv_\ast}_{\mathbf{L}^2(\Om)}\norm{\Grad\bv_k}_{\mathbf{L}^\infty(\Om)}\norm{\delt\bv_k}_{\mathbf{L}^2(\Om)} + \sigma^\ast\norm{\bw_\ast}_{\mathbf{L}^2(\Ga)}\norm{\Gradg\bw_k}_{\mathbf{L}^\infty(\Ga)}\norm{\delt\bw_k}_{\mathbf{L}^2(\Ga)} \nonumber \\[0.3em]
    &\qquad  + 2\nu^\ast\norm{\D\bv_k}_{\mathbf{L}^2(\Om)}\norm{\D\delt\bv_k}_{\mathbf{L}^2(\Om)} + 2\nu^\ast\norm{\Dg\bw_k}_{\mathbf{L}^2(\Ga)}\norm{\Dg\delt\bw_k}_{\mathbf{L}^2(\Ga)} \nonumber \\[0.3em]
    &\qquad + \gamma^\ast\norm{\bw_k}_{\mathbf{L}^2(\Ga)}\norm{\delt\bw_k}_{\mathbf{L}^2(\Ga)} + \Bigabs{\frac{\tilde\rho_1 - \tilde\rho_2}{2}}m^\ast\norm{\Grad\mu_k}_{\mathbf{L}^2(\Om)}\norm{\Grad\bv_k}_{\mathbf{L}^\infty(\Om)}\norm{\delt\bv_k}_{\mathbf{L}^2(\Om)} \nonumber \\[0.3em]
    &\qquad + \Bigabs{\frac{\tilde\sigma_1 - \tilde\sigma_2}{2}}m^\ast\norm{\Gradg\theta_k}_{\mathbf{L}^2(\Ga)}\norm{\Gradg\bw_k}_{\mathbf{L}^\infty(\Ga)}\norm{\delt\bw_k}_{\mathbf{L}^2(\Ga)} \nonumber \\[0.3em]
    &\qquad + \norm{\Grad\mu_k}_{\mathbf{L}^2(\Om)}\norm{\delt\bv_k}_{\mathbf{L}^2(\Om)} + \norm{\Gradg\theta_k}_{\mathbf{L}^2(\Ga)}\norm{\delt\bw_k}_{\mathbf{L}^2(\Ga)} \nonumber \\[0.2em]
    &\qquad + \tfrac12\chi(L)\Bigabs{\beta\frac{\tilde\sigma_2 - \tilde\sigma_1}{2} - \frac{\tilde\rho_2 - \tilde\rho_1}{2}}\norm{\beta\theta_k - \mu_k}_{L^2(\Ga)}\norm{\bw_k}_{\mathbf{L}^\infty(\Ga)}\norm{\delt\bw_k}_{\mathbf{L}^2(\Ga)} \nonumber \\[0.4em]
    &\quad\leq C(\rho^\ast + \sigma^\ast) \norm{(\bv_\ast,\bw_\ast)}_{\mathbfcal{L}^2}\norm{(\bv_k,\bw_k)}_{\mathbfcal{W}^{2,4}}\norm{(\delt\bv_k,\delt\bw_k)}_{\mathbfcal{L}^2} \nonumber \\[0.3em]
    &\qquad + \nu^\ast C_k^2\norm{(\bv_k,\bw_k)}_{\mathbfcal{L}^2}\norm{(\delt\bv_k,\delt\bw_k)}_{\mathbfcal{L}^2} + \gamma^\ast\norm{\bw_k}_{\mathbf{L}^2(\Ga)}\norm{\delt\bw_k}_{\mathbf{L}^2(\Ga)} \\[0.3em]
    &\qquad + Cm^\ast\bigg(\Bigabs{\frac{\tilde\rho_1 - \tilde\rho_2}{2}} + \Bigabs{\frac{\tilde\sigma_1 - \tilde\sigma_2}{2}}\bigg)\norm{(\mu_k, \theta_k)}_{L,\beta}\norm{(\bv_k,\bw_k)}_{\mathbfcal{W}^{2,4}}\norm{(\delt\bv_k,\delt\bw_k)}_{\mathbfcal{L}^2} \nonumber \\[0.3em]
    &\qquad + C\norm{(\mu_k,\theta_k)}_{L,\beta}\norm{(\delt\bv_k,\delt\bw_k)}_{\mathbfcal{L}^2} \nonumber \\[0.3em]
    &\qquad + \chi(L)C\Bigabs{\beta\frac{\tilde\sigma_2 - \tilde\sigma_1}{2} - \frac{\tilde\rho_2 - \tilde\rho_1}{2}}\norm{\beta\theta - \mu_k}_{L^2(\Ga)}\norm{\bw_k}_{\mathbf{H}^2(\Ga)}\norm{\delt\bw_k}_{\mathbf{L}^2(\Ga)} \nonumber \\[0.4em]
    &\quad\leq C_k(\rho^\ast + \sigma^\ast)\norm{(\bv_\ast,\bw_\ast)}_{\mathbfcal{L}^2}\norm{(\bv_k,\bw_k)}_{\mathbfcal{L}^2}\norm{(\delt\bv_k,\delt\bw_k)}_{\mathbfcal{L}^2} \nonumber \\[0.3em]
    &\qquad + \nu^\ast C_k^2\norm{(\bv_k,\bw_k)}_{\mathbfcal{L}^2}\norm{(\delt\bv_k,\delt\bw_k)}_{\mathbfcal{L}^2} + \gamma^\ast\norm{\bw_k}_{\mathbf{L}^2(\Ga)}\norm{\delt\bw_k}_{\mathbf{L}^2(\Ga)} \nonumber \\[0.3em]
    &\qquad + C_km^\ast\bigg(\Bigabs{\frac{\tilde\rho_1 - \tilde\rho_2}{2}} + \Bigabs{\frac{\tilde\sigma_1 - \tilde\sigma_2}{2}}\bigg)\norm{(\mu_k, \theta_k)}_{L,\beta}\norm{(\bv_k,\bw_k)}_{\mathbfcal{L}^2}\norm{(\delt\bv_k,\delt\bw_k)}_{\mathbfcal{L}^2} \nonumber \\[0.3em]
    &\qquad + C_k\norm{(\mu_k,\theta_k)}_{L,\beta}\norm{(\delt\bv_k,\delt\bw_k)}_{\mathbfcal{L}^2} \nonumber \\[0.3em]
    &\qquad + \chi(L)C_k\Bigabs{\beta\frac{\tilde\sigma_1 - \tilde\sigma_2}{2} - \frac{\tilde\rho_1 - \tilde\rho_2}{2}}\norm{\beta\theta_k - \mu_k}_{L^2(\Ga)}\norm{(\bv_k,\bw_k)}_{\mathbfcal{L}^2}\norm{\delt\bw_k}_{\mathbf{L}^2(\Ga)}. \nonumber
\end{align}
Eventually, in view of \eqref{CCH:Fixed:EnergyEstimate} and \eqref{Assump:vw:_star}-\eqref{Est:vw:m:sup}, we infer that
\begin{align}\label{Est:delt:vw:m:int}
    &\int_0^T \norm{(\delt\bv_k(\tau),\delt\bw_k(\tau))}_{\mathbfcal{L}^2}^2\dtau \nonumber\\
    &\quad\leq 3\Big(C_k(\rho^\ast + \sigma^\ast)\Big(\frac{1}{\rho_\ast} + \frac{1}{\sigma_\ast}\Big)M_0\Big)^2\int_0^T\norm{(\bv_\ast,\bw_\ast)}_{\mathbfcal{L}^2}^2\dtau  \nonumber\\
    &\qquad + 3\Big(\nu^\ast C_k^2\Big(\frac{1}{\rho_\ast} + \frac{1}{\sigma_\ast}\Big)\Big)^2R_4\e^{R_2T} + 3\Big(\gamma^\ast\Big(\frac{1}{\rho_\ast} + \frac{1}{\sigma_\ast}\Big)\Big)^2R_4\e^{R_2T} \nonumber \\
    &\qquad + 3\Big(C_km^\ast\bigg(\Bigabs{\frac{\tilde\rho_1 - \tilde\rho_2}{2}} + \Bigabs{\frac{\tilde\sigma_1 - \tilde\sigma_2}{2}}\bigg)\Big(\frac{1}{\rho_\ast} + \frac{1}{\sigma_\ast}\Big)M_0 + M_0\Big)^2\int_0^T \norm{(\mu_k(\tau),\theta_k(\tau))}_{L,\beta}^2\dtau \nonumber \\
    &\qquad + \Big(C_k\Bigabs{\beta\frac{\tilde\sigma_1 - \tilde\sigma_2}{2} - \frac{\tilde\rho_1 - \tilde\rho_2}{2}}\Big(\frac{1}{\rho_\ast} + \frac{1}{\sigma_\ast}\Big)M_0\Big)^2\int_0^T\norm{\beta\theta_k(\tau) - \mu_k(\tau)}_{L^2(\Ga)}^2\dtau \\
    &\quad\leq \Bigg(3\Big(C_k(\rho^\ast + \sigma^\ast)\Big(\frac{1}{\rho_\ast} + \frac{1}{\sigma_\ast}\Big)M_0\Big)^2 +  3\Big(\nu^\ast C_k^2\Big(\frac{1}{\rho_\ast} + \frac{1}{\sigma_\ast}\Big)\Big)^2 + 3\Big(\gamma^\ast\Big(\frac{1}{\rho_\ast} + \frac{1}{\sigma_\ast}\Big)\Big)\Bigg)R_4\e^{R_2T} \nonumber \\
    &\qquad + 3\Big(C_km^\ast\bigg(\Bigabs{\frac{\tilde\rho_1 - \tilde\rho_2}{2}} + \abs{\frac{\tilde\sigma_1 - \tilde\sigma_2}{2}}\bigg)\Big(\frac{1}{\rho_\ast} + \frac{1}{\sigma_\ast}\Big)M_0 + M_0\Big)^2\Big(2E_{\mathrm{free}}(\phi_0,\psi_0) + R_4\e^{R_2T}\Big) \nonumber \\
    &\qquad + \Big(C_k\Bigabs{\beta\frac{\tilde\sigma_1 - \tilde\sigma_2}{2} - \frac{\tilde\rho_1 - \tilde\rho_2}{2}}\Big(\frac{1}{\rho_\ast} + \frac{1}{\sigma_\ast}\Big)M_0\Big)^2\Big(2E_{\mathrm{free}}(\phi_0,\psi_0) + R_4\e^{R_2T}\Big) \nonumber \\
    &\quad\eqqcolon M_1^2, \nonumber
\end{align}
where $M_1$ only depends on $\rho_\ast,\rho^\ast,\sigma_\ast,\sigma^\ast,\gamma^\ast, E_{\mathrm{free}}(\phi_0,\psi_0), L,T,\Om, m_\ast, m^\ast$ and $k$.

Now we define the setting of the fixed-point argument. We introduce the set
\begin{align*}
    \mathbfcal{S} = \{(\bv,\bw)\in H^1(0,T;\mathbfcal{V}_k): &\int_0^t\norm{(\bv(\tau),\bw(\tau))}_{\mathbfcal{L}^2}^2\dtau \leq R_4\e^{R_2t} \ \text{for all~}t\in[0,T],\\ &\qquad\norm{(\delt\bv,\delt\bw)}_{L^2(0,T;\mathbfcal{V}_k)}\leq M_1\},
\end{align*}
which is a subset of $L^2(0,T;\mathbfcal{V}_k)$, and define the map
\begin{align*}
    \Lambda:\mathbfcal{S}\rightarrow L^2(0,T;\mathbfcal{V}_k), \qquad \Lambda(\bv_\ast,\bw_\ast) = (\bv_k,\bw_k),
\end{align*}
where $(\bv_k,\bw_k)$ is the solution to the system \eqref{GalerkinApprox}. By \eqref{Est:vw:m:int} and \eqref{Est:delt:vw:m:int} it follows immediately that $\Lambda:\mathbfcal{S}\rightarrow\mathbfcal{S}$. We readily notice that $\mathbfcal{S}$ is convex and closed. Moreover, as a subset of $L^2(0,T;\mathbfcal{V}_k)$, $\mathbfcal{S}$ is compact. We are left to prove the continuity of the map $\Lambda$. This is done by adapting the argument in \cite[Theorem~6.1]{Knopf2025a} to the case with non-degenerate mobilities by using the results established in Appendix~\ref{Section:ConvCH}. To this end, let us consider a sequence $\{(\wv_n,\ww_n)\}_{n\in\N}\subset\mathbfcal{S}$ such that $(\wv_n,\ww_n)\rightarrow (\wv_\ast,\ww_\ast)$ in $L^2(0,T;\mathbfcal{V}_k)$. By arguing as above, there exists a sequence $(\wphi_n,\wpsi_n,\wmu_n,\wtheta_n)$, $n\in\N$, and $(\wphi_\ast,\wpsi_\ast,\wmu_\ast,\wtheta_\ast)$ that solve the convective bulk-surface Cahn--Hilliard equation \eqref{CCH:Fixed}, where $(\bv_\ast,\bw_\ast)$ is replaced by $(\wv_n,\ww_n)$ and $(\wv_\ast,\ww_\ast)$, respectively. Observing that $(\wphi_n(0), \wpsi_n(0)) = (\wphi_\ast(0), \wpsi_\ast(0))$ for all $n\in\N$, an application of \eqref{Est:ContinuousDependenceConvective} shows that
\begin{align}\label{ContDep:n}
    \begin{split}
        &\norm{((\wphi_n - \wphi, \wpsi_n - \wpsi_\ast}_{(\mathcal{H}^1_{L,\beta})^\prime}^2 + \int_0^T\norm{(\wphi_n - \wphi_\ast, \wpsi_n - \wpsi_\ast)}_{K,\alpha}^2\dt \\
        &\quad\leq \exp\Big(\int_0^T P_n\dt\Big)\int_0^T\norm{(\wv_n - \wv_\ast, \ww_n - \ww_\ast)}_{\mathbfcal{L}^2}^2\dt,
    \end{split}
\end{align}
where
\begin{align*}
    P_n(\cdot) = C\big(1 + \norm{(\wv_\ast,\ww_\ast)}_{\mathbfcal{L}^2}^2 + \norm{(\wmu_\ast,\wtheta_\ast)}_{L,\beta}^2 + \norm{(\delt\wphi_n,\delt\wpsi_n)}_{(\mathcal{H}^1_{L,\beta})^\prime}^2 + \norm{(\wphi_n,\wpsi_n)}_{\mathcal{H}^2}^4\big),
\end{align*}
and the constant $C > 0$ is independent of $n\in\N$. To pass to the limit in \eqref{ContDep:n}, we have to show that the exponential term is uniformly bounded in $n\in\N$. To do so, we first notice that, since $\{(\wv_n,\ww_n)\}_{n\in\N}$ belongs to the space $\mathbfcal{S}$, it is especially uniformly bounded in $L^2(0,T;\mathbfcal{L}^2)$. Thus, invoking the estimates \eqref{CCH:Est:Energy:Thm}-\eqref{CCH:Est:H^2:Thm}, we find a constant $C = C(p) > 0$ such that
\begin{align}\label{Est:All:n}
    &\norm{(\wphi_n,\wpsi_n)}_{L^\infty(0,T;\mathcal{H}^1)} + \norm{(\delt\wphi_n,\delt\wpsi_n)}_{L^2(0,T;(\mathcal{H}^1_{L,\beta})^\prime)} + \norm{(\Grad\wmu_n,\Gradg\wtheta_n)}_{L^2(0,T;\mathbfcal{L}^2)} \nonumber \\
    &\quad + \chi(L)^{\frac12}\norm{\beta\wtheta_n - \wmu_n}_{L^2(0,T;L^2(\Ga))} + \norm{(\wphi_n,\wpsi_n)}_{L^2(0,T;\mathcal{W}^{2,p})} \\
    &\quad + \norm{(F^\prime(\wphi_n),G^\prime(\wpsi_n))}_{L^2(0,T;\mathcal{L}^p)} + \norm{(\wphi_n,\wpsi_n)}_{L^4(0,T;\mathcal{H}^2)} \leq C \nonumber
\end{align}
for any $2 \leq p < \infty$. Consequently, it follows that
\begin{align*}
    \exp\Big(\int_0^T P_n(\tau)\dt\Big) \leq C.
\end{align*}
Thus, we infer from \eqref{ContDep:n} with an application of the bulk-surface Poincar\'{e} inequality that
\begin{align}\label{ContDep:Conv}
    \norm{(\wphi_n - \wphi_\ast, \wpsi_n - \wpsi_\ast)}_{L^\infty(0,T;(\mathcal{H}^1_{L,\beta})^\prime)\cap L^2(0,T;\mathcal{H}^1)} \rightarrow 0 
\end{align}
as $n\rightarrow\infty$. 
Then, the proof of \cite[Theorem~3.4]{Knopf2025} yields the estimates
\begin{align}\label{Est:mt:mean:n}
    \abs{\mean{\wmu_n}{\wtheta_n}} \leq C\big(1 + \norm{(\wmu_n,\wtheta_n)}_{L,\beta}\big).
\end{align}
In the original formulation in \cite{Knopf2025}, the right-hand side additionally contains a term involving the normal derivative $\deln\wphi_n$. However, this extra term appears only in the case $K = 0$. Since we restrict our analysis to the case $K\in(0,\infty)$, this contribution does not arise and can therefore be omitted.
Consequently, on account of the bulk-surface Poincar\'{e} inequality, we infer from \eqref{Est:mt:mean:n} with \eqref{Est:All:n} that
\begin{align}\label{Est:mt:n:H^1}
    \norm{(\wmu_n,\wtheta_n)}_{L^2(0,T;\mathcal{H}^1)} \leq C.
\end{align}
Then, in view of \eqref{Est:All:n}, we find that
\begin{align}\label{Conv:FG:n}
    (F^\prime(\wphi_n),G^\prime(\wpsi_n)) \rightarrow (F^\prime(\wphi_\ast),G^\prime(\wpsi_\ast)) \qquad\text{weakly in~} L^2(0,T;\mathcal{L}^2).
\end{align}
Here, we have additionally used the fact that $F_0^\prime$ and $G_0^\prime$ are maximal monotone operators to identify the limit functions (see, e.g., \cite{Knopf2025a} for more details). Next, in view of the weak formulation \eqref{WF:MT:DISCR} and the convergences \eqref{ContDep:Conv} and \eqref{Conv:FG:n}, we obtain 
\begin{align*}
    (\wmu_n,\wtheta_n) \rightarrow (\wmu_\ast,\wtheta_\ast) \qquad\text{weakly in~} L^2(0,T;(\mathcal{H}^1)^\prime).
\end{align*}
As we additionally have the uniform bound \eqref{Est:mt:n:H^1}, it readily follows that
\begin{align*}
    (\wmu_n,\wtheta_n) \rightarrow (\wmu_\ast,\wtheta_\ast) \qquad\text{weakly in~} L^2(0,T;\mathcal{H}^1_{L,\beta}).
\end{align*}

Now, to show the continuity of $\Lambda$, we have to prove that
\begin{align*}
    \Lambda(\wv_n,\wv_n) \rightarrow \Lambda(\wv_\ast,\ww_\ast) \qquad\text{strongly in~} L^2(0,T;\mathbfcal{V}_k)
\end{align*}
as $n\rightarrow\infty$. To this end, using the abbreviation $(\bu^\Om_n,\bu^\Ga_n) = \Lambda(\wv_n,\ww_n)$, we recall that by definition of $\Lambda$, $(\bu^\Om_n,\bu^\Ga_n)$ satisfies the variational formulation
\begin{align}
    &\intO\rho(\wphi_n)\delt\bu^\Om_n\cdot\tv_l\dx + \intG\sigma(\wpsi_n)\delt\bu^\Ga_n\cdot\tw_l\dG \nonumber \\
    &\qquad + \intO\rho(\wphi_n)(\wv_n\cdot\Grad)\bu^\Om_n\cdot\tv_l\dx + \intG\sigma(\wpsi_n)(\ww_n\cdot\Gradg)\bu^\Ga_n\cdot\tw_l\dG \nonumber \\
    &\qquad + \frac{\tilde\rho_2 - \tilde\rho_1}{2}\intO m_\Om(\wphi_n)(\Grad\wmu_n\cdot\Grad)\bu^\Om_n\cdot\tv_l\dx + \frac{\tilde\sigma_2 - \tilde\sigma_1}{2}\intG m_\Ga(\wpsi_n)(\Gradg\wtheta_n\cdot\Gradg)\bu^\Ga_n\cdot\tw_l\dG \label{GalerkinApprox:Continuity} \\
    &\qquad + \intO2\nu_\Om(\wphi_n)\D\bu^\Om_n:\D\tv_l\dx + \intG2\nu_\Ga(\wpsi_n)\Dg\bu^\Ga_n:\Dg\tw_l\dG + \intG \gamma(\wphi_n,\wpsi_n)\bu^\Ga_n\cdot\tw_l\dG  \nonumber \\
    &\quad = \intO\wmu_n\Grad\wphi_n\cdot\tv_l\dx + \intG\wtheta_n\Gradg\wpsi_n\cdot\tw_l\dG \nonumber \\
    &\qquad + \frac{\chi(L)}{2}\Big(\beta\frac{\tilde\sigma_2 - \tilde\sigma_1}{2} - \frac{\tilde\rho_2 - \tilde\rho_1}{2}\big)\intG(\beta\wtheta_n - \wmu_n)\bu^\Ga_n\cdot\tw_l\dG, \nonumber
\end{align}
for all $l=1,\ldots,k$. Now, we notice that, since $\{(\bu^\Om_n,\bu^\Ga_n)\}_{n\in\N}\subset\mathbfcal{S}$, there exists $(\bu^\Om_\ast,\bu^\Ga_\ast)\in H^1(0,T;\mathbfcal{V}_k)$ such that
\begin{alignat}{2}
    (\bu^\Om_n,\bu^\Ga_n) &\rightarrow (\bu^\Om_\ast,\bu^\Ga_\ast) &&\qquad\text{weakly in~} H^1(0,T;\mathbfcal{V}_k). \label{Conv:u_n}
\end{alignat}
In light of the compact embedding $H^1(0,T;\mathbfcal{V}_k)\emb C([0,T];\mathbfcal{V}_k)$ and \eqref{Est:InverseSobolev}, we readily deduce from \eqref{Conv:u_n} and by compactness, along a non-relabeled subsequence, that
\begin{alignat}{2}
        (\bu^\Om_n,\bu^\Ga_n) &\rightarrow (\bu^\Om_\ast,\bu^\Ga_\ast) &&\qquad\text{strongly in~} C([0,T];\mathbfcal{V}_k), \label{Conv:u_n:strong} \\
        (\Grad\bu^\Om_n,\Grad\bu^\Ga_n) &\rightarrow (\Grad\bu^\Om_\ast,\Gradg\bu^\Ga_\ast) &&\qquad\text{strongly in~} C([0,T];\mathbfcal{L}^2). \label{Conv:Grad:u_n:strong}
\end{alignat}
Now, the convergences \eqref{Conv:u_n}-\eqref{Conv:Grad:u_n:strong} allow us to pass to the limit $n\rightarrow\infty$ in \eqref{GalerkinApprox:Continuity}. As $(\bu^\Om_\ast,\bu^\Ga_\ast)$ further satisfies the initial condition $(\bu^\Om_\ast,\bu^\Ga_\ast)\vert_{t=0} = \mathbfcal{P}_{\mathbfcal{V}_k}(\bv_0,\bw_0)$, which follows readily from \eqref{Conv:u_n:strong}, we infer that $(\bu^\Om_\ast,\bu^\Ga_\ast)$ is a solution to \eqref{GalerkinApprox}-\eqref{Galerkin:InitialCondition}. In view of the uniqueness of solutions to \eqref{GalerkinApprox}-\eqref{Galerkin:InitialCondition}, this implies $(\bu^\Om_\ast,\bu^\Ga_\ast) = \Lambda(\wv_\ast,\ww_\ast)$, which entails
\begin{align*}
    \Lambda(\wv_n,\ww_n) = (\bu^\Om_n,\bu^\Ga_n)
    \rightarrow 
    (\bu^\Om_\ast, \bu^\Ga_\ast) = \Lambda(\wv_\ast,\ww_\ast)
    \qquad\text{strongly in~} L^2(0,T;\mathbfcal{V}_k)
\end{align*}
as $n\rightarrow\infty$.
This proves that the map $\Lambda$ is continuous. 
Finally, we are in a position to apply the Schauder fixed-point theorem and conclude that the map $\Lambda$ has a fixed-point in $\mathbfcal{S}$, which gives the existence of the approximate solution $(\bv_k,\bw_k,\phi_k,\psi_k,\mu_k,\theta_k)$ satisfying \eqref{WF:VW:DISCR}-\eqref{Approx:Problem:IC} for any $k\in\N$.

\textbf{A priori Estimates for the Approximate Solutions}
We first observe that, as $T > 0$ was arbitrarily chosen, the approximate solutions $(\bv_k,\bw_k,\phi_k,\psi_k,\mu_k,\theta_k)$ exist on $(0,\infty)$ for any $k\in\N$. In the following, we denote with the letter $C$ generic constants that are independent of $k\in\N$.
We start by establishing a priori estimates, which are based on an energy estimate for the approximate solutions. To this end, we take $(\wv,\ww) = (\bv_k,\bw_k)$ in \eqref{WF:VW:DISCR} and perform integration by parts. This yields
\begin{align}\label{ddt:KineticEnergy}
    &\ddt\frac12\Big(\intO \rho(\phi_k)\abs{\bv_k}^2\dx + \intG \sigma(\psi_k)\abs{\bw_k}^2\dG\Big) \nonumber \\
    &\qquad + \intO 2\nu_\Om(\phi_k)\abs{\D\bv_k}^2\dx + \intG 2\nu_\Ga(\psi_k)\abs{\Dg\bw_k}^2\dG + \intG \gamma(\phi_k,\psi_k)\abs{\bw_k}^2\dG \\
    &\quad = -\intO \phi_k\Grad\mu_k\cdot\bv_k\dx - \intG \psi_k\Gradg\theta_k\cdot\bw_k\dG, \nonumber
\end{align}
see \eqref{Energy:kin:id:}. Then, taking $(\zeta,\xi) = (\mu_k,\theta_k)$ in \eqref{WF:PP:DISCR} and $(\eta,\vartheta) = -(\delt\phi_k,\delt\psi_k)$ in \eqref{WF:MT:DISCR}, we find with integration by parts that
\begin{align}\label{ddt:FreeEnergy}
    &\ddt\Big( \intO \frac12\abs{\Grad\phi_k}^2 + F(\phi_k)\dx + \intG \frac12\abs{\Gradg\psi_k}^2 + G(\psi_k)\dG + \chi(K)\intG \frac12(\alpha\psi_k - \phi_k)^2\dG \Big) \nonumber \\
    &\qquad + \intO m_\Om(\phi_k)\abs{\Grad\mu_k}^2\dx + \intG m_\Ga(\psi_k)\abs{\Gradg\theta_k}^2\dG + \chi(L)\intG (\beta\theta_k - \mu_k)^2\dG \\
    &\quad = \intO \phi_k\Grad\mu_k\cdot\bv_k\dx + \intG \psi_k\Gradg\theta_k\cdot\bw_k. \nonumber  
\end{align}
By summming \eqref{ddt:KineticEnergy} and \eqref{ddt:FreeEnergy}, we obtain
\begin{align*}
    &\ddt E_{\mathrm{tot}}(\bv_k,\bw_k,\phi_k,\psi_k) \\
    &\quad + \intO 2\nu_\Om(\phi_k)\abs{\D\bv_k}^2\dx + \intG 2\nu_\Ga(\psi_k)\abs{\Dg\bw_k}^2\dG + \intG \gamma(\phi_k,\psi_k)\abs{\bw_k}^2\dG \\
    &\quad + \intO m_\Om(\phi_k)\abs{\Grad\mu_k}^2\dx + \intG m_\Ga(\psi_k)\abs{\Gradg\theta_k}^2\dG + \chi(L) \intG (\beta\theta_k - \mu_k)^2\dG = 0.
\end{align*}
Integrating in time from $0$ to $t$ for any $t\in(0,\infty)$, we have
\begin{align*}
    &E_{\mathrm{tot}}(\bv_k(t),\bw_k(t),\phi_k(t),\psi_k(t)) \\
    &\quad + \int_0^t\Bigg(\intO 2\nu_\Om(\phi_k)\abs{\D\bv_k}^2\dx + \intG 2\nu_\Ga(\psi_k)\abs{\Dg\bw_k}^2\dG + \intG \gamma(\phi_k,\psi_k)\abs{\bw_k}^2\dG\Bigg)\ds \\
    &\quad + \int_0^t\Bigg(\intO m_\Om(\phi_k)\abs{\Grad\mu_k}^2\dx + \intG m_\Ga(\psi_k)\abs{\Gradg\theta_k}^2\dGs + \chi(L)\intG (\beta\theta_k - \mu_k)^2\dG\Bigg)\ds \\
    &= E_{\mathrm{tot}}(\mathbfcal{P}_{\mathbfcal{V}_k}(\bv_0,\bw_0),\phi_0,\psi_0).
\end{align*}
As the initial energy can be estimated as
\begin{align*}
    E_{\mathrm{tot}}(\mathbfcal{P}_{\mathbfcal{V}_k}(\bv_0,\bw_0),\phi_0,\psi_0) \leq \frac{\max\{\rho_\ast,\sigma_\ast\}}{2}\norm{(\bv_0,\bw_0)}_{\mathcal{L}^2} + E_{\mathrm{free}}(\phi_0,\psi_0),
\end{align*}
it follows using $\abs{\phi_k} < 1$ a.e. in $\Om\times(0,\infty)$, $\abs{\psi_k} < 1$ a.e. in $\Ga\times(0,\infty)$ and the bulk-surface Korn inequality \eqref{Est:Korn} that
\begin{align}
    \norm{(\bv_k,\bw_k)}_{L^\infty(0,\infty;\mathbfcal{L}^2)} + \norm{(\bv_k,\bw_k)}_{L^2(0,\infty;\mathbfcal{H}^1)}\leq C, \label{Est:vw:Energy}\\
    \norm{(\phi_k,\psi_k)}_{L^\infty(0,\infty;\mathcal{H}^1)} \leq C, \label{Est:pp:Energy} \\
    \norm{(\Grad\mu_k,\Gradg\theta_k)}_{L^2(0,\infty;\mathbfcal{L}^2)} + \chi(L)^{\frac12}\norm{\beta\theta_k - \mu_k}_{L^2(0,\infty;L^2(\Ga))} \leq C \label{Est:mt:Energy}.
\end{align}
Then, recalling \eqref{Est:mt:mean:n}, we employ the bulk-surface Poincar\'{e} inequality to deduce that
\begin{align}\label{Est:mt:H^1:a.e.}
    \norm{(\mu_k,\theta_k)}_{\mathcal{H}^1} \leq C\big(1 + \norm{(\mu_k,\theta_k)}_{L,\beta}\big).
\end{align}
Then, by \cite[Proposition~5.5]{Giorgini2025} and another application of the bulk-surface Poincar\'{e} inequality, we obtain
\begin{align}\label{Est:pp:pot:L^p:a.e.}
    \begin{split}
        \norm{(\phi_k,\psi_k)}_{\mathcal{W}^{2,p}} + \norm{(F^\prime(\phi_k),G^\prime(\psi_k))}_{\mathcal{L}^p} &\leq C_p\big(1 + \norm{(\mu_k,\theta_k)}_{\mathcal{H}^1}\big) \\
        &\leq C_p\big(1 + \norm{(\mu_k,\theta_k)}_{L,\beta}\big)
    \end{split}
\end{align}
for any $2 \leq p < \infty$. By comparison in \eqref{WF:PP:DISCR}, it holds that
\begin{align}\label{Est:pp:delt:a.e.}
    \norm{(\delt\phi_k,\delt\psi_k)}_{(\mathcal{H}^1_{L,\beta})^\prime} \leq C\big(\norm{(\bv_k,\bw_k)}_{\mathbfcal{L}^2} + \norm{(\mu_k,\theta_k)}_{L,\beta}\big).
\end{align}
Here, we have additionally made use of \eqref{Est:pp:<1} and the boundedness of the mobility functions. Lastly, noting on $K\in(0,\infty)$, it follows from \cite[Corollary~5.4]{Giorgini2025} in combination with \eqref{Est:pp:Energy} and \eqref{Est:mt:H^1:a.e.} that
\begin{align*}
    \norm{(-\Lap\phi_k,-\Lapg\psi_k + \alpha\deln\phi_k)}_{\mathcal{L}^2}^2 \leq C\big(1 + \norm{(\mu_k,\theta_k)}_{L,\beta}\big).
\end{align*}
Thus, by elliptic regularity theory for systems with bulk-surface coupling, we obtain 
\begin{align}\label{Est:pp:H^2:4}
    \norm{(\phi_k,\psi_k)}_{\mathcal{H}^2}^4 \leq C\big(1 + \norm{(\mu_k,\theta_k)}_{L,\beta}^2\big).
\end{align}
Consequently, collecting the above estimates, we infer from \eqref{Est:vw:Energy} and \eqref{Est:mt:Energy} that
\begin{align}\label{Est:pp:all:Thm}
    \begin{split}
        &\norm{(\delt\phi_k,\delt\psi_k)}_{L^2(0,\infty;(\mathcal{H}^1_{L,\beta})^\prime)} + \norm{(\phi_k,\psi_k)}_{L^2_{\mathrm{uloc}}(0,\infty;\mathcal{W}^{2,p})} + \norm{(F^\prime(\phi_k),G^\prime(\psi_k))}_{L^2_{\mathrm{uloc}}(0,\infty;\mathcal{L}^p)} \\
        &\quad + \norm{(\mu_k,\theta_k)}_{L^2_{\mathrm{uloc}}(0,\infty;\mathcal{L}^2)} + \norm{(\phi_k,\psi_k)}_{L^4_{\mathrm{uloc}}(0,\infty;\mathcal{H}^2)} \leq C_p
    \end{split}
\end{align}
for any $2 \leq p < \infty$.

\textbf{Higher Order Estimates for the Approximate Solutions}
Next, we establish higher-order estimates for the approximate solution $(\bv_k,\bw_k,\phi_k,\psi_k,\mu_k,\theta_k)$. Based on the estimates \eqref{Est:vw:Energy}-\eqref{Est:mt:Energy} and \eqref{Est:pp:all:Thm}, we can use Theorem~\ref{Theorem:CCH:Strong}, which entails the uniform estimates
\begin{align}\label{Est:K1p}
    \norm{(\phi_k,\psi_k)}_{L^\infty(0,\infty;\mathcal{W}^{2,p})} + \norm{(F^\prime(\phi_k),G^\prime(\psi_k))}_{L^\infty(0,\infty;\mathcal{L}^p)} + \norm{(F^{\prime\prime}(\phi_k),G^{\prime\prime}(\psi_k))}_{L^\infty(0,\infty;\mathcal{L}^p)} \leq K_{1,p}
\end{align}
for any $2 \leq p \leq \infty$, as well as
\begin{align}\label{Est:K2}
    \begin{split}
        &\norm{(\delt\phi_k,\delt\psi_k)}_{L^\infty(0,\infty;(\mathcal{H}^1_{L,\beta})^\prime)} + \norm{(\delt\phi_k,\delt\psi_k)}_{L^2_{\mathrm{uloc}}(0,\infty;\mathcal{H}^1)} \\
        &\qquad + \norm{(\mu_k,\theta_k)}_{L^\infty(0,\infty;\mathcal{H}^1)} + \norm{(\mu_k,\theta_k)}_{L^2_{\mathrm{uloc}}(0,\infty;\mathcal{H}^3)} \leq K_2.
    \end{split}
\end{align}
Moreover, as a consequence of \eqref{Est:K1p}, we have
\begin{align}\label{Est:K3}
    \norm{(\phi_k,\psi_k)}_{L^\infty(0,\infty;\mathcal{H}^3)} + \norm{(F^{\prime\prime\prime}(\phi_k),G^{\prime\prime\prime}(\psi_k))}_{L^\infty(0,\infty;\mathcal{L}^p)} \leq  K_{3,p}
\end{align}
for any $2 \leq p < \infty$. Next, we aim to establish higher-order estimates of the approximate velocity fields $(\bv_k,\bw_k)$. To this end, we take $(\wv,\ww) = (\delt\bv_k,\delt\bw_k)$ as a test function in \eqref{WF:VW:DISCR} and find
\begin{align}\label{Est:H_k:-1}
    &\ddt\frac12\Big(\intO 2\nu_\Om(\phi_k)\abs{\D\bv_k}^2\dx + \intG 2\nu_\Ga(\psi_k)\abs{\Dg\bw_k}^2\dG + \intG \gamma(\phi_k,\psi_k)\abs{\bw_k}^2\dG\Big) \nonumber \\
    &\qquad + \intO \rho(\phi_k)\abs{\delt\bv_k}^2\dx + \intG \sigma(\psi_k)\abs{\delt\bw_k}^2\dG \nonumber \\
    &\quad = - \intO \rho(\phi_k)(\bv_k\cdot\Grad)\bv_k\cdot\delt\bv_k\dx - \intG \sigma(\psi_k)(\bw_k\cdot\Gradg)\bw_k\cdot\delt\bw_k\dG \nonumber \\
    &\qquad + \intO 2\nu_\Om^\prime(\phi_k)\delt\phi_k\abs{\D\bv_k}^2\dx + \intG 2\nu_\Ga^\prime(\psi_k)\delt\psi_k\abs{\Dg\bw_k}^2\dG \nonumber \\
    &\qquad + \intG \big(\del_1\gamma(\phi_k,\psi_k)\delt\phi_k + \del_2\gamma(\phi_k,\psi_k)\delt\psi_k\big)\abs{\bw_k}^2\dG \\
    &\qquad + \frac{\tilde\rho_2 - \tilde\rho_1}{2}\intO m_\Om(\phi_k)(\Grad\mu_k\cdot\Grad)\bv_k\cdot\delt\bv_k\dx + \frac{\tilde\sigma_2 - \tilde\sigma_1}{2}\intG m_\Ga(\psi_k)(\Gradg\theta_k\cdot\Gradg)\bw_k\cdot\delt\bw_k\dG \nonumber \\
    &\qquad + \frac{\chi(L)}{2}\Big(\beta\frac{\tilde\sigma_2 - \tilde\sigma_1}{2} - \frac{\tilde\rho_2 - \tilde\rho_1}{2}\Big)\intG (\beta\theta_k - \mu_k)\bw_k\cdot\delt\bw_k\dG \nonumber \\
    &\qquad - \intO \phi_k\Grad\mu_k\cdot\delt\bv_k\dx - \intG \psi_k\Gradg\theta_k\cdot\delt\bw_k\dG \nonumber \\
    &\quad \eqqcolon \sum_{j=1}^{10}R_j. \nonumber
\end{align}
We now intend to find suitable bounds for the terms $R_j$, $j=1,\ldots,10$, by utilizing the estimates \eqref{Est:K1p}-\eqref{Est:K3}. Let $\varpi$ be a positive constant whose precise value will be determined later. Exploiting \eqref{InterpolEst:L^4}, \eqref{Est:Korn} and \eqref{NormEquivalence:Stokes}, we find
\begin{align}\label{Est:R1R2}
    \begin{split}
        \abs{R_1 + R_2} &\leq \max\{\rho^\ast,\sigma^\ast\}\norm{(\bv_k,\bw_k)}_{\mathbfcal{L}^4}\norm{(\Grad\bv_k,\Gradg\bw_k)}_{\mathbfcal{L}^4}\norm{(\delt\bv_k,\delt\bw_k)}_{\mathbfcal{L}^2} \\
        &\leq \frac{\min\{\rho_\ast,\sigma_\ast\}}{8}\norm{(\delt\bv_k,\delt\bw_k)}_{\mathbfcal{L}^2}^2 + C\norm{(\bv_k,\bw_k)}_{\mathbfcal{H}^1_{0}}^2\norm{(\bv_k,\bw_k)}_{\mathbfcal{H}^2} \\
        &\leq \frac{\min\{\rho_\ast,\sigma_\ast\}}{8}\norm{(\delt\bv_k,\delt\bw_k)}_{\mathbfcal{L}^2}^2 + \frac{\varpi}{2}\norm{(\bv_k,\bw_k)}_{\mathbfcal{H}^2}^2 + C\norm{(\bv_k,\bw_k)}_{\mathbfcal{H}^1_{0}}^4.
    \end{split}
\end{align}
Then, in view of the assumptions on the coefficients, we have by \eqref{Est:Poincare}, \eqref{InterpolEst:L^4}, \eqref{Est:pp:delt:a.e.} and \eqref{NormEquivalence:Stokes}
\begin{align}\label{Est:R3R4R5}
    \begin{split}
        \abs{R_3 + R_4 + R_5} &\leq C\norm{\delt\phi_k}_{L^2(\Om)}\norm{\D\bv_k}_{\mathbf{L}^4(\Om)}^2 + C\norm{\delt\psi_k}_{L^2(\Ga)}\norm{\Dg\bw_k}_{\mathbf{L}^4(\Ga)}^2 \\
        &\quad + C\big(\norm{\delt\phi_k}_{L^2(\Ga)} + \norm{\delt\psi_k}_{L^2(\Ga)}\big)\norm{\bw_k}_{\mathbf{L}^4(\Ga)}^2 \\
        &\leq C\norm{(\delt\phi_k,\delt\psi_k)}_{\mathcal{L}^2}\norm{(\bv_k,\bw_k)}_{\mathbfcal{H}^1_{0}}\norm{(\bv_k,\bw_k)}_{\mathbfcal{H}^2} \\
        &\leq\frac{\varpi}{2}\norm{(\bv_k,\bw_k)}_{\mathbfcal{H}^2}^2 + C\norm{(\delt\phi_k,\delt\psi_k)}_{K,\alpha}^2\norm{(\bv_k,\bw_k)}_{\mathbfcal{H}^1_{0}}^2.
    \end{split}
\end{align}
Next, using interpolation of Sobolev spaces (see, e.g., \cite[Lemma~2.2]{Colli2023}), \eqref{Est:mt:H^1:a.e.} and \eqref{NormEquivalence:Stokes}, we get
\begin{align}\label{Est:R6R7}
    \begin{split}
        \abs{R_6 + R_7} &\leq C\norm{(\Grad\mu_k,\Gradg\theta_k)}_{\mathbfcal{L}^\infty}\norm{(\Grad\bv_k,\Gradg\bw_k)}_{\mathbfcal{L}^2}\norm{(\delt\bv_k,\delt\bw_k)}_{\mathbfcal{L}^2} \\
        &\leq \frac{\min\{\rho_\ast,\sigma_\ast\}}{8}\norm{(\delt\bv_k,\delt\bw_k)}_{\mathbfcal{L}^2}^2 + C\norm{(\Grad\mu_k,\Gradg\theta_k)}_{\mathbfcal{H}^2}^2\norm{(\bv_k,\bw_k)}_{\mathbfcal{H}^1_{0}}^2.
    \end{split}
\end{align}
By the trace theorem in combination with \eqref{Est:mt:H^1:a.e.} we find that
\begin{align}\label{Est:R8}
    \begin{split}
        \abs{R_8} &\leq C\norm{\deln\mu_k}_{L^4(\Ga)}\norm{\bw_k}_{\mathbf{L}^4(\Ga)}\norm{\delt\bw_k}_{\mathbf{L}^2(\Ga)} \\
        &\quad\leq \frac{\min\{\rho_\ast,\sigma_\ast\}}{8}\norm{(\delt\bv_k,\delt\bw_k)}_{\mathbfcal{L}^2}^2 + C\norm{(\Grad\mu_k,\Gradg\theta_k)}_{\mathbfcal{H}^2}^2\norm{(\bv_k,\bw_k)}_{\mathbfcal{H}^1_{0}}^2.
    \end{split}
\end{align}
Lastly, in view of \eqref{Est:pp:<1}, we get
\begin{align}\label{Est:R9R10}
    \begin{split}
        \abs{R_9 + R_{10}} &\leq \norm{(\Grad\mu_k,\Gradg\theta_k)}_{\mathbfcal{L}^2}\norm{(\delt\bv_k,\delt\bw_k)}_{\mathbfcal{L}^2} \\
        &\leq \frac{\min\{\rho_\ast,\sigma_\ast\}}{8}\norm{(\delt\bv_k,\delt\bw_k)}_{\mathbfcal{L}^2}^2 + C\norm{(\Grad\mu_k,\Gradg\theta_k)}_{\mathbfcal{L}^2}^2.
    \end{split}
\end{align}
Combining the above estimates \eqref{Est:R1R2}-\eqref{Est:R9R10}, we arrive at
\begin{align}\label{Est:H_k:1}
    \begin{split}
        &\ddt\frac12\Big(\intO 2\nu_\Om(\phi_k)\abs{\D\bv_k}^2\dx + \intG 2\nu_\Ga(\psi_k)\abs{\Dg\bw_k}^2\dG + \intG \gamma(\phi_k,\psi_k)\abs{\bw_k}^2\dG\Big) \\
        &\qquad + \frac{\min\{\rho_\ast,\sigma_\ast\}}{2}\norm{(\delt\bv_k,\delt\bw_k)}_{\mathbfcal{L}^2}^2\\
        &\quad\leq \varpi\norm{(\bv_k,\bw_k)}_{\mathbfcal{H}^2}^2 + C\norm{(\Grad\mu_k,\Gradg\theta_k)}_{\mathbfcal{L}^2}^2 \\
        &\qquad + C\big(\norm{(\delt\phi_k,\delt\psi_k)}_{K,\alpha}^2 + \norm{(\Grad\mu_k,\Gradg\theta_k)}_{\mathbfcal{H}^2}^2 + \norm{(\bv_k,\bw_k)}_{\mathbfcal{H}^1_{0}}^2\big)\norm{(\bv_k,\bw_k)}_{\mathbfcal{H}^1_{0}}^2,
    \end{split}
\end{align}
where $C > 0$ is a constant that depends on $\varpi$, but is independent of $k$. We are left to control the term $\norm{(\bv_k,\bw_k)}_{\mathbfcal{H}^2}$. This is done by employing regularity theory for the bulk-surface Stokes system as developed in Appendix~\ref{Section:Stokes}. To this end, we first notice that, for any $k\in\N$ and a.e. $t\in(0,\infty)$, there exists a pressure pair $(p_k(t),q_k(t))\in \mathcal{H}^1\cap\mathcal{L}^2_{(0)}$ such that
\begin{subequations}\label{eqs:NSCH:NC:2}
    \begin{alignat}{2}
        &\rho(\phi_k)\delt\bv_k + \rho(\phi_k)(\bv_k\cdot\Grad)\bv_k + (\J_k\cdot\Grad)\bv_k - \Div(2\nu_\Om(\phi_k)\D\bv_k) + \Grad p_k = \mu_k\Grad\phi_k &&\qquad\text{a.e.~in~}Q \label{ConsForm:v:2}\\
        &\sigma(\psi_k)\delt\bw_k + \sigma(\psi_k)(\bw_k\cdot\Gradg)\bw_k + (\K_k\cdot\Gradg)\bw_k - \Divg(2\nu_\Ga(\psi_k)\Dg\bw_k) + \Gradg q_k\nonumber \\
        &\qquad = \theta_k\Gradg\psi_k - 2\nu_\Om(\phi_k)\big[ \D\bv_k\,\n \big]_\tau + \tfrac12(\J_k\cdot\n)\bw - \tfrac12(\J_{\Ga,k}\cdot\n)\bw_k - \gamma(\phi_k,\psi_k)\bw_k &&\qquad\text{a.e.~on~}\Sigma. \label{ConsForm:k}
    \end{alignat}
\end{subequations}
Here, we have used the shorthand notation
\begin{align*}
    \J_k = -\frac{\tilde\rho_2 - \tilde\rho_1}{2}m_\Om(\phi_k)\Grad\mu_k, \qquad \K_k = -\frac{\tilde\sigma_2 - \tilde\sigma_1}{2}m_\Ga(\psi_k)\Gradg\theta_k, \qquad \J_{\Ga,k} = - \beta\frac{\tilde\sigma_2 - \tilde\sigma_1}{2}m_\Om(\phi_k)\Grad\mu_k.
\end{align*}
Consequently, in view of Theorem~\ref{App:Theorem:BulkSurfaceStokes:SS}, we find that
\begin{align*}
    &\norm{(\bv_k,\bw_k)}_{\mathbfcal{H}^2}^2 + \norm{(p_k,q_k)}_{\mathcal{H}^1}^2 \\
    &\quad \leq C\big(\norm{-\rho(\phi_k)\delt\bv_k + \rho(\phi_k)(\bv_k\cdot\Grad)\bv_k - (\J_k\cdot\Grad)\bv_k + \mu_k\Grad\phi_k}_{\mathbf{L}^2(\Om)}^2 \\
    &\qquad + \norm{-\sigma(\psi_k)\delt\bw_k - \sigma(\psi_k)(\bw_k\cdot\Gradg)\bw_k - (\K_k\cdot\Gradg)\bw_k + \theta_k\Gradg\psi_k + \tfrac12(\J_k\cdot\n)\bw_k - \tfrac12(\J_{\Ga,k}\cdot\n)\bw_k)}_{\mathbf{L}^2(\Ga)}^2\big) \\
    &\quad\leq C\big(\norm{(\delt\bv_k,\delt\bw_k)}_{\mathbfcal{L}^2}^2 + \norm{(\bv_k,\bw_k)}_{\mathbfcal{L}^4}^2\norm{(\Grad\bv_k,\Gradg\bw_k)}_{\mathbfcal{L}^4}^2 + \norm{(\Grad\mu_k,\Gradg\theta_k)}_{\mathbfcal{L}^\infty}^2\norm{(\Grad\bv_k,\Gradg\bw_k)}_{\mathbfcal{L}^2}^2 \\
    &\qquad + \norm{(\Grad\mu_k,\Gradg\theta_k)}_{\mathbfcal{L}^2}^2 + \norm{\deln\mu_k}_{L^4(\Ga)}^2\norm{\bw_k}_{\mathbfcal{L}^4}^2\big) \\
    &\quad\leq C\big(\norm{(\delt\bv_k,\delt\bw_k)}_{\mathbfcal{L}^2}^2 + C\norm{(\bv_k,\bw_k)}_{\mathbfcal{H}^1_{0}}^2\norm{(\bv_k,\bw_k)}_{\mathbfcal{H}^2} + \norm{(\Grad\mu_k,\Gradg\theta_k)}_{\mathbfcal{H}^2}^2\norm{(\bv_k,\bw_k)}_{\mathbfcal{H}^1_{0}}^2 \\
    &\qquad + \norm{(\mu_k,\theta_k)}_{L,\beta}^2 + \norm{(\Grad\mu_k,\Gradg\theta_k)}_{\mathbfcal{H}^1}^2\norm{(\bv_k,\bw_k)}_{\mathbfcal{H}^1_{0}}^2 \big).
\end{align*}
Thus, by Young's inequality, we have
\begin{align}\label{Est:vw:H^2:k}
    \begin{split}
        \norm{(\bv_k,\bw_k)}_{\mathbfcal{H}^2}^2 + \norm{(p_k,q_k)}_{\mathcal{H}^1}^2 &\leq C_0\norm{(\delt\bv_k,\delt\bw_k)}_{\mathbfcal{L}^2}^2 + C_0\norm{(\mu_k,\theta_k)}_{L,\beta}^2 \\
        &\quad + C_0\big(\norm{(\Grad\mu_k,\Gradg\theta_k)}_{\mathbfcal{H}^2}^2 + \norm{(\bv_k,\bw_k)}_{\mathbfcal{H}^1_{0}}^2\big)\norm{(\bv_k,\bw_k)}_{\mathbfcal{H}^1_{0}}^2.
    \end{split}
\end{align}
Let us now set
\begin{align*}
    \varepsilon = \frac{\min\{\rho_\ast,\sigma_\ast\}}{4C_0}, \qquad \varpi = \frac{\min\{\rho_\ast,\sigma_\ast\}}{8C_0}.
\end{align*}
Then, multiplying \eqref{Est:vw:H^2:k} by $\varepsilon$ and summing the resulting inequality to \eqref{Est:H_k:1}, we deduce the differential inequality
\begin{align}\label{Est:H_k:Final}
    \begin{split}
        &\ddt\frac12\Big(\intO 2\nu_\Om(\phi_k)\abs{\D\bv_k}^2\dx + \intG 2\nu_\Ga(\psi_k)\abs{\Dg\bw_k}^2\dG + \intG \gamma(\phi_k,\psi_k)\abs{\bw_k}^2\dG\Big) \\
        &\qquad + \frac{\min\{\rho_\ast,\sigma_\ast\}}{8C_0}\norm{(\bv_k,\bw_k)}_{\mathbfcal{H}^2}^2 + \frac{\min\{\rho_\ast,\sigma_\ast\}}{4}\norm{(\delt\bv_k,\delt\bw_k)}_{\mathbfcal{L}^2}^2 + \frac{\min\{\rho_\ast,\sigma_\ast\}}{4C_0}\norm{(p_k,q_k)}_{\mathcal{H}^1}^2 \\
        &\quad\leq C\norm{(\mu_k,\theta_k)}_{L,\beta}^2 + C\big(\norm{(\delt\phi_k,\delt\psi_k)}_{K,\alpha}^2 + \norm{(\Grad\mu_k,\Gradg\theta_k)}_{\mathbfcal{H}^2}^2 + \norm{(\bv_k,\bw_k)}_{\mathbfcal{H}^1_{0}}^2\big)\norm{(\bv_k,\bw_k)}_{\mathbfcal{H}^1_{0}}^2.
    \end{split}
\end{align}
Observing that
\begin{align*}
    \norm{(\bv_k(0),\bw_k(0))}_{\mathbfcal{H}^1_{0}} \leq C\norm{(\bv_0,\bw_0)}_{\mathbfcal{H}^1_{0}},
\end{align*}
we infer from the Gronwall lemma together with \eqref{Est:vw:Energy} and \eqref{Est:K2} that
\begin{align}\label{Est:vw:H^1:<1}
    \sup_{0\leq t \leq 1}\norm{(\bv_k(t),\bw_k(t))}_{\mathbfcal{H}^1_{0}}^2 \leq C\big(1 + \norm{(\bv_0,\bw_0)}_{\mathbfcal{H}^1_{0}}^2\big).
\end{align}
To derive a global control independent of time, we use a uniform variant of the Gronwall lemma as stated in Lemma~\ref{Lemma:Gronwall}. Since
\begin{align}
    &\sup_{t\geq 0}\int_t^{t+1} \norm{(\delt\phi_k,\delt\psi_k)}_{K,\alpha}^2 + \norm{(\Grad\mu_k,\Gradg\theta_k)}_{\mathbfcal{H}^2}^2 + \norm{(\bv_k,\bw_k)}_{\mathbfcal{H}^1_{0}}^2 \ds \leq C_1, \label{Est:C_1} \\
    &\sup_{t\geq 0}\int_t^{t+1} \norm{(\Grad\mu_k,\Gradg\theta_k)}_{\mathbfcal{H}^2}^2 \ds \leq C_2, \label{Est:C_2} \\
    &\sup_{t\geq 0}\int_t^{t+1} \norm{(\bv_k,\bw_k)}_{\mathbfcal{H}^1_{0}}^2\ds \leq C_3, \label{Est:C_3}
\end{align}
an application of Lemma~\ref{Lemma:Gronwall} with $r = 1$ and $t_0 = 0$ yields that
\begin{align}\label{Est:vw:H^1:>1}
    \sup_{t\geq 1} \norm{(\bv_k(t),\bw_k(t))}_{\mathbfcal{H}^1_{0}}^2 \leq \big(C_3 + C_2)\e^{C_1}.
\end{align}
Then, for any $t \geq 0$, we integrate \eqref{Est:H_k:Final} in time from $t$ to $t+1$. Thanks to \eqref{Est:C_1}-\eqref{Est:C_3}, it follows from \eqref{Est:vw:H^1:<1} and \eqref{Est:vw:H^1:>1} that
\begin{align}\label{Est:vw:H^2:delt:L^2:pq:uloc}
    \sup_{t\geq 0}\int_t^{t+1} \norm{(\bv_k,\bw_k)}_{\mathbfcal{H}^2}^2 + \norm{(\delt\bv_k,\delt\bw_k)}_{\mathbfcal{L}^2}^2 + \norm{(p_k,q_k)}_{\mathcal{H}^1}^2 \ds \leq C.
\end{align}

\textbf{Step 4: Passage to the Limit and Existence of global Strong Solutions.}
We are in a position to pass to the limit $k\rightarrow\infty$. More precisely, in light of the estimates \eqref{Est:K1p}-\eqref{Est:K3}, \eqref{Est:vw:H^1:<1} and \eqref{Est:vw:H^1:>1}-\eqref{Est:vw:H^2:delt:L^2:pq:uloc}, the Banach--Alaoglu theorem implies the existence of an octuple $(\bv,\bw,p,q,\phi,\psi,\mu,\theta)$ such that, up to a subsequence, as $k\rightarrow\infty$,
\begin{alignat}{3}
    (\bv_k,\bw_k) &\rightarrow (\bv,\bw) &&\qquad\text{weakly-star in~} L^\infty(0,\infty;\mathbfcal{H}^1_{0,\Div}), && \label{Conv:vw:H^1}\\
    & &&\qquad\text{weakly in~} L^2(0,T;\mathbfcal{H}^2)\cap H^1(0,T;\mathbfcal{L}^2) &&\quad\text{for any~} T > 0, \\
    (p_k,q_k) &\rightarrow (p,q) &&\qquad\text{weakly in~} L^2(0,T;\mathcal{H}^1\cap\mathcal{L}^2_{(0)}) &&\quad\text{for any~} T > 0, \\
    (\phi_k,\psi_k) &\rightarrow (\phi,\psi) &&\qquad\text{weakly-star in~} L^\infty(0,\infty;\mathcal{H}^3), \label{Conv:delt:pp} && \\
    & &&\qquad\text{weakly in~} H^1(0,T;\mathcal{H}^1) &&\quad\text{for any~} T > 0, \\
    (\mu_k,\theta_k) &\rightarrow (\mu,\theta) &&\qquad\text{weakly-star in~} L^\infty(0,\infty;\mathcal{H}^1), && \\
    & &&\qquad\text{weakly in~} L^2(0,T;\mathcal{H}^3)\cap H^1(0,T;(\mathcal{H}^1)^\prime) &&\quad \text{for any~} T > 0. 
    \label{Conv:mt:H^1}
\end{alignat}
As a consequence of the Aubin--Lions--Simon lemma, for any $T > 0$, we further have
\begin{alignat}{2}
    (\bv_k,\bw_k) &\rightarrow (\bv,\bw) &&\qquad\text{strongly in~} L^2(0,T;\mathbfcal{H}^1_{0,\Div}), \\
    (\phi_k,\psi_k) &\rightarrow (\phi,\psi) &&\qquad\text{strongly in~} C([0,T];\mathcal{W}^{2,p}) \ \text{for all~}p\in[2,\infty), \label{Conv:pp:strong}\\
    (\mu_k,\theta_k) &\rightarrow (\mu,\theta) &&\qquad\text{strongly in~} L^2(0,T;\mathcal{H}^2).
\end{alignat}
Moreover, by Minty's trick, we have
\begin{align*}
    (F^\prime(\phi_k), G^\prime(\psi_k)) \rightarrow (F^\prime(\phi),G^\prime(\psi)) \qquad\text{weakly-star in~}L^\infty(0,\infty;\mathcal{L}^p)
\end{align*}
for any $2 \leq p < \infty$. Furthermore, on account of the assumptions on the coefficients (see \ref{Assumption:Density}-\ref{Assumption:Coefficients}), we infer from \eqref{Conv:pp:strong} that
\begin{alignat*}{6}
    \rho(\phi_k) &\rightarrow \rho(\phi), &&\qquad m_\Om(\phi_k) &\rightarrow m_\Om(\phi), &&\qquad \nu_\Om(\phi_k) &&\rightarrow \nu_\Om(\phi) &&\qquad\text{strongly in~} C(\overline{\Om}\times[0,T]), \\
    \sigma(\psi_k) &\rightarrow \sigma(\psi), &&\qquad 
    m_\Ga(\psi_k) &\rightarrow m_\Ga(\psi), &&\qquad \nu_\Ga(\psi_k) &&\rightarrow \nu_\Ga(\psi) &&\qquad\text{strongly in~} C(\Ga\times[0,T]),
\end{alignat*}
and
\begin{align*}
        \gamma(\phi_k,\psi_k) &\rightarrow \gamma(\phi,\psi) \qquad\text{strongly in~} C(\Ga\times[0,T])
\end{align*}
for any $T > 0$.
The above properties entail the convergence of the nonlinear terms in \eqref{WF:VW:DISCR}, which allows us to pass to the limit as $k\rightarrow\infty$ in \eqref{WF:VW:DISCR}, \eqref{WF:PP:DISCR} and \eqref{WF:MT:DISCR} (see, e.g., \cite{Knopf2025a} for the limit in the Galerkin formulation). Lastly, we observe that the pressure pair $(p,q)$ satisfies
\begin{align*}
    \Grad p = \rho(\phi)\delt\bv + \rho(\phi)(\bv\cdot\Grad)\bv + (\J\cdot\Grad)\bv - \Div(2\nu_\Om(\phi)\D\bv) - \mu\Grad\phi \qquad\text{a.e.~in~}Q
\end{align*}
and
\begin{align*}
    \Gradg q &= \sigma(\psi)\delt\bw + \sigma(\psi)(\bw\cdot\Gradg)\bw + (\K\cdot\Gradg)\bw - \Divg(2\nu_\Ga(\psi)\Dg\bw) - \theta\Gradg\psi + 2\nu_\Om(\phi)[\D\bv\,\n]_\tau \\
    &\quad - \frac12(\J\cdot\n)\bw + \frac12(\J_\Ga\cdot\n)\bw + \gamma(\phi,\psi)\bw \qquad\text{a.e.~in~}\Sigma,
\end{align*}
which finishes the proof.
\end{proof}

\medskip

\section{Uniqueness}
\label{Section:Uniqueness}

In this section, we prove Theorem~\ref{Theorem:Uniqueness}. Namely, for $L\in(0,\infty]$, we show the uniqueness of strong solutions and their continuous dependence on the initial data. 

\begin{proof}[Proof of Theorem~\ref{Theorem:GlobalExistence}]
Let $(\bv_0^1,\bw_0^1,\phi_0^1,\psi_0^1)$ and $(\bv_0^2,\bw_0^2,\phi_0^2,\psi_0^2)$ be admissible initial data, and consider two global strong solutions $(\bv_1,\bw_1,p_1,q_1,\phi_1,\psi_1,\mu_1,\theta_1)$ and $(\bv_2,\bw_2,p_2,q_2,\phi_2,\psi_2,\mu_2,\theta_2)$ originating from $(\bv_0^1,\bw_0^1,\phi_0^1,\psi_0^1)$ and $(\bv_0^2,\bw_0^2,\phi_0^2,\psi_0^2)$, respectively. Set
\begin{align*}
    (\bv,\bw,p,q,\phi,\psi,\mu,\theta) \coloneqq (\bv_1 - \bv_2, \bw_1 - \bw_2, p_1 - p_2, q_1 - q_2, \phi_1 - \phi_2, \psi_1 - \psi_2, \mu_1 - \mu_2, \theta_1 - \theta_2).
\end{align*}
It is clear that 
\begin{subequations}
    \begin{align}\label{Uniq:v1-v2}
        \begin{split}
            &\rho(\phi_1)\delt\bv + \big(\rho(\phi_1) - \rho(\phi_2)\big)\delt\bv_2 + \big(\rho(\phi_1)(\bv_1\cdot\Grad)\bv_1 - \rho(\phi_2)(\bv_2\cdot\Grad)\bv_2\big) \\
            &\qquad - \frac{\tilde\rho_1 - \tilde\rho_2}{2}\big((m_\Om(\phi_1)\Grad\mu_1\cdot\Grad)\bv_1 - (m_\Om(\phi_2)\Grad\mu_2\cdot\Grad)\bv_2\big) \\
            &\qquad - \Div(2\nu_\Om(\phi_1)\D\bv) - \Div\big(2(\nu_\Om(\phi_1) - \nu_\Om(\phi_2))\D\bv_2\big) + \Grad p \\
            &\quad = \mu_1\Grad\phi_1 - \mu_2\Grad\phi_2 \qquad\text{a.e.~in~}Q,
        \end{split}
        \\[8pt]
        \begin{split}\label{Uniq:w1-w2}
            &\sigma(\psi_1)\delt\bw + \big(\sigma(\psi_1) - \sigma(\psi_2)\big)\delt\bw_2 + \big(\sigma(\psi_1)(\bw_1\cdot\Gradg)\bw_1 - \sigma(\psi_2)(\bw_2\cdot\Gradg)\bw_2\big) \\
            &\qquad - \frac{\tilde\sigma_1 - \tilde\sigma_2}{2}\big((m_\Ga(\psi_1)\Gradg\theta_1\cdot\Gradg)\bw_1 - (m_\Ga(\psi_2)\Gradg\theta_2\cdot\Gradg)\bw_2\big) \\
            &\qquad - \Divg(2\nu_\Ga(\psi_1)\Dg\bw) - \Divg\big(2(\nu_\Ga(\psi_1) - \nu_\Ga(\psi_2))\Dg\bw_2\big) + \Gradg q \\
            &\quad = \theta_1\Gradg\psi_1 - \theta_2\Gradg\psi_2 - 2\nu_\Om(\phi_1)\D\bv\,\n - (2\nu_\Om(\phi_1) - 2\nu_\Om(\phi_2))\D\bv_2\,\n \\
            &\qquad - \frac{\tilde\rho_1 - \tilde\rho_2}{2}m_\Om(\phi_1)\deln\mu_1\bw_1 + \frac{\tilde\rho_1 - \tilde\rho_2}{2}m_\Om(\phi_2)\deln\mu_2\bw_2 \\
            &\qquad + \beta\frac{\tilde\sigma_1 - \tilde\sigma_2}{2}m_\Om(\phi_1)\deln\mu_1\bw_1 - \beta\frac{\tilde\sigma_1 - \tilde\sigma_2}{2}m_\Om(\phi_2)\deln\mu_2\bw_2 \\
            &\qquad - \gamma(\phi_1,\psi_1)\bw_1 + \gamma(\phi_2,\psi_2)\bw_2 \qquad\text{a.e.~on~}\Sigma,
        \end{split}
    \end{align}
    as well as
    \begin{alignat}{2}
        &\delt\phi + \bv_1\cdot\Grad\phi + \bv\cdot\Grad\phi_2 = \Div(m_\Om(\phi_1)\Grad\mu) + \Div\big((m_\Om(\phi_1) - m_\Om(\phi_2))\Grad\mu_2\big) &&\text{a.e.~in~}Q, \label{Uniq:phi1-phi2} \\
        &\mu = - \Lap\phi + F^\prime(\phi_1) - F^\prime(\phi_2) &&\text{a.e.~in~}Q, \label{Uniq:mu1-mu2} \\
        &\delt\psi + \bw_1\cdot\Gradg\psi + \bw\cdot\Gradg\psi_2 = \Divg(m_\Ga(\psi_1)\Gradg\theta) + \Divg\big((m_\Ga(\psi_1) - m_\Ga(\psi_2))\Gradg\theta_2\big) \nonumber \\
        &\qquad\qquad\qquad\qquad\qquad\qquad\qquad - \beta m_\Om(\phi_1)\deln\mu - \beta(m_\Om(\phi_1) - m_\Om(\phi_2))\deln\mu_2 &&\text{a.e.~on~}\Sigma, \label{Uniq:psi1-psi2} \\
        &\theta = -\Lapg\psi + G^\prime(\psi_1) - G^\prime(\psi_2) + \alpha\deln\phi &&\text{a.e.~on~}\Sigma. \label{Uniq:theta1-theta2}
    \end{alignat}
\end{subequations}
Then, we multiply \eqref{Uniq:v1-v2} and \eqref{Uniq:w1-w2} by $\bv$ and $\bw$, respectively, and integrate the respective equations over $\Om$ and $\Ga$. Adding the resulting equations leads to
\begin{align}\label{Uniq:ddt:kinetic}
    &\ddt\frac12\Big( \intO \rho(\phi_1)\abs{\bv}^2\dx + \intG\sigma(\psi_1)\abs{\bw}^2\dG\Big) \nonumber \\
    &\qquad +
    \intO 2\nu_\Om(\phi_1)\abs{\D\bv}^2\dx + \intG 2\nu_\Ga(\psi_1)\abs{\Dg\bw}^2\dG + \intG \gamma(\phi_1,\psi_1)\abs{\bw}^2\dG \nonumber \\
    &\quad = - \intO \big(\rho(\phi_1) - \rho(\phi_2)\big)\delt\bv_2\cdot\bv\dx - \intG \big(\sigma(\psi_1) - \sigma(\psi_2)\big)\delt\bw_2\cdot\bw\dG \nonumber \\
    &\qquad - \intO \rho(\phi_1)(\bv\cdot\Grad)\bv_2\cdot\bv - \intG \sigma(\psi_1)(\bw\cdot\Gradg)\bw_2\cdot\bw\dG \nonumber \\
    &\qquad - \intO \big(\rho(\phi_1) - \rho(\phi_2)\big)(\bv_2\cdot\Grad)\bv_2\cdot\bv\dx - \intG \big(\sigma(\psi_1) - \sigma(\psi_2)\big)(\bw_2\cdot\Gradg)\bw_2\cdot\bw\dG \nonumber \\
    &\qquad + \frac{\tilde\rho_1 - \tilde\rho_2}{2}\intO m_\Om(\phi_1)(\Grad\mu\cdot\Grad)\bv_2\cdot\bv\dx + \frac{\tilde\sigma_1 - \tilde\sigma_2}{2}\intG m_\Ga(\psi_1)(\Gradg\theta\cdot\Gradg)\bw_2\cdot\bw\dG \nonumber \\
    &\qquad + \frac{\chi(L)}{2}\Big(\beta\frac{\tilde\sigma_1 - \tilde\sigma_2}{2} - \frac{\tilde\rho_1 - \tilde\rho_2}{2}\Big)\intG (\beta\theta - \mu)\bw_2\cdot\bw\dG \\ 
    &\qquad + \frac{\tilde\rho_1 - \tilde\rho_2}{2}\intO \big(m_\Om(\phi_1) - m_\Om(\phi_2)\big)(\Grad\mu_2\cdot\Grad)\bv_2\cdot\bv\dx \nonumber \\
    &\qquad + \frac{\tilde\sigma_1 - \tilde\sigma_2}{2}\intG \big(m_\Ga(\psi_1) - m_\Ga(\psi_2)\big)(\Gradg\theta_2\cdot\Gradg)\bw_2\cdot\bw\dG \nonumber \\
    &\qquad + \intO 2\big(\nu_\Om(\phi_1) - \nu_\Om(\phi_2)\big)\D\bv_2:\D\bv\dx + \intG 2\big(\nu_\Ga(\psi_1) - \nu_\Ga(\psi_2)\big)\Dg\bw_2:\Dg\bw\dG \nonumber \\
    &\qquad + \intG \big(\gamma(\phi_1,\psi_1) - \gamma(\phi_2,\psi_2)\big)\bw_2\cdot\bw\dG \nonumber \\
    &\eqqcolon \sum_{j=1}^{14} W_j. \nonumber
\end{align}
Here, we made use of
\begin{align*}
    &-\frac12\intO \delt\rho(\phi_1)\abs{\bv}^2\dx - \frac12\intG \delt\sigma(\psi_1)\abs{\bw}^2\dG \nonumber \\
    &\qquad + \frac12\intO \rho(\phi_1)\bv_1\cdot\Grad\abs{\bv}^2\dx + \frac12\intG \sigma(\psi_1)\bw_1\cdot\Gradg\abs{\bw}^2\dG \nonumber \\
    &\qquad - \frac12\frac{\tilde\rho_1 - \tilde\rho_2}{2}\intO m_\Om(\phi_1)\Grad\mu_1\cdot\Grad\abs{\bv}^2\dx - \frac12\frac{\tilde\sigma_1 - \tilde\sigma_2}{2}\intG m_\Ga(\psi_1)\Gradg\theta_1\cdot\Gradg\abs{\bw}^2\dG \\
    &\qquad - \frac{\chi(L)}{2}\Big(\beta\frac{\tilde\sigma_1 - \tilde\sigma_2}{2} - \frac{\tilde\rho_1 - \tilde\rho_2}{2}\Big)\intG (\beta\theta_1 - \mu_1)\abs{\bw}^2\dG \nonumber \\
    &\quad = 0, \nonumber
\end{align*}
which follows from the same considerations as in the proof of Theorem~\ref{Theorem:GlobalExistence} (see also \eqref{Equation:densities}). Next, noting on
\begin{align*}
    Lm_\Om(\phi_1)\deln\mu = \beta\theta - \mu + \Big(1 - \frac{m_\Om(\phi_1)}{m_\Om(\phi_2)}\Big)\big(\beta\theta_2 - \mu_2) \qquad\text{a.e.~on~}\Sigma
\end{align*}
for $L > 0$, a straightforward calculation shows in view of \eqref{Uniq:phi1-phi2}-\eqref{Uniq:theta1-theta2} that
\begin{align}\label{Uniq:ddt:Ka}
    &\ddt\frac12\norm{(\phi,\psi)}_{K,\alpha}^2 + \norm{(\mu,\theta)}_{L,\beta,[\phi_1,\psi_1]}^2 \nonumber \\
    &\quad = - \big\langle (\delt\phi,\delt\psi), (F^\prime(\phi_1) - F^\prime(\phi_2),G^\prime(\psi_1) - G^\prime(\psi_2))\big\rangle_{\mathcal{H}^1} \nonumber \\
    &\qquad - \intO \big(m_\Om(\phi_1) - m_\Om(\phi_2)\big)\Grad\mu_2\cdot\Grad\mu\dx - \intG \big(m_\Ga(\psi_1) - m_\Ga(\psi_2)\big)\Gradg\theta_2\cdot\Gradg\theta\dG \\
    &\qquad + \intO \big(\phi_1\bv + \phi\bv_2\big)\cdot\Grad\mu\dx + \intG (\psi_1\bw + \psi\bw_2\big)\cdot\Gradg\theta\dG \nonumber \\
    &\quad\eqqcolon \sum_{j=15}^{19} W_j. \nonumber
\end{align}
Hence, summing \eqref{Uniq:ddt:kinetic} and \eqref{Uniq:ddt:Ka}, we obtain
\begin{align*}
    &\ddt\frac12\Big( \intO \rho(\phi_1)\abs{\bv}^2\dx + \intG\sigma(\psi_1)\abs{\bw}^2\dG + \norm{(\phi,\psi)}_{K,\alpha}^2 \Big) \\
    &\qquad + \intO m_\Om(\phi_1)\abs{\Grad\mu}^2\dx + \intG m_\Ga(\psi_1)\abs{\Gradg\theta}^2\dG + \chi(L)\intG (\beta\theta - \mu)^2\dG \\ 
    &\qquad +
    \intO 2\nu_\Om(\phi_1)\abs{\D\bv}^2\dx + \intG 2\nu_\Ga(\psi_1)\abs{\Dg\bw}^2\dG + \intG \gamma(\phi_1,\psi_1)\abs{\bw}^2\dG \\ 
    &\quad = \sum_{j=1}^{19} W_j.
\end{align*}
To control the terms $W_j$, $j=1,\ldots,19$, we start by exploiting \eqref{Est:Korn} as well as the regularity of strong solutions to obtain
\begin{align}
    \label{W12}
    \begin{split}
    &\abs{W_1 + W_2}
        \begin{aligned}[t]
             &\leq C\norm{(\phi,\psi)}_{\mathcal{L}^6}\norm{(\delt\bv_2,\delt\bw_2)}_{\mathbfcal{L}^2}\norm{(\bv,\bw)}_{\mathbfcal{L}^3} \\
            &\leq \frac{\min\{2\nu_\ast,\gamma_\ast\}}{8}\norm{(\bv,\bw)}_{\mathbfcal{H}^1_{0}}^2 + C\norm{(\delt\bv_2,\delt\bw_2)}_{\mathbfcal{L}^2}^2\norm{(\phi,\psi)}_{K,\alpha}^2,
        \end{aligned}
    \end{split}
    \\[10pt]
    \label{W34}
    \begin{split}
    &\abs{W_3 + W_4} 
        \begin{aligned}[t]
            &\leq C\norm{(\bv,\bw)}_{\mathbfcal{L}^3}\norm{(\Grad\bv_2,\Gradg\bw_2)}_{\mathbfcal{L}^6}\norm{(\bv,\bw)}_{\mathbfcal{L}^2} \\
            &\leq \frac{\min\{2\nu_\ast,\gamma_\ast\}}{8}\norm{(\bv,\bw)}_{\mathbfcal{H}^1_{0}}^2 + C\norm{(\bv_2,\bw_2)}_{\mathbfcal{H}^2}^2\norm{(\bv,\bw)}_{\mathbfcal{L}^2}^2,
        \end{aligned}
    \end{split}
    \\[10pt]
    \label{W56}
    \begin{split}
    &\abs{W_5 + W_6} 
        \begin{aligned}[t]
            &\leq C\norm{(\phi,\psi)}_{\mathcal{L}^6}\norm{(\bv_2,\bw_2)}_{\mathbfcal{L}^6}\norm{(\Grad\bv_2,\Gradg\bw_2)}_{\mathbfcal{L}^6}\norm{(\bv,\bw)}_{\mathbfcal{L}^2} \\
            &\leq C\norm{(\bv_2,\bw_2)}_{\mathbfcal{H}^2}^2\big(\norm{(\bv,\bw)}_{\mathbfcal{L}^2}^2 + \norm{(\phi,\psi)}_{K,\alpha}^2\big).
        \end{aligned}
    \end{split}
\end{align}
Then, using additionally \eqref{InterpolEst:L^4}, we find that
\begin{align}\label{W789}
    \abs{W_7 + W_8 + W_9} &\leq C\norm{(m_\Om(\phi_1)\Grad\mu,m_\Ga(\psi_1)\Gradg\theta)}_{\mathbfcal{L}^2}\norm{(\Grad\bv_2,\Gradg\bw_2)}_{\mathbfcal{L}^4}\norm{(\bv,\bw)}_{\mathbfcal{L}^4}  \nonumber \\
    &\quad + C\norm{\beta\theta - \mu}_{\mathbf{L}^2(\Ga)}\norm{\bw_2}_{\mathbf{L}^4(\Ga)}\norm{\bw}_{\mathbf{L}^4(\Ga)} \nonumber \\
    &\leq \frac18\norm{(\mu,\theta)}_{L,\beta,[\phi_1,\psi_1]}^2 + C\norm{(\Grad\bv_2,\Gradg\bw_2)}_{\mathbfcal{L}^2}\norm{(\Grad\bv_2,\Gradg\bw_2)}_{\mathbfcal{H}^1}\norm{(\bv,\bw)}_{\mathbfcal{L}^2}\norm{(\bv,\bw)}_{\mathbfcal{H}^1_{0}}  \nonumber \\
    &\quad + C\norm{\bw_2}_{\mathbf{L}^4(\Ga)}^2\norm{\bw}_{\mathbf{L}^2(\Ga)}\norm{\bw}_{\mathbf{H}^1(\Ga)} \\
    &\leq \frac18\norm{(\mu,\theta)}_{L,\beta,[\phi_1,\psi_1]}^2 + \frac{\min\{2\nu_\ast,\gamma_\ast\}}{8}\norm{(\bv,\bw)}_{\mathbfcal{H}^1_{0}}^2 + C\big(1 + \norm{(\bv_2,\bw_2)}_{\mathbfcal{H}^2}^2\big)\norm{(\bv,\bw)}_{\mathbfcal{L}^2}^2. \nonumber
\end{align}
Moreover, we have
\begin{align}\label{W1011}
    \begin{split}
    \abs{W_{10} + W_{11}} 
        &\leq C\norm{(\phi,\psi)}_{\mathcal{L}^6}\norm{(\Grad\mu_2,\Gradg\theta_2)}_{\mathbfcal{L}^6}\norm{(\Grad\bv_2,\Gradg\bw_2)}_{\mathbfcal{L}^6}\norm{(\bv,\bw)}_{\mathbfcal{L}^2} \\
        &\leq C\big(\norm{(\Grad\mu_2,\Gradg\theta_2)}_{\mathbfcal{H}^1}^2 + \norm{(\bv_2,\bw_2)}_{\mathbfcal{H}^2}^2\big)\big(\norm{(\bv,\bw)}_{\mathbfcal{L}^2}^2 + \norm{(\phi,\psi)}_{K,\alpha}^2\big),
    \end{split}
\end{align}
and
\begin{align}\label{W121314}
    \abs{W_{12}+ W_{13} + W_{14}} &\leq C\norm{(\phi,\psi)}_{\mathcal{L}^6}\norm{(\D\bv_2,\Dg\bw_2)}_{\mathbfcal{L}^3}\norm{(\D\bv,\Dg\bw)}_{\mathbfcal{L}^2} \nonumber\\
    &\quad + C\big(\norm{\phi}_{L^4(\Ga)} + \norm{\psi}_{L^4(\Ga)}\big)\norm{\bw_2}_{\mathbf{L}^4(\Ga)}\norm{\bw}_{\mathbf{L}^2(\Ga)} \\
    &\leq \frac{\min\{2\nu_\ast,\gamma_\ast\}}{8}\norm{(\bv,\bw)}_{\mathbfcal{H}^1_{0}}^2 + C\norm{(\bv_2,\bw_2)}_{\mathbfcal{H}^2}^2\norm{(\phi,\psi)}_{K,\alpha}^2. \nonumber
\end{align}
Next, 
\begin{align}\label{Uniq:Est:W15:1}
    \abs{W_{15}} \leq \norm{(\delt\phi,\delt\psi)}_{(\mathcal{H}^1)}\norm{(F^\prime(\phi_1) - F^\prime(\phi_2), G^\prime(\psi_1) - G^\prime(\psi_2))}_{\mathcal{H}^1}.
\end{align}
By means of a comparison argument in \eqref{Uniq:phi1-phi2}-\eqref{Uniq:psi1-psi2}, we observe that
\begin{align}\label{Uniq:Est:W15:delt}
    &\norm{(\delt\phi,\delt\psi)}_{(\mathcal{H}^1)^\prime} \nonumber \\
    &= \sup_{\norm{(\zeta,\xi)}_{\mathcal{H}^1}\leq 1} \abs{\langle (\delt\phi,\delt\psi), (\zeta,\xi)\rangle_{\mathcal{H}^1}} \\
    &\leq C\big(\norm{(\bv,\bw)}_{\mathbfcal{L}^2} + \norm{(\phi,\psi)}_{\mathcal{L}^4}\norm{(\bv_2,\bw_2)}_{\mathbfcal{L}^4} + \norm{(\mu,\theta)}_{L,\beta,[\phi_1,\psi_1]} + \norm{(\phi,\psi)}_{\mathcal{L}^4}\norm{(\Grad\mu_2,\Gradg\theta_2)}_{\mathbfcal{L}^4}\big). \nonumber
\end{align}
For the second term on the right-hand side of \eqref{Uniq:Est:W15:1}, we recall the existence of $\delta\in(0,1]$ such that
\begin{align}\label{Uniq:Separation}
    \abs{\phi_j} \leq 1 - \delta \qquad\text{on~}Q, \qquad \abs{\psi_j} \leq 1 - \delta \qquad\text{on~}\Sigma
\end{align}
for $j = 1,2$. In particular, for any $s\in[0,1]$, it holds that
\begin{align*}
    \abs{s\phi_1 + (1-s)\phi_2} \leq s(1-\delta) + (1-s)(1-\delta) = 1 - \delta \qquad\text{on~}Q.
\end{align*}
Similarly, we find that
\begin{align*}
    \abs{s\psi_1 + (1-s)\psi_2} \leq 1 - \delta \qquad\text{on~}\Sigma
\end{align*}
for any $s\in[0,1]$. Noting on the decomposition $F(s) = F_0(s) - \tfrac{c_F}{2}s^2$ with $F_0\in C^3(-1,1)$, we use the fundamental theorem of calculus to infer
\begin{align}\label{Uniq:Est:F:H^1}
    &\norm{F^\prime(\phi_1) - F^\prime(\phi_2)}_{H^1(\Om)}^2 \nonumber \\
    &\quad\leq \intO \abs{F_0^\prime(\phi_1) - F_0^\prime(\phi_2)}^2\dx + \intO \abs{F_0^{\prime\prime}(\phi_1)}^2\abs{\Grad\phi}^2\dx + \intO \abs{F_0^{\prime\prime}(\phi_1) - F_0^{\prime\prime}(\phi_2)}^2\abs{\Grad\phi_2}^2\dx \nonumber \\
    &\qquad + c_F^2\intO \phi^2\dx + c_F^2\intO \abs{\Grad\phi}^2\dx \nonumber \\
    &\quad\leq \intO \Bigabs{\int_0^1 F_0^{\prime\prime}(s\phi_1 + (1-s)\phi_2)^2\ds}^2\phi^2\dx + C\norm{\Grad\phi}_{\mathbf{L}^2(\Om)}^2 \\
    &\qquad + \intO \Bigabs{\int_0^1 F_0^{\prime\prime\prime}(s\phi_1 + (1-s)\phi_2)^2\ds}^2\phi^2\abs{\Grad\phi_2}^2\dx + c_F^2\norm{\phi}_{L^2(\Om)}^2 + c_F^2\norm{\Grad\phi}_{\mathbf{L}^2(\Om)}^2 \nonumber \\
    &\quad\leq \sup_{s\in[-1+\delta,1-\delta]}\abs{F_1^{\prime\prime}(s)}^2\norm{\phi}_{L^2(\Om)}^2 + C\norm{\phi}_{H^1(\Om)}^2 + \sup_{s\in[-1+\delta,1-\delta]}\abs{F_1^{\prime\prime\prime}(s)}^2\norm{\phi}_{L^4(\Om)}^2\norm{\Grad\phi_2}_{\mathbf{L}^4(\Om)}^2 \nonumber \\
    &\quad\leq C\norm{(\phi,\psi)}_{K,\alpha}^2. \nonumber
\end{align}
Performing similar computations on the boundary, we deduce
\begin{align}\label{Uniq:Est:G:H^1}
    \norm{G^\prime(\psi_1) - G^\prime(\psi_2)}_{H^1(\Ga)}^2 \leq C\norm{(\phi,\psi)}_{K,\alpha}^2.
\end{align}
Combining \eqref{Uniq:Est:F:H^1} and \eqref{Uniq:Est:G:H^1} leads to 
\begin{align}\label{Uniq:Est:Pot:H^1}
    \norm{(F^\prime(\phi_1) - F^\prime(\phi_2), G^\prime(\psi_1) - G^\prime(\psi_2))}_{\mathcal{H}^1} \leq C\norm{(\phi,\psi)}_{K,\alpha}.
\end{align}
Thus, going back to \eqref{Uniq:Est:W15:1}, the estimates \eqref{Uniq:Est:W15:delt} and \eqref{Uniq:Est:Pot:H^1} entail that
\begin{align}\label{W15}
    \abs{W_{15}} &\leq C\big(\norm{(\bv,\bw)}_{\mathbfcal{L}^2} + \norm{(\phi,\psi)}_{\mathcal{L}^4}\norm{(\bv_2,\bw_2)}_{\mathbfcal{L}^4} + \norm{(\mu,\theta)}_{L,\beta,[\phi_1,\psi_1]} + \norm{(\phi,\psi)}_{\mathcal{L}^4}\norm{(\Grad\mu_2,\Gradg\theta_2)}_{\mathbfcal{L}^4}\big) \nonumber \\
    &\qquad\times C\norm{(\phi,\psi)}_{K,\alpha} \\
    &\leq \frac{\min\{1,m_\ast\}}{8}\norm{(\mu,\theta)}_{L,\beta}^2 + C\big(1 + \norm{(\bv_2,\bw_2)}_{\mathbfcal{H}^1}^2 + \norm{(\Grad\mu_2,\Gradg\theta_2)}_{\mathbfcal{H}^1}^2\big)\big(\norm{(\bv,\bw)}_{\mathbfcal{L}^2}^2 + \norm{(\phi,\psi)}_{K,\alpha}^2\big). \nonumber
\end{align}
Lastly, for the remaining terms, we have
\begin{align}
    \label{W1617}
    \begin{split}
        \begin{aligned}[t]
            \abs{W_{16} + W_{17}} &\leq C\norm{(\phi,\psi)}_{\mathcal{L}^4}\norm{(\Grad\mu_2,\Gradg\theta_2)}_{\mathbfcal{L}^4}\norm{(\Grad\mu,\Gradg\theta)}_{\mathbfcal{L}^2} \\
            &\leq \frac{\min\{1,m_\ast\}}{8}\norm{(\mu,\theta)}_{L,\beta}^2 + C\norm{(\Grad\mu_2,\Gradg\theta_2)}_{\mathbfcal{H}^1}^2\norm{(\phi,\psi)}_{K,\alpha}^2,
        \end{aligned}
    \end{split}
    \\[10pt]
    \label{W1819}
    \begin{split}
        \begin{aligned}[t]
            \abs{W_{18} + W_{19}} &\leq \norm{(\bv,\bw)}_{\mathbfcal{L}^2}\norm{(\Grad\mu,\Gradg\theta)}_{\mathbfcal{L}^2} + C\norm{(\phi,\psi)}_{\mathcal{L}^4}\norm{(\bv_2,\bw_2)}_{\mathbfcal{L}^4}\norm{(\Grad\mu,\Gradg\theta)}_{\mathbfcal{L}^2} \\
            &\leq \frac{\min\{1,m_\ast\}}{8}\norm{(\mu,\theta)}_{L,\beta}^2 + C\big(1 + \norm{(\bv_2,\bw_2)}_{\mathbfcal{H}^1}^2\big)\big(\norm{(\bv,\bw)}_{\mathbfcal{L}^2}^2 + \norm{(\phi,\psi)}_{K,\alpha}^2\big).
        \end{aligned}
    \end{split}
\end{align}
Therefore, by \eqref{W12}-\eqref{W121314} and \eqref{W15}-\eqref{W1819}, we obtain the differential inequality
\begin{align*}
    &\ddt\frac12\Big(\intO \rho(\phi_1)\abs{\bv}^2\dx + \intG \sigma(\psi_1)\abs{\bw}^2\dG + \norm{(\phi,\psi)}_{K,\alpha}^2\Big) + \frac{\min\{1,m_\ast\}}{2}\norm{(\mu,\theta)}_{L,\beta}^2 \\
    &\qquad + \frac{\min\{2\nu_\ast,\gamma_\ast\}}{2}\norm{(\bv,\bw)}_{\mathbfcal{H}^1_{0}}^2 \\
    &\quad\leq C\big(1 + \norm{(\delt\bv_2,\delt\bw_2)}_{\mathbfcal{L}^2}^2 + \norm{(\bv_2,\bw_2)}_{\mathbfcal{H}^2}^2 + \norm{(\Grad\mu_2,\Gradg\theta_2)}_{\mathbfcal{H}^1}^2\big)\big(\norm{(\bv,\bw)}_{\mathbfcal{L}^2}^2 + \norm{(\phi,\psi)}_{K,\alpha}^2\big).
\end{align*}
Noting on $\rho \geq \rho_\ast > 0$ a.e. on $Q$ and $\sigma \geq \sigma_\ast > 0$ a.e. on $\Sigma$, we deduce with the help of the Gronwall lemma that
\begin{align*}
    &\norm{(\bv(t),\bw(t))}_{\mathbfcal{L}^2} + \norm{(\phi(t),\psi(t))}_{K,\alpha}^2 \\
    &\quad\leq C\big(\norm{(\bv(0),\bw(0))}_{\mathbfcal{L}^2}^2 + \norm{(\phi(0),\psi(0))}_{K,\alpha}^2\big) \\
    &\qquad\times\exp\Big(\int_0^T 1 + \norm{(\delt\bv_2(\tau),\delt\bw_2(\tau))}_{\mathbfcal{L}^2}^2 + \norm{(\bv_2(\tau),\bw_2(\tau))}_{\mathbfcal{H}^2}^2 + \norm{(\Grad\mu_2(\tau),\Gradg\theta_2(\tau))}_{\mathbfcal{H}^1}^2\dtau\Big)
\end{align*}
for all $t\in[0,T]$ for any $T > 0$, which implies the uniqueness of strong solutions.
\end{proof}

\medskip
\appendix
\section{The convective bulk-surface Cahn--Hilliard equation with non-degenerate mobility}
\label{Section:ConvCH}
\setcounter{equation}{0}

In this section, we study the following convective bulk-surface Cahn--Hilliard equation with non-degenerate mobility:
\begin{subequations}\label{EQ:CONV:SYSTEM}
    \begin{align}
        \label{EQ:CONV:SYSTEM:1}
        &\delt\phi + \Div(\phi\mathbf{v}) = \Div(m_\Om(\phi)\Grad\mu) && \text{in} \ Q, \\
        \label{EQ:CONV:SYSTEM:2}
        &\mu = -\Lap\phi + F'(\phi)   && \text{in} \ Q, \\
        \label{EQ:CONV:SYSTEM:3}
        &\delt\psi + \Divg(\psi\mathbf{w}) = \Divg(m_\Ga(\psi)\Gradg\theta) - \beta m_\Om(\phi)\deln\mu && \text{on} \ \Sigma, \\
        \label{EQ:CONV:SYSTEM:4}
        &\theta = - \Lapg\psi + G'(\psi) + \alpha\deln\phi && \text{on} \ \Sigma, \\
        \label{EQ:CONV:SYSTEM:5}
        &\begin{cases} K\deln\phi = \alpha\psi - \phi &\text{if} \ K\in [0,\infty), \\
        \deln\phi = 0 &\text{if} \ K = \infty
        \end{cases} && \text{on} \ \Sigma, \\
        \label{EQ:CONV:SYSTEM:6}
        &\begin{cases} 
        L m_\Om(\phi)\deln\mu = \beta\theta - \mu &\text{if} \  L\in[0,\infty), \\
        m_\Om(\phi)\deln\mu = 0 &\text{if} \ L=\infty
        \end{cases} &&\text{on} \ \Sigma, \\
        \label{EQ:CONV:SYSTEM:7}
        &\phi\vert_{t=0} = \phi_0 &&\text{in} \ \Om, \\
        \label{EQ:CONV:SYSTEM:8}
        &\psi\vert_{t=0} = \psi_0 &&\text{on} \ \Ga,
    \end{align}
\end{subequations}
where the mobility functions $m_\Om$ and $m_\Ga$ are supposed to satisfy the assumptions as stated in \ref{Assumption:Coefficients}. Moreover, the prescribed velocity fields are such that $(\mathbf{v},\mathbf{w})\in L^2(0,\infty;\mathbfcal{L}^2_\Div)$. The system \eqref{EQ:CONV:SYSTEM} can be regarded as a subsystem of the full Navier--Stokes--Cahn--Hilliard system \eqref{System}. For constant mobility functions, the well-posedness of strong solutions to \eqref{EQ:CONV:SYSTEM} has already been investigated in several works, see, for instance, \cite{Knopf2024, Knopf2025, Giorgini2025}. In the case of non-degenerate mobilities and vanishing vector fields, i.e., $\bv \equiv 0$ and $\bw \equiv 0$, the recent work \cite{Stange2025} establishes the well-posedness of weak solutions together with the propagation of regularity. This section aims to extend these results to the case of non-trivial vector fields. 

We start with the following result regarding the existence of global-in-time weak solutions.
\begin{theorem}\label{Theorem:CCH:Existence}
    Suppose that the mobility functions $m_\Om, m_\Ga\in C([-1,1])$ satisfy \ref{Assumption:Mobility:Bound} and that the potentials satisfy \ref{Assumption:Potential}-\eqref{Assumption:DominationProperty}. Let $K\in(0,\infty)$, $L\in[0,\infty]$, $(\bv,\bw)\in L^2(0,\infty;\mathbfcal{L}^2_\Div)$, and let $(\phi_0,\psi_0)\in\mathcal{H}^1$ be an arbitrary initial datum satisfying
    \begin{subequations}\label{cond:init}
        \begin{align}\label{cond:init:int}
            \norm{\phi_0}_{L^\infty(\Om)} \leq 1 \quad\text{and}\quad \norm{\psi_0}_{L^\infty(\Ga)} \leq 1.
        \end{align}
        In addition, we assume that
        \begin{align}\label{cond:init:mean:L}
            &\beta\,\mean{\phi_0}{\psi_0},\ \mean{\phi_0}{\psi_0}\in(-1,1), \quad \text{if~ }L\in[0,\infty),
        \end{align}
        and 
        \begin{align}\label{cond:init:mean:inf}
            \meano{\phi_0},\ \meang{\psi_0}\in(-1,1), \quad \text{if~ }L=\infty.
        \end{align}
    \end{subequations}
    Then, there exists a global-in-time weak solution $(\phi,\psi,\mu,\theta)$ to \eqref{EQ:CONV:SYSTEM} such that:
    \begin{enumerate}[label=\textnormal{(\roman*)}, topsep=0ex, leftmargin=*, itemsep=1.5ex]
        \item \label{CCH:WS:Reg} The weak solution satisfies
        \begin{align*}
            (\phi,\psi)&\in BC_w([0,\infty);\mathcal{H}^1)\cap L^4_{\mathrm{uloc}}([0,\infty);\mathcal{H}^2)\cap L^2_{\mathrm{uloc}}([0,\infty);\mathcal{W}^{2,p}), \\
            (\delt\phi,\delt\psi)&\in L^2(0,\infty;(\mathcal{H}^1_{L,\beta})^\prime), \\
            (\mu,\theta)&\in L^2_{\mathrm{uloc}}([0,\infty);\mathcal{H}^1_{L,\beta}), \\
            (F^\prime(\phi),G^\prime(\psi))&\in L^2_{\mathrm{uloc}}([0,\infty);\mathcal{L}^p)
        \end{align*}
        for any $2 \leq p < \infty$, and it holds
        \begin{align}\label{CCH:pp:<1}
            \abs{\phi} < 1 \quad\text{a.e.~in~}Q\quad\text{and} \quad\abs{\psi} < 1 \quad\text{a.e.~in~}\Sigma.
        \end{align}
        \item \label{CCH:WS:WeakForm} The weak solution solves \eqref{EQ:CONV:SYSTEM} in the following variational sense
        \begin{align}\label{WF:CCH:PP}
            \begin{split}
                &\big\langle (\delt\phi,\delt\psi), (\zeta,\xi)\big\rangle_{\mathcal{H}^1_{L,\beta}} + \intO \phi\bv\cdot\Grad\zeta\dx + \intG \psi\bw\cdot\Gradg\xi\dG \\
                &\quad = -\intO m_\Om(\phi)\Grad\mu\cdot\Grad\zeta\dx - \intG m_\Ga(\psi)\Gradg\theta\cdot\Gradg\xi\dG - \chi(L) \intG (\beta\theta - \mu)(\beta\xi - \zeta)\dG
            \end{split}
        \end{align}
        a.e. on $(0,\infty)$ for all $(\zeta,\xi)\in\mathcal{H}^1_{L,\beta}$, where $(\mu,\theta)$ are given by
        \begin{align}
            &\mu = -\Lap\phi + F^\prime(\phi) &&\text{a.e.~in } Q, \label{Eq:mu:strong}\\
            &\theta = -\Lapg\psi + G^\prime(\psi) + \alpha\deln\phi &&\text{a.e.~on } \Sigma, \label{Eq:theta:strong}\\
            &K\deln\phi = \alpha\psi - \phi &&  \text{a.e.~on } \Sigma.\label{Eq:bd:strong}
        \end{align}
        Moreover, it holds that
        \begin{align*}
            \phi\vert_{t=0} = \phi_0 \quad\text{a.e.~in~}\Om \quad\text{and}\quad\psi\vert_{t=0} = \psi_0 \quad\text{a.e.~on~}\Ga.
        \end{align*}
        \item \label{CCH:MCL} The functions $\phi$ and $\psi$ satisfy the mass conservation law
        \begin{align}\label{MCL:SING}
            \begin{dcases}
                \beta\intO \phi(t)\dx + \intG \psi(t)\dG = \beta\intO \phi_0 \dx + \intG \psi_0\dG, &\textnormal{if } L\in[0,\infty), \\
                \intO\phi(t)\dx = \intO\phi_0\dx \quad\textnormal{and}\quad \intG\psi(t)\dG = \intG\psi_0\dG, &\textnormal{if } L = \infty
            \end{dcases}
        \end{align}
        for all $t\in[0,\infty)$.
        \item \label{CCH:WS:EnergyIneq} The weak solution satisfies the energy inequality
        \begin{align}\label{CCH:EnergyIneq}
            &E_{\mathrm{free}}(\phi(t),\psi(t)) + \int_0^t\intO m_\Om(\phi)\abs{\Grad\mu}^2\dxs + \int_0^t\intG m_\Ga(\psi)\abs{\Gradg\theta}^2\dGs \nonumber \\
            &\qquad + \chi(L)\int_0^t\intG (\beta\theta - \mu)^2\dGs \\
            &\quad \leq E_{\mathrm{free}}(\phi_0,\psi_0) + \int_0^t\intO \phi\bv\cdot\Grad\mu\dxs + \int_0^t\intG \psi\bw\cdot\Gradg\theta\dGs \nonumber
        \end{align}
        for a.e. $t\in[0,\infty)$ including $t = 0$.
        \item \label{CCH:WS:EnergyEstimates} The following estimates hold 
        \begin{align}\label{CCH:Est:Energy:Thm}
            \begin{split}
                &\norm{(\phi,\psi)}_{L^\infty(0,\infty;\mathcal{H}^1)}^2 + \norm{(\delt\phi,\delt\psi)}_{L^2(0,\infty;(\mathcal{H}^1_{L,\beta})^\prime)}^2 + \norm{(\Grad\mu,\Gradg\theta)}_{L^2(0,\infty;\mathbfcal{L}^2)}^2 \\
                &\qquad + \chi(L)\norm{\beta\theta - \mu}^2_{L^2(0,\infty;L^2(\Ga))} \leq C\big(1 + \norm{(\bv,\bw)}_{L^2(0,\infty;\mathbfcal{L}^2)}^2 \big),
            \end{split}
        \end{align}
        \vspace{-.5em}
        \begin{align}\label{CCH:Est:L^p:Thm}
            \begin{split}
                &\norm{(\phi,\psi)}_{L^2_{\mathrm{uloc}}([0,\infty);\mathcal{W}^{2,p})}^2 + \norm{(F^\prime(\phi),G^\prime(\psi))}_{L^2_{\mathrm{uloc}}([0,\infty);\mathcal{L}^p)}^2 \\
                &\qquad\leq C_p\big(1 + \norm{(\bv,\bw)}_{L^2(0,\infty;\mathbfcal{L}^2)}^2\big), \hspace{-2em}
            \end{split}
        \end{align}
        \vspace{-.5em}
        \begin{align}\label{CCH:Est:H^2:Thm}
            \norm{(\phi,\psi)}_{L^4_{\mathrm{uloc}}([0,\infty);\mathcal{H}^2)}^4 \leq C\big(1 + \norm{(\bv,\bw)}_{L^2(0,\infty;\mathbfcal{L}^2)}^2\big)^2
        \end{align}
        for any $2 \leq p < \infty$. Here, the constants $C$ and $C_p$ are independent of the initial conditions $(\phi_0,\psi_0)$ and the velocity fields $(\bv,\bw)$.
    \end{enumerate}
\end{theorem}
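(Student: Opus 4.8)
\textbf{Proof proposal for Theorem~\ref{Theorem:CCH:Existence}.}
The plan is to construct a solution via a Faedo--Galerkin scheme combined with a Yosida-type regularization of the singular potentials, following the strategy of \cite{Stange2025} but now carrying the convective terms $\Div(\phi\bv)$ and $\Divg(\psi\bw)$ as prescribed forcing. First I would introduce the approximating parameter $\lambda\in(0,1]$ and replace $F_0'$, $G_0'$ by their Yosida approximations $F_{0,\lambda}'$, $G_{0,\lambda}'$, which are globally Lipschitz, and project the equations onto the finite-dimensional spaces spanned by the eigenfunctions of the bulk-surface Laplace operator associated with the bilinear form $(\cdot,\cdot)_{L,\beta}$ (see \cite{Knopf2021}). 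For the discretized system, local-in-time existence of the ODE system follows from Cauchy--Lipschitz; the basic energy estimate — obtained by testing the $\phi$/$\psi$ equation with $(\mu,\theta)$ and the $\mu$/$\theta$ equation with $(\delt\phi,\delt\psi)$ and summing, exactly as in the derivation of \eqref{CCH:EnergyIneq} — gives a global bound, where the convective terms are absorbed by Young's inequality using that $(\bv,\bw)\in L^2(0,\infty;\mathbfcal{L}^2_\Div)$ and the divergence-free condition eliminates the troublesome parts. This yields the uniform bounds on $(\phi,\psi)$ in $L^\infty(\mathcal{H}^1)$, on $(\Grad\mu,\Gradg\theta)$ and $\chi(L)^{1/2}(\beta\theta-\mu)$ in $L^2$, and, by comparison in the weak formulation, on $(\delt\phi,\delt\psi)$ in $L^2((\mathcal{H}^1_{L,\beta})')$, all independent of $\lambda$ and the discretization index, establishing \eqref{CCH:Est:Energy:Thm}.

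Next I would pass to the limit, first in the Galerkin index using Aubin--Lions--Simon (the $\delt$-bound plus the $\mathcal{H}^1$-bound gives strong $\mathcal{L}^2$-convergence, hence a.e. convergence of $\phi_\lambda,\psi_\lambda$), and then in $\lambda\to 0$. The crucial point at the $\lambda$-limit is to recover the separation property $\abs{\phi}<1$, $\abs{\psi}<1$ and the $\mathcal{L}^p$-bounds on $F'(\phi)$, $G'(\psi)$: here one tests with suitable approximations of $F_{0,\lambda}'(\phi_\lambda)$, exploits the strong convexity \eqref{Pot:F_0:Convex} and the domination property \ref{Assumption:DominationProperty} to control the boundary coupling, and uses the growth conditions \ref{Assumption:Potential:Growth} in the manner of \cite{Stange2025} (and \cite{Giorgini2025}) to upgrade the integrability; this combined with elliptic regularity for the bulk-surface system \eqref{SYSTEM:EBS} (treating $-\Lap\phi = \mu - F'(\phi)$, $-\Lapg\psi+\alpha\deln\phi = \theta - G'(\psi)$) yields the $\mathcal{W}^{2,p}$-bound \eqref{CCH:Est:L^p:Thm}. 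The $\mathcal{H}^2$-estimate \eqref{CCH:Est:H^2:Thm} then follows by the argument of \cite[Theorem~3.3]{Giorgini2025}: one differentiates or tests more carefully to control $\norm{(\phi,\psi)}_{\mathcal{H}^2}^4$ by $(1+\norm{(\bv,\bw)}_{L^2}^2)^2$, using \eqref{InterpolEst:L^4} and the already-established bounds. The uniform-in-time ($\mathrm{uloc}$) character of these estimates comes from running the energy estimate on intervals $[t,t+1]$ and summing, as the right-hand side of \eqref{CCH:EnergyIneq} involves only the globally-$L^2$ velocity.

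The energy inequality \eqref{CCH:EnergyIneq} and the mass conservation law \eqref{MCL:SING} are inherited in the limit: \eqref{MCL:SING} is exact at the Galerkin level (test with constants, using $\Div\bv=0$, $\Divg\bw=0$), and \eqref{CCH:EnergyIneq} follows from weak lower semicontinuity of the energy and of the dissipation terms together with strong convergence of the convective contributions. Finally, $BC_w([0,\infty);\mathcal{H}^1)$ follows from the $L^\infty(\mathcal{H}^1)$-bound plus continuity in a weaker topology (from $\delt\phi\in L^2((\mathcal{H}^1_{L,\beta})')$) via a standard interpolation/density argument. I expect the main obstacle to be the recovery of the separation property together with the $\mathcal{L}^p$-bounds on the potential derivatives uniformly in the approximation parameters while keeping track of the convective terms: the test function $F_{0,\lambda}'(\phi_\lambda)$ (respectively its bulk-surface-compatible modification) interacts with $\Div(\phi_\lambda\bv)$, and one must verify that this interaction is controlled purely by the $L^2$-norm of $\bv$ and lower-order quantities — this is precisely where the divergence-free structure and the assumptions \ref{Assumption:DominationProperty} and \ref{Assumption:Potential:Growth} must be used carefully, essentially reproducing the technical core of \cite{Stange2025} with the extra convective terms.
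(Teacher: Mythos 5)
Your proposal follows a fundamentally different route from the paper: you outline a full existence proof from scratch (Faedo--Galerkin projection onto eigenfunctions of the bulk-surface Laplacian, Yosida regularization of the singular potentials, passage to the limit in the discretization index and then in $\lambda$), whereas the paper does not construct the solution at all.  The paper's proof begins by citing \cite[Theorems~3.2 and 3.3]{Giorgini2025} for finite-time existence on $(0,T)$, together with regularity statements (i)--(ii), mass conservation (iii), and the energy inequality (iv), all of which that reference already covers for the convective system \eqref{EQ:CONV:SYSTEM}.  The paper's own contribution is therefore only the verification that the bounds are uniform in $T$, i.e.\ the $\mathrm{uloc}$ estimates in (v), which follow from \eqref{CCH:EnergyIneq} after absorbing the convective contributions via Young's inequality (using $|\phi|<1$, $|\psi|<1$), the mean-value estimate from \cite{Knopf2025}, and elliptic regularity bounds cited from \cite[Proposition~6.5 and Corollary~5.4]{Giorgini2025}.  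Your approach is substantially longer but would, if carried out, reprove the cited black boxes; the paper's approach is shorter precisely because it delegates the construction.  One inaccuracy in your proposal: you flag the interaction of the test function $F_{0,\lambda}'(\phi_\lambda)$ with $\Div(\phi_\lambda\bv)$ as a potential obstacle to the $\mathcal{L}^p$-bounds and the separation property.  In fact, those estimates are obtained from the \emph{chemical potential} equation $\mu=-\Lap\phi+F'(\phi)$ (respectively its boundary counterpart), which contains no convective term; convection enters only the evolution equation for $(\delt\phi,\delt\psi)$ and thus only affects the basic energy and time-derivative estimates, which you already handle correctly.  The $L^4_{\mathrm{uloc}}(\mathcal{H}^2)$ bound with the stated quadratic right-hand side also requires the precise power-scaling $\norm{(-\Lap\phi,-\Lapg\psi+\alpha\deln\phi)}_{\mathcal{L}^2}^2\lesssim 1+\norm{(\mu,\theta)}_{\mathcal{H}^1}$ from \cite[Corollary~5.4]{Giorgini2025}; ``testing more carefully'' is too vague to guarantee that exponent.
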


\begin{proof}
    Let $T > 0$. It was already established in \cite[Theorem~3.2 and Theorem~3.3]{Giorgini2025} that there exists a weak solution $(\phi,\psi,\mu,\theta)$ to \eqref{EQ:CONV:SYSTEM} on a finite time interval $(0,T)$ for $T > 0$ such that
    \begin{align*}
        (\phi,\psi)&\in BC_w([0,T);\mathcal{H}^1)\cap L^4(0,T;\mathcal{H}^2)\cap L^2_{\mathrm{uloc}}([0,\infty);\mathcal{W}^{2,p}), \\
        (\delt\phi,\delt\psi)&\in L^2(0,T;(\mathcal{H}^1_{L,\beta})^\prime), \\
        (\mu,\theta)&\in L^2(0,T;\mathcal{H}^1_{L,\beta}), \\
        (F^\prime(\phi),G^\prime(\psi))&\in L^2(0,T;\mathcal{L}^p)
    \end{align*}
    for any $2 \leq p < \infty$, and the weak solution satisfies the condition \ref{CCH:WS:WeakForm}, \ref{CCH:MCL} and \ref{CCH:WS:EnergyIneq}. To establish the estimates in \ref{CCH:WS:EnergyEstimates}, we make use of the energy inequality \eqref{CCH:EnergyIneq}. First, using \eqref{CCH:pp:<1}, we have by Young's inequality
    \begin{align*}
        &\Bigabs{\int_0^t\intO \phi\bv\cdot\Grad\mu\dxs + \int_0^t\intG \psi\bw\cdot\Gradg\theta\dGs} \\
        &\quad\leq \frac{\min\{1,m_\ast\}}{2}\int_0^t\intO\abs{\Grad\mu}^2\dxs + \frac{\min\{1,m_\ast\}}{2}\int_0^t\intG\abs{\Gradg\theta}^2\dGs \\
        &\qquad + \frac{1}{2\min\{1,m_\ast\}}\int_0^t\norm{(\bv,\bw)}_{\mathbfcal{L}^2}^2\ds.
    \end{align*}
    Recalling \eqref{CCH:EnergyIneq}, it readily follows that
    \begin{align*}
        &E_{\mathrm{free}}(\phi(t),\psi(t)) + \frac{\min\{1,m_\ast\}}{2}\int_0^t\norm{(\mu,\theta)}_{L,\beta}^2\ds \\
        &\quad\leq E_{\mathrm{free}}(\phi_0,\psi_0) + \frac{1}{2\min\{1,m_\ast\}}\int_0^t \norm{(\bv,\bw)}_{\mathbfcal{L}^2}^2\ds
    \end{align*}
    for a.e. $t\in(0,\infty)$. Then, since by assumption $(\bv,\bw)\in L^2(0,\infty;\mathbfcal{L}^2)$ and $E_{\mathrm{free}}(\phi_0,\psi_0)$ is finite, we can use again \eqref{CCH:pp:<1} to find that
    \begin{align}\label{Est:PP:H1:Linfty}
        \norm{(\phi,\psi)}_{L^\infty(0,\infty;\mathcal{H}^1)}^2 \leq C\big(1 + \norm{(\bv,\bw)}_{L^2(0,\infty;\mathbfcal{L}^2)}^2\big)
    \end{align}
    as well as
    \begin{align}\label{Est:MT:LB:L2}
        \int_0^\infty \norm{(\mu,\theta)}_{L,\beta}^2\ds \leq C\big(1 + \norm{(\bv,\bw)}_{L^2(0,\infty;\mathbfcal{L}^2)}^2\big).
    \end{align}
    Next, a comparison argument in \eqref{WF:CCH:PP} yields the estimate
    \begin{align}\label{Est:DELT:PP:H1Lb':a.e.}
        \norm{(\delt\phi,\delt\psi)}_{(\mathcal{H}^1_{L,\beta})^\prime} \leq C\big(\norm{(\bv,\bw)}_{\mathbfcal{L}^2} + \norm{(\mu,\theta)}_{L,\beta}\big),
    \end{align}
    from which we deduce that
    \begin{align}\label{EST:DELT:PP:H1Lb':L2}
        \int_0^\infty \norm{(\delt\phi,\delt\psi)}_{(\mathcal{H}^1_{L,\beta})^\prime}^2\ds \leq C\big(1 + \norm{(\bv,\bw)}_{L^2(0,\infty;\mathbfcal{L}^2)}^2\big).
    \end{align}
    Then, as shown in \cite{Knopf2025}, we have the estimate
    \begin{align}\label{Est:Mean:mt}
        \abs{\mean{\mu}{\theta}} \leq C\big(1 + \norm{(\mu,\theta)}_{L,\beta}\big),
    \end{align}
    see also \eqref{Est:mt:mean:n}, which implies together with \eqref{Est:MT:LB:L2} that $(\mu,\theta)\in L^2_{\mathrm{uloc}}([0,\infty);\mathcal{H}^1)$. Now, noticing that
    \begin{alignat*}{2}
        -\Lap\phi + F_0^\prime(\phi) &= \mu^\ast &&\qquad\text{a.e.~in~}\Om, \\
        -\Lapg\psi + G_0^\prime(\psi) + \alpha\deln\phi &= \theta^\ast &&\qquad\text{a.e.~on~}\Ga, \\
        K\deln\phi &= \alpha\psi - \phi &&\qquad\text{a.e.~on~}\Ga,
    \end{alignat*}
    almost everywhere on $(0,\infty)$, where $(\mu^\ast, \theta^\ast) = (\mu - c_F\phi, \theta - c_G\psi)\in\mathcal{H}^1$, an application of \cite[Proposition~6.5]{Giorgini2025} together with \eqref{Est:PP:H1:Linfty} and \eqref{Est:Mean:mt} yields that
    \begin{align}\label{Est:pp:Pot:Lp:a.e.}
        \begin{split}
            \norm{(\phi,\psi)}_{\mathcal{W}^{2,p}} + \norm{(F_0^\prime(\phi),G_0^\prime(\psi))}_{\mathcal{L}^p} &\leq C_p\big(1 + \norm{(\mu,\theta)}_{\mathcal{H}^1}\big) \\
            &\leq C_p(1 + \norm{(\mu,\theta)}_{L,\beta}\big)
        \end{split}
    \end{align}
    almost everywhere on $(0,\infty)$ for all $2 \leq p < \infty$. The latter entails that $(\phi,\psi)\in L^2_{\mathrm{uloc}}([0,\infty);\mathcal{W}^{2,p})$ as well as $(F^\prime(\phi),G^\prime(\psi))\in L^2_{\mathrm{uloc}}([0,\infty);\mathcal{L}^p)$ together with the estimate \eqref{CCH:Est:L^p:Thm}. Lastly, recalling that $K\in(0,\infty)$, we apply \cite[Corollary~5.4]{Giorgini2025} and find that
    \begin{align*}
        \norm{(-\Lap\phi,-\Lapg\psi + \alpha\deln\phi)}_{\mathcal{L}^2}^2 \leq C\big(1 + \norm{(\mu - c_F\phi,\theta - c_G\psi)}_{\mathcal{H}^1}\big) \leq C\big(1 + \norm{(\mu,\theta)}_{\mathcal{H}^1}\big).
    \end{align*}
    Thus, by elliptic regularity theory for systems with bulk-surface coupling (see, e.g., \cite[Theorem~3.3]{Knopf2021}), we have
    \begin{align*}
        \sup_{t\geq 0}\int_t^{t+1}\norm{(\phi,\psi)}_{\mathcal{H}^2}^4\ds \leq \big(1 + \norm{(\bv,\bw)}_{L^2(0,\infty;\mathbfcal{L}^2)}^2\big),
    \end{align*}
    which provides $(\phi,\psi)\in L^4_{\mathrm{uloc}}([0,\infty);\mathcal{H}^2)$. 
\end{proof}

Our next result is concerned with the uniqueness of weak solutions to \eqref{EQ:CONV:SYSTEM}. In the non-convective case, this was first shown for the Cahn--Hilliard equation with dynamic boundary conditions in \cite{Stange2025}.
\begin{theorem}\label{Theorem:CCH:Uniqueness}
    Suppose that the mobility functions satisfy \ref{Assumption:Coefficients} and the potentials \ref{Assumption:Potential}. Let $K\in(0,\infty)$, $L\in[0,\infty]$, and let $(\phi_0^1,\psi_0^1), (\phi_0^2,\psi_0^2)\in\mathcal{H}^1$ be two pairs of initial data satisfying \eqref{cond:init} as well as
    \begin{align}\label{cond:init:uniqueness}
        \begin{dcases}
            \mean{\phi_0^1}{\psi_0^1} = \mean{\phi_0^2}{\psi_0^2}, &\textnormal{if } L\in[0,\infty), \\
            \meano{\phi_0^1} = \meano{\phi_0^2} \quad\textnormal{and}\quad \meang{\psi_0^1} = \meang{\psi_0^2}, &\textnormal{if } L = \infty,
        \end{dcases}
    \end{align}
    and let $(\bv_1,\bw_1), (\bv_2,\bw_2)\in L^2(0,T;\mathbf{L}^2_\Div(\Om)\times\mathbf{L}^2_\tau(\Ga))$ be prescribed velocity fields. Consider two weak solutions $(\phi_1,\psi_1,\mu_1,\theta_1)$ and $(\phi_2,\psi_2,\mu_2,\theta_2)$ originating from $(\phi_0^1,\psi_0^1)$ and $(\phi_0^2,\psi_0^2)$ with velocity fields $(\bv_1,\bw_1)$ and $(\bv_2,\bw_2)$, respectively. Then, for any $T > 0$, there exists a positive constant $C$ such that
    \begin{align}
        \begin{split}\label{Est:ContinuousDependenceConvective}
            &\norm{(\phi_1(t) - \phi_2(t),\psi_1(t) - \psi_2(t))}_{(\mathcal{H}^1_{L,\beta})^\prime}^2 \\
            &\quad\leq \norm{(\phi_0^1 - \phi_0^2,\psi_0^1 - \psi_0^2)}_{(\mathcal{H}^1_{L,\beta})^\prime}^2\exp\Big(\int_0^t P(\tau)\dtau\Big) \\
            &\qquad + \int_0^t \norm{(\mathbf{v}_1(s) - \mathbf{v}_2(s),\mathbf{w}_1(s) - \mathbf{w}_2(s))}_{\mathbf{L}^{1+\omega}(\Om)\times\mathbf{L}^1(\Ga)}^2\exp\Big(\int_0^t P(\tau)\dtau\Big)\ds
        \end{split}
    \end{align}
    for almost all $t\in[0,T]$ and any $\omega > 0$, where
    \begin{align*}
        P = C\big(1 + \norm{(\mathbf{v}_2,\mathbf{w}_2)}_{\mathbfcal{L}^2}^2 + \norm{(\mu_2,\theta_2)}_{L,\beta}^2 + \norm{(\delt\phi_1,\delt\psi_1)}_{(\mathcal{H}^1_{L,\beta})^\prime}^2 + \norm{(\phi_1,\psi_1)}_{\mathcal{H}^2}^4\big),
    \end{align*}
    and the constant $C > 0$ depends only on $\Om$, the initial data and the parameters of the system.
\end{theorem}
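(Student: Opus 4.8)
The plan is to adapt the weak--weak continuous dependence argument of \cite{Stange2025}, which was established there for vanishing velocity fields, and to carry along the two extra convective contributions. Write $(\phi,\psi,\mu,\theta)\coloneqq(\phi_1-\phi_2,\psi_1-\psi_2,\mu_1-\mu_2,\theta_1-\theta_2)$ and $(\bv,\bw)\coloneqq(\bv_1-\bv_2,\bw_1-\bw_2)$. Subtracting the weak formulations \eqref{WF:CCH:PP} for the two solutions and decomposing $m_\Om(\phi_1)\Grad\mu_1-m_\Om(\phi_2)\Grad\mu_2=m_\Om(\phi_1)\Grad\mu+(m_\Om(\phi_1)-m_\Om(\phi_2))\Grad\mu_2$ (and analogously on $\Ga$), together with $\phi_1\bv_1-\phi_2\bv_2=\phi_1\bv+\phi\bv_2$ and $\psi_1\bw_1-\psi_2\bw_2=\psi_1\bw+\psi\bw_2$, yields the weak system solved by the difference. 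Thanks to the mass conservation law \eqref{MCL:SING} and the matching-mean hypothesis \eqref{cond:init:uniqueness}, the pair $(\phi(t),\psi(t))$ has vanishing generalized mean for every $t$, hence lies in $\mathcal{V}^{-1}_{L,\beta}$, and likewise $(\delt\phi,\delt\psi)\in L^2(0,T;\mathcal{V}^{-1}_{L,\beta})$; moreover $(\mu(t),\theta(t))\in\mathcal{H}^1_{L,\beta}$ for a.e.\ $t$. This makes it legitimate to test against the solution operator $\mathcal{S}_{L,\beta}[\phi_1,\psi_1]$ of the variable-coefficient elliptic problem \eqref{SYSTEM:EBS:MOB}, frozen at the first solution, and to work in the associated dual norm (all of this carried out uniformly in $L\in[0,\infty]$, reading $\mathcal{H}^1_{L,\beta}$ accordingly).

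Next I would run the energy estimate in this dual norm. Set $(\zeta(t),\xi(t))\coloneqq\mathcal{S}_{L,\beta}[\phi_1(t),\psi_1(t)](\phi(t),\psi(t))\in\mathcal{V}^1_{L,\beta}$ and $\mathcal{E}(t)\coloneqq\tfrac12\norm{(\phi(t),\psi(t))}_{L,\beta,[\phi_1,\psi_1],\ast}^2$, which by \eqref{NormEquivalence:EBS} is uniformly equivalent to $\tfrac12\norm{(\phi,\psi)}_{(\mathcal{H}^1_{L,\beta})^\prime}^2$. Differentiating $\mathcal{E}$ in time while accounting for the time dependence of the frozen coefficient (a step requiring the usual regularization/density justification, cf.\ \cite{Giorgini2025}) gives
\[
\ddt\mathcal{E}=\big\langle(\delt\phi,\delt\psi),(\zeta,\xi)\big\rangle_{\mathcal{H}^1_{L,\beta}}-\frac12\Big(\intO m_\Om^\prime(\phi_1)\,\delt\phi_1\,\abs{\Grad\zeta}^2\dx+\intG m_\Ga^\prime(\psi_1)\,\delt\psi_1\,\abs{\Gradg\xi}^2\dG\Big).
\]
Inserting the weak formulation of the difference into the first term, the piece $-\intO m_\Om(\phi_1)\Grad\mu\cdot\Grad\zeta\dx-\intG m_\Ga(\psi_1)\Gradg\theta\cdot\Gradg\xi\dG-\chi(L)\intG(\beta\theta-\mu)(\beta\xi-\zeta)\dG$ collapses to $-\big\langle(\phi,\psi),(\mu,\theta)\big\rangle_{\mathcal{H}^1_{L,\beta}}$ by the defining property of $(\zeta,\xi)$; integrating by parts using \eqref{Eq:mu:strong}--\eqref{Eq:bd:strong} and the monotonicity of $F_0^\prime$ and $G_0^\prime$ then produces $+\norm{(\phi,\psi)}_{K,\alpha}^2$ plus a nonnegative term on the left. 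What survives on the right is: the low-order terms $c_F\norm{\phi}_{L^2(\Om)}^2+c_G\norm{\psi}_{L^2(\Ga)}^2$; the mobility-difference terms $\intO(m_\Om(\phi_1)-m_\Om(\phi_2))\Grad\mu_2\cdot\Grad\zeta\dx$ and its surface analogue; the convective terms $\intO(\phi_1\bv+\phi\bv_2)\cdot\Grad\zeta\dx$ and its surface analogue; and the commutator term displayed above.

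I would then estimate these residual terms, systematically absorbing small multiples of $\norm{(\phi,\psi)}_{K,\alpha}^2$ into the left. The tools are the Poincar\'{e} inequality \eqref{Est:Poincare} (so that $\norm{(\phi,\psi)}_{\mathcal{H}^1}\leq C\norm{(\phi,\psi)}_{K,\alpha}$), the interpolation \eqref{Est:fg:L^2:K} (so that $\norm{(\phi,\psi)}_{\mathcal{L}^2}\leq C\mathcal{E}^{1/4}\norm{(\phi,\psi)}_{K,\alpha}^{1/2}$), the elliptic estimates \eqref{Est:Sol:G:W2p} and \eqref{Est:Sol:G:H^2} for $(\zeta,\xi)$, the two-dimensional Ladyzhenskaya inequality \eqref{InterpolEst:L^4} and the embeddings $\mathcal{H}^1\hookrightarrow\mathcal{L}^q$ for every finite $q$ in the bulk and on the surface together with $H^2(\Ga)\hookrightarrow W^{1,\infty}(\Ga)$ (recall $\Ga$ is one-dimensional), the bounds $\abs{\phi_j},\abs{\psi_j}\leq1$, and the a priori regularity of weak solutions from Theorem~\ref{Theorem:CCH:Existence}. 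After Young's inequality this leaves the differential inequality $\ddt\mathcal{E}+\tfrac12\norm{(\phi,\psi)}_{K,\alpha}^2\leq P(t)\,\mathcal{E}(t)+C\norm{(\bv,\bw)}_{\mathbf{L}^{1+\omega}(\Om)\times\mathbf{L}^1(\Ga)}^2$ with $P$ exactly as in the statement (the $\bv_2$ and $\mu_2$ contributions originating from the convective and mobility-difference terms), whence Gronwall's lemma and the norm equivalence \eqref{NormEquivalence:EBS} give \eqref{Est:ContinuousDependenceConvective}.

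The hard part is the commutator term. It pairs $\delt\phi_1$, which is available only in $(\mathcal{H}^1_{L,\beta})^\prime$, against $m_\Om^\prime(\phi_1)\abs{\Grad\zeta}^2$, so controlling the latter in $\mathcal{H}^1_{L,\beta}$ forces $\mathbf{W}^{2,q}$-regularity of $(\zeta,\xi)$ for some $q>2$; inserting this into \eqref{Est:Sol:G:W2p} and \eqref{Est:Sol:G:H^2} couples the bound to $\norm{(\phi_1,\psi_1)}_{\mathcal{H}^2}$, and an arithmetic--geometric mean inequality converts the resulting product $\norm{(\delt\phi_1,\delt\psi_1)}_{(\mathcal{H}^1_{L,\beta})^\prime}\norm{(\phi_1,\psi_1)}_{\mathcal{H}^2}^2$ into the sum $\norm{(\delt\phi_1,\delt\psi_1)}_{(\mathcal{H}^1_{L,\beta})^\prime}^2+\norm{(\phi_1,\psi_1)}_{\mathcal{H}^2}^4$ that figures in $P$. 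Using the variable-coefficient operator $\mathcal{S}_{L,\beta}[\phi_1,\psi_1]$ rather than the constant-coefficient $\mathcal{S}_{L,\beta}$ is essential here: it is what makes the leading $\Grad\mu$-term collapse exactly, whereas the constant-coefficient choice would leave $\intO(m_\Om(\phi_1)-1)\Grad\mu\cdot\Grad\zeta\dx$, which is not absorbable since $\Grad\mu$ is not controlled by $\norm{(\phi,\psi)}_{K,\alpha}$ or $\mathcal{E}$. Finally, the weak velocity norm is dictated by the convective term: using $\abs{\phi_1}\leq1$ one pairs $\bv$ against $\Grad\zeta$, and in two dimensions $(\zeta,\xi)\in\mathcal{H}^2$ only yields $\Grad\zeta\in\mathbf{L}^q(\Om)$ for every finite $q$ — hence the loss $\omega>0$ in $\mathbf{L}^{1+\omega}(\Om)$ — while on the one-dimensional boundary $\Gradg\xi\in\mathbf{L}^\infty(\Ga)$, which is precisely what allows the $\mathbf{L}^1(\Ga)$ norm there.
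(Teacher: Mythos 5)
Your proposal is correct and follows essentially the same route as the paper: test the difference system against the variable-coefficient solution operator $\mathcal{S}_{L,\beta}[\phi_1,\psi_1]$ (whose legitimacy is guaranteed by the mass conservation law), invoke the chain rule for the time-dependent $\norm{\cdot}_{L,\beta,[\phi_1,\psi_1],\ast}$-norm to generate the dissipative $\norm{(\phi,\psi)}_{K,\alpha}^2$ term plus a commutator, estimate the commutator, mobility-difference, and convective terms with the same combination of the bulk-surface Poincar\'{e} and interpolation inequalities \eqref{Est:fg:L^2:K}, \eqref{InterpolEst:L^4}, and the elliptic bounds \eqref{Est:Sol:G:H^2}, and close with Gronwall. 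The only cosmetic differences are bookkeeping: you differentiate $\mathcal{E}$ first and then substitute the weak formulation, whereas the paper tests the difference of \eqref{Eq:mu:strong}--\eqref{Eq:bd:strong} against $(\phi,\psi)$ and then substitutes the chain rule; and in estimating the $\phi_1\bv\cdot\Grad\zeta$ contribution you pair $\bv$ directly against $\Grad\zeta$ (needing $\zeta\in H^2$), while the paper integrates by parts once to pair against $\Grad\phi_1\cdot\bv\,\zeta$ (needing only $\zeta\in H^1$ in two dimensions) -- both yield the same differential inequality with the same $P$.
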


\begin{proof}
    Let $(\phi_0^1,\psi_0^1)$ and $(\phi_0^2,\psi_0^2)$ be two pairs of admissible initial data satisfying \eqref{cond:init} as well as \eqref{cond:init:uniqueness}.     Additionally, let $(\bv_1,\bw_1), (\bv_2,\bw_2)\in L^2(0,T;\mathbf{L}^2_\Div(\Om)\times\mathbf{L}^2_\tau(\Ga))$. Consider the corresponding weak solutions $(\phi_1,\psi_1,\mu_1,\theta_1)$ and $(\phi_2,\psi_2,\mu_2,\theta_2)$. 
    Then, defining $(\phi,\psi) = (\phi_1 - \phi_2, \psi_1 - \psi_2)$, we have
    \begin{align}\label{EQ:DELT:PHIPSI}
        \begin{split}
            &\bigang{(\delt\phi,\delt\psi)}{(\zeta,\xi)}_{\mathcal{H}^1_{L,\beta}} \\
            &\quad = \intO \big(\phi_1\bv + \phi\bv_2\big)\cdot\Grad\zeta\dx + \intG \big(\psi_1\bw + \psi\bw_2\big)\cdot\Gradg\xi\dG \\
            &\qquad - \intO m_\Om(\phi_1)\Grad(\mu_1 - \mu_2)\cdot\Grad\zeta\dx - \intG m_\Ga(\psi_1)\Gradg(\theta_1 - \theta_2)\cdot\Gradg\xi\dG \\
            &\qquad - \chi(L) \intG \big(\beta(\theta_1 - \theta_2) - (\mu_1 - \mu_2)\big)(\beta\xi - \zeta)\dG \\
            &\qquad - \intO \big(m_\Om(\phi_1) - m_\Om(\phi_2)\big)\Grad\mu_2\cdot\Grad\zeta\dx - \intG \big(m_\Ga(\psi_1) - m_\Ga(\psi_2)\big)\Gradg\theta_2\cdot\Gradg\xi\dG
        \end{split}
    \end{align}
    a.e. on $(0,\infty)$ for all $(\zeta,\xi)\in\mathcal{H}^1_{L,\beta}$, as well as
    \begin{subequations}\label{EQ:UNIQUENESS}
        \begin{alignat}{2}
        	-\Lap\phi + F^\prime(\phi_1) - F^\prime(\phi_2) &= \mu_1 - \mu_2 &&\qquad\text{a.e.~on~}Q, \label{EQ:PHI}\\
        	-\Lapg\psi + G^\prime(\psi_1) - G^\prime(\psi_2) + \alpha\deln\phi &= \theta_1 - \theta_2 &&\qquad\text{a.e.~on~}\Sigma, \label{EQ:PSI} \\
            K\deln\phi &= \alpha\psi - \phi &&\qquad\text{a.e.~on~}\Sigma. \label{EQ:BC:PHI}
        \end{alignat}
    \end{subequations}
    Now, we multiply \eqref{EQ:PHI} with $\phi$ and \eqref{EQ:PSI} with $\psi$, integrate over $\Om$ and $\Ga$, respectively, and perform integration by parts. Adding the resulting equations leads to
    \begin{align}\label{pre:comp}
        &\norm{(\phi,\psi)}_{K,\alpha}^2 + \intO \big(F_0^\prime(\phi_1) - F_1^\prime(\phi_2)\big)\phi\dx + \intG \big(G_0^\prime(\psi_1) - G_1^\prime(\psi_2)\big)\psi\dG \nonumber \\
        &\qquad - \intO \big(\mu_1 - \mu_2)\phi\dx - \intG \big(\theta_1 - \theta_2\big)\psi\dG \\
        &\quad = c_F\intO \phi^2\dx + c_G\intG \psi^2\dG \nonumber
    \end{align}
    almost everywhere on $(0,\infty)$. Next, we want to rewrite the integrals involving the chemical potentials in terms of the solution operator $\mathcal{S}_{L,\beta}[\phi_1,\psi_1](\delt\phi,\delt\psi)$. To this end, to make the following computations more readable, we use the abbreviation $\mathcal{S}_1(\delt\phi,\delt\psi) = \mathcal{S}_{L,\beta}[\phi_1,\psi_1](\delt\phi,\delt\psi)$ and find
    \begin{align}\label{mu_1-mu_2}
        \begin{split}
            &-\intO (\mu_1 - \mu_2)\phi\dx - \intG (\theta_1 - \theta_2)\psi\dG \\
            &\quad = \big((\phi,\psi),\mathcal{S}_1(\delt\phi,\delt\psi)\big)_{\mathcal{L}^2} - \intO \big(m_\Om(\phi_1) - m_\Om(\phi_2)\big)\Grad\mu_2\cdot\Grad\mathcal{S}_1^\Om(\phi,\psi)\dx \\
            &\qquad - \intG (m_\Ga(\psi_1) - m_\Ga(\psi_1)\big)\Gradg\theta_2\cdot\Gradg\mathcal{S}_1^\Ga(\phi,\psi)\dG + \intO \big(\phi_1\mathbf{v} + \phi\mathbf{v}_2\big)\cdot\Grad\mathcal{S}_1^\Om(\phi,\psi)\dx \\
            &\qquad + \intG \big(\psi_1\mathbf{w} + \psi\mathbf{w}_2\big)\cdot\Gradg\mathcal{S}_1^\Ga(\phi,\psi)\dG
        \end{split}
    \end{align}
    a.e. on $(0,\infty)$.
    Plugging the identity \eqref{mu_1-mu_2} back into \eqref{pre:comp} yields
    \begin{align*}
    	&\norm{(\phi,\psi)}_{K,\alpha}^2 + \intO \big(F_0^\prime(\phi_1) - F_0^\prime(\phi_2)\big)\phi\dx + \intG \big(G_0^\prime(\psi_1) - G_0^\prime(\psi_2)\big)\psi\dG \\
    	&\qquad  + \intO \mathcal{S}_1(\delt\phi,\delt\psi)\phi\dx + \intG \mathcal{S}_1(\delt\phi,\delt\psi)\psi\dG \\
        &\qquad - \intO \big(m_\Om(\phi_1) - m_\Om(\phi_2)\big)\Grad\mu_2\cdot\Grad\mathcal{S}^\Om_1(\phi,\psi)\dx - \intG \big(m_\Ga(\psi_1) - m_\Ga(\psi_2)\big)\Gradg\theta_2\cdot\Gradg\mathcal{S}^\Ga_1(\phi,\psi)\dG \\
        &\qquad + \intO \big(\phi_1\mathbf{v} + \phi\mathbf{v}_2\big)\cdot\Grad\mathcal{S}_1^\Om(\phi,\psi)\dx + \intG \big(\psi_1\mathbf{w} + \psi\mathbf{w}_2\big)\cdot\Gradg\mathcal{S}_1^\Ga(\phi,\psi)\dG \\
    	&\quad = c_F\intO \phi^2 \dx + c_G\intG \psi^2\dG.
    \end{align*}
    Then, in view of the chain rule (cf. \cite[Eqn.~(5.18)]{Stange2025})
    \begin{align}\label{ChainRuleFormula}
	   \begin{split}
	   &\intO \mathcal{S}_1^\Om(\delt\phi,\delt\psi)\phi\dx + \intG \mathcal{S}_1^\Ga(\delt\phi,\delt\psi)\psi\dG  \\
	   &\quad = \ddt\frac12 \norm{(\phi,\psi)}_{L,\beta,[\phi_1,\psi_1],\ast}^2 \\
	   &\qquad + \frac12 \Big(\mathcal{S}_1(\delt\phi_1,\delt\psi_1),\big(m_\Om^\prime(\phi_1)\abs{\Grad\mathcal{S}_1^\Om(\phi,\psi)}^2,m_\Ga^\prime(\psi_1)\abs{\Gradg\mathcal{S}_1^\Ga(\phi,\psi)}^2\big)\Big)_{L,\beta},
	   \end{split}
    \end{align}
    we obtain
    \begin{align}\label{DiffIneq}
    		\ddt\frac12\norm{(\phi,\psi)}_{L,\beta,[\phi_1,\psi_1],\ast}^2 + \norm{(\phi,\psi)}_{K,\alpha}^2 \leq c_F\norm{\phi}_{L^2(\Om)}^2 + c_G\norm{\psi}_{L^2(\Ga)}^2 + I_1 + I_2 + I_3,
    \end{align}
    where
    \begin{align*}
    	I_1 &=  -\frac12 \big(\mathcal{S}_1(\delt\phi_1,\delt\psi_1),(m_\Om^\prime(\phi_1)\abs{\Grad\mathcal{S}_1^\Om(\phi,\psi)}^2,m_\Ga^\prime(\psi_1)\abs{\Gradg\mathcal{S}_1^\Ga(\phi,\psi)}^2)\big)_{L,\beta}, \\
        I_2 &=  \intO \big(m_\Om(\phi_1) - m_\Om(\phi_2)\big)\Grad\mu_2\cdot\Grad\mathcal{S}^\Om_1(\phi,\psi)\dx + \intG \big(m_\Ga(\psi_1) - m_\Ga(\psi_2)\big)\Gradg\theta_2\cdot\Gradg\mathcal{S}^\Ga_1(\phi,\psi)\dG,
    \end{align*}
    and
    \begin{align*}
    	I_3 = \intO \big(\phi_1\mathbf{v} + \phi\mathbf{v}_2\big)\cdot\Grad\mathcal{S}_1^\Om(\phi,\psi)\dx + \intG \big(\psi_1\mathbf{w} + \psi\mathbf{w}_2\big)\cdot\Gradg\mathcal{S}_1^\Ga(\phi,\psi)\dG.
    \end{align*}
    The terms $I_1$ and $I_2$ have already been estimated in \cite{Stange2025}. Namely, it holds that
    \begin{align*}
        &\abs{I_1 + I_2} \\
        &\quad \leq \frac18\norm{(\phi,\psi)}_{K,\alpha}^2 + C\big(\norm{(\mu_2,\theta_2)}_{L,\beta}^2 + \norm{(\delt\phi_1,\delt\psi_1)}_{(\mathcal{H}^1_{L,\beta})^\prime}^2 + \norm{(\phi_1,\psi_1)}_{\mathcal{H}^2}^4\big)\norm{\mathcal{S}_1(\phi,\psi)}_{L,\beta}^2, \nonumber
    \end{align*}
    see \cite[Eqns.~(5.62)--(5.63)]{Stange2025}.
    Furthermore, as \eqref{Est:fg:L^2:K} implies that
    \begin{align}\label{Est:PhiPsi:L2}
    	c_F\norm{\phi}_{L^2(\Om)}^2 + c_G\norm{\psi}_{L^2(\Ga)}^2\leq C\norm{\mathcal{S}_1(\phi,\psi)}_{L,\beta}\norm{(\phi,\psi)}_{K,\alpha} \leq \frac18\norm{(\phi,\psi)}_{K,\alpha}^2 + C\norm{\mathcal{S}_1(\phi,\psi)}_{L,\beta}^2,
    \end{align}
    we only have to estimate the term $I_3$, which contains the convective contributions. To this end, we first observe that
    \begin{align}\label{I3:1}
        \abs{I_3} &= \Bigabs{\intO \big(\phi_1\mathbf{v} + \phi\mathbf{v}_2\big)\cdot\Grad\mathcal{S}_1^\Om(\phi,\psi)\dx + \intG \big(\psi_1\mathbf{w} + \psi\mathbf{w}_2\big)\cdot\Gradg\mathcal{S}_1^\Ga(\phi,\psi)\dG} \nonumber \\
        &\leq \Bigabs{\intO \Grad\phi_1\cdot\bv\,\mathcal{S}_1^\Om(\phi,\psi)\dx + \intG \Gradg\psi_1\cdot\bw\,\mathcal{S}_1^\Ga(\phi,\psi)\dG} \\
        &\quad + \Bigabs{\intO \phi\bv_2\cdot\Grad\mathcal{S}_1^\Om(\phi,\psi)\dx + \intG \psi\bw_2\cdot\Gradg\mathcal{S}_1^\Ga(\phi,\psi)\dG}. \nonumber
    \end{align}
    For the first term on the right-hand side of \eqref{I3:1}, we use Hölder's, Sobolev's and the bulk-surface Poincar\'{e} inequality and deduce that
    \begin{align*}
        &\Bigabs{\intO \Grad\phi_1\cdot\bv\,\mathcal{S}_1^\Om(\phi,\psi)\dx + \intG \Gradg\psi_1\cdot\bw\,\mathcal{S}_1^\Ga(\phi,\psi)\dG} \\
        &\leq \norm{\Grad\phi_1}_{\mathbf{L}^{\frac{2(1+\omega)}{\omega}}(\Om)}\norm{\bv}_{\mathbf{L}^{1+\omega}(\Om)}\norm{\mathcal{S}_1^\Om(\phi,\psi)}_{L^{\frac{2(1+\omega)}{\omega}}(\Om)} + \norm{\Gradg\psi_1}_{\mathbf{L}^\infty(\Ga)}\norm{\bw}_{\mathbf{L}^1(\Ga)}\norm{\mathcal{S}_1^\Ga(\phi,\psi)}_{L^\infty(\Ga)} \\
        &\leq C\norm{(\phi_1,\psi_1)}_{\mathcal{H}^2}\norm{(\bv,\bw)}_{\mathbf{L}^{1+\omega}(\Om)\times\mathbf{L}^1(\Ga)}\norm{\mathcal{S}_1(\phi,\psi)}_{L,\beta} \\
        &\leq \frac12\norm{(\bv,\bw)}_{\mathbf{L}^{1+\omega}(\Om)\times\mathbf{L}^1(\Ga)}^2 + C\norm{(\phi_1,\psi_1)}_{\mathcal{H}^2}^2\norm{\mathcal{S}_1(\phi,\psi)}_{L,\beta}^2
    \end{align*}
    for any $\omega > 0$.
    For the second term on the right-hand side of \eqref{I3:1}, we use a similar argument to \cite[Lemma~5.1]{Giorgini2018}, see also \cite{Giorgini2021}. However, since in our case we have non-degenerate mobility functions, additional terms are coming from \eqref{Est:Sol:G:H^2} that have to be handled suitably. To be precise, exploiting \eqref{Est:Poincare}, \eqref{InterpolEst:L^4}, \eqref{Est:fg:L^2:K} and \eqref{Est:Sol:G:H^2},  we find that
    \begin{align*}
        &\Bigabs{\intO \phi\bv_2\cdot\Grad\mathcal{S}_1^\Om(\phi,\psi)\dx + \intG \psi\bw_2\cdot\Gradg\mathcal{S}_1^\Ga(\phi,\psi)\dG} \\
        &\leq \norm{(\bv_2,\bw_2)}_{\mathbfcal{L}^2}\norm{(\phi,\psi)}_{\mathcal{L}^4}\norm{(\Grad\mathcal{S}_1^\Om(\phi,\psi),\Gradg\mathcal{S}_1^\Ga(\phi,\psi))}_{\mathbfcal{L}^4} \\
        &\leq C\norm{(\bv_2,\bw_2)}_{\mathbfcal{L}^2}\norm{(\phi,\psi)}_{\mathcal{L}^2}^{\frac12}\norm{(\phi,\psi)}_{\mathcal{H}^1}^{\frac12}\norm{(\Grad\mathcal{S}_1^\Om(\phi,\psi),\Gradg\mathcal{S}_1^\Ga(\phi,\psi))}_{\mathbfcal{L}^2}^{\frac12}\norm{(\Grad\mathcal{S}_1^\Om(\phi,\psi),\Gradg\mathcal{S}_1^\Ga(\phi,\psi))}_{\mathbfcal{H}^1}^{\frac12} \\
        &\leq C\norm{(\bv_2,\bw_2)}_{\mathbfcal{L}^2}\norm{(\phi,\psi)}_{\mathcal{L}^2}^{\frac12}\norm{(\phi,\psi)}_{\mathcal{H}^1}^{\frac12}\norm{\mathcal{S}_1(\phi,\psi)}_{L,\beta}^{\frac12} \\
        &\qquad\times \Big(\norm{(\Grad\phi_1,\Gradg\psi_1)}_{\mathbfcal{L}^2}^{\frac12}\norm{(\phi_1,\psi_1)}_{\mathcal{H}^2}^{\frac12}\norm{\mathcal{S}_1(\phi,\psi)}_{L,\beta}^{\frac12} + \norm{(\phi,\psi)}_{\mathcal{L}^2}^{\frac12}\Big) \\
        &\leq C\norm{(\bv_2,\bw_2)}_{\mathbfcal{L}^2}\Big(\norm{(\phi,\psi)}_{\mathcal{L}^2}^{\frac12}\norm{(\phi,\psi)}_{\mathcal{H}^1}^{\frac12}\norm{(\Grad\phi_1,\Gradg\psi_1)}_{\mathbfcal{L}^2}^{\frac12}\norm{(\phi_1,\psi_1)}_{\mathcal{H}^2}^{\frac12}\norm{\mathcal{S}_1(\phi,\psi)}_{L,\beta} \\
        &\qquad + \norm{(\phi,\psi)}_{\mathcal{L}^2}\norm{(\phi,\psi)}_{\mathcal{H}^1}^{\frac12}\norm{\mathcal{S}_1(\phi,\psi)}_{L,\beta}^{\frac12}\Big) \\
        &\leq C\norm{(\bv_2,\bw_2)}_{\mathbfcal{L}^2}\Big(\norm{(\phi,\psi)}_{K,\alpha}\norm{(\phi_1,\psi_1)}_{\mathcal{H}^2}\norm{\mathcal{S}_1(\phi,\psi)}_{L,\beta} + \norm{(\phi,\psi)}_{K,\alpha}\norm{\mathcal{S}_1(\phi,\psi)}_{L,\beta}\Big) \\
        &\leq \frac18\norm{(\phi,\psi)}_{K,\alpha}^2 + C\big(1 + \norm{(\bv_2,\bw_2)}_{\mathbfcal{L}^2}^2 + \norm{(\phi_1,\psi_1)}_{\mathcal{H}^2}^4\big)\norm{\mathcal{S}_1(\phi,\psi)}_{L,\beta}^2. 
    \end{align*}
    Consequently, we find the differential inequality
    \begin{align}\label{DiffIneq:Final}
        \ddt\frac12\norm{(\phi,\psi)}_{L,\beta,[\phi_1,\psi_1],\ast}^2 + \frac12\norm{(\phi,\psi)}_{K,\alpha}^2 \leq \frac12\norm{(\mathbf{v},\mathbf{w})}_{\mathbf{L}^{1+\omega}(\Om)\times\mathbf{L}^1(\Ga)}^2 + P(t)\norm{(\phi,\psi)}_{L,\beta,[\phi_1,\psi_1],\ast}^2,
    \end{align}
    where
    \begin{align*}
        P(\cdot) = C\big(1 + \norm{(\mathbf{v}_2,\mathbf{w}_2)}_{\mathbfcal{L}^2}^2 + \norm{(\mu_2,\theta_2)}_{L,\beta}^2 + \norm{(\delt\phi_1,\delt\psi_1)}_{(\mathcal{H}^1_{L,\beta})^\prime}^2 + \norm{(\phi_1,\psi_1)}_{\mathcal{H}^2}^4\big)\in L^1(0,T).
    \end{align*}
    An application of Gronwall's lemma readily yields the desired conclusion \eqref{Est:ContinuousDependenceConvective}, and thus the uniqueness of weak solutions to \eqref{EQ:CONV:SYSTEM}. This finishes the proof.
\end{proof}

Our next result is concerned with the existence of strong solutions to \eqref{EQ:CONV:SYSTEM}.
\begin{theorem}\label{Theorem:CCH:Strong}
    We suppose that the mobility functions satisfy \ref{Assumption:Coefficients} and that the potentials satisfy \ref{Assumption:Potential}. Let $K\in(0,\infty)$ and $L\in[0,\infty]$, and let the initial condition $(\phi_0,\psi_0)\in\mathcal{H}^1$ satisfy \eqref{cond:init}, as well as the following compatibility condition:
    \begin{enumerate}[label=\textnormal{\bfseries(C)},topsep=0ex,leftmargin=*]
    \item \label{cond:MT:0:app} There exists $\scp{\mu_0}{\theta_0}\in\mathcal{H}^1_{L,\beta}$ such that for all $\scp{\eta}{\vartheta}\in\mathcal{H}^1$ it holds
    \begin{align*}
        \begin{aligned}
            &\intO\mu_0\eta\dx + \intG\theta_0\vartheta\dG 
            \\
            &= \intO\Grad\phi_0\cdot\Grad\eta + F^\prime(\phi_0)\eta\dx + \intG\Gradg\psi_0\cdot\Gradg\vartheta + G^\prime(\psi_0)\vartheta\dG 
            \\
            &\quad + \chi(K)\intG(\alpha\psi_0 - \phi_0)(\alpha\vartheta - \eta)\dG.
        \end{aligned}
    \end{align*}
\end{enumerate}
    For the velocity fields $(\bv,\bw)$, we assume one of the following conditions:
    \begin{enumerate}[label=\textnormal{(\roman*)}, topsep=0ex]
        \item \label{Cond:Conv:i} $(\bv,\bw)\in H^1_{\mathrm{uloc}}(0,\infty;\mathbf{L}^{1+\omega}(\Om)\times\mathbf{L}^1(\Ga)) \cap L^2(0,\infty;\mathbfcal{L}^2)\cap L^\infty(0,\infty;\mathbfcal{L}^2_\Div)$ \ for some $\omega > 0$,
        \item \label{Cond:Conv:ii} $(\bv,\bw)\in L^2(0,\infty;\mathbfcal{H}^1)\cap L^\infty(0,\infty;\mathbfcal{L}^2_\Div)$.
    \end{enumerate}
    Then, the corresponding unique weak solution $(\phi,\psi,\mu,\theta)$ to \eqref{EQ:CONV:SYSTEM} satisfies
    \begin{align*}
        &(\phi,\psi)\in L^\infty(0,\infty;\mathcal{W}^{2,p}), \quad (\delt\phi,\delt\psi)\in L^\infty(0,\infty;(\mathcal{H}^1_{L,\beta})^\prime)\cap L^2_{\mathrm{uloc}}([0,\infty);\mathcal{H}^1), \\
        &(\mu,\theta)\in L^\infty(0,\infty;\mathcal{H}^1_{L,\beta})\cap L^2_{\mathrm{uloc}}([0,\infty);\mathcal{H}^3)\cap H^1_{\mathrm{uloc}}(0,\infty;(\mathcal{H}^1_{L,\beta})^\prime), \\
        &(F^\prime(\phi),G^\prime(\psi)), \ (F^{\prime\prime}(\phi), G^{\prime\prime}(\psi))\in L^\infty(0,\infty;\mathcal{L}^p)
    \end{align*}
    for any $2 \leq p < \infty$. Moreover, the equations \eqref{EQ:CONV:SYSTEM:1}-\eqref{EQ:CONV:SYSTEM:2} are satisfied almost everywhere on $\Om\times(0,\infty)$, while \eqref{EQ:CONV:SYSTEM:3}-\eqref{EQ:CONV:SYSTEM:4} and the boundary conditions \eqref{EQ:CONV:SYSTEM:5}-\eqref{EQ:CONV:SYSTEM:6} are satisfied almost everywhere on $\Ga\times(0,\infty)$.
\end{theorem}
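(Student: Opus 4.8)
The starting point is the global weak solution $(\phi,\psi,\mu,\theta)$ produced by Theorem~\ref{Theorem:CCH:Existence}, together with its uniform-in-time bounds \eqref{CCH:Est:Energy:Thm}--\eqref{CCH:Est:H^2:Thm}; by Theorem~\ref{Theorem:CCH:Uniqueness} this solution is unique, so it suffices to establish, on a suitable approximating scheme (cf. \cite{Giorgini2025,Stange2025}), a priori estimates that are independent of the approximation parameter and uniform in time, and then pass to the limit. I sketch only the a priori estimates; all differential identities below are to be read on the approximate level, where the solutions are smooth enough to justify the manipulations.

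\textbf{Step 1: a uniform higher-order bound on $(\delt\phi,\delt\psi)$.}
This is the core estimate. Differentiating the weak mass balance \eqref{WF:CCH:PP} in time and testing with $(\zeta,\xi)=\mathcal{S}_{L,\beta}[\phi,\psi](\delt\phi,\delt\psi)=:\mathcal{S}_1(\delt\phi,\delt\psi)$, I would use the chain-rule identity \eqref{ChainRuleFormula} to rewrite the left-hand side as $\ddt\tfrac12\norm{(\delt\phi,\delt\psi)}_{L,\beta,[\phi,\psi],\ast}^2$ plus a mobility-correction term, while the defining weak formulation of $\mathcal{S}_1$ together with \eqref{Eq:mu:strong}--\eqref{Eq:bd:strong} turns the diffusive contribution into $-\norm{(\delt\phi,\delt\psi)}_{K,\alpha}^2-\intO F^{\prime\prime}(\phi)\abs{\delt\phi}^2\dx-\intG G^{\prime\prime}(\psi)\abs{\delt\psi}^2\dG$. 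Since $F_0^{\prime\prime}\ge\Theta$ and $F=F_0-\tfrac{c_F}{2}s^2$ (and likewise for $G$), the last two integrals are absorbed up to a multiple of $\norm{(\delt\phi,\delt\psi)}_{\mathcal{L}^2}^2$, which is itself controlled via \eqref{Est:fg:L^2:K}. The remaining terms — the mobility correction from \eqref{ChainRuleFormula}, the contributions $-\intO m_\Om^\prime(\phi)\delt\phi\,\Grad\mu\cdot\Grad\mathcal{S}_1^\Om(\delt\phi,\delt\psi)\dx$ and its surface analogue, and the convective terms $-\intO\delt\phi\,\bv\cdot\Grad\mathcal{S}_1^\Om(\delt\phi,\delt\psi)\dx$ and $-\intO\phi\,\delt\bv\cdot\Grad\mathcal{S}_1^\Om(\delt\phi,\delt\psi)\dx$ (and surface analogues) — are estimated by Hölder's inequality, the interpolation estimates \eqref{InterpolEst:Trace:L^2}--\eqref{InterpolEst:L^4}, the Poincar\'e inequality \eqref{Est:Poincare}, the elliptic bounds \eqref{Est:Sol:G:H^2} for $\mathcal{S}_1$, and $\abs{\phi},\abs{\psi}<1$; the term involving $\phi\,\delt\bv$ is treated by integration by parts using $\Div\bv=0$, exactly as the first summand of $I_3$ in the proof of Theorem~\ref{Theorem:CCH:Uniqueness}, which is where one distinguishes cases~\ref{Cond:Conv:i} and~\ref{Cond:Conv:ii}. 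Altogether one arrives at a differential inequality
\begin{align*}
    \ddt\norm{(\delt\phi,\delt\psi)}_{L,\beta,[\phi,\psi],\ast}^2 + \norm{(\delt\phi,\delt\psi)}_{K,\alpha}^2 \le P_1\norm{(\delt\phi,\delt\psi)}_{L,\beta,[\phi,\psi],\ast}^2 + P_2,
\end{align*}
with $P_1,P_2\in L^1_{\mathrm{uloc}}([0,\infty))$ by \eqref{CCH:Est:Energy:Thm}--\eqref{CCH:Est:H^2:Thm} and the hypotheses on $(\bv,\bw)$. The initial value $\norm{(\delt\phi(0),\delt\psi(0))}_{L,\beta,[\phi,\psi],\ast}$ is finite precisely because of the compatibility condition~\ref{cond:MT:0:app}, which supplies $(\mu_0,\theta_0)\in\mathcal{H}^1_{L,\beta}$, so that comparison in the mass balance at $t=0$ bounds $(\delt\phi(0),\delt\psi(0))$ in $(\mathcal{H}^1_{L,\beta})^\prime$. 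The uniform Gronwall Lemma~\ref{Lemma:Gronwall} then yields $(\delt\phi,\delt\psi)\in L^\infty(0,\infty;(\mathcal{H}^1_{L,\beta})^\prime)$ and, using the norm equivalences together with mass conservation \eqref{MCL:SING} and assumption~\ref{Assumption:Constants}, also $(\delt\phi,\delt\psi)\in L^2_{\mathrm{uloc}}([0,\infty);\mathcal{H}^1)$.

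\textbf{Step 2: bootstrap of the remaining regularities.}
Rewriting the mass balance in elliptic form, $(\mu,\theta)$ coincides with $\mathcal{S}_{L,\beta}[\phi,\psi]$ applied to $-(\delt\phi+\Div(\phi\bv),\delt\psi+\Divg(\psi\bw))$ up to its generalized mean, which is controlled by \eqref{Est:Mean:mt}; since $\abs{\phi},\abs{\psi}\le1$ this source is uniformly bounded in $(\mathcal{H}^1_{L,\beta})^\prime$, so \eqref{BSE:MOB:Est:Apriori} gives $(\mu,\theta)\in L^\infty(0,\infty;\mathcal{H}^1_{L,\beta})$. Inserting this into the elliptic estimate \cite[Proposition~6.5]{Giorgini2025} for the stationary problem \eqref{Eq:mu:strong}--\eqref{Eq:bd:strong} (as in \eqref{Est:pp:Pot:Lp:a.e.}) yields $(\phi,\psi)\in L^\infty(0,\infty;\mathcal{W}^{2,p})$ and $(F^\prime(\phi),G^\prime(\psi))\in L^\infty(0,\infty;\mathcal{L}^p)$ for every $p<\infty$, and the growth hypothesis~\ref{Assumption:Potential:Growth} upgrades this to $(F^{\prime\prime}(\phi),G^{\prime\prime}(\psi))\in L^\infty(0,\infty;\mathcal{L}^p)$. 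Finally, with $(\phi,\psi)\in\mathcal{W}^{2,p}$ and $(\delt\phi,\delt\psi)\in L^2_{\mathrm{uloc}}([0,\infty);\mathcal{H}^1)$ in hand, the higher-order elliptic estimate \eqref{Est:Sol:G:H^3} applied once more to the mass balance gives $(\mu,\theta)\in L^2_{\mathrm{uloc}}([0,\infty);\mathcal{H}^3)$, and a comparison argument in the time-differentiated weak formulation gives $(\mu,\theta)\in H^1_{\mathrm{uloc}}(0,\infty;(\mathcal{H}^1_{L,\beta})^\prime)$. Passing to the limit in the approximation and identifying limits via Theorem~\ref{Theorem:CCH:Uniqueness} concludes the proof.

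\textbf{Main obstacle.}
The delicate part is Step~1. Because the mobilities are only non-degenerate (rather than constant), testing the time-differentiated equation with $\mathcal{S}_1(\delt\phi,\delt\psi)$ generates the mobility-correction term of \eqref{ChainRuleFormula} and the terms $m_\Om^\prime(\phi)\delt\phi\,\Grad\mu\cdot\Grad\mathcal{S}_1^\Om(\delt\phi,\delt\psi)$, absent in the constant-mobility theory, which must be absorbed using only the $L^4_{\mathrm{uloc}}([0,\infty);\mathcal{H}^2)$-bound on $(\phi,\psi)$ and the $L^\infty$-bound on $\norm{(\mu,\theta)}_{L,\beta}$ inherited from Theorem~\ref{Theorem:CCH:Existence}; simultaneously the convective contributions — in particular $\phi\,\delt\bv$ under the weak surface integrability in~\ref{Cond:Conv:i} — have to be handled so that the coefficients $P_1,P_2$ remain in $L^1_{\mathrm{uloc}}$, which is exactly what makes the statement global rather than merely local in time.
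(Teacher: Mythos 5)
The proposal identifies the right overall structure (a priori estimate for $(\delt\phi,\delt\psi)$, then bootstrap), and for the velocity condition \ref{Cond:Conv:i} your Step~1 is essentially a formal version of the paper's rigorous argument: the paper works with difference quotients $\delth\phi$ and applies the continuous-dependence inequality \eqref{DiffIneq:Final} from Theorem~\ref{Theorem:CCH:Uniqueness} directly (plus a separate inequality \eqref{DiffIneq:InitialData} to control the initial value $\norm{(\delth\phi(0),\delth\psi(0))}_{L,\beta,\ast}$ using the compatibility datum $(\mu_0,\theta_0)$), then lets $h\to0$. Your formal time differentiation would, after careful justification, produce the same differential inequality, so for~\ref{Cond:Conv:i} the two routes are equivalent in content, with the paper's being cleaner because it never needs to differentiate the equation.

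\textbf{Gap in case~\ref{Cond:Conv:ii}.} Your single Step~1 scheme does not work under the Leray hypothesis $(\bv,\bw)\in L^2(0,\infty;\mathbfcal{H}^1)\cap L^\infty(0,\infty;\mathbfcal{L}^2_\Div)$. Differentiating \eqref{WF:CCH:PP} in time unavoidably produces the term $\intO\phi\,\delt\bv\cdot\Grad\mathcal{S}_1^\Om(\delt\phi,\delt\psi)\dx$ (and its surface analogue). Integration by parts using $\Div\delt\bv=0$ converts it into $-\intO\Grad\phi\cdot\delt\bv\,\mathcal{S}_1^\Om(\delt\phi,\delt\psi)\dx$, but in case~\ref{Cond:Conv:ii} there is \emph{no} integrability assumption on $\delt\bv$ at all, so this integral is simply undefined. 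You flag that ``one distinguishes cases~\ref{Cond:Conv:i} and~\ref{Cond:Conv:ii}'' at this point but offer no alternative argument, and mollifying $(\bv,\bw)$ does not save the estimate because $\norm{\delt\bv_k}$ blows up as the mollification parameter shrinks.

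The paper handles case~\ref{Cond:Conv:ii} by a genuinely different route: it mollifies $(\bv,\bw)$ to $(\bv_k,\bw_k)\in C_c^\infty(0,\infty;\mathbfcal{H}^1\cap\mathbfcal{L}^2_\Div)$ (so that case~\ref{Cond:Conv:i} applies for each fixed $k$, giving strong solutions $(\phi_k,\psi_k,\mu_k,\theta_k)$), differentiates the \emph{chemical-potential} relation \eqref{Eq:mu:strong}--\eqref{Eq:bd:strong} in time to obtain $(\delt\mu_k,\delt\theta_k)$, and then invokes the chain rule \eqref{CONV:ChainRule} for
$$\ddt\tfrac12\Big(\intO m_\Om(\phi_k)\abs{\Grad\mu_k}^2\dx+\intG m_\Ga(\psi_k)\abs{\Gradg\theta_k}^2\dG+\chi(L)\intG(\beta\theta_k-\mu_k)^2\dG\Big).$$
Because the right-hand side there is paired against $(-\Div(m_\Om(\phi_k)\Grad\mu_k),\ldots)=(\delt\phi_k+\Div(\phi_k\bv_k),\ldots)$ rather than against $\mathcal{S}(\delt\phi,\delt\psi)$, the convective contribution enters only through $\bv_k$ (and $\Grad\bv_k$), never through $\delt\bv_k$; see \eqref{Est:HighReg:Conv}. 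The resulting coefficients are uniformly bounded by the mollification estimate \eqref{Est:vw:ep}, and the bound on $\norm{(\mu_k,\theta_k)}_{L,\beta}$ (hence on $(\delt\phi_k,\delt\psi_k)$ via comparison) survives the limit $k\to\infty$. This change of test function is the key idea your proposal is missing, and without it case~\ref{Cond:Conv:ii} — which is what Theorem~\ref{Theorem:GlobalExistence} ultimately needs — is not covered.

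Your Step~2 is otherwise fine and matches the paper's bootstrap.
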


\begin{remark}\label{REM:REG:WEAK}
    Concerning the assumptions on the mobility functions and the velocity fields in Theorem~\ref{Theorem:CCH:Strong}, we remark the following. For $m_\Om, m_\Ga\in C^1([-1,1])$ and Leray-velocity fields, i.e., 
    \begin{align*}
        (\bv,\bw)\in L^2(0,\infty;\mathbfcal{H}^1)\cap L^\infty(0,\infty;\mathbfcal{L}^2_\Div),
    \end{align*}
    one can also show the existence of a strong solution to \eqref{EQ:CONV:SYSTEM}. This can be done by combining the approaches of \cite[Theorem~3.6]{Giorgini2025} and \cite[Theorem~3.4]{Stange2025} (note also \cite[Remark~3.7]{Giorgini2025}). However, as the mobility functions only lie in $C^1([-1,1])$, Theorem~\ref{Theorem:CCH:Uniqueness} about the uniqueness is not applicable. Nonetheless, if $L\in(0,\infty]$, the uniqueness of strong solutions to \eqref{EQ:CONV:SYSTEM} can be achieved. The case $L = 0$ is only admissible if we additionally take $K = 0$, provided that $\alpha\neq 0, \beta\neq 0$, and the following compatibility condition on the potentials:
    \begin{align*}
        F(\alpha s) = \alpha\beta G(s) \qquad\text{for all~}s\in[-1,1].
    \end{align*}
    In this case, we have $(F^\prime(\phi),G^\prime(\psi))\in\mathcal{H}^1_{0,\beta}$, entailing that it is an admissible test function in \eqref{WF:CCH:PP}. We do not present the proof of this uniqueness result at this point, as the computations are the same as in the proof of Theorem~\ref{Theorem:Uniqueness}, see, in particular, \eqref{Uniq:ddt:Ka} and the corresponding estimates \eqref{Uniq:Est:W15:1}-\eqref{W1819}.
\end{remark}

\begin{proof}
    \textbf{Case }\ref{Cond:Conv:i}: In this case, we intend to apply the continuous dependence estimate \eqref{DiffIneq:Final}.
    Namely, for any $h\in(0,1)$, we apply \eqref{DiffIneq:Final} to $(\phi_1,\psi_1) = (\phi(\cdot + h),\psi(\cdot + h))$ and $(\phi_2,\psi_2) = (\phi,\psi)$ with given vector fields $(\bv_1,\bw_1) = (\bv(\cdot + h),\bw(\cdot + h))$ and $(\bv_2,\bw_2) = (\bv,\bw)$. After dividing the resulting inequality by $h^2$, we arrive at
    \begin{align}\label{Conv:HighReg:Gronwall:Step1}
        \begin{split}
        	&\ddt \frac12\norm{(\delth\phi(t),\delth\psi(t))}_{L,\beta,[\phi(t+h),\psi(t+h)],\ast}^2 + \frac12\norm{(\delth\phi(t),\delth\psi(t))}_{K,\alpha}^2 \\
            &\qquad \leq \frac12\norm{(\delth\mathbf{v}(t),\delth\mathbf{w}(t))}_{\mathbf{L}^{1+\omega}(\Om)\times\mathbf{L}^1(\Ga)}^2 + P_h(t)\norm{(\delth\phi(t),\delth\psi(t))}_{L,\beta,[\phi(t+h),\psi(t+h)],\ast}^2,
        \end{split}
    \end{align}
    where
    \begin{align*}
        P_h(\cdot) &= C\big(1 + \norm{(\mathbf{v},\mathbf{w})}_{\mathbfcal{L}^2}^2 + \norm{(\mu,\theta)}_{L,\beta}^2 + \norm{(\delt\phi(\cdot + h),\delt\psi(\cdot + h))}_{(\mathcal{H}^1_{L,\beta})^\prime}^2 \\
        &\qquad + \norm{(\phi(\cdot + h),\psi(\cdot + h))}_{\mathcal{H}^2}^4\big)\in L^1(0,T) \qquad\text{for any~} T > 0.
    \end{align*}
    Before applying the Gronwall lemma to \eqref{Conv:HighReg:Gronwall:Step1}, we aim to control the initial data. To this end, we can argue similarly as for \eqref{DiffIneq}. Now, instead, we obtain
    \begin{align}\label{DiffIneq:InitialData}
        &\ddt\frac12\norm{(\phi - \phi_0,\psi - \psi_0)}_{L,\beta,[\phi,\psi],\ast} + \norm{(\phi - \phi_0,\psi - \psi_0)}_{K,\alpha}^2 \nonumber \\
        &\quad\leq c_F\norm{\phi - \phi_0}_{L^2(\Om)}^2 + c_G\norm{\psi - \psi_0}_{L^2(\Ga)}^2 + \big((\mu_0,\theta_0),\mathcal{S}[\phi,\psi](\phi - \phi_0,\psi - \psi_0)\big)_{L,\beta,[\phi,\psi]} \nonumber \\
        &\qquad + \intO \phi\bv\cdot\Grad\mathcal{S}_{L,\beta}^\Om[\phi,\psi](\phi - \phi_0, \psi - \psi_0)\dx + \intG \psi\bw\cdot\Gradg\mathcal{S}_{L,\beta}^\Ga[\phi,\psi](\phi - \phi_0, \psi - \psi_0)\dG \\ 
        &\qquad - \frac12\Big(\mathcal{S}_{L,\beta}[\phi,\psi](\delt\phi,\delt\psi), \nonumber \\
        &\qquad\qquad\quad \big(m_\Om^\prime(\phi)\abs{\Grad\mathcal{S}_{L,\beta}^\Om[\phi,\psi](\phi - \phi_0,\psi - \psi_0)}^2, m_\Ga^\prime(\psi)\abs{\Gradg\mathcal{S}_{L,\beta}^\Ga[\phi,\psi](\phi - \phi_0, \psi - \psi_0)}^2\big)\Big)_{L,\beta}. \nonumber
    \end{align}
    The first two terms on the right-hand side of \eqref{DiffIneq:InitialData} can be estimated analogously to \eqref{Est:PhiPsi:L2}, namely,
    \begin{align*}
        c_F\norm{\phi - \phi_0}_{L^2(\Om)}^2 + c_G\norm{\psi - \psi_0}_{L^2(\Ga)}^2 \leq \frac14\norm{(\phi - \phi_0, \psi - \psi_0)}_{K,\alpha}^2 + C\norm{(\phi - \phi_0, \psi - \psi_0)}_{L,\beta[\phi,\psi],\ast}^2,
    \end{align*}
    while the last term has already been estimated in \cite[Eqn.~(5.62)]{Stange2025} as
    \begin{align*}
        &\bigg\vert - \frac12\Big(\mathcal{S}_{L,\beta}[\phi,\psi](\delt\phi,\delt\psi), \nonumber \\
        &\qquad\quad\big(m_\Om^\prime(\phi)\abs{\Grad\mathcal{S}_{L,\beta}^\Om[\phi,\psi](\phi - \phi_0,\psi - \psi_0)}^2, m_\Ga^\prime(\psi)\abs{\Gradg\mathcal{S}_{L,\beta}^\Ga[\phi,\psi](\phi - \phi_0, \psi - \psi_0)}^2\big)\Big)_{L,\beta}\bigg\vert \\
        &\quad \leq \frac14\norm{(\phi - \phi_0, \psi - \psi_0)}_{K,\alpha}^2 + C\Big(\norm{(\delt\phi,\delt\psi)}_{(\mathcal{H}^1_{L,\beta})^\prime}^2 + \norm{(\phi,\psi)}_{\mathcal{H}^2}^4\Big)\norm{(\phi - \phi_0, \psi - \psi_0)}_{L,\beta,[\phi,\psi],\ast}^2.
    \end{align*}
    Then, for the third term on the right-hand side of \eqref{DiffIneq:InitialData} we simply use Hölder's inequality and the boundedness of the mobility functions to find that
    \begin{align*}
        \big((\mu_0,\theta_0),\mathcal{S}_{L,\beta}[\phi,\psi](\phi - \phi_0,\psi - \psi_0)\big)_{L,\beta,[\phi,\psi]} \leq C\norm{(\mu_0,\theta_0)}_{L,\beta}\norm{(\phi - \phi_0, \psi - \psi_0)}_{L,\beta,[\phi,\psi],\ast},
    \end{align*}
    whereas for the convective terms, we have
    \begin{align*}
        &\Bigabs{\intO \phi\bv\cdot\Grad\mathcal{S}_{L,\beta}^\Om[\phi,\psi](\phi - \phi_0, \psi - \psi_0)\dx + \intG \psi\bw\cdot\Gradg\mathcal{S}_{L,\beta}^\Ga[\phi,\psi](\phi - \phi_0, \psi - \psi_0)\dG } \\
        &\quad\leq C\norm{(\bv,\bw)}_{\mathbfcal{L}^2}\norm{(\phi - \phi_0, \psi - \psi_0)}_{L,\beta,[\phi,\psi],\ast}
    \end{align*}
    in view of \eqref{CCH:pp:<1}. Collecting these estimates, we have the following differential inequality
    \begin{align}\label{DiffIneq:InitialData:Final}
        &\ddt\frac12\norm{(\phi - \phi_0, \psi - \psi_0)}_{L,\beta[\phi,\psi],\ast}^2 \\
        &\quad \leq C\Big(1 + \norm{(\mu_0,\theta_0)}_{L,\beta} + \norm{(\bv,\bw)}_{\mathbfcal{L}^2} + \big(\norm{(\delt\phi,\delt\psi)}_{(\mathcal{H}^1_{L,\beta})^\prime}^2 + \norm{(\phi,\psi)}_{\mathcal{H}^2}^4\big)\norm{(\phi - \phi_0, \psi - \psi_0)}_{L,\beta,[\phi,\psi],\ast}\Big) \nonumber \\
        &\qquad\times \norm{(\phi - \phi_0, \psi - \psi_0)}_{L,\beta,[\phi,\psi],\ast}. \nonumber
    \end{align}
    An application of a quadratic variant of the Gronwall lemma (see, e.g., \cite[Lemma~A.5]{Brezis1973}) yields
    \begin{align*}
        &\norm{(\phi - \phi_0, \psi - \psi_0)}_{L,\beta,[\phi,\psi],\ast} \\
        &\quad\leq C\big(1 + \norm{(\mu_0,\theta_0)}_{L,\beta} + \norm{(\bv,\bw)}_{L^\infty(0,\infty;\mathbfcal{L}^2)}\big)t \\
        &\qquad + C\int_0^t \big(\norm{(\delt\phi,\delt\psi)}_{(\mathcal{H}^1_{L,\beta})^\prime}^2 + \norm{(\phi,\psi)}_{\mathcal{H}^2}^4\big)\norm{(\phi - \phi_0, \psi - \psi_0)}_{L,\beta,[\phi,\psi],\ast}\ds.
    \end{align*}
    Then, we take $t = h$ and divide the inequality by $h$ to obtain
    \begin{align*}
        &\norm{(\delth\phi(0), \delth\psi(0))}_{L,\beta,[\phi,\psi],\ast} \\
        &\quad\leq C\big(1 + \norm{(\mu_0,\theta_0)}_{L,\beta} + \norm{(\bv,\bw)}_{L^\infty(0,\infty;\mathbfcal{L}^2)}\big) \\
        &\qquad + C\int_0^h \big(\norm{(\delt\phi,\delt\psi)}_{(\mathcal{H}^1_{L,\beta})^\prime}^2 + \norm{(\phi,\psi)}_{\mathcal{H}^2}^4\big)\norm{(\delth\phi(0), \delth\psi(0))}_{L,\beta,[\phi,\psi],\ast}\ds.
    \end{align*}
    Now, we can apply the integrated version of the Gronwall lemma to this inequality and find that
    \begin{align}\label{HighReg:Control:Init}
        &\norm{(\delth\phi(0), \delth\psi(0))}_{L,\beta,[\phi(\cdot+h),\psi(\cdot+h)],\ast} \nonumber \\
        &\quad\leq  C\big(1 + \norm{(\mu_0,\theta_0)}_{L,\beta} + \norm{(\bv,\bw)}_{L^\infty(0,\infty;\mathbfcal{L}^2)}\big) \\
        &\qquad\times\Bigg(1 + \int_0^h \big(\norm{(\delt\phi,\delt\psi)}_{(\mathcal{H}^1_{L,\beta})^\prime}^2 + \norm{(\phi,\psi)}_{\mathcal{H}^2}^4\big)\exp\Big(\int_s^h \big(\norm{(\delt\phi,\delt\psi)}_{(\mathcal{H}^1_{L,\beta})^\prime}^2 + \norm{(\phi,\psi)}_{\mathcal{H}^2}^4\big)\dtau\Big)\ds\Bigg). \nonumber
    \end{align}
    Now that we have a suitable control of the initial data, we use again the Gronwall lemma for the differential inequality \eqref{Conv:HighReg:Gronwall:Step1} and get
    \begin{align}\label{Conv:HighReg:Gronwall:Result}
        \begin{split}
            \norm{(\delth\phi(t),\delth\psi(t))}_{L,\beta,[\phi(\cdot+h),\psi(\cdot+h)],\ast}^2 &\leq \norm{(\delth\phi(0),\delth\psi(0))}_{L,\beta,[\phi(\cdot+h),\psi(\cdot+h)],\ast}^2\exp\Big(C\int_0^t P_h(\tau)\dtau\Big) \\
            &\quad + C\int_0^t \norm{(\delth\bv,\delth\bw)}_{\mathbf{L}^{1+\omega}(\Om)\times\mathbf{L}^1(\Ga)}^2\exp\Big(C\int_s^t P_h(\tau)\dtau\Big)\ds.
        \end{split}
    \end{align}
    Then, noting on 
    \begin{align}\label{Est:delth:vw}
        \begin{split}
            &\norm{(\delth\mathbf{v}(t),\delth\mathbf{w}(t))}_{\mathbf{L}^{1+\omega}(\Om)\times\mathbf{L}^1(\Ga)} \leq \frac1h \int_t^{t+h} \norm{(\delt\mathbf{v}(s),\delt\mathbf{w}(s))}_{\mathbf{L}^{1+\omega}(\Om)\times\mathbf{L}^1(\Ga)}\ds, \\
            &\lim_{h\rightarrow 0} \frac1h \int_t^{t+h} \norm{(\delt\mathbf{v}(s),\delt\mathbf{w}(s))}_{\mathbf{L}^{1+\omega}(\Om)\times\mathbf{L}^1(\Ga)}\ds = \norm{(\delt\mathbf{v}(t),\delt\mathbf{w}(t))}_{\mathbf{L}^{1+\omega}(\Om)\times\mathbf{L}^1(\Ga)}
        \end{split}
    \end{align}
    for a.e. $t\in(0,\infty)$, and \eqref{HighReg:Control:Init}, we observe that the right-hand side of \eqref{Conv:HighReg:Gronwall:Result} is uniformly bounded in $h\in(0,1)$. Moreover, by \eqref{CCH:Est:Energy:Thm} and \eqref{CCH:Est:H^2:Thm}, for any $h\in(0,1)$, we have
    \begin{align}\label{Est:P_h}
        \int_0^t P_h(\tau)\dtau \leq C\int_0^t 1 + \norm{(\bv,\bw)}_{\mathbfcal{L}^2}^2\dtau.
    \end{align}
    Consequently, taking the limit $h\rightarrow 0$ in \eqref{Conv:HighReg:Gronwall:Result}, and noting on \eqref{HighReg:Control:Init}, \eqref{Est:delth:vw} and \eqref{Est:P_h} yields
    \begin{align}\label{Est:delt:pp:Lbast:HighReg:1}
        \norm{(\delt\phi(t),\delt\psi(t))}_{L,\beta,\ast}^2 &\leq C\Big(1 + \norm{(\mu_0,\theta_0)}_{L,\beta}^2 + \norm{(\bv,\bw)}_{L^\infty(0,\infty;\mathbfcal{L}^2)}^2 + \int_0^t\norm{(\delt\bv,\delt\bw)}_{\mathbf{L}^{1+\omega}(\Om)\times\mathbf{L}^1(\Ga)}^2\dtau\Big) \nonumber \\
        &\quad\times \exp\Big(C\int_0^t 1 + \norm{(\bv,\bw)}_{\mathbfcal{L}^2}^2\dtau\Big).
    \end{align}
    Thus, we find a constant $C_0 > 0$ such that
    \begin{align}\label{Est:Conv:1:HighReg:delt:pp:t<1}
        \esssup_{t\in(0,1)}\norm{(\delt\phi(t),\delt\psi(t))}_{(\mathcal{H}^1_{L,\beta})^\prime}^2\leq C_0 \qquad\text{for~a.e.~}t\in(0,1).
    \end{align}
    To derive a global control in time, we report the following uniform Gronwall lemma, which can be found, e.g., in \cite[Chapter~III, Lemma~1.1]{Temam1997}.
    \begin{lemma}\label{Lemma:Gronwall}
        Let $f:[t_0,\infty)\rightarrow\R$ be an absolutely continuous positive function, and $g,h$ two positive locally summable functions on $[t_0,\infty)$, which satisfy
        \begin{align*}
            \ddt f(t) &\leq g(t)f(t) + h(t) \qquad\text{for~a.e.~}t\geq t_0,
        \end{align*}
        and
        \begin{align*}
            \int_t^{t+r} g(s)\ds \leq a_1, \quad \int_t^{t+r} h(s)\ds &\leq a_2, \quad \int_t^{t+r} f(s)\ds \leq a_3 \quad\text{for all~}t\geq t_0,
        \end{align*}
        where $r,a_1,a_2,a_3$ are positive constants. Then it holds
        \begin{align*}
            f(t) \leq \left(\frac{a_3}{r} + a_2\right)e^{a_1} \qquad\text{for all~}t\geq t_0 + r.
        \end{align*}
    \end{lemma}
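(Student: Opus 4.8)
The plan is to reduce the statement to the classical (differential) Gronwall inequality applied on a \emph{sliding} time window of length $r$, followed by an averaging step that replaces the pointwise value $f(s)$ by an integral of $f$, which is uniformly controlled by $a_3$. The key observation is that the pointwise inequality $\ddt f \leq gf + h$ on its own only yields growth governed by $\int_{t_0}^t g$, which need not be bounded uniformly in $t$; running Gronwall instead from a variable initial time $s$ lying in the window $[t-r,t]$ confines the relevant $g$- and $h$-integrals to that window, where they are bounded by $a_1$ and $a_2$, and then integration in $s$ removes the dependence on $f(s)$.

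Concretely, I would fix $t\geq t_0+r$ and, for each $s\in[t-r,t]$, apply the classical Gronwall lemma to the absolutely continuous positive function $f$ on the interval $[s,t]$ to obtain
\begin{align*}
    f(t) \leq f(s)\exp\Big(\int_s^t g(\tau)\dtau\Big) + \int_s^t h(\sigma)\exp\Big(\int_\sigma^t g(\tau)\dtau\Big)\,\mathrm d\sigma.
\end{align*}
Since $[s,t]\subseteq[t-r,t]$ and, because $t-r\geq t_0$, the integral hypotheses apply on the window $[t-r,t]$, the nonnegativity of $g$ and $h$ gives $\int_s^t g(\tau)\dtau\leq a_1$ and $\int_s^t h(\sigma)\,\mathrm d\sigma\leq a_2$. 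Hence
\begin{align*}
    f(t) \leq e^{a_1}\big(f(s) + a_2\big) \qquad\text{for all } s\in[t-r,t].
\end{align*}

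Integrating this last inequality in $s$ over $[t-r,t]$, and using that the left-hand side is independent of $s$ together with $\int_{t-r}^t f(s)\ds\leq a_3$, yields
\begin{align*}
    r\,f(t) \leq e^{a_1}\Big(\int_{t-r}^t f(s)\ds + r\,a_2\Big) \leq e^{a_1}\big(a_3 + r\,a_2\big),
\end{align*}
so that $f(t)\leq(a_3/r + a_2)e^{a_1}$ for every $t\geq t_0+r$, which is the assertion. I do not expect a genuine obstacle here, as this is precisely the classical uniform Gronwall lemma; the only points that require a little care are bookkeeping ones — using $t-r\geq t_0$ so that the integral bounds are available on $[t-r,t]$, invoking $g,h\geq 0$ to shrink the integration interval from $[s,t]$ to $[t-r,t]$, and appealing to the absolute continuity of $f$ to justify the fundamental-theorem-of-calculus identity underpinning the differential Gronwall estimate.
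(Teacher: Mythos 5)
Your proof is correct and is the standard argument for the uniform Gronwall lemma: apply the differential Gronwall inequality from each variable starting point $s\in[t-r,t]$, bound the $g$- and $h$-integrals on the sliding window of length $r$ by $a_1$ and $a_2$ using $g,h\ge 0$ and $t-r\ge t_0$, and then average over $s$ to replace $f(s)$ by the integral bound $a_3$. Note that the paper does not prove this lemma itself but cites it from Temam's book (Chapter~III, Lemma~1.1), and your argument is essentially the one given there.
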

    \noindent
    Since
    \begin{align}\label{Est:Unif:Gronwall:a123}
        \begin{split}
            &\sup_{t\geq 0}\int_t^{t+1} P_h(s)\ds \leq C\big(1 + \norm{(\bv,\bw)}_{L^\infty(0,\infty;\mathbfcal{L}^2)}^2\big), \\
            &\sup_{t\geq 0}\int_t^{t+1}\norm{(\delth\phi(s),\delth\psi(s))}_{L,\beta,[\phi(\cdot+h),\psi(\cdot+h)],\ast}^2 \ds \leq C\big(1 + \norm{(\bv,\bw)}_{L^\infty(0,\infty;\mathbfcal{L}^2)}^2\big), \\
            &\sup_{t\geq 0} \int_t^{t+1} \norm{(\delth\mathbf{v}(s),\delth\mathbf{w}(s))}_{\mathbf{L}^{1+\omega}(\Om)\times\mathbf{L}^1(\Ga)}^2\ds\leq \norm{(\delt\bv,\delt\bw)}_{L^2_{\mathrm{uloc}}(0,\infty;\mathbf{L}^{1+\omega}(\Om)\times\mathbf{L}^1(\Ga))}^2,
        \end{split}
    \end{align}
    an application of Lemma~\ref{Lemma:Gronwall} with $t_0 = 0$ and $r = 1$ shows that
    \begin{align}\label{Est:HighReg:Delt:pp}
        \begin{split}
            &\norm{(\delt\phi(t),\delt\psi(t))}_{L,\beta,\ast}^2 \\
            &\quad \leq C\big(1 + \norm{(\bv,\bw)}_{L^\infty(0,\infty;\mathbfcal{L}^2)}^2 + \norm{(\delt\bv,\delt\bw)}_{L^2_{\mathrm{uloc}}(0,\infty;\mathbf{L}^{1+\omega}(\Om)\times\mathbf{L}^1(\Ga))}^2\big)\\
            &\qquad\times \exp\Big(C\big(1 + \norm{(\bv,\bw)}_{L^2(0,\infty;\mathbfcal{L}^2)}^2\big)\Big) \qquad\text{for~all~}t\geq1.
        \end{split}
    \end{align}
    Here, we have already taken the limit $h\rightarrow 0$ and used again the norm equivalence on $\mathcal{V}^{-1}_{L,\beta}$ (see \eqref{NormEquivalence:EBS}).
    The latter implies together with \eqref{Est:Conv:1:HighReg:delt:pp:t<1} that $(\delt\phi,\delt\psi)\in L^\infty(0,\infty;(\mathcal{H}^1_{L,\beta})^\prime)$, and the estimate \eqref{Est:HighReg:Delt:pp} holds for a.e. $t\in(0,\infty)$. Next, we integrate \eqref{Conv:HighReg:Gronwall:Step1} over the time interval $[t,t+1]$ for $t\geq0$, and utilize the estimates \eqref{Est:Unif:Gronwall:a123} and \eqref{Est:HighReg:Delt:pp}. By passing to the limit $h\rightarrow 0$ in the corresponding estimate and using again the Poincar\'{e} inequality, we deduce that
    \begin{align}\label{Est:HighReg:pp:K}
        &\sup_{t\geq0}\int_t^{t+1} \norm{(\delt\phi(s),\delt\psi(s))}_{\mathcal{H}^1}^2\ds \nonumber \\
        &\quad\leq C\Big(1 + \norm{(\mu_0,\theta_0)}_{L,\beta}^2 + \norm{(\bv,\bw)}_{L^\infty(0,\infty;\mathbfcal{L}^2)}^2 + \norm{(\delt\bv,\delt\bw)}_{L^2_{\mathrm{uloc}}(0,\infty;\mathbf{L}^{1+\omega}(\Om)\times\mathbf{L}^1(\Ga))}^2\Big) \\
        &\qquad\times \bigg(1 + \norm{(\bv,\bw)}_{L^2(0,\infty;\mathbfcal{L}^2)}^2\Big)\exp\Big(C\big(1 + \norm{(\bv,\bw)}_{L^2(0,\infty;\mathbfcal{L}^2)}^2\big)\bigg), \nonumber
    \end{align}
    which gives $(\delt\phi,\delt\psi)\in L^2_{\mathrm{uloc}}([0,\infty);\mathcal{H}^1)$. Then, a comparison argument in \eqref{WF:CCH:PP} together with \eqref{Est:HighReg:Delt:pp} and $(\bv,\bw)\in L^\infty(0,\infty;\mathbfcal{L}^2)$ further shows that
    \begin{align}\label{Est:HighReg:MT:L:Linfty}
        \esssup_{t\in(0,\infty)}\norm{(\mu(t),\theta(t))}_{L,\beta} \leq C\big(\norm{(\delt\phi,\delt\psi)}_{L^\infty(0,\infty;(\mathcal{H}^1_{L,\beta})^\prime} + \norm{(\bv,\bw)}_{L^\infty(0,\infty;\mathbfcal{L}^2)}\big).
    \end{align}
    Thus, in light of \eqref{Est:Mean:mt} and \eqref{Est:HighReg:Delt:pp}, we have $(\mu,\theta)\in L^\infty(0,\infty;\mathcal{H}^1)$ by exploiting the bulk-surface Poincar\'{e} inequality. By \eqref{Est:pp:Pot:Lp:a.e.}, we learn that
    \begin{align}\label{Est:pp:Pot:Lp:Linfty}
        (\phi,\psi)\in L^\infty(0,\infty;\mathcal{W}^{2,p}), \qquad (F^\prime(\phi),G^\prime(\psi))\in L^\infty(0,\infty;\mathcal{L}^p)
    \end{align}
    for any $2 \leq p < \infty$. Moreover, by arguing as for \cite[Theorem~3.7]{Stange2025}, we find
    \begin{align*}
        (F^{\prime\prime}(\phi), G^{\prime\prime}(\psi)) \in L^\infty(0,\infty;\mathcal{L}^p)
    \end{align*}
    for any $2 \leq p < \infty$ (see also \cite{Lv2024b, Lv2024a}), from which we deduce that
    \begin{align*}
        (\phi,\psi)\in L^\infty(0,\infty;\mathcal{H}^3)
    \end{align*}
    by elliptic regularity theory for systems with bulk-surface coupling (see, e.g., \cite[Theorem~3.3]{Knopf2021}).
    Lastly, to derive higher-order estimates for the chemical potentials, we first notice that
    \begin{align}\label{ID:MT:MEAN:SOL:CONVECTIVE}
        (\mu - \beta\mean{\mu}{\theta},\theta - \mean{\mu}{\theta}) = - \mathcal{S}_{L,\beta}[\phi,\psi](\delt\phi + \Grad\phi\cdot\mathbf{v}, \delt\psi + \Gradg\psi\cdot\mathbf{w}).
    \end{align}
    Then, by \eqref{BSE:MOB:Est:Apriori}, \eqref{Est:fg:L^2:K}, \eqref{Est:Sol:G:H^2}, \eqref{Est:HighReg:MT:L:Linfty} and \eqref{Est:pp:Pot:Lp:Linfty}, we have
    \begin{align*}
        &\norm{(\mu - \beta\mean{\mu}{\theta},\theta - \mean{\mu}{\theta})}_{\mathcal{H}^2} \nonumber \\
        &\quad = \norm{\mathcal{S}_{L,\beta}[\phi,\psi](\delt\phi + \Grad\phi\cdot\mathbf{v}, \delt\psi + \Gradg\psi\cdot\mathbf{w})}_{\mathcal{H}^2} \nonumber \\
        &\quad\leq C\big(\norm{(\Grad\phi,\Gradg\psi)}_{\mathcal{L}^2}\norm{(\phi,\psi)}_{\mathcal{H}^2}\norm{\mathcal{S}_{L,\beta}[\phi,\psi](\delt\phi + \Grad\phi\cdot\mathbf{v}, \delt\psi + \Gradg\psi\cdot\mathbf{w})}_{L,\beta} \nonumber \\
        &\qquad + \norm{(\delt\phi + \Grad\phi\cdot\mathbf{v}, \delt\psi + \Gradg\psi\cdot\mathbf{w})}_{\mathcal{L}^2}\big) \nonumber \\
        &\quad\leq C\big( \norm{\mathcal{S}_{L,\beta}[\phi,\psi](\delt\phi,\delt\psi)}_{L,\beta} + \norm{\mathcal{S}_{L,\beta}[\phi,\psi](\Grad\phi\cdot\mathbf{v},\Gradg\psi\cdot\mathbf{w})}_{L,\beta} + \norm{(\delt\phi,\delt\psi)}_{\mathcal{L}^2} \\
        &\qquad + \norm{(\Grad\phi\cdot\mathbf{v},\Gradg\psi\cdot\mathbf{w})}_{\mathcal{L}^2}\big) \nonumber \\
        &\quad\leq C\big(1 + \norm{(\delt\phi,\delt\psi)}_{\mathcal{L}^2} + \norm{(\Grad\phi,\Gradg\psi)}_{\mathbfcal{L}^\infty}\norm{(\mathbf{v},\mathbf{w})}_{\mathbfcal{L}^2}\big) \nonumber \\
        &\quad\leq C\big(1 + \big(\norm{(\mu,\theta)}_{L,\beta}^{\frac12} + \norm{(\bv,\bw)}_{\mathbfcal{L}^2}^{\frac12}\big)\norm{(\delt\phi,\delt\psi)}_{K,\alpha}^{\frac12} + \norm{(\mathbf{v},\mathbf{w})}_{\mathbfcal{L}^2}\big) \nonumber \\
        &\quad\leq C\big(1 + \norm{(\delt\phi,\delt\psi)}_{K,\alpha}^{\frac12}\big). \nonumber
    \end{align*}
    Integrating the previous estimate in time from $t$ to $t+1$ for $t\geq 0$, it follows that
    \begin{align}\label{Est:HighReg:MT:H^2}
        \sup_{t\geq0}\int_t^{t+1}\norm{(\mu - \beta\mean{\mu}{\theta},\theta - \mean{\mu}{\theta})}_{\mathcal{H}^2}^4\ds \leq C.
    \end{align}
    In light of \eqref{Est:Mean:mt}, the latter entails that $(\mu,\theta)\in L^4_{\mathrm{uloc}}([0,\infty);\mathcal{H}^2)$.

\textbf{Case }\ref{Cond:Conv:ii}: Now, we assume that $(\bv,\bw)\in L^2(0,\infty;\mathbfcal{H}^1)\cap L^\infty(0,\infty;\mathbfcal{L}^2_\Div)$. Using a standard mollification, we find a sequence $\{(\bv_k,\bw_k)\}_{k\in\N}\subset C_c^\infty(0,\infty;\mathbfcal{H}^1\cap\mathbfcal{L}^2_\Div)$ such that 
\begin{align}\label{Conv:vw:ep}
    \begin{split}
        (\mathbf{v}_k,\mathbf{w}_k) \rightarrow (\mathbf{v},\mathbf{w}) &\qquad\text{strongly in~} L^2(0,\infty;\mathbfcal{H}^1), \\
        &\qquad\text{weakly-$\ast$ in~} L^\infty(0,\infty;\mathbfcal{L}^2),
    \end{split}
\end{align}
as $k\rightarrow\infty$. For more details, we refer to \cite[S.~4.13-4.15]{Alt2016}.
In particular, according to \cite[S.~4.13 (2)]{Alt2016}, we have
\begin{align}\label{Est:vw:ep}
    \norm{(\mathbf{v}_k,\mathbf{w}_k)}_{L^\infty(0,\infty;\mathbfcal{L}^2)\cap L^2(0,\infty;\mathbfcal{H}^1)} 
    \leq \norm{(\mathbf{v},\mathbf{w})}_{L^\infty(0,\infty;\mathbfcal{L}^2)\cap L^2(0,\infty;\mathbfcal{H}^1)} 
\end{align}
for all $k\in\N$.
Now, owing to the first part of the proof, the unique global weak solution $(\phi_k,\psi_k,\mu_k,\theta_k)$ to \eqref{EQ:CONV:SYSTEM}, which exists according to Theorem~\ref{Theorem:CCH:Existence} and Theorem~\ref{Theorem:CCH:Uniqueness}, is a strong solution with the regularities
\begin{align*}
    (\phi_k,\psi_k)&\in L^\infty(0,\infty;\mathcal{H}^3), \quad (\delt\phi_k,\delt\psi_k)\in L^\infty(0,\infty;(\mathcal{H}^1_{L,\beta})^\prime)\cap L^2_{\mathrm{uloc}}([0,\infty);\mathcal{H}^1), \\
    (\mu_k,\theta_k)&\in L^\infty(0,\infty;\mathcal{H}^1_{L,\beta})\cap L^2_{\mathrm{uloc}}([0,\infty);\mathcal{H}^2), \\
    (F^\prime(\phi_k)&,G^\prime(\psi_k)), \ (F^{\prime\prime}(\phi_k), G^{\prime\prime}(\psi_k))\in L^\infty(0,\infty;\mathcal{L}^p)
\end{align*}
for any $2 \leq p < \infty$ and all $k\in\N$. Then, in view of \eqref{Eq:mu:strong}-\eqref{Eq:bd:strong}, one can deduce the existence of the weak time derivative $(\delt\mu_k,\delt\theta_k)$ in the sense that $(\delt\mu_k,\delt\theta_k)\in L^2_{\mathrm{uloc}}([0,\infty);(\mathcal{H}^1)^\prime)$, and it holds
\begin{align}\label{WF:DELT:MT}
    \begin{split}
        \big\langle (\delt\mu_k,\delt\theta_k), (\eta,\vartheta)\big\rangle_{\mathcal{H}^1} &= \intO \Grad\delt\phi_k\cdot\Grad\eta + F^{\prime\prime}(\phi_k)\delt\phi_k\eta\dx \\
        &\quad + \intG \Gradg\delt\psi_k\cdot\Gradg\vartheta + G^{\prime\prime}(\psi_k)\delt\psi_k\vartheta\dG \\
        &\quad + \chi(K)\intG(\alpha\delt\psi_k - \delt\phi_k)(\alpha\vartheta - \eta)\dG
    \end{split}
\end{align}
a.e. on $(0,\infty)$ for all $(\eta,\vartheta)\in\mathcal{H}^1$ and all $k\in\N$, see \cite[Proof of Theorem~3.6, Step~5.1]{Giorgini2025} for more details. Now, following verbatim the proof of Theorem~\ref{Theorem:CCH:Existence}, it follows from \eqref{Est:vw:ep} that
\begin{align}
    &\sup_{t\geq 0}\norm{(\phi_k(t),\psi_k(t))}_{\mathcal{H}^1}^2 \leq C\big(1 + \norm{(\bv,\bw)}_{L^2(0,\infty;\mathbfcal{L}^2)}^2\big), \label{Est:HighReg:pp:k}\\
    &\int_0^\infty \norm{(\mu_k,\theta_k)}_{L,\beta}^2 + \norm{(\delt\phi_k,\delt\psi_k)}_{(\mathcal{H}^1_{L,\beta})^\prime}^2 \ds \leq C\big(1 + \norm{(\bv,\bw)}_{L^2(0,\infty;\mathbfcal{L}^2)}^2\big), \label{Est:HighReg:mt:delt:pp:k}\\
    &\sup_{t\geq 0}\int_t^{t+1}\norm{(\phi_k,\psi_k)}_{\mathcal{H}^2}^4\ds \leq C\big(1 + \norm{(\bv,\bw)}_{L^2(0,\infty;\mathbfcal{L}^2)}^2\big) \label{Est:HighReg:pp:H^2:k}
\end{align}
as well as
\begin{align}\label{Est:HighReg:pp:pot:k}
    \norm{(\phi_k,\psi_k)}_{\mathcal{W}^{2,p}} + \norm{(F^\prime(\phi_k),G^\prime(\psi_k))}_{\mathcal{L}^p} \leq C\big(1 + \norm{(\mu_k,\theta_k)}_{L,\beta}) \qquad\text{a.e.~in~}(0,\infty)
\end{align}
for any $2 \leq p < \infty$. To establish the higher regularity estimates, we aim to apply the chain rule \cite[Proposition~A.1]{Stange2025}. To this end, we first have to show that $(\mu_k,\theta_k) \in L^2_{\mathrm{uloc}}(0,\infty;\mathcal{H}^3)$. Recalling \eqref{Est:Sol:G:H^3} and \eqref{ID:MT:MEAN:SOL:CONVECTIVE}, we have
\begin{align}\label{Est:MT:H^3:mean:Conv}
    &\norm{(\mu_k - \beta\mean{\mu_k}{\theta_k},\theta_k - \mean{\mu_k}{\theta_k})}_{\mathcal{H}^3} \nonumber \\
    &\quad\leq C\big(1 + \mathbf{1}_{\{0\}}(L)\norm{(\phi_k,\psi_k)}_{\mathcal{H}^2}\big) \nonumber \\
    &\qquad\times\Bigg( \Bignorm{\bigg(\frac{\delt\phi_k + \Grad\phi_k\cdot\bv_k}{m_\Om(\phi_k)},\frac{\delt\psi_k + \Gradg\psi_k\cdot\bw_k}{m_\Ga(\psi_k)}\bigg)}_{\mathcal{H}^1} + \Bignorm{\bigg(\frac{m_\Om^\prime(\phi_k)\Grad\phi_k\cdot\Grad\mu_k}{m_\Om(\phi_k)},\frac{m_\Ga^\prime(\psi_k)\Gradg\psi_k\cdot\Gradg\theta_k}{m_\Ga(\psi_k)}\bigg)}_{\mathcal{H}^1}\Bigg) \nonumber \\
    &\quad\leq C\big(1 + \mathbf{1}_{\{0\}}(L)\norm{(\phi_k,\psi_k)}_{\mathcal{H}^2}\big)\Bigg( \Bignorm{\bigg(\frac{\delt\phi_k}{m_\Om(\phi_k)},\frac{\delt\psi_k}{m_\Ga(\psi_k)}\bigg)}_{\mathcal{H}^1} + \Bignorm{\bigg(\frac{\Grad\phi_k\cdot\bv_k}{m_\Om(\phi_k)},\frac{\Gradg\psi_k\cdot\bw_k}{m_\Ga(\psi_k)}\bigg)}_{\mathcal{H}^1} \\
    &\qquad + \Bignorm{\bigg(\frac{m_\Om^\prime(\phi_k)\Grad\phi_k\cdot\Grad\mu}{m_\Om(\phi_k)},\frac{m_\Ga^\prime(\psi_k)\Gradg\psi_k\cdot\Gradg\theta_k}{m_\Ga(\psi_k)}\bigg)}_{\mathcal{H}^1}\Bigg) \nonumber \\
    &\quad\eqqcolon C\big(1 + \mathbf{1}_{\{0\}}(L)\norm{(\phi_k,\psi_k)}_{\mathcal{H}^2}\big)\big(I_1 + I_2 + I_3). \nonumber
\end{align}
Using standard computations, one readily shows that
\begin{align}\label{I1}
    I_1 \leq C\big(1 + \norm{(\phi_k,\psi_k)}_{\mathcal{W}^{2,4}}\big)\norm{(\delt\phi_k,\delt\psi_k)}_{K,\alpha}
\end{align}
and
\begin{align}\label{I3}
    I_3 &\leq C\big(1 + \norm{(\phi_k,\psi_k)}_{\mathcal{W}^{2,4}}\big)\norm{(\phi_k,\psi_k)}_{\mathcal{W}^{2,4}}\norm{(\mu_k - \beta\mean{\mu_k}{\theta_k}, \theta_k - \mean{\mu_k}{\theta_k})}_{\mathcal{H}^2},
\end{align}
see \cite[Eqn.~(6.10) ff.]{Stange2025} for more details. For the remaining term, we find with \eqref{Est:vw:ep} that
\begin{align}\label{I2}
    &\Bignorm{\bigg(\frac{\Grad\phi_k\cdot\bv_k}{m_\Om(\phi_k)},\frac{\Gradg\psi_k\cdot\bw_k}{m_\Ga(\psi_k)}\bigg)}_{\mathcal{H}^1} \nonumber \\
    &\quad\leq \Bignorm{\bigg(\frac{\Grad\phi_k\cdot\bv_k}{m_\Om(\phi_k)},\frac{\Gradg\psi\cdot\bw_k}{m_\Ga(\psi_k)}\bigg)}_{\mathcal{L}^2} + \Bignorm{\bigg(\frac{m_\Om^\prime(\phi_k)(\Grad\phi_k\cdot\bv_k)\Grad\phi_k}{m_\Om(\phi_k)^2},\frac{m_\Ga^\prime(\psi_k)(\Gradg\psi_k\cdot\bw_k)\Gradg\psi_k}{m_\Ga(\psi_k)^2}\bigg)}_{\mathcal{L}^2} \nonumber \\
    &\qquad + \Bignorm{\bigg(\frac{\Grad(\Grad\phi_k\cdot\bv_k)}{m_\Om(\phi_k)},\frac{\Gradg(\Gradg\psi_k\cdot\bw_k)}{m_\Ga(\psi_k)}\bigg)}_{\mathcal{L}^2} \\
    &\quad\leq C\big(\norm{(\Grad\phi_k,\Gradg\psi_k)}_{\mathbfcal{L}^\infty}\norm{(\bv_k,\bw_k)}_{\mathbfcal{L}^2} + \norm{(\Grad\phi_k,\Gradg\psi_k)}_{\mathbfcal{L}^\infty}^2\norm{(\bv_k,\bw_k)}_{\mathbfcal{L}^2} + \norm{(\phi_k,\psi_k)}_{\mathcal{W}^{2,4}}\norm{(\bv_k,\bw_k)}_{\mathbfcal{L}^3} \nonumber \\
    &\qquad + \norm{(\Grad\phi_k,\Gradg\psi_k)}_{\mathbfcal{L}^\infty}\norm{(\bv_k,\bw_k)}_{\mathbfcal{H}^1}\big) \nonumber \\
    &\quad\leq C\norm{(\phi_k,\psi_k)}_{\mathcal{W}^{2,4}}\big(1 + \norm{(\bv_k,\bw_k)}_{\mathbfcal{H}^1} + \norm{(\phi_k,\psi_k)}_{\mathcal{W}^{2,4}}\big). \nonumber
\end{align}
In view of these bounds, the regularity of the approximate solution $(\phi_k,\psi_k)$ and the approximate velocity fields $(\bv_k,\bw_k)$ entails that $(\mu_k,\theta_k)\in L^2_{\mathrm{uloc}}([0,\infty);\mathcal{H}^3)$. We stress that this bound is not yet uniform in $k\in\N$. In view of this regularity, we are in a position to apply \cite[Proposition~A.1]{Stange2025} which yields $(\mu_k,\theta_k)\in C([0,\infty);\mathcal{H}^1_{L,\beta})$, and the following chain rule
\begin{align}\label{CONV:ChainRule}
    &\ddt\frac12\Big(\intO m_\Om(\phi_k)\abs{\Grad\mu_k}^2\dx + \intG m_\Ga(\psi_k)\abs{\Gradg\theta_k}^2\dG + \chi(L)\intG (\beta\theta_k - \mu_k)^2\dG \Big) \nonumber \\
    &\quad = \bigang{(\delt\mu_k,\delt\theta_k)}{(-\Div(m_\Om(\phi_k)\Grad\mu_k),-\Divg(m_\Ga(\psi_k)\Gradg\psi_k) + \beta m_\Om(\phi_k)\deln\mu_k)}_{\mathcal{H}^1} \\
    &\qquad + \intO m_\Om^\prime(\phi_k)\delt\phi_k\abs{\Grad\mu_k}^2\dx + \intG m_\Ga^\prime(\psi_k)\delt\psi_k\abs{\Gradg\theta_k}^2\dG \nonumber
\end{align}
holds a.e. on $(0,\infty)$. Taking \eqref{EQ:CONV:SYSTEM:1}-\eqref{EQ:CONV:SYSTEM:3} as well as \eqref{WF:DELT:MT} into account, the dual pairing on the right-hand side of \eqref{CONV:ChainRule} can be rewritten as
\begin{align}\label{CONV:ChainRule:Rewrite}
    \begin{split}
        &\bigang{(\delt\mu_k,\delt\theta_k)}{(-\Div(m_\Om(\phi_k)\Grad\mu_k),-\Divg(m_\Ga(\psi_k)\Gradg\psi_k) + \beta m_\Om(\phi_k)\deln\mu_k)}_{\mathcal{H}^1} \\
        &\quad = -\bigang{(\delt\mu_k,\delt\theta_k)}{(\delt\phi_k + \Div(\phi_k\mathbf{v}_k),\delt\psi_k + \Divg(\psi_k\mathbf{w}_k))}_{\mathcal{H}^1} \\
        &\quad = - \norm{(\delt\phi_k,\delt\psi_k)}_{K,\alpha}^2 - \intO F^{\prime\prime}(\phi_k)\abs{\delt\phi_k}^2\dx - \intG G^{\prime\prime}(\psi_k)\abs{\delt\psi_k}^2\dG \\
        &\qquad - \bigang{(\delt\mu_k,\delt\theta_k)}{(\Div(\phi_k\mathbf{v}_k),\Divg(\psi_k\mathbf{w}_k))}_{\mathcal{H}^1}.
    \end{split}
\end{align}
Hence, exploiting the convexity of $F_0$ and $G_0$ (see \eqref{Pot:F_0:Convex}), it follows from \eqref{CONV:ChainRule} and \eqref{CONV:ChainRule:Rewrite} that
\begin{align}\label{Appl:ChainRule:2}
    &\ddt\frac12\Big(\intO m_\Om(\phi_k)\abs{\Grad\mu_k}^2\dx + \intG m_\Ga(\psi_k)\abs{\Gradg\theta_k}^2\dG + \chi(L)\intG (\beta\theta_k - \mu_k)^2\dG \Big)  \nonumber \\
    &\qquad + \norm{(\delt\phi_k,\delt\psi_k)}_{K,\alpha}^2 \nonumber \\
    &\quad \leq \intO c_F\abs{\delt\phi_k}^2\dx + \intG c_G \abs{\delt\psi_k}^2\dG \\
    &\qquad + \intO m_\Om^\prime(\phi_k)\delt\phi_k\abs{\Grad\mu_k}^2\dx + \intG m_\Ga^\prime(\psi_k)\delt\psi_k\abs{\Gradg\theta_k}^2\dG \nonumber \\
    &\qquad - \bigang{(\delt\mu_k,\delt\theta_k)}{(\Div(\phi_k\mathbf{v}_k),\Divg(\psi_k\mathbf{w}_k))}_{\mathcal{H}^1}. \nonumber
\end{align}
For the first two terms on the right-hand side of \eqref{Appl:ChainRule:2}, we have already seen in the first part of the proof that these can be estimated as
\begin{align}\label{Est:HighReg:LowOrd}
    \begin{split}
        c_F\norm{\delt\phi_k}_{L^2(\Om)}^2 + c_G\norm{\delt\psi_k}_{L^2(\Ga)}^2\leq \frac16\norm{(\delt\phi_k,\delt\psi_k)}_{K,\alpha}^2 + C\Big(\norm{(\bv_k,\bw_k)}_{\mathbfcal{L}^2}^2 + \norm{(\mu_k,\theta_k)}_{L,\beta}^2\Big).
    \end{split}
\end{align}
Then, for the next two terms, we use again the computations made in \cite[Eqns.~(6.17)--(6.18)]{Stange2025}, which show that
\begin{align}\label{Est:HighReg:Mob}
    \begin{split}
        &\Bigabs{\intO m_\Om^\prime(\phi_k)\delt\phi_k\abs{\Grad\mu_k}^2\dx + \intG m_\Ga^\prime(\psi_k)\delt\psi_k\abs{\Gradg\theta_k}^2\dG} \\
        &\quad\leq \frac16\norm{(\delt\phi_k,\delt\psi_k)}_{K,\alpha}^2 + C\Big(\norm{(\phi_k,\psi_k)}_{\mathcal{H}^2}^4 + \norm{(\mu_k,\theta_k)}_{L,\beta}^2\Big)\norm{(\mu_k,\theta_k)}_{L,\beta}^2,
    \end{split}
\end{align}
while for the last term, it has been established in \cite[Eqn.~(7.58)]{Giorgini2025} that
\begin{align}\label{Est:HighReg:Conv}
    \begin{split}
        &\bigabs{\bigang{(\delt\mu_k,\delt\theta_k)}{(\Div(\phi_k\mathbf{v}_k),\Divg(\psi_k\mathbf{w}_k))}_{\mathcal{H}^1}} \\
        &\quad \leq \bigabs{\big((\delt\phi_k,\delt\psi_k),(\Div(\phi_k\mathbf{v}_k),\Divg(\psi_k\mathbf{w}_k))\big)_{K,\alpha}} \\
        &\qquad + \Bigabs{\intO F^{\prime\prime}(\phi_k)\delt\phi_k\Div(\phi_k\mathbf{v}_k)\dx} + \Bigabs{\intG G^{\prime\prime}(\psi_k)\delt\psi_k\Divg(\psi_k\mathbf{w}_k)\dG} \\
        &\quad\leq \frac16\norm{(\delt\phi_k,\delt\psi_k)}_{K,\alpha}^2 + C\norm{(\mu_k,\theta_k)}_{L,\beta}^2 + C\norm{(\bv_k,\bw_k)}_{\mathbfcal{H}^1}^2\norm{(\mu_k,\theta_k)}_{L,\beta}^2.
    \end{split}
\end{align}
Thus, collecting the estimates \eqref{Est:HighReg:LowOrd}, \eqref{Est:HighReg:Mob} and \eqref{Est:HighReg:Conv}, we end up with the differential inequality
\begin{align}\label{Appl:ChainRule:Final}
    \begin{split}
        &\ddt\frac12\Big(\intO m_\Om(\phi_k)\abs{\Grad\mu_k}^2\dx + \intG m_\Ga(\psi_k)\abs{\Gradg\theta_k}^2\dG + \chi(L)\intG (\beta\theta_k - \mu_k)^2\dG \Big) \\
        &\qquad + \frac12\norm{(\delt\phi_k,\delt\psi_k)}_{K,\alpha}^2 \\
        &\quad \leq C\big(\norm{(\bv_k,\bw_k)}_{\mathbfcal{L}^2}^2 + \norm{(\mu_k,\theta_k)}_{L,\beta}^2\big) + C\Big(\norm{(\bv_k,\bw_k)}_{\mathbfcal{H}^1}^2 + \norm{(\phi_k,\psi_k)}_{\mathcal{H}^2}^4 + \norm{(\mu_k,\theta_k)}_{L,\beta}^2\Big) \\
        &\qquad\times\Big(\intO m_\Om(\phi_k)\abs{\Grad\mu_k}^2\dx + \intG m_\Ga(\psi_k)\abs{\Gradg\theta_k}^2\dG + \chi(L)\intG (\beta\theta_k - \mu_k)^2\dG\Big)
    \end{split}
\end{align}
a.e. on $(0,\infty)$. Applying Gronwall's lemma yields
\begin{align*}
    \norm{(\mu_k(t),\theta_k(t)}_{L,\beta}^2 &\leq C\norm{(\mu_k(0),\theta_k(0))}_{L,\beta}^2\exp\Big(C\int_0^t \norm{(\bv_k,\bw_k)}_{\mathbfcal{L}^2}^2 + \norm{(\mu_k,\theta_k)}_{L,\beta}^2\dtau\Big) \nonumber \\
    &\quad + C\int_0^t \Big(\norm{(\bv_k,\bw_k)}_{\mathbfcal{H}^1}^2 + \norm{(\phi_k,\psi_k)}_{\mathcal{H}^2}^4 + \norm{(\mu_k,\theta_k)}_{L,\beta}^2\Big)\\
    &\qquad\qquad\times\exp\Big(C\int_s^t \norm{(\bv_k,\bw_k)}_{\mathbfcal{L}^2}^2 + \norm{(\mu_k,\theta_k)}_{L,\beta}^2\dtau\Big)\ds \nonumber.
\end{align*}
As $(\mu_k(0),\theta_k(0)) = (\mu_0,\theta_0)$, we readily infer from \eqref{Est:vw:ep}, \eqref{Est:HighReg:pp:k}, \eqref{Est:HighReg:mt:delt:pp:k}, and \eqref{Est:HighReg:pp:H^2:k} that
\begin{align}\label{CCH:MT:k:<1}
    \begin{split}
        \sup_{t\in[0,1]}\norm{(\mu_k(t),\theta_k(t)}_{L,\beta}^2&\leq C\Big(1 + \norm{(\mu_0,\theta_0)}_{L,\beta}^2 + \int_0^\infty \norm{(\bv,\bw)}_{\mathbfcal{H}^1}^2\ds\Big)\\
        &\qquad\times\exp\Big(C\big(1 + \int_0^\infty \norm{(\bv,\bw)}_{\mathbfcal{H}^1}^2\ds\big)\Big).
    \end{split}
\end{align}
To derive a global control in time, we aim to use again the uniform Gronwall lemma~\ref{Lemma:Gronwall}. In view of \eqref{Est:vw:ep}, \eqref{Est:HighReg:mt:delt:pp:k} and \eqref{Est:HighReg:pp:H^2:k}, we obtain similarly to the first part of the proof that
\begin{align}\label{CCH:MT:k:>1}
    \begin{split}
        \sup_{t\geq 1}\norm{(\mu_k(t),\theta_k(t))}_{L,\beta}^2 &\leq C\Big(1 + \norm{(\mu_0,\theta_0)}_{L,\beta}^2 + \int_0^\infty \norm{(\bv,\bw)}_{\mathbfcal{H}^1}^2\ds\Big)\\
        &\qquad\times\exp\Big(C\big(1 + \int_0^\infty \norm{(\bv,\bw)}_{\mathbfcal{H}^1}^2\ds\big)\Big).
    \end{split}
\end{align}
Next, integrating \eqref{Appl:ChainRule:Final} in time from $t$ to $t+1$ for any $t\geq 0$, we readily deduce on account of \eqref{CCH:MT:k:<1}, \eqref{CCH:MT:k:>1} and the bulk-surface Poincar\'{e} inequality that
\begin{align}\label{CCH:PP:k}
    \begin{split}
        \sup_{t\geq0}\int_t^{t+1}\norm{(\delt\phi_k,\delt\psi_k)}_{\mathcal{H}^1}^2\ds &\leq C\Big(1 + \norm{(\mu_0,\theta_0)}_{L,\beta}^2 + \int_0^\infty \norm{(\bv,\bw)}_{\mathbfcal{H}^1}^2\ds\Big)\\
        &\quad\times\Big(C\big(1 + \int_0^\infty\norm{(\bv,\bw)}_{\mathbfcal{H}^1}^2\ds\big)\Big)\exp\Big(C\big(1 + \int_0^\infty \norm{(\bv,\bw)}_{\mathbfcal{H}^1}^2\ds\big)\Big).
    \end{split}
\end{align}
Now, based on the estimates \eqref{CCH:MT:k:<1} and \eqref{CCH:MT:k:>1}, it follows from \eqref{Est:HighReg:pp:pot:k} that
\begin{align}\label{Est:HighReg:pp:pot:k:unif}
    \sup_{t\geq 0} \norm{(\phi_k(t),\psi_k(t))}_{\mathcal{W}^{2,p}} + \sup_{t\geq 0}\norm{(F^\prime(\phi_k(t)),G^\prime(\psi_k(t)))}_{\mathcal{L}^p} \leq C_p
\end{align}
for any $2 \leq p < \infty$. Moreover, by arguing as for \cite[Theorem~3.7]{Stange2025} (see also \cite{Lv2024b, Lv2024a}), we find
\begin{align*}
    \sup_{t\geq 0}\norm{(F^{\prime\prime}(\phi_k(t)), G^{\prime\prime}(\psi_k(t))}_{\mathcal{L}^p} + \sup_{t\geq 0}\norm{(\phi_k(t),\psi_k(t))}_{\mathcal{H}^3} \leq C_p^\prime.
\end{align*}
Next, following the same computations as for \eqref{Est:HighReg:MT:H^2}, we find
\begin{align}\label{Est:HighReg:MT:H^2:k:unif}
    \sup_{t\geq 0}\int_t^{t+1}\norm{(\mu_k - \beta\mean{\mu_k}{\theta_k},\theta_k - \mean{\mu_k}{\theta_k})}_{\mathcal{H}^2}^4\ds \leq C.
\end{align}
Then, on account of \eqref{I1}-\eqref{I2}, we deduce with \eqref{Est:vw:ep}, \eqref{Est:HighReg:pp:pot:k:unif} and  \eqref{Est:HighReg:MT:H^2:k:unif} from \eqref{Est:MT:H^3:mean:Conv} that
\begin{align}\label{Est:HighReg:MT:H^3:k:unif}
    \sup_{t\geq 0}\int_t^{t+1}\norm{(\mu_k - \beta\mean{\mu_k}{\theta_k},\theta_k - \mean{\mu_k}{\theta_k})}_{\mathcal{H}^3}^2\ds \leq C.
\end{align}
In light of \eqref{Est:Mean:mt}, the latter estimates entail that
\begin{align*}
    \sup_{t\geq 0}\int_t^{t+1} \norm{(\mu_k,\theta_k)}_{\mathcal{H}^2}^4\ds + \sup_{t\geq 0}\int_t^{t+1}\norm{(\mu_k,\theta_k)}_{\mathcal{H}^3}^2\ds \leq C.
\end{align*}
By standard compactness arguments, these estimates guarantee the existence of a limit quadruple $(\phi_\ast,\psi_\ast,\mu_\ast,\theta_\ast)$ solving \eqref{EQ:CONV:SYSTEM} and satisfying the desired regularity properties. By Theorem~\ref{Theorem:CCH:Uniqueness}, $(\phi_\ast,\psi_\ast,\mu_\ast,\theta_\ast)$ is thus the unique strong solution to \eqref{EQ:CONV:SYSTEM}, which finishes the proof.
\end{proof}

\medskip

\section{On the bulk-surface Stokes equation}
\label{Section:Stokes}
\setcounter{equation}{0}
We consider a bulk-surface Stokes system with non-constant viscosity functions and a non-constant friction parameter on $\Om\subset\R^d$ with $d\in\{2,3\}$. The system reads as
\begin{subequations}\label{System:BSS}
    \begin{alignat}{2}
        -\Div(2\nu_\Om(\phi)\D\bv) + \Grad p &= \f &&\qquad\text{in~}\Om, \\
        \Div\,\bv &= 0 &&\qquad\text{in~}\Om, \\
        -\Divg(2\nu_\Ga(\psi)\Dg\bw) + 2\nu_\Om(\phi)[\D\bv\,\n]_\tau + \Gradg q + \gamma(\phi,\psi)\bw &= \g &&\qquad\text{on~}\Ga, \\
        \Divg\,\bw &= 0 &&\qquad\text{on~}\Ga, \\
        \bv\vert_\Ga = \bw, \ \bv\cdot\n &= 0 &&\qquad\text{on~}\Ga.
    \end{alignat}
\end{subequations}
Here, $\phi:\Om\rightarrow\R$ and $\psi:\Ga\rightarrow\R$ are given measurable functions with $\abs{\phi} \leq 1$ a.e. in $\Om$ and $\abs{\psi} \leq 1$ a.e. on $\Ga$, and the coefficients fulfill the following assumption:
\begin{enumerate}[label=\textnormal{\bfseries(A)}]
    \item\label{Appendix:Assumption:Coefficients:Stokes} It holds $\nu_\Om,\nu_\Ga\in C([-1,1]$, $\gamma\in C([-1,1]^2)$, and there exist positive constants $\nu_\ast,\nu^\ast,\gamma_\ast, \gamma^\ast$ such that
    \begin{align*}
        0 < \nu_\ast \leq \nu_\Om(s), \nu_\Ga(s) \leq \nu^\ast \qquad\text{for all~}s\in[-1,1]
    \end{align*}
    as well as
    \begin{align*}
        0 < \gamma_\ast \leq \gamma(s,r) \leq \gamma^\ast \qquad\text{for all~}(s,r)\in[-1,1]^2.
    \end{align*}
\end{enumerate}
Under this assumption, the authors of \cite{Knopf2025a} proved the following theorem about the well-posedness of weak solutions to \eqref{System:BSS} (see \cite[Theorem~5.4]{Knopf2025a}):

\begin{theorem}\label{App:Lemma:BulkSurfaceStokes:WS}
    Let $(\f,\g)\in(\mathbfcal{H}^1_0)^\prime$. Then there exists a unique weak solution $(\bv,\bw,p,q)$ consisting of a pair of velocity fields $(\bv,\bw)\in\mathbfcal{H}^1_0$ and an associated pressure pair $(p,q)\in\mathcal{L}^2_{(0)}$ such that
    \begin{align*}
        &\intO 2\nu_\Om(\phi)\D\bv:\D\wv\dx + \intG 2\nu_\Ga(\psi)\Dg\bw:\Dg\ww\dG + \intG \gamma(\phi,\psi)\bw\cdot\ww\dG \nonumber \\
        &\qquad - \intO p\,\Div\,\wv\dx - \intG q\,\Divg\,\ww\dG \\
        &\quad = \big\langle (\f,\g),(\wv,\ww)\big\rangle_{\mathbfcal{H}^1_0} \nonumber
    \end{align*}
    for all $(\wv,\ww)\in\mathbfcal{H}^1_0$. Moreover, there exists a constant $C > 0$, depending only on the parameters of the systems, such that
    \begin{align}\label{App:Est:BulkSurfaceStokes:H^1}
        \norm{(\bv,\bw)}_{\mathbfcal{H}^1_0} + \norm{(p,q)}_{\mathcal{L}^2} \leq C\norm{(\f,\g)}_{(\mathbfcal{H}^1_0)^\prime}.
    \end{align}
\end{theorem}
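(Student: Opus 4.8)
The plan is to solve the saddle-point system \eqref{System:BSS} in two stages: first determine the velocity pair $(\bv,\bw)$ by restricting to divergence-free test fields and applying the Lax--Milgram theorem, and then recover the pressure pair $(p,q)$ by a de Rham-type argument built on an inf-sup condition for the bulk-surface divergence operator. To this end I would introduce the bilinear form
\begin{align*}
	a\big((\bv,\bw),(\wv,\ww)\big) \coloneqq \intO 2\nu_\Om(\phi)\D\bv:\D\wv\dx + \intG 2\nu_\Ga(\psi)\Dg\bw:\Dg\ww\dG + \intG \gamma(\phi,\psi)\bw\cdot\ww\dG
\end{align*}
on $\mathbfcal{H}^1_0\times\mathbfcal{H}^1_0$. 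Boundedness of $a$ follows immediately from the upper bounds in \ref{Appendix:Assumption:Coefficients:Stokes} and the continuity of the trace operator $\bv\mapsto\bv\vert_\Ga$. For coercivity, the lower bounds $\nu_\Om,\nu_\Ga\geq\nu_\ast$ and $\gamma\geq\gamma_\ast$ give
\begin{align*}
	a\big((\bv,\bw),(\bv,\bw)\big) \geq \min\{2\nu_\ast,\gamma_\ast\}\big(\norm{\D\bv}_{\mathbf{L}^2(\Om)}^2 + \norm{\Dg\bw}_{\mathbf{L}^2(\Ga)}^2 + \norm{\bw}_{\mathbf{L}^2(\Ga)}^2\big),
\end{align*}
and the bulk-surface Korn inequality (Lemma~\ref{Lemma:Korn}) bounds the right-hand side below by $c\,\norm{(\bv,\bw)}_{\mathbfcal{H}^1_0}^2$ with $c>0$. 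Hence $a$ is coercive on $\mathbfcal{H}^1_0$, and in particular on its closed subspace $\mathbfcal{H}^1_{0,\Div}$. Since $(\f,\g)\in(\mathbfcal{H}^1_0)^\prime$ restricts to a bounded functional on $\mathbfcal{H}^1_{0,\Div}$, Lax--Milgram yields a unique $(\bv,\bw)\in\mathbfcal{H}^1_{0,\Div}$ satisfying the weak formulation against all divergence-free test fields, together with $\norm{(\bv,\bw)}_{\mathbfcal{H}^1_0}\leq C\norm{(\f,\g)}_{(\mathbfcal{H}^1_0)^\prime}$.

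The main obstacle is the construction of the pressure. The residual functional $\ell(\wv,\ww)\coloneqq\big\langle (\f,\g),(\wv,\ww)\big\rangle_{\mathbfcal{H}^1_0} - a((\bv,\bw),(\wv,\ww))$ lies in $(\mathbfcal{H}^1_0)^\prime$ and, by the previous step, annihilates $\mathbfcal{H}^1_{0,\Div} = \ker B$, where $B:\mathbfcal{H}^1_0\to\mathcal{L}^2_{(0)}$ denotes the bulk-surface divergence $B(\wv,\ww) = (\Div\wv,\Divg\ww)$; note that $\Div\wv$ and $\Divg\ww$ indeed have vanishing mean over $\Om$ and over $\Ga$, respectively, because $\wv\cdot\n = 0$ and $\ww$ is tangential. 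To represent $\ell$ in the desired form $\intO p\,\Div\wv\dx + \intG q\,\Divg\ww\dG$ one needs $B$ to have closed range equal to all of $\mathcal{L}^2_{(0)}$, i.e. a Ladyzhenskaya--Babu\v{s}ka--Brezzi condition: for every $(r,s)\in\mathcal{L}^2_{(0)}$ there exists $(\wv,\ww)\in\mathbfcal{H}^1_0$ with $\Div\wv = r$, $\Divg\ww = s$, and $\norm{(\wv,\ww)}_{\mathbfcal{H}^1_0}\leq C_B\,\norm{(r,s)}_{\mathcal{L}^2}$. I would build such a right inverse in two steps: (i) given $s\in L^2_{(0)}(\Ga)$, use a surface Bogovski\u{\i}/Helmholtz decomposition on the compact manifold $\Ga$ to obtain a tangential field $\ww$ with $\Divg\ww = s$ and $\norm{\ww}_{\mathbf{H}^1(\Ga)}\leq C\norm{s}_{L^2(\Ga)}$; (ii) solve the bulk problem $\Div\wv = r$ in $\Om$ with boundary datum $\wv\vert_\Ga = \ww$, which is admissible because $\intO r\dx = 0 = \intG\ww\cdot\n\dG$, via a trace lifting of $\ww$ followed by the classical Bogovski\u{\i} operator to correct the divergence, keeping $\norm{\wv}_{\mathbf{H}^1(\Om)}\leq C\big(\norm{r}_{L^2(\Om)} + \norm{\ww}_{\mathbf{H}^{1/2}(\Ga)}\big)$. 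Then $\wv\cdot\n = \ww\cdot\n = 0$ on $\Ga$, so $(\wv,\ww)\in\mathbfcal{H}^1_0$ with the required bound.

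Granting this inf-sup estimate, the closed range theorem — equivalently the Banach--Ne\v{c}as--Babu\v{s}ka lemma — identifies the annihilator of $\ker B$ in $(\mathbfcal{H}^1_0)^\prime$ with the range of $B^\ast$, so there is a unique $(p,q)\in(\mathcal{L}^2_{(0)})^\prime\cong\mathcal{L}^2_{(0)}$ with $\ell = B^\ast(p,q)$, that is, $\ell(\wv,\ww) = \intO p\,\Div\wv\dx + \intG q\,\Divg\ww\dG$ for all $(\wv,\ww)\in\mathbfcal{H}^1_0$, together with $\norm{(p,q)}_{\mathcal{L}^2}\leq C_B\,\norm{\ell}_{(\mathbfcal{H}^1_0)^\prime}\leq C\norm{(\f,\g)}_{(\mathbfcal{H}^1_0)^\prime}$. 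Unwinding the definition of $\ell$ yields precisely the stated weak formulation, and combining the two norm bounds gives \eqref{App:Est:BulkSurfaceStokes:H^1}. Uniqueness of $(\bv,\bw)$ is part of the Lax--Milgram conclusion, while uniqueness of $(p,q)$ follows from the injectivity of $B^\ast$ on $\mathcal{L}^2_{(0)}$, again a consequence of the inf-sup estimate. The only genuinely nontrivial ingredient is the compatible bulk-surface Bogovski\u{\i} construction in step~(ii), which must simultaneously respect the prescribed divergence in $\Om$, the prescribed tangential trace on $\Ga$, and the coupling $\bv\vert_\Ga = \bw$; everything else is a routine assembly of standard Hilbert-space and elliptic-regularity facts.
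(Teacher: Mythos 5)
This theorem is cited in the paper from \cite[Theorem~5.4]{Knopf2025a} without a proof, so there is no internal argument to compare against; I evaluate your proposal on its own merits.

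Your proposal is correct in substance and follows the standard Lax--Milgram plus inf-sup strategy for saddle-point problems. The velocity step is clean: boundedness and coercivity of $a$ on $\mathbfcal{H}^1_0$ follow from \ref{Appendix:Assumption:Coefficients:Stokes} and the bulk-surface Korn inequality (Lemma~\ref{Lemma:Korn}), and Lax--Milgram on the closed subspace $\mathbfcal{H}^1_{0,\Div}$ yields the unique velocity pair with the claimed bound. The pressure recovery via the de Rham argument is also right, and you correctly identify the one genuinely nontrivial ingredient: surjectivity of the coupled divergence operator $B:\mathbfcal{H}^1_0\to\mathcal{L}^2_{(0)}$ with a bounded right inverse. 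Your two-step construction is sound. For step (i) the cleanest route on the closed surface $\Ga$ is to solve $-\Lapg u = s$ with $\meang{u}=0$ (an isomorphism $H^2_{(0)}(\Ga)\to L^2_{(0)}(\Ga)$) and set $\ww = -\Gradg u$, which is automatically tangential with $\Divg\ww = s$ and $\norm{\ww}_{\mathbf{H}^1(\Ga)}\le C\norm{s}_{L^2(\Ga)}$. Step (ii) via trace lifting followed by Bogovski\u{\i} is exactly right; the compatibility $\intO r\dx = \intG\ww\cdot\n\dG = 0$ holds since $\ww$ is tangential, and the resulting pair lies in $\mathbfcal{H}^1_0$ with $B(\wv,\ww)=(r,s)$. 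You should also note, as you do, that $\intG\Divg\ww\dG = 0$ for any tangential $\ww\in\mathbf{H}^1(\Ga)$ because $\Ga$ is closed and without boundary, so $B$ indeed maps into $\mathcal{L}^2_{(0)}$.

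There is one small sign error: the theorem's weak form has $-\intO p\,\Div\wv\dx - \intG q\,\Divg\ww\dG$ on the left, while your $\ell(\wv,\ww) = \langle(\f,\g),(\wv,\ww)\rangle - a((\bv,\bw),(\wv,\ww))$ together with $\ell = B^\ast(p,q)$ produces $+\intO p\,\Div\wv\dx + \intG q\,\Divg\ww\dG$ on the left. Either define $\ell = a((\bv,\bw),(\wv,\ww)) - \langle(\f,\g),(\wv,\ww)\rangle$ or replace $(p,q)$ by $(-p,-q)$; the argument is otherwise unaffected and the estimate \eqref{App:Est:BulkSurfaceStokes:H^1} follows as you describe.
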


Additionally, in the case of constant viscosity functions and a constant friction function, the authors of \cite{Knopf2025a} proved the existence of a strong solution and investigated the corresponding bulk-surface Stokes operator. The goal of this section is to generalize their results and show that the existence of strong solutions also holds for a more general class of coefficients. Namely, we assume the following condition:
\begin{enumerate}[label=\textnormal{\bfseries(A)$^\prime$}]
    \item \label{Appendix:Assumption':Coefficients:Stokes} It holds $\nu_\Om,\nu_\Ga\in C^2([-1,1]$, $\gamma\in C([-1,1]^2)$, and there exist positive constants $\nu_\ast,\nu^\ast,\gamma_\ast, \gamma^\ast$ such that
    \begin{align}\label{Appendix:Assumption':Bound}
        0 < \nu_\ast \leq \nu_\Om(s), \nu_\Ga(s) \leq \nu^\ast \qquad\text{for all~}s\in[-1,1]
    \end{align}
    as well as
    \begin{align*}
        0 < \gamma_\ast \leq \gamma(s,r) \leq \gamma^\ast \qquad\text{for all~}(s,r)\in[-1,1]^2.
    \end{align*}
\end{enumerate}
Under these assumptions, the goal of this section is to prove the following theorem:
\begin{theorem}\label{App:Theorem:BulkSurfaceStokes:SS}
    Let \ref{Appendix:Assumption':Coefficients:Stokes} hold, and let $(\phi,\psi)\in \mathcal{W}^{1,\infty}$ and $(\f,\g)\in\mathbf{L}^2(\Om)\times\mathbf{L}^2_\tau(\Ga)$. Then, there exists a unique solution $(\bv,\bw,p,q)\in\mathbfcal{H}^2_{0,\Div}\times\big(\mathcal{H}^1\cap\mathcal{L}^2_{(0)}\big)$ to \eqref{System:BSS}. Furthermore, there exists a constant $C > 0$ such that
    \begin{align}\label{App:Est:BulkSurfaceStokes}
        \norm{(\bv,\bw)}_{\mathbfcal{H}^2} + \norm{(p,q)}_{\mathcal{H}^1} \leq C\norm{(\f,\g)}_{\mathbfcal{L}^2}.
    \end{align}
\end{theorem}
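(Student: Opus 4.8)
The plan is to reduce the variable-coefficient problem to the constant-coefficient theory of \cite{Knopf2025a} by a freezing-the-coefficients argument combined with a perturbation estimate. First I would establish existence and uniqueness of a weak solution $(\bv,\bw,p,q)\in\mathbfcal{H}^1_0\times\mathcal{L}^2_{(0)}$ by Theorem~\ref{App:Lemma:BulkSurfaceStokes:WS}, since $(\f,\g)\in\mathbf{L}^2(\Om)\times\mathbf{L}^2_\tau(\Ga)\hookrightarrow(\mathbfcal{H}^1_0)^\prime$. The task is then to upgrade the regularity of this weak solution to $\mathbfcal{H}^2$ for the velocity and $\mathcal{H}^1$ for the pressure, together with the estimate \eqref{App:Est:BulkSurfaceStokes}.

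The key observation is that the principal part can be rewritten by pulling the viscosity coefficients out of the divergence. Using
\begin{align*}
    -\Div(2\nu_\Om(\phi)\D\bv) &= -2\nu_\Om(\phi)\Div(\D\bv) - 2\nu_\Om^\prime(\phi)\D\bv\,\Grad\phi \qquad\text{in~}\Om,
\end{align*}
and the analogous surface identity, the system becomes a constant-coefficient bulk-surface Stokes system (with unit viscosity, after dividing) whose right-hand side is $(\tilde\f,\tilde\g)$ where $\tilde\f = \tfrac{1}{2\nu_\Om(\phi)}\f + \tfrac{\nu_\Om^\prime(\phi)}{\nu_\Om(\phi)}\D\bv\,\Grad\phi$ and similarly on $\Ga$, with the friction term $\gamma(\phi,\psi)\bw$ also moved to the right-hand side; the pressure gets rescaled accordingly. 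Here I would invoke the constant-coefficient strong regularity theory of \cite[Section~5]{Knopf2025a} (the analogue of Theorem~\ref{App:Theorem:BulkSurfaceStokes:SS} with constant coefficients, which underlies the bulk-surface Stokes operator $\mathbfcal A$ used in Section~\ref{Section:LocalWellPosedness}). Since $\nu_\Om,\nu_\Ga\in C^2([-1,1])$ are bounded below and $(\phi,\psi)\in\mathcal{W}^{1,\infty}$, the multipliers $\tfrac{1}{2\nu_\Om(\phi)}$, $\tfrac{\nu_\Om^\prime(\phi)}{\nu_\Om(\phi)}$, $\Grad\phi$ are all in $L^\infty$, so $\tilde\f\in\mathbf{L}^2(\Om)$ provided $\D\bv\in\mathbf{L}^2(\Om)$, which we already know from the weak solution. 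Applying the constant-coefficient $\mathbfcal{H}^2$-estimate to $(\tilde\f,\tilde\g)$ yields $(\bv,\bw)\in\mathbfcal{H}^2_{0,\Div}$ and $(p,q)\in\mathcal{H}^1\cap\mathcal{L}^2_{(0)}$ together with
\begin{align*}
    \norm{(\bv,\bw)}_{\mathbfcal{H}^2} + \norm{(p,q)}_{\mathcal{H}^1} &\leq C\big(\norm{(\f,\g)}_{\mathbfcal{L}^2} + \norm{(\Grad\bv,\Gradg\bw)}_{\mathbfcal{L}^2} + \norm{(\bv,\bw)}_{\mathbfcal{L}^2}\big),
\end{align*}
and the lower-order terms on the right are absorbed using the weak estimate \eqref{App:Est:BulkSurfaceStokes:H^1}, $\norm{(\bv,\bw)}_{\mathbfcal{H}^1_0}\leq C\norm{(\f,\g)}_{(\mathbfcal{H}^1_0)^\prime}\leq C\norm{(\f,\g)}_{\mathbfcal{L}^2}$, giving \eqref{App:Est:BulkSurfaceStokes}. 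Uniqueness is inherited from the weak formulation (Theorem~\ref{App:Lemma:BulkSurfaceStokes:WS}).

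The main obstacle I anticipate is making the reduction to the constant-coefficient case fully rigorous at the level of the coupled transmission condition $\bv\vert_\Ga = \bw$ and the stress coupling term $2\nu_\Om(\phi)[\D\bv\,\n]_\tau$ on $\Ga$: after the rewriting, this term still carries $\nu_\Om(\phi)\vert_\Ga$, which must be handled as part of the surface right-hand side, and one must check that $[\D\bv\,\n]_\tau$ has the right trace regularity ($\mathbf{H}^{1/2}$, hence $\mathbf{L}^2$) coming only from the $\mathbfcal{H}^1$ bound on $\bv$ — it does, by the trace theorem, but the constant-coefficient operator expects this datum, so its appearance in $\tilde\g$ has to be justified by a bootstrap or by treating it within the weak-to-strong elliptic regularity iteration rather than as a fixed forcing. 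A clean way around this is to phrase the argument as a difference estimate: write the equation for the difference between the weak solution and the constant-coefficient solution with frozen viscosity $\nu_\Om(\phi_0)$ at a reference value, and estimate the commutator terms; but since $(\phi,\psi)$ need not be continuous this localization is delicate, so I expect the cleanest route is the direct global rewriting above, combined with the difference quotient / Nirenberg translation method applied tangentially near the boundary exactly as in the constant-coefficient proof of \cite{Knopf2025a}, with the extra coefficient-derivative terms $\nu_\Om^\prime(\phi)\D\bv\,\Grad\phi$ controlled in $\mathbf{L}^2$ by $\norm{\Grad\phi}_{L^\infty}\norm{\D\bv}_{\mathbf{L}^2}$. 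This is where the hypothesis $(\phi,\psi)\in\mathcal{W}^{1,\infty}$ (rather than merely $\mathcal{W}^{2,4}$ as elsewhere in the paper) is essential.
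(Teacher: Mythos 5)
Your proposal takes a genuinely different route from the paper's, but it contains a gap at the exact point you flag as delicate, and the way you propose to resolve it does not work.

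The paper does not reduce the coupled system to the constant-coefficient operator $\mathbfcal{A}$ all at once. Instead it proves two separate one-domain regularity lemmas (Lemma~\ref{App:Lemma:BulkStokes} for the bulk problem with non-constant $\nu_\Om$ and prescribed Dirichlet data $\bw_\ast\in\mathbf{H}^{3/2}_\tau(\Ga)$, and Lemma~\ref{App:Lemma:SurfaceStokes} for the surface problem with non-constant $\nu_\Ga,\gamma$), each of which uses exactly the kind of ``move the coefficients to the right-hand side'' perturbation you describe, but only within its own domain where the structure is clean. The two problems are then coupled by a Schauder fixed-point argument on the boundary trace: given $\tilde\bw\in\mathbf{H}^{3/2}_\Div(\Ga)$, solve the bulk problem to get $\bv\in\mathbf{H}^2(\Om)$, take the (now well-defined) trace of $2\nu_\Om(\phi)[\D\bv\,\n]_\tau$ as forcing for the surface problem to get $\bw\in\mathbf{H}^2(\Ga)$, and use compactness of $\mathbf{H}^2(\Ga)\hookrightarrow\mathbf{H}^{3/2}(\Ga)$ to close the loop. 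This order is essential.

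The gap in your argument is precisely the trace claim. You assert that $[\D\bv\,\n]_\tau$ lies in $\mathbf{L}^2_\tau(\Ga)$ ``coming only from the $\mathbfcal{H}^1$ bound on $\bv$ --- it does, by the trace theorem.'' This is false: $\bv\in\mathbf{H}^1(\Om)$ gives $\D\bv\in\mathbf{L}^2(\Om)$ only, and $\mathbf{L}^2(\Om)$-functions have no trace on $\Ga$. One needs $\D\bv\in\mathbf{H}^s(\Om)$ for some $s>1/2$, i.e.\ $\bv\in\mathbf{H}^{s+1}(\Om)$, which is exactly the regularity you are trying to prove. (The weak formulation only defines the full conormal stress distributionally in $\mathbf{H}^{-1/2}_\tau(\Ga)$; it does not localize the $\D\bv\,\n$ piece in $\mathbf{L}^2$.) Thus the stress-coupling term cannot be absorbed into $\tilde\g$ as a fixed $\mathbf{L}^2$ datum, and the proposed one-shot reduction to the constant-coefficient operator is circular. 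The paper's Schauder construction is designed precisely to break this circle: it manufactures $\mathbf{H}^2$-regularity in the bulk \emph{before} the trace is needed.

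A secondary, smaller issue: dividing the bulk equation by $\nu_\Om(\phi)$ turns $\Grad p$ into $\tfrac{1}{2\nu_\Om(\phi)}\Grad p$, which is not a gradient. It can be repaired by writing $\tfrac{1}{2\nu_\Om(\phi)}\Grad p = \Grad\bigl(\tfrac{p}{2\nu_\Om(\phi)}\bigr) + \tfrac{\nu_\Om'(\phi)}{2\nu_\Om(\phi)^2}\,p\,\Grad\phi$ and absorbing the second term into the forcing (it is in $\mathbf{L}^2$ since $p\in L^2$ and $\Grad\phi\in\mathbf{L}^\infty$), but ``the pressure gets rescaled accordingly'' glosses over this, and one must still recover $p\in H^1$ from $\tilde p\in H^1$ at the end. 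This part is reparable; the trace issue is not, at least not along the lines you sketch.
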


Similar to the proof of \cite[Theorem~5.9]{Knopf2025a}, the proof of Theorem~\ref{App:Theorem:BulkSurfaceStokes:SS} is based on Schauder's fixed point theorem, by combining regularity theory for both the bulk and the surface Stokes equations. Therefore, we start by studying both equations separately.

\subsection{Non-homogeneous Stokes equation with non-constant viscosity}

We consider
\begin{subequations}\label{System:BulkStokes:non-const}
    \begin{alignat}{2}
        -\Div(2\nu_\Om(\phi)\D\bv) + \Grad p &= \f &&\qquad\text{in~}\Om, \\
        \Div\,\bv &= 0 &&\qquad\text{in~}\Om, \\
        \bv\vert_\Ga &= \bw_\ast &&\qquad\text{on~}\Ga,
    \end{alignat}
\end{subequations}
for a given tangential vector field $\bw_\ast:\Ga\rightarrow\R^d$. We have the following result regarding the well-posedness of strong solutions to \eqref{System:BulkStokes:non-const}.

\begin{lemma}\label{App:Lemma:BulkStokes}
    Let $\nu_\Om\in C^2([-1,1])$ satisfy \eqref{Appendix:Assumption':Bound}, $\phi\in W^{1,\infty}(\Om)$, $\f\in\mathbf{L}^2(\Om)$ and $\bw_\ast\in\mathbf{H}^{\frac32}_\tau(\Ga)$. Then, there exists a unique solution $(\bv,p)\in\mathbf{H}^2(\Om)\times(H^1(\Om)\cap L^2_{(0)}(\Om))$ to \eqref{System:BulkStokes:non-const}. Furthermore, there exists a constant $C > 0$ such that
    \begin{align}\label{App:Est:BulkStokes}
        \norm{\bv}_{\mathbf{H}^2(\Om)} + \norm{p}_{H^1(\Om)} \leq C\big(\norm{\f}_{\mathbf{L}^2(\Om)} + \norm{\bw_\ast}_{\mathbf{H}^{\frac32}(\Ga)}\big).
    \end{align}
\end{lemma}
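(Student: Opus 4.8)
\textbf{Proof strategy for Lemma~\ref{App:Lemma:BulkStokes}.}
The plan is to reduce \eqref{System:BulkStokes:non-const} to a homogeneous-boundary problem and then treat the variable-viscosity term as a perturbation of the constant-viscosity Stokes operator, for which the classical $\mathbf{H}^2\times H^1$ regularity theory is available. First I would lift the boundary datum: since $\bw_\ast\in\mathbf{H}^{3/2}_\tau(\Ga)$ with $\bw_\ast\cdot\n = 0$, there exists a solenoidal extension $\mathbf{b}\in\mathbf{H}^2(\Om)$ with $\Div\,\mathbf{b} = 0$ in $\Om$, $\mathbf{b}\vert_\Ga = \bw_\ast$, and $\norm{\mathbf{b}}_{\mathbf{H}^2(\Om)} \leq C\norm{\bw_\ast}_{\mathbf{H}^{3/2}(\Ga)}$ (this is standard, e.g.\ via a divergence-free extension using that the normal component vanishes so the compatibility condition $\int_\Ga \bw_\ast\cdot\n\,\dd\Ga = 0$ holds trivially). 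Writing $\bv = \bu + \mathbf{b}$, the pair $(\bu,p)$ solves a homogeneous-boundary Stokes-type system with the modified right-hand side $\tilde\f = \f + \Div(2\nu_\Om(\phi)\D\mathbf{b})\in\mathbf{L}^2(\Om)$, where the new forcing term is controlled by $\norm{\tilde\f}_{\mathbf{L}^2(\Om)} \leq C(\norm{\f}_{\mathbf{L}^2(\Om)} + (1+\norm{\phi}_{W^{1,\infty}(\Om)})\norm{\bw_\ast}_{\mathbf{H}^{3/2}(\Ga)})$, using $\nu_\Om\in C^1$ and $\phi\in W^{1,\infty}$ to differentiate the viscosity.

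Next I would split the variable-viscosity operator as $-\Div(2\nu_\Om(\phi)\D\bu) = -2\nu_\ast\Lap\bu - \Div(2(\nu_\Om(\phi) - \nu_\ast)\D\bu)$ (using $\Div\,\bu = 0$ to write $\Div\D\bu = \tfrac12\Lap\bu$), treating the second piece as a lower-order forcing in a fixed-point or continuity argument. Concretely, for $\bu_\ast\in\mathbf{H}^2_0(\Om)\cap\{\Div = 0\}$ given, one solves the constant-coefficient problem $-2\nu_\ast\Lap\bu + \Grad p = \tilde\f + \Div(2(\nu_\Om(\phi) - \nu_\ast)\D\bu_\ast)$, $\Div\,\bu = 0$, $\bu\vert_\Ga = 0$; the classical Cattabriga–Agmon–Douglis–Nirenberg estimate gives $\norm{\bu}_{\mathbf{H}^2(\Om)} + \norm{p}_{H^1(\Om)} \leq C(\norm{\tilde\f}_{\mathbf{L}^2(\Om)} + \norm{\Div(2(\nu_\Om(\phi)-\nu_\ast)\D\bu_\ast)}_{\mathbf{L}^2(\Om)})$. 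The perturbation term is bounded by $C\norm{\nu_\Om(\phi)-\nu_\ast}_{W^{1,\infty}(\Om)}\norm{\bu_\ast}_{\mathbf{H}^2(\Om)}$; since $\nu_\Om(\phi) - \nu_\ast$ can be large in $W^{1,\infty}$, a single contraction estimate is not immediate, so one instead uses a continuation-in-parameter argument: replace $\nu_\Om(\phi)$ by $\nu_\ast + t(\nu_\Om(\phi)-\nu_\ast)$ for $t\in[0,1]$ and show the set of $t$ for which the problem is uniquely solvable with a uniform a priori bound is open and closed. The a priori bound itself follows from testing the equation with $\bu$ (giving the $\mathbf{H}^1$ estimate via Korn/Poincaré on $\mathbf{H}^1_0(\Om)$ and the lower ellipticity bound $\nu_\ast$) combined with the ADN estimate above and absorbing the top-order perturbation term using the $W^{1,\infty}$ bound on $\phi$ — this is where the hypothesis $\nu_\Om\in C^2$ and $\phi\in W^{1,\infty}$ is genuinely used (one actually only needs $C^1$ and $W^{1,\infty}$ here, but $C^2$ matches the paper's global assumptions and is harmless).

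Finally, uniqueness of $(\bv,p)$ follows by linearity: the difference of two solutions satisfies the homogeneous system with $\f = 0$, $\bw_\ast = 0$, and testing with the velocity difference forces it to vanish by coercivity of $2\nu_\Om(\phi)\D\,\cdot$ on $\mathbf{H}^1_0(\Om)$; the pressure is then determined up to a constant, hence uniquely in $L^2_{(0)}(\Om)$. Adding back $\mathbf{b}$ recovers $\bv$ and yields the estimate \eqref{App:Est:BulkStokes} with the constant $C$ depending on $\nu_\ast,\nu^\ast$, $\norm{\phi}_{W^{1,\infty}(\Om)}$, and $\Om$. The main obstacle I anticipate is making the perturbation argument rigorous when $\norm{\nu_\Om(\phi) - \nu_\ast}_{W^{1,\infty}}$ is not small: the clean way around it is the method of continuity together with a uniform a priori estimate, rather than a naive Banach fixed point; alternatively one can invoke known $L^p$-regularity theory for the Stokes system with $W^{1,\infty}$ (or even merely VMO) viscosity coefficients directly, which would shorten the argument considerably.
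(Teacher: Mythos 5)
Your overall strategy coincides with the paper's: lift the boundary datum by a divergence-free $\mathbf{H}^2$ extension (you call it $\mathbf{b}$; the paper takes the solution $\bv_1$ of a constant-viscosity Stokes problem, which plays exactly the same role), then reduce to a variable-viscosity Stokes problem with homogeneous Dirichlet data and $\mathbf{L}^2$ right-hand side. The divergence from the paper is in how the latter problem is solved: the paper simply invokes a known $\mathbf{H}^2$-regularity result for the Stokes operator with Lipschitz viscosity (it cites \cite[Section~4, Lemma~4]{Abels2009a}), whereas you propose to prove that result from scratch by a method of continuity in $t$ along the family $\nu_t = \nu_\ast + t(\nu_\Om(\phi)-\nu_\ast)$.

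Here is where your argument has a genuine gap. The method of continuity requires a \emph{uniform} a priori $\mathbf{H}^2\times H^1$ estimate along the whole path $t\in[0,1]$, and you claim this follows from "the ADN estimate above and absorbing the top-order perturbation term using the $W^{1,\infty}$ bound on $\phi$." That absorption does not work: writing the ADN estimate for $-\nu_\ast\Lap$ with the perturbation $\Div\big(2t(\nu_\Om(\phi)-\nu_\ast)\D\bu\big)$ moved to the right-hand side produces a top-order term bounded by $C\,\norm{\nu_\Om(\phi)-\nu_\ast}_{L^\infty}\norm{\bu}_{\mathbf{H}^2}$, and the prefactor is essentially $\nu^\ast-\nu_\ast$, which is $O(1)$ in general — there is no smallness to exploit, so the term cannot be absorbed into the left-hand side. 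The energy estimate you mention only gives a uniform $\mathbf{H}^1$ bound, which does not help at the $\mathbf{H}^2$ level. To make your route rigorous you would need the classical freezing-of-coefficients argument (a partition of unity, locally approximating $\nu_\Om(\phi)$ by a constant and controlling the commutators, which are lower-order precisely because $\nu_\Om(\phi)$ is Lipschitz); that is a nontrivial amount of work, and it is exactly the content of the regularity lemma the paper chooses to cite rather than reprove. Your closing remark — that one could "invoke known $L^p$-regularity theory for the Stokes system with $W^{1,\infty}$ viscosity directly" — is in fact the paper's choice, and it closes the gap; but as written your main line of argument is incomplete at the a priori estimate step.

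Two small points: the identity $\Div(2\nu_\ast\D\bu) = \nu_\ast\Lap\bu$ (not $2\nu_\ast\Lap\bu$) for divergence-free $\bu$, so your split should read $-\nu_\ast\Lap\bu - \Div\big(2(\nu_\Om(\phi)-\nu_\ast)\D\bu\big)$; and your uniqueness argument and the recovery of the pressure via the de Rham lemma are both fine and match the paper.
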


\begin{proof}
    \textbf{Uniqueness.} According to the Lemma of Lax--Milgram, the corresponding homogeneous problem has a unique weak solution. As the zero solution trivially solves the homogeneous problem, the uniqueness of \eqref{System:BulkStokes:non-const} readily follows. \\
    \textbf{Existence.} We first consider the system
    \begin{alignat*}{2}
        -\Div(2\D\bv) + \Grad p &= \f &&\qquad\text{in~}\Om, \nonumber \\
        \Div\,\bv &= 0 &&\qquad\text{in~}\Om, \\
        \bv\vert_\Ga &= \bw_\ast &&\qquad\text{on~}\Ga. \nonumber
    \end{alignat*}
    The existence of a unique solution $(\bv_1,p_1)\in\mathbf{H}^2(\Om)\times(H^1(\Om)\cap L^2_{(0)}(\Om))$ has been proven in \cite{Cattabriga1961} for $d = 3$, and in \cite{Temam1984} for $d = 2$, along with the existence of a constant $C_1 > 0$ such that
    \begin{align*}
        \norm{\bv_1}_{\mathbf{H}^2(\Om)} + \norm{p_1}_{H^1(\Om)} \leq C_1\big(\norm{\f}_{\mathbf{L}^2(\Om)} + \norm{\bw_\ast}_{\mathbf{H}^{\frac32}(\Ga)}\big).
    \end{align*}
    Now, we consider
    \begin{subequations}\label{Stokes:Viscous:RHS}
        \begin{alignat}{2}
            -\Div(2\nu_\Om(\phi)\D\bv) + \Grad p &= \f + \Div(2\nu_\Om(\phi)\D\bv_1)&&\qquad\text{in~}\Om, \label{Stokes:Viscous:RHS:1} \\
            \Div\,\bv &= 0 &&\qquad\text{in~}\Om, \\
            \bv\vert_\Ga &= 0 &&\qquad\text{on~}\Ga. 
        \end{alignat}
    \end{subequations}
    As the right-hand side of \eqref{Stokes:Viscous:RHS:1} lies in $\mathbf{L}^2(\Om)$ by assumption on $\nu_\Om$ and the regularity of $\bv_1$, using the Lemma of Lax--Milgram, one can show the existence of a unique $\bv_2\in\mathbf{H}^1_0(\Om)$ satisfying
    \begin{align*}
        \intO 2\nu_\Om(\phi)\D\bv_2:\D\wv\dx = \intO \big(\f + \Div(2\nu_\Om(\phi)\D\bv_1)\big)\cdot\wv\dx
    \end{align*}
    for all $\wv\in\mathbf{H}^1_{0,\Div}(\Om)$. Hence, according to \cite[Section~4, Lemma~4]{Abels2009a}, we additionally find that $\bv_2\in\mathbf{H}^2(\Om)$. Then, applying integration by parts, for all $\wv\in\mathbf{H}^1_{0,\Div}(\Om)$ it holds that
    \begin{align*}
        \intO \Big(-\Div(2\nu_\Om(\phi)\D\bv_2) - \f - \Div(2\nu_\Om(\phi)\D\bv_1)\Big)\cdot\wv\dx = 0.
    \end{align*}
    Consequently, \cite[Lemma~II.2.2.2]{Sohr2001} yields the existence of $p_2\in L^2_{(0)}(\Om)$ such that
    \begin{align}\label{Pressure:p_2}
        \Grad p_2 = -\Div(2\nu_\Om(\phi)\D\bv_2) - \f - \Div(2\nu_\Om(\phi)\D\bv_1)
    \end{align}
    in the sense of distributions. As the right-hand side of \eqref{Pressure:p_2} is an element of $\mathbf{L}^2(\Om)$, we conclude $\Grad p_2\in\mathbf{L}^2(\Om)$, and thus, $(\bv_2,p_2)$ is a strong solution to \eqref{Stokes:Viscous:RHS}. Moreover, we have the following estimate
    \begin{align*}
        \norm{\bv_2}_{\mathbf{H}^2(\Om)} + \norm{p_2}_{H^1(\Om)} \leq C_2\big(\norm{\bv_1}_{\mathbf{H}^2(\Om)} + \norm{\f}_{\mathbf{L}^2(\Om)}\big)
    \end{align*}
    for a constant $C_2 > 0$. Finally, defining $\bv = \bv_1 + \bv_2$ and $p = p_2$, we infer that $(\bv,p)$ is a solution to \eqref{System:BulkStokes:non-const} with the desired regularities and satisfies the estimate \eqref{App:Est:BulkStokes}.
\end{proof}

\subsection{Surface Stokes equations with non-constant viscosity and friction}

We consider
\begin{subequations}\label{System:SurfacesStokes:non-const}
    \begin{alignat}{2}
        -\Divg(2\nu_\Ga(\psi)\Dg\bw) + \Gradg q + \gamma(\phi,\psi)\bw &= \g &&\qquad\text{on~}\Ga, \\
        \Divg\,\bw &= 0 &&\qquad\text{on~}\Ga.
    \end{alignat}
\end{subequations}
We prove the following result concerning the well-posedness of strong solutions to \eqref{System:SurfacesStokes:non-const}.

\begin{lemma}\label{App:Lemma:SurfaceStokes}
    Let $\nu_\Ga\in C^2([-1,1])$ and $\gamma\in C([-1,1]^2)$ satisfy \eqref{Appendix:Assumption':Bound}, $\psi\in W^{1,\infty}(\Ga)$ and $\g\in\mathbf{L}^2_\tau(\Ga)$. Then there exists a unique solution $(\bw,q)\in \mathbf{H}^2_\Div(\Ga)\times(H^1(\Ga)\cap L^2_{(0)}(\Ga))$ to \eqref{System:SurfacesStokes:non-const}. Additionally, there exists a constant $C > 0$ such that
    \begin{align}\label{App:Est:SurfaceStokes}
        \norm{\bw}_{\mathbf{H}^2(\Ga)} + \norm{q}_{H^1(\Ga)} \leq C\norm{\g}_{\mathbf{L}^2(\Ga)}.
    \end{align}
\end{lemma}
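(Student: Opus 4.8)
The plan is to mirror the structure of the bulk case in Lemma~\ref{App:Lemma:BulkStokes}: first solve a constant-coefficient surface Stokes problem with $H^2$-regularity, then absorb the variable-coefficient perturbation (both the viscosity and the friction term) into the right-hand side and treat it as a lower-order correction via Lax--Milgram plus interior elliptic regularity on the closed manifold $\Ga$. Uniqueness follows exactly as in the bulk case: the associated homogeneous weak problem is coercive on $\mathbf{H}^1_\Div(\Ga)$ (using $\gamma \geq \gamma_\ast > 0$ together with a surface Korn inequality), so its only weak solution is zero, and the difference of two strong solutions would be such a weak solution.

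\textbf{Key steps.} First I would recall the constant-viscosity surface Stokes system
\begin{subequations}
\begin{alignat}{2}
    -\Divg(2\Dg\bw) + \Gradg q + \bw &= \g &&\qquad\text{on~}\Ga, \\
    \Divg\,\bw &= 0 &&\qquad\text{on~}\Ga,
\end{alignat}
\end{subequations}
whose well-posedness in $\mathbf{H}^2_\Div(\Ga)\times(H^1(\Ga)\cap L^2_{(0)}(\Ga))$ with the estimate $\norm{\bw_1}_{\mathbf{H}^2(\Ga)} + \norm{q_1}_{H^1(\Ga)} \leq C_1\norm{\g}_{\mathbf{L}^2(\Ga)}$ is available from the literature on the surface Stokes operator on a smooth closed hypersurface (see the references used in \cite[Section~5]{Knopf2025a}, or alternatively the results on the Hodge--Stokes/Bochner Laplacian on compact manifolds). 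Writing $\bw = \bw_1 + \bw_2$, the remainder $\bw_2$ should solve
\begin{subequations}
\begin{alignat}{2}
    -\Divg(2\nu_\Ga(\psi)\Dg\bw_2) + \Gradg q_2 + \gamma(\phi,\psi)\bw_2 &= \Divg\big(2(\nu_\Ga(\psi)-1)\Dg\bw_1\big) + (1-\gamma(\phi,\psi))\bw_1 &&\quad\text{on~}\Ga, \\
    \Divg\,\bw_2 &= 0 &&\quad\text{on~}\Ga.
\end{alignat}
\end{subequations}
Since $\psi\in W^{1,\infty}(\Ga)$ and $\nu_\Ga\in C^2$, the term $\Divg(2(\nu_\Ga(\psi)-1)\Dg\bw_1)$ expands to $2(\nu_\Ga(\psi)-1)\Divg(\Dg\bw_1) + 2\nu_\Ga'(\psi)(\Gradg\psi)\cdot\Dg\bw_1$ (understood componentwise with the appropriate projection), which lies in $\mathbf{L}^2_\tau(\Ga)$ by the $\mathbf{H}^2$-regularity of $\bw_1$; likewise $(1-\gamma(\phi,\psi))\bw_1\in\mathbf{L}^2_\tau(\Ga)$. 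Thus the right-hand side of the $\bw_2$-problem is a genuine $\mathbf{L}^2_\tau(\Ga)$ forcing. Lax--Milgram on $\mathbf{H}^1_\Div(\Ga)$ (coercivity from $\nu_\Ga\geq\nu_\ast$, $\gamma\geq\gamma_\ast$, and the surface Korn inequality) then produces a unique $\bw_2\in\mathbf{H}^1_\Div(\Ga)$. Elliptic regularity for the variable-coefficient surface Stokes operator — the analogue of \cite[Section~4, Lemma~4]{Abels2009a} on a compact manifold, which is exactly what is invoked in the bulk proof — upgrades this to $\bw_2\in\mathbf{H}^2(\Ga)$. Finally, by integration by parts against divergence-free test fields and the surface version of de~Rham's lemma (the analogue of \cite[Lemma~II.2.2.2]{Sohr2001}, e.g. $\Gradg$ has closed range and its cokernel is spanned by constants), one recovers a pressure $q_2\in H^1(\Ga)\cap L^2_{(0)}(\Ga)$ with $\Gradg q_2$ equal to the appropriate $\mathbf{L}^2_\tau(\Ga)$ combination, so $(\bw_2,q_2)$ is a strong solution. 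Setting $\bw=\bw_1+\bw_2$, $q=q_1+q_2$ and chaining the two estimates yields \eqref{App:Est:SurfaceStokes}.

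\textbf{Main obstacle.} The delicate point is the $\mathbf{L}^2\to\mathbf{H}^2$ elliptic regularity step for the variable-coefficient \emph{tangential} surface Stokes system, since the surface divergence, the surface symmetric gradient, and the projection $\mathbf{P}^\Ga$ interact with the curvature of $\Ga$; one must check that the perturbation analysis of \cite{Abels2009a} carries over in local charts on the $C^3$ surface and that the curvature terms generated by $\Dg$ and $\Divg$ are of lower order and hence harmless (they only contribute zeroth- and first-order coefficients, controlled since $\Ga$ is $C^3$ and $\psi\in W^{1,\infty}$). A secondary technical point is justifying the surface de~Rham lemma with the correct functional-analytic setup for tangential fields; this is, however, already implicit in the constant-coefficient surface Stokes theory cited from \cite{Knopf2025a}, so it can be quoted rather than reproved. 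Everything else — uniqueness, coercivity, the product/chain-rule expansions — is routine.
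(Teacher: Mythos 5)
Your proposal is correct in outline, but it takes a detour that the paper avoids, and it leaves the key step as a gesture rather than a citation. Two points are worth noting.

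\textbf{The decomposition $\bw = \bw_1 + \bw_2$ is superfluous.} In Lemma~\ref{App:Lemma:BulkStokes} the split is forced by the inhomogeneous boundary value $\bv\vert_\Ga = \bw_\ast$: the constant-coefficient solver handles the boundary data, after which \cite[Section~4, Lemma~4]{Abels2009a} applies to the variable-coefficient remainder \emph{with homogeneous Dirichlet data}. On the closed manifold $\Ga$ there is no boundary, so this driver is absent. Moreover, your $\bw_2$-problem \emph{still} has variable coefficients $\nu_\Ga(\psi)$ and $\gamma(\phi,\psi)$ in the principal and zeroth-order terms, so you need exactly the same variable-viscosity $\mathbf{L}^2\to\mathbf{H}^2$ regularity you would need if you attacked $\bw$ directly. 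The split buys nothing. The paper instead applies Lax--Milgram to the full variable-coefficient weak formulation (giving $\bw\in\mathbf{H}^1_\Div(\Ga)$), then rewrites the equation so that the \emph{viscosity} stays in the principal part but the friction term is absorbed into the right-hand side, $\intG 2\nu_\Ga(\psi)\Dg\bw:\Dg\ww\dG + \intG\bw\cdot\ww\dG = \intG\G\cdot\ww\dG$ with $\G = \g + (1-\gamma(\phi,\psi))\bw\in\mathbf{L}^2_\tau(\Ga)$, and then cites the regularity result directly.

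\textbf{The variable-coefficient regularity step is left open.} You correctly identify the crux as the $\mathbf{L}^2\to\mathbf{H}^2$ elliptic estimate for the variable-viscosity tangential surface Stokes system, but "check that the perturbation analysis of \cite{Abels2009a} carries over in local charts, curvature terms are lower order" is a program, not a proof; \cite{Abels2009a} is a bulk result and does not apply to tangential systems on $\Ga$. The paper supplies a concrete reference, \cite[Lemma~7.4]{Abels2024}, which furnishes precisely the $\mathbf{L}^2\to\mathbf{H}^2$ estimate for the variable-viscosity, unit-friction surface Stokes equation; absorbing $(1-\gamma)\bw$ into the right-hand side is exactly the manoeuvre that puts the system in the form that lemma handles. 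If you wish to keep your structure, you must substitute that citation (or an equivalent) for the hand-waved regularity step, at which point you will also see that $\bw_1$ is unnecessary. The rest of your argument — uniqueness via Lax--Milgram on $\mathbf{H}^1_\Div(\Ga)$ with the surface Korn inequality, pressure recovery via the surface de~Rham/Leray projection argument, and the final chaining of estimates — matches the paper.
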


\begin{proof}
    We start by recalling the following surface Korn inequality,
    \begin{align*}
        \norm{\tw}_{\mathbf{H}^1(\Ga)}^2 \leq C_{\Ga} \big(\norm{\Dg\tw}_{\mathbf{L}^2(\Ga)}^2 + \norm{\tw}_{\mathbf{L}^2(\Ga)}^2\big) \qquad\text{for all~}\tw\in\mathbf{H}^1(\Ga)
    \end{align*}
    for some constant $C_{\Ga} > 0$, see \cite[Eqn.~(4.7)]{Jankuhn2018}. Consequently, as both $\nu_\Ga$ and $\gamma$ are bounded from below by a positive constant, the Lemma of Lax--Milgram guarantees the existence of a unique $\bw\in\mathbf{H}^1_\Div(\Ga)$ satisfying
    \begin{align}\label{SurfaceStokes:WF}
        \intG 2\nu_\Ga(\psi)\Dg\bw:\Dg\ww\dG + \intG \gamma(\phi,\psi)\bw\cdot\ww\dG = \intG \g\cdot\ww\dG
    \end{align}
    for all $\ww\in\mathbf{H}^1_\Div(\Ga)$ along with a constant $C > 0$ such that
    \begin{align}\label{App:Est:w:H^1:g}
        \norm{\bw}_{\mathbf{H}^1(\Ga)} \leq C\norm{\g}_{\mathbf{L}^2(\Ga)}.
    \end{align}
    Next, we observe that \eqref{SurfaceStokes:WF} can be reformulated as
    \begin{align}\label{SurfaceStokes:WF:2}
        &\intG 2\nu_\Ga(\psi)\Dg\bw:\Dg\ww\dG + \intG \bw\cdot\ww\dG = \intG \G\cdot\ww\dG
    \end{align}
    for all $\ww\in\mathbf{H}^1_\Div(\Ga)$, where $\G = \g + (1-\gamma(\phi,\psi))\bw\in\mathbf{L}^2(\Ga)$. In view of \eqref{App:Est:w:H^1:g} and \eqref{Appendix:Assumption':Bound}, it holds $\norm{\G}_{\mathbf{L}^2(\Ga)} \leq C\norm{\g}_{\mathbf{L}^2(\Ga)}$. Consequently, utilizing regularity theory for \eqref{SurfaceStokes:WF:2} (see \cite[Lemma~7.4]{Abels2024}), we deduce $\bw\in\mathbf{H}^2(\Ga)$ with
    \begin{align*}
        \norm{\bw}_{\mathbf{H}^2(\Ga)} \leq C\norm{\G}_{\mathbf{L}^2(\Ga)} \leq C\norm{\g}_{\mathbf{L}^2(\Ga)}
    \end{align*}
    for some constant $C > 0$. Finally, as $\bw\in\mathbf{H}^2(\Ga)$, an application of integration by parts yields that
    \begin{align*}
        \intG \mathbf{P}_\Div^\Ga\big(-\Divg(2\nu_\Ga(\psi)\Dg\bw) + \gamma(\phi,\psi)\bw - \g\big)\cdot\ww\dG = 0
    \end{align*}
    for all $\ww\in\mathbf{H}^1(\Ga)$. Thus, by definition of the projection $\mathbf{P}_\Div^\Ga$, we find $q\in \dot{H}^1(\Ga)$ such that
    \begin{align}\label{Pressure:q:Construct}
        \Gradg q = -\Divg(2\nu_\Ga(\psi)\Dg\bw) + \gamma(\phi,\psi)\bw - \g \qquad\text{on~}\Ga.
    \end{align}
    Moreover, the choice of $q$ is unique if we additionally require that $\meang{q} = 0$. Then, by the Poincar\'{e}--Wirtinger inequality, we obtain $q\in H^1(\Ga)$. In this way, $(\bw,q)$ is the unique solution to \eqref{System:SurfacesStokes:non-const} satisfying the desired regularities as well as the estimate \eqref{App:Est:SurfaceStokes}.
\end{proof}

\subsection{Bulk-surface Stokes equation}

Now, having Lemma~\ref{App:Lemma:BulkStokes} and Lemma~\ref{App:Lemma:SurfaceStokes} at hand, we can finally prove Theorem~\ref{App:Theorem:BulkSurfaceStokes:SS}.

\begin{proof}[Proof of Theorem~\ref{App:Theorem:BulkSurfaceStokes:SS}]
    Let $\widetilde{\bw}\in\mathbf{H}^{\frac32}_\Div(\Ga)$, and consider the unique pair $(\bv,p)\in\mathbf{H}^2_\Div(\Om)\times(H^1(\Om)\cap L^2_{(0)}(\Om))$ satisfying
    \begin{alignat}{2}\label{App:FixedPoint:v}
        -\Div(2\nu_\Om(\phi)\D\bv) + \Grad p &= \f &&\qquad\text{in~}\Om, \nonumber \\
        \Div\,\bv &= 0 &&\qquad\text{in~}\Om, \\
        \bv\vert_\Ga &= \tw &&\qquad\text{on~}\Ga, \nonumber
    \end{alignat}
    which exists according to Lemma~\ref{App:Lemma:BulkStokes}. Additionally, there exists a constant $C > 0$ such that
    \begin{align}\label{App:Est:BulkStokes:Proof}
        \norm{\bv}_{\mathbf{H}^2(\Om)} + \norm{p}_{H^1(\Om)} \leq C\big(\norm{\f}_{\mathbf{L}^2(\Om)} + \norm{\tw}_{\mathbf{H}^{\frac32}(\Ga)}\big).
    \end{align}
    Then, since by the trace theorem it holds that $\g - 2\nu_\Om(\phi)[\D\bv\,\n]_\tau\in\mathbf{L}^2_{\tau}(\Ga)$, Lemma~\ref{App:Lemma:SurfaceStokes} implies the existence of a unique pair $(\bw,q)\in\mathbf{H}^2_\Div(\Ga)\times (H^1(\Ga)\cap L^2_{(0)}(\Ga))$ solving
    \begin{equation}\label{App:FixedPoint:w}
        \begin{aligned}
            -\Divg(2\nu_\Ga(\psi)\Dg\bw) + \Gradg q + \gamma(\phi,\psi)\bw &= \g - 2\nu_\Om(\phi)[\D\bv\,\n ]_\tau&&\qquad\text{on~}\Ga, \\
        \Divg\,\bw &= 0 &&\qquad\text{on~}\Ga.
        \end{aligned}
    \end{equation}
   This defines a function $\mathbf{T}:\mathbf{H}^{\frac32}_\Div(\Ga)\rightarrow\mathbf{H}^2_\Div(\Ga)$ via $\mathbf{T}\big(\widetilde{\bw}\big) = \bw$, with $\bw$ being the unique solution to \eqref{App:FixedPoint:w}. Now, let $\widetilde{\bw}_1,\widetilde{\bw}_2\in\mathbf{H}^{\frac32}_\Div(\Ga)$, and consider the corresponding solutions $\bw_1, \bw_2$ to \eqref{App:FixedPoint:w} with $\bv_1,\bv_2$ being the associated solutions to \eqref{App:FixedPoint:v}. Then, in view of the estimates \eqref{App:Est:SurfaceStokes} and \eqref{App:Est:BulkStokes:Proof}, we find that
    \begin{align*}
        \norm{\mathbf{T}\big(\widetilde{\bw}_1) - \mathbf{T}\big(\widetilde{\bw}_2)}_{\mathbf{H}^2(\Ga)} &\leq C\norm{\D\bv_1\,\n - \D\bv_2\,\n}_{\mathbf{L}^2(\Ga)} \leq C\norm{\bv_1 - \bv_2}_{\mathbf{H}^2(\Ga)} \\
        &\leq C\norm{\widetilde{\bw}_1 - \widetilde{\bw}_2}_{\mathbf{H}^{\frac32}(\Ga)}.
    \end{align*}
    In particular, this shows that $\mathbf{T}$ is continuous. As we have the compact embedding $\mathbf{H}^2(\Ga)\emb\mathbf{H}^{\frac32}(\Ga)$, Schauder's fixed-point theorem guarantees the existence of a fixed-point $\bw_\ast\in\mathbf{H}^{\frac32}(\Ga)$ such that $\mathbf{T}\big(\bw_\ast\big) = \bw_\ast$. Then, consider the corresponding solution $\bv_\ast$ to \eqref{App:FixedPoint:v}, and the pair $(p_\ast,q_\ast)$ consisting of the bulk and surface pressure, respectively. In view of Lemma~\ref{App:Lemma:BulkSurfaceStokes:WS}, $(\bv_\ast,\bw_\ast,p_\ast,q_\ast)$ is unique.
    Lastly, to establish \eqref{App:Est:BulkSurfaceStokes}, we first note that due to \eqref{InterpolEst:Trace:L^2}, \eqref{App:Est:BulkSurfaceStokes:H^1} and \eqref{App:Est:SurfaceStokes} we have
    \begin{align}\label{App:Est:SurfaceStokes:Final}
        \begin{split}
            \norm{\bw_\ast}_{\mathbf{H}^2(\Ga)} + \norm{q_\ast}_{H^1(\Ga)} &\leq C\big(\norm{\g}_{\mathbf{L}^2(\Ga)} + \norm{\D\bv_\ast\,\n}_{\mathbf{L}^2(\Ga)}\big) \\
            &\leq C\big(\norm{\g}_{\mathbf{L}^2(\Ga)} + \norm{\bv_\ast}_{\mathbf{H}^1(\Om)}^{\frac12}\norm{\bv_\ast}_{\mathbf{H}^2(\Om)}^{\frac12}\big) \\
            &\leq \varepsilon\norm{\bv_\ast}_{\mathbf{H}^2(\Om)} + C_\varepsilon\norm{(\f,\g)}_{\mathbfcal{L}^2}
        \end{split}
    \end{align}
    for some $\varepsilon > 0$ and a constant $C_\varepsilon > 0$. Therefore, plugging this estimate into \eqref{App:Est:BulkStokes:Proof}, and choosing $\varepsilon$ small enough, we deduce
    \begin{align}\label{App:Est:BulkStokes:Final}
        \norm{\bv_\ast}_{\mathbf{H}^2(\Om)} + \norm{p_\ast}_{H^1(\Om)} \leq C\big(\norm{\f}_{\mathbf{L}^2(\Ga)} + \norm{\bw_\ast}_{\mathbf{H}^{\frac32}(\Ga)}\big) \leq C\norm{(\f,\g)}_{\mathbfcal{L}^2}.
    \end{align}
    Finally, combining \eqref{App:Est:SurfaceStokes:Final} and \eqref{App:Est:BulkStokes:Final} yields \eqref{App:Est:BulkSurfaceStokes}, which finishes the proof.
\end{proof}

\subsection{Bulk-surface Stokes operator}

We now consider the case of constant, non-negative coefficients. Without loss of generality, we assume that $\nu_\Om = \nu_\Ga \equiv 1$ as well as $\gamma \equiv 1$. Then, the bulk-surface operator is defined as
\begin{align*}
    \mathbfcal{A}:D(\mathbfcal{A})\subset\mathbfcal{L}^2_\Div&\rightarrow\mathbfcal{L}^2_\Div, \\
    (\bv,\bw)&\mapsto \big(-\mathbf{P}_\Div^\Om(\Div(2\D\bv)), -\mathbf{P}_\Div^\Ga(\Divg(2\Dg\bw) + 2[\D\bv\,\n]_\tau + \bw)\big),
\end{align*}
where
\begin{align*}
    D(\mathbfcal{A}) \coloneqq \mathbfcal{H}^2_{0,\Div}.
\end{align*}
It was shown in \cite{Knopf2025} that $\mathbfcal{A}$ is a positive, unbounded, and self-adjoint operator on $\mathbfcal{L}^2_\Div$ with compact inverse.
Then, thanks to the above regularity results, we deduce that the operator $\mathbfcal{A}^{-1}:\mathbfcal{L}^2_\Div\rightarrow\mathbfcal{H}^2_{0,\Div}$ is such that, for any $(\f,\g)\in\mathbfcal{L}^2_\Div$, there exists $\mathbfcal{A}^{-1}(\f,\g) = \big(\mathbf{A}_\Om^{-1}(\f,\g),\mathbf{A}_\Ga^{-1}(\f,\g)\big)\in D(\mathbfcal{A})$ and $(p,q)\in\mathcal{H}^1\cap\mathcal{L}^2_{(0)}$ which solve
\begin{subequations}\label{App:System:StokesOperator}
    \begin{alignat}{2}
        -\Div(2\D\mathbf{A}_\Om^{-1}(\f,\g)) + \Grad p &= \f &&\qquad\text{in~}\Om, \\
        -\Divg(2\Dg\mathbf{A}_\Ga^{-1}(\f,\g)) + 2[\D\mathbf{A}_\Om^{-1}(\f,\g)\,\n]_\tau + \Gradg q + \mathbf{A}_\Ga^{-1}(\f,\g) &= \g  &&\qquad\text{on~}\Ga.
    \end{alignat}
\end{subequations}
Furthermore, in view of \eqref{App:Est:BulkSurfaceStokes}, we have
\begin{align*}
    \norm{\mathbfcal{A}^{-1}(\f,\g)}_{\mathbfcal{H}^2} + \norm{(p,q)}_{\mathcal{H}^1} \leq C\norm{(\f,\g)}_{\mathbfcal{L}^2}.
\end{align*}
In particular, it holds that
\begin{align}\label{NormEquivalence:Stokes}
    \norm{(\bv,\bw)}_{\mathbfcal{H}^2} \leq C\norm{\mathbfcal{A}(\bv,\bw)}_{\mathbfcal{L}^2} \qquad\text{for all~}(\bv,\bw)\in\mathbfcal{H}^2_{0,\Div}.
\end{align}

Lastly, we provide an $\mathcal{L}^2$ estimate of the pressure pair $(p,q)$ in \eqref{App:System:StokesOperator}, which can be seen as a generalization of \cite[Lemma~B.2]{Giorgini2019}.
\begin{lemma}
    Let $(\f,\g)\in\mathbf{L}^2_\Div$. Then there exists a constant $C > 0$, independent of $\f$ and $\g$, such that
    \begin{align}\label{App:Est:Pressure}
        \norm{(p,q)}_{\mathcal{L}^2} \leq C\norm{\mathbfcal{A}^{-1}(\f,\g)}_{\mathbfcal{H}^1_{0}}^{\frac12}\norm{(\f,\g)}_{\mathbfcal{L}^2}^{\frac12}.
    \end{align}
\end{lemma}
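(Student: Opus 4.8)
The plan is to adapt the argument of \cite[Lemma~B.2]{Giorgini2019} to the bulk--surface setting. Write $(\bv,\bw)\coloneqq\mathbfcal{A}^{-1}(\f,\g)=\big(\mathbf{A}_\Om^{-1}(\f,\g),\mathbf{A}_\Ga^{-1}(\f,\g)\big)\in\mathbfcal{H}^2_{0,\Div}$. By \eqref{App:Est:BulkSurfaceStokes} we have $\norm{(\bv,\bw)}_{\mathbfcal{H}^2}\le C\norm{(\f,\g)}_{\mathbfcal{L}^2}$, and testing the identity $\mathbfcal{A}(\bv,\bw)=(\f,\g)$ against $(\bv,\bw)$ together with \eqref{Est:Korn} gives $\norm{(\bv,\bw)}_{\mathbfcal{H}^1_0}^2\le C\norm{(\f,\g)}_{\mathbfcal{L}^2}\norm{(\bv,\bw)}_{\mathbfcal{H}^1_0}$, hence $\norm{(\bv,\bw)}_{\mathbfcal{H}^1_0}\le C\norm{(\f,\g)}_{\mathbfcal{L}^2}$. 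Since both $\norm{(\bv,\bw)}_{\mathbfcal{H}^1_0}^{1/2}$ and $\norm{(\bv,\bw)}_{\mathbfcal{H}^2}^{1/2}$ are therefore $\le C\norm{(\f,\g)}_{\mathbfcal{L}^2}^{1/2}$, it suffices to prove
\begin{align*}
    \norm{(p,q)}_{\mathcal{L}^2}\le C\Big(\norm{(\bv,\bw)}_{\mathbfcal{H}^1_0}+\norm{(\bv,\bw)}_{\mathbfcal{H}^1_0}^{1/2}\norm{(\bv,\bw)}_{\mathbfcal{H}^2}^{1/2}\Big).
\end{align*}

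First I would establish a bulk--surface Ne\v{c}as--Bogovskii inequality: the map $(\Div,\Divg)\colon\mathbfcal{H}^1_0\to L^2_{(0)}(\Om)\times L^2_{(0)}(\Ga)$ admits a bounded right inverse, obtained by solving the surface divergence equation $\Divg\bm{\Upsilon}_0=q$ with $\bm{\Upsilon}_0\in\mathbf{H}^1_\tau(\Ga)$ (surface Bogovskii), taking a tangential $\mathbf{H}^1(\Om)$-extension $\bm{\Xi}_1$ of $\bm{\Upsilon}_0$ with $\bm{\Xi}_1\cdot\n=0$ on $\Ga$, and correcting by a bulk Bogovskii field $\bm{\Xi}_2\in\mathbf{H}^1_0(\Om)$ with $\Div\bm{\Xi}_2=p-\Div\bm{\Xi}_1$; then $(\bm{\Xi}_1+\bm{\Xi}_2,\bm{\Upsilon}_0)\in\mathbfcal{H}^1_0$ with $\mathbfcal{H}^1$-norm $\le C\norm{(p,q)}_{\mathcal{L}^2}$. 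This yields $\norm{(p,q)}_{\mathcal{L}^2}\le C\sup\{\,\big|\intO p\,\Div\bm{\Xi}\dx+\intG q\,\Divg\bm{\Upsilon}\dG\big|:(\bm{\Xi},\bm{\Upsilon})\in\mathbfcal{H}^1_0,\ \norm{(\bm{\Xi},\bm{\Upsilon})}_{\mathbfcal{H}^1}\le 1\,\}$. For such a test pair, integration by parts (boundary terms vanish by $\bm{\Xi}\cdot\n=0$ on $\Ga$ and closedness of $\Ga$) gives $\intO p\,\Div\bm{\Xi}+\intG q\,\Divg\bm{\Upsilon}=-\intO\Grad p\cdot\bm{\Xi}-\intG\Gradg q\cdot\bm{\Upsilon}$, into which I would substitute $\Grad p=\f+\Div(2\D\bv)$ and $\Gradg q=\g+\Divg(2\Dg\bw)-2[\D\bv\,\n]_\tau-\bw$ read off from \eqref{App:System:StokesOperator}. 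This splits the right-hand side into a solution part $B$ and a force part $A=-\intO\f\cdot\bm{\Xi}-\intG\g\cdot\bm{\Upsilon}$; reversing the integration by parts in $B$ makes the $[\D\bv\,\n]_\tau$-contributions cancel, so $B=\intO 2\D\bv:\D\bm{\Xi}+\intG 2\Dg\bw:\Dg\bm{\Upsilon}+\intG\bw\cdot\bm{\Upsilon}$, whence $|B|\le C\norm{(\bv,\bw)}_{\mathbfcal{H}^1_0}\norm{(\bm{\Xi},\bm{\Upsilon})}_{\mathbfcal{H}^1}$.

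The crucial step is $A$. Since $\f\in\mathbf{L}^2_\Div(\Om)$ and $\g\in\mathbf{L}^2_\Div(\Ga)$, I would insert the Leray projections, $\intO\f\cdot\bm{\Xi}=\intO\f\cdot(\mathbf{P}_\Div^\Om\bm{\Xi})$ and $\intG\g\cdot\bm{\Upsilon}=\intG\g\cdot(\mathbf{P}_\Div^\Ga\bm{\Upsilon})$, i.e.\ the gradient parts of the test fields pair to zero against divergence-free fields. Using the representations $\f=-\mathbf{P}_\Div^\Om(\Div(2\D\bv))$ and $\g=-\mathbf{P}_\Div^\Ga(\Divg(2\Dg\bw)+2[\D\bv\,\n]_\tau+\bw)$ coming from the definition of $\mathbfcal{A}$, together with self-adjointness and idempotency of the projections and integration by parts, one rewrites $\intO\f\cdot\bm{\Xi}=\intO 2\D\bv:\D(\mathbf{P}_\Div^\Om\bm{\Xi})-2\intG[\D\bv\,\n]_\tau\cdot(\mathbf{P}_\Div^\Om\bm{\Xi})$ and analogously $\intG\g\cdot\bm{\Upsilon}$ in terms of $\bw$ paired with $\Dg(\mathbf{P}_\Div^\Ga\bm{\Upsilon})$, $\mathbf{P}_\Div^\Ga\bm{\Upsilon}$ plus a boundary integral of $[\D\bv\,\n]_\tau$. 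The interior terms are bounded by $C\norm{(\bv,\bw)}_{\mathbfcal{H}^1_0}\norm{(\bm{\Xi},\bm{\Upsilon})}_{\mathbfcal{H}^1}$ via the $\mathbf{H}^1$-boundedness of the Leray projections on the smooth domain and surface, and the boundary terms via the trace interpolation inequality \eqref{InterpolEst:Trace:L^2} applied to $\Grad\bv$, which yields $\norm{[\D\bv\,\n]_\tau}_{\mathbf{L}^2(\Ga)}\le C\norm{\bv}_{\mathbf{H}^1(\Om)}^{1/2}\norm{\bv}_{\mathbf{H}^2(\Om)}^{1/2}$; hence $|A|\le C\big(\norm{(\bv,\bw)}_{\mathbfcal{H}^1_0}+\norm{(\bv,\bw)}_{\mathbfcal{H}^1_0}^{1/2}\norm{(\bv,\bw)}_{\mathbfcal{H}^2}^{1/2}\big)\norm{(\bm{\Xi},\bm{\Upsilon})}_{\mathbfcal{H}^1}$. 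Taking the supremum over $(\bm{\Xi},\bm{\Upsilon})$ and combining with the first paragraph gives the assertion. I expect the main obstacle to be the bulk--surface Bogovskii construction and the careful bookkeeping of which boundary contributions survive the repeated integrations by parts; the essential gain — upgrading the crude bound $\norm{(p,q)}_{\mathcal{L}^2}\lesssim\norm{(\f,\g)}_{\mathbfcal{L}^2}$ to the sharp square-root estimate — comes precisely from exploiting that $(\f,\g)$ is divergence-free, which annihilates the gradient parts of the test fields and leaves only the $\bv,\bw$-dependent remainder.
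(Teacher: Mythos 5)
Your proposal is correct and arrives at the same half-power estimate, but it takes a genuinely different route from the paper in one respect: how the pressure norm is represented. The paper never constructs a bulk--surface Bogovskii operator; it instead estimates $\norm{(\f,\g)}_{(\mathbfcal{H}^1_0)^\prime}$ directly by testing $\mathbfcal{A}\mathbfcal{A}^{-1}(\f,\g)=(\f,\g)$ against $(\wv,\ww)\in\mathbfcal{H}^1_0$, unfolding the Leray projections onto the test fields, and then invoking the already-established weak-solution pressure estimate from Theorem~\ref{App:Lemma:BulkSurfaceStokes:WS}, namely $\norm{(p,q)}_{\mathcal{L}^2}\le C\norm{(\f,\g)}_{(\mathbfcal{H}^1_0)^\prime}$, together with $\norm{\mathbfcal{A}^{-1}(\f,\g)}_{\mathbfcal{H}^1_0}\le C\norm{(\f,\g)}_{\mathbfcal{L}^2}$ to absorb the first-order term into the square-root bound. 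You instead re-derive the inf-sup control of $(p,q)$ from scratch via a surface-plus-bulk Bogovskii/extension construction, split $\intO p\,\Div\bm{\Xi}+\intG q\,\Divg\bm{\Upsilon}$ into $A+B$, and then your $A=-\langle(\f,\g),(\bm{\Xi},\bm{\Upsilon})\rangle_{\mathbfcal{L}^2}$ reduces to exactly the identity the paper computes (Leray projections on the test pair, boundary terms handled by \eqref{InterpolEst:Trace:L^2}), while $B$ contributes only a harmless $\norm{(\bv,\bw)}_{\mathbfcal{H}^1_0}$ term. So your argument effectively re-proves the pressure part of Theorem~\ref{App:Lemma:BulkSurfaceStokes:WS} inside the proof; the paper saves that work by citing it. The one spot you would need to firm up is the bulk--surface Bogovskii lemma itself (surjectivity of $\Divg$ from $\mathbf{H}^1_\tau(\Ga)$ onto $L^2_{(0)}(\Ga)$ with a bounded right inverse, plus the mean-value compatibility for the bulk correction via $\intO\Div\bm{\Xi}_1=\intG\bm{\Xi}_1\cdot\n=0$); your sketch of the construction is sound, but it is an extra lemma that the paper's bootstrap from the weak-solution estimate avoids entirely. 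Both approaches share the essential insight you correctly identify: divergence-freeness of $(\f,\g)$ lets the Leray projections kill the pressure gradients, and trace interpolation of $\D\bv\,\n$ is what produces the sharp $1/2$-powers.
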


\begin{proof}
    Consider $(\wv,\ww)\in\mathbfcal{H}^1_0$. Then, using the definition of the Stokes operator $\mathbfcal{A}$ and integration by parts, we find that
    \begin{align*}
        &\bigscp{(\f,\g)}{(\wv,\ww)}_{\mathbfcal{L}^2} \\
        &\quad = \bigscp{\mathbfcal{A}\mathbfcal{A}^{-1}(\f,\g)}{(\wv,\ww)}_{\mathbfcal{L}^2} \\
        &\quad = \intO \mathbf{A}_\Om\mathbf{A}_\Om^{-1}(\f,\g)\cdot\wv\dx + \intG \mathbf{A}_\Ga\mathbf{A}_\Ga^{-1}(\f,\g)\cdot\ww\dG \\
        &\quad = - \intO \mathbf{P}_\Div^\Om\big(\Div(2\D\mathbf{A}_\Om^{-1}(\f,\g))\big)\cdot\wv \dx - \intG \mathbf{P}_\Div^\Ga\big(\Divg(2\Dg\mathbf{A}_\Ga^{-1}(\f,\g))\big)\cdot\ww \dG \\
        &\qquad + \intG \mathbf{P}_\Div^\Ga\big(2[\D\mathbf{A}_\Om^{-1}(\f,\g)\,\n]_\tau\big)\cdot\ww\dG + \intG \mathbf{A}_\Ga^{-1}(\f,\g)\cdot\ww\dG \\
        &\quad = - \intO \Div(2\D\A_\Om^{-1}(\f,\g))\cdot\mathbf{P}_\Div^\Om\wv\dx - \intG \Divg(2\Dg\A_\Ga^{-1}(\f,\g))\cdot\mathbf{P}_\Div^\Ga\ww\dG \\
        &\qquad + \intG 2\D\mathbf{A}_\Om^{-1}(\f,\g)\n\,\cdot\mathbf{P}_\Div^\Ga\ww\dG + \intG \mathbf{A}_\Ga^{-1}(\f,\g)\cdot\mathbf{P}_\Div^\Ga\ww\dG \\
        &\quad = \intO 2\D\A_\Om^{-1}(\f,\g):\D\mathbf{P}_\Div^\Om\wv\dx + \intG 2\Dg\A_\Ga^{-1}(\f,\g):\Dg\mathbf{P}_\Div^\Ga\ww\dG \\
        &\qquad - \intG 2\D\A_\Om^{-1}(\f,\g)\n\,\cdot\mathbf{P}_\Div^\Om\wv\dx + \intG 2\D\A_\Om^{-1}(\f,\g)\n\,\cdot\mathbf{P}_\Div^\Ga\ww\dG + \intG \mathbf{A}_\Ga^{-1}(\f,\g)\cdot\mathbf{P}_\Div^\Ga\ww\dG.
    \end{align*}
    Recalling that $\mathbf{P}_\Div^\Om:\mathbf{H}^1(\Om)\rightarrow\mathbf{H}^1(\Om)\cap\mathbf{L}^2_\Div(\Om)$ as well as $\mathbf{P}_\Div^\Ga:\mathbf{H}^1(\Ga)\rightarrow\mathbf{H}^1(\Ga)\cap\mathbf{L}^2_\Div(\Ga)$ are both bounded operators, respectively, we infer
    \begin{align*}
        &\Bigabs{\intO 2\D\A_\Om^{-1}(\f,\g):\D\mathbf{P}_\Div^\Om\wv\dx + \intG 2\Dg\A_\Ga^{-1}(\f,\g):\Dg\mathbf{P}_\Div^\Ga\ww\dG  + \intG \mathbf{A}_\Ga^{-1}(\f,\g)\cdot\mathbf{P}_\Div^\Ga\ww\dG} \\
        &\quad\leq C\big(\norm{\D\mathbf{A}_\Om^{-1}(\f,\g)}_{\mathbf{L}^2(\Om)} + \norm{\Dg\mathbf{A}_\Ga^{-1}(\f,\g)}_{\mathbf{L}^2(\Ga)} + \norm{\mathbf{A}_\Ga^{-1}(\f,\g)}_{\mathbf{L}^2(\Ga)}\big)\norm{(\wv,\ww)}_{\mathbfcal{H}^1} \\
        &\quad\leq C\norm{\mathbfcal{A}^{-1}(\f,\g)}_{\mathbfcal{H}^1_{0}}\norm{(\wv,\ww)}_{\mathbfcal{H}^1}.
    \end{align*}
    Next, invoking Lemma~\ref{Prelim:Lemma:Interpol:Trace} together with the estimate \eqref{App:Est:BulkSurfaceStokes}, we deduce that
    \begin{align*}
        &\Bigabs{- \intG 2\D\A_\Om^{-1}(\f,\g)\n\,\cdot\mathbf{P}_\Div^\Om\wv\dx + \intG 2\D\A_\Om^{-1}(\f,\g)\n\,\cdot\mathbf{P}_\Div^\Ga\ww\dG} \\
        &\quad\leq C\norm{\D\A_\Om^{-1}(\f,\g)}_{\mathbf{L}^2(\Ga)}\norm{(\wv,\ww)}_{\mathbfcal{H}^1} \\
        &\quad\leq C\norm{\D\A_\Om^{-1}(\f,\g)}_{\mathbf{L}^2(\Om)}^{\frac12}\norm{\D\A_\Om^{-1}(\f,\g)}_{\mathbf{H}^1(\Om)}^{\frac12}\norm{(\wv,\ww)}_{\mathbfcal{H}^1} \\
        &\quad\leq C\norm{\mathbfcal{A}^{-1}(\f,\g)}_{\mathbfcal{H}^1_{0}}^{\frac12}\norm{(\f,\g)}_{\mathbfcal{L}^2}^{\frac12}\norm{(\wv,\ww)}_{\mathbfcal{H}^1}.
    \end{align*}
    Consequently, taking the supremum over all $(\wv,\ww)\in\mathbfcal{H}^1_0$ such that $\norm{(\wv,\ww)}_{\mathbfcal{H}^1} \leq 1$, we find that
    \begin{align*}
        \norm{(\f,\g)}_{(\mathbfcal{H}^1_0)^\prime} &\leq C\norm{\mathbfcal{A}^{-1}(\f,\g)}_{\mathbfcal{H}^1_{0}} + C\norm{\mathbfcal{A}^{-1}(\f,\g)}_{\mathbfcal{H}^1_0}^{\frac12}\norm{(\f,\g)}_{\mathbfcal{L}^2}^{\frac12},
    \end{align*}
    from which the desired inequality \eqref{App:Est:Pressure} immediately follows.
\end{proof}

\bigskip
\noindent 
\section*{Acknowledgments} 
This work was supported by the Deutsche Forschungsgemeinschaft (DFG, German Research Foundation) - Project 52469428, and partially supported by the Deutsche Forschungsgemeinschaft (DFG, German Research Foundation) - RTG 2339. The support is gratefully acknowledged.


\section*{Conflict of Interests and Data Availability Statement}

There is no conflict of interest.

There is no associated data with the manuscript.


\footnotesize

\bibliographystyle{abbrv}
\bibliography{S_NSCH}

\end{document}